\newcommand\WDiv{{\mathrm{WDiv}}}
\newcommand\Cl{{\mathrm{Cl}}}
\newcommand\Pic{{\mathrm{Pic}}}
\newcommand\Eff{{\mathrm{Eff}}}
\newcommand\Nef{{\mathrm{Nef}}}
\newcommand\Mov{{\mathrm{Mov}}}
\newcommand\SAmp{{\mathrm{SAmp}}}
\newcommand{\Spec}{\text{Spec}}
\newtheorem{theorem}{Theorem}[section]
\newtheorem{lemma}[theorem]{Lemma}
\newtheorem{proposition}[theorem]{Proposition}
\newtheorem{question}[theorem]{Question}
\newtheorem{corollary}[theorem]{Corollary}
\theoremstyle{definition}     % italic or bold etc.
\newtheorem{definition}[theorem]{Definition}
\theoremstyle{remark}
\newtheorem{remark}[theorem]{Remark}
\numberwithin{equation}{section}
\begin{document}

\date\today
%\date{July 1, 2019}

\title[Combinatorially minimal Mori dream surfaces]{Combinatorially minimal \\ Mori dream surfaces of general type}

\author[J. Keum]{JongHae Keum}
\address{School of Mathematics, Korea Institute for Advanced Study, Seoul 02455, Republic of Korea} \email{jhkeum@kias.re.kr}

\author[K.-S. Lee]{Kyoung-Seog Lee}
\address{Institute of the Mathematical Sciences of the Americas, University of Miami, Department of Mathematics, 1365 Memorial Drive, Ungar 503, Coral Gables, FL 33146} \email{kyoungseog02@gmail.com}

\thanks{} 

%\subjclass[2000] {14J29; 14J27}

\keywords{Cox ring, Mori dream surface, surface of general type with $p_g=0$, combinatorially minimal Mori dream space, quotient singularities, effective cone, nef cone, semiample cone.}

\begin{abstract} 
In this paper, we suggest a new approach to study minimal surfaces of general type with $p_g=0$ via their Cox rings, especially using the notion of combinatorially minimal Mori dream space introduced by Hausen in \cite{Hausen}. First, we study general properties of combinatorially minimal Mori dream surfaces. Then we discuss how to apply these ideas to the study of minimal surfaces of general type with $p_g=0$ which are very important but still mysterious objects. In our previous paper \cite{Keum-Lee1}, we provided several examples of Mori dream surfaces of general type with $p_g=0$ and computed their effective cones explicitly. In this paper, we study their fibrations, explicit combinatorially minimal models and discuss singularities of the combinatorially minimal models. We also show that many minimal surfaces of general type with $p_g=0$ arise from the minimal resolutions of combinatorially minimal Mori dream surfaces.
\end{abstract}
\maketitle

%%%%%%%%%%%%%%%%%%%%%%%%%%%%%%%%%%%%%%%%%%%
%\setcounter{section}{0}, \section{Introduction}

\section{Introduction}

Enriques-Kodaira classification tells us a rough geography of compact complex surfaces. It seems that we have a relatively good understanding of surfaces with Kodaira dimension less than or equal to 1. However, surfaces of general type are still very mysterious objects although they have been studied for a long time. Even for surfaces of general type with fixed numerical invariants, we do not know how to understand them in a systematic way except for a few cases. Especially, it is desirable to classify minimal surfaces of general type with $p_g=0$ since they can be considered as surfaces of general type having minimal numerical invariants. Since the first examples of these surfaces were found by Godeaux and Campedelli in 1930s, they have been important objects in algebraic geometry. In 1980, Mumford asked the following question (cf. \cite{BCP}).

\begin{question}[Mumford]
Can a computer classify all surfaces of general type with $p_g=0$?
\end{question}

Recently, there have been lots of new developments in the theory of minimal surfaces of general type with $p_g=0$. Many new examples were found, some of them with special properties were classified and several new phenomena were observed (cf. \cite{BCP}). However it seems that we are still far from the complete classification of these surfaces. \\

On the other hand, there have been lots of new progress in birational geometry and the theory of Cox rings in these days. It is desirable to use these recent progress to have a deeper understanding of surfaces of general type. Therefore it is a natural attempt to try to understand surfaces of general type via their birational invariants and Cox rings. It seems that an ideal class of objects to study using these tools are Mori dream surfaces. Especially, minimal surfaces of general type with $p_g=0$ are natural objects to study since they have relatively small Picard numbers and there are some similarities between these surfaces with del Pezzo surfaces. From this perspective the following questions seems to be natural.

\begin{question}
Let $X$ be a minimal surface of general type with $p_g=0.$ When the effective cone of $X$ is a rational polyhedral cone? When the nef cone of $X$ is the same as the semiample cone of $X$? When the Cox ring of $X$ is finitely generated?
\end{question}

In our previous paper \cite{Keum-Lee1}, we provided several examples of Mori dream surfaces which are minimal surfaces of general type with $p_g=0$ and $2 \leq K^2 \leq 9.$ Indeed, it turns out that many classical or well-known surfaces of general type are Mori dream surfaces. We also computed effective cones of these surfaces explicitly. It seems that there are many more Mori dream surfaces of minimal surfaces of general type with $p_g=0.$ In fact, we do not know a single example of minimal surface of general type with $p_g=0$ which is not a Mori dream space. We think that an answer of the following question can be an important step toward the classification of minimal surfaces of general type with $p_g=0.$

\begin{question}
Can we characterize or classify minimal surfaces of general type with $p_g=0$ which are Mori dream spaces?
\end{question}

Indeed, there are many attempts to classify Mori dream rational surfaces. See \cite{ADHL} and references therein for more details. Especially, it is well-known that weak del Pezzo surfaces are Mori dream spaces. Recall that every weak del Pezzo surface can be obtained from $\mathbb{P}^2$ or $\mathbb{P}^1 \times \mathbb{P}^1$ by blowing-ups. Note that $\mathbb{P}^2$ and $\mathbb{P}^1 \times \mathbb{P}^1$ are Mori dream surfaces which do not contain a contractible curve. Hausen defined the notion of combinatorially minimal Mori dream space as follows.

\begin{definition}\cite[Definition 6.1]{Hausen} 
Let $X$ be a Mori dream space. \\
(1) A divisor $D \in \Cl(X)$ is combinatorially contractible if it belongs to an extremal ray of $\Eff(X)$ and $h^0(X,nD) \leq 1$ for all $n \in \mathbb{N}.$ \\
(2) $X$ is combinatorially minimal if there is no combinatorially contractible divisor. 
\end{definition}

In \cite{Hausen}, Hausen developed the theory of neat ambient modifications of algebraic varieties with finitely generated Cox rings. Especially, he proved that any $\mathbb{Q}$-factorial projective Mori dream space can be obtained from a combinatorially minimal Mori dream space by sequences of neat ambient modifications and small $\mathbb{Q}$-factorial modifications (cf. Theorem  \ref{Hausen;mainthm}). However, it is very hard to compute the Cox ring of an algebraic variety in general and therefore it seems to be difficult to apply this rather complicated procedure for certain examples like varieties of general type explicitly. And it is not clear how to understand combinatorially minimal Mori dream spaces systematically. On the other hand, his construction works for any $\mathbb{Q}$-factorial Mori dream space, so it will be nice to use his idea to study complicated examples like surfaces of general type. In this paper, we study properties and characterization of combinatorially minimal Mori dream surfaces and how one can reach a combinatorially minimal surface from a Mori dream surface. We obtain the following result.

\begin{theorem}\label{contraction}
Let $X$ be a smooth Mori dream surface. Then we have the following. \\
(1) We can contract negative curves from $X$ to reach a combinatorial minimal Mori dream surface $X_0$ as follows. 
$$ X=X_n \to X_{n-1} \to \cdots \to X_0. $$
(2) The surface $X_0$ is normal $\mathbb{Q}$-factorial and the Picard number of $X_0$ is one or two. \\
(3) If the Picard number of $X_0$ is two, then $X_0$ has a finite morphism to $\mathbb{P}^1 \times \mathbb{P}^1.$ \\
(4) If $X$ is minimal, then $X$ is the minimal resolution of $X_0.$ \\
(5) Conversely, if $X_0$ be a normal $\mathbb{Q}$-factorial surface whose Picard number is 1 or Picard number 2 with a finite morphism to $\mathbb{P}^1 \times \mathbb{P}^1$, then $X_0$ is a combinatorially minimal Mori dream surface.
\end{theorem}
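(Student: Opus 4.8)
The plan is to analyze each statement separately, building on the structure of the contraction sequence.

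For part (1), I would proceed by induction on the Picard number $\rho(X)$. If $X$ is already combinatorially minimal, take $n=0$. Otherwise, there is a combinatorially contractible divisor $D$ on the extremal ray $R$ of $\Eff(X)$; since $X$ is a smooth surface, the extremal ray is generated by an irreducible curve $C$ with $C^2<0$ (the only irreducible curves lying on an extremal ray of the effective cone of a surface are negative curves, and since $X$ is Mori dream, $\Eff(X)$ is rational polyhedral, so $R$ is generated by such a $C$). The condition $h^0(X,nD)\leq 1$ for all $n$ forces $C$ to be contractible to a (possibly singular) point: if $C$ were a $(-1)$-curve this is Castelnuovo, and for $C^2 \leq -2$ one contracts $C$ to a cyclic quotient singularity. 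The resulting surface $X_{n-1}$ is again a Mori dream surface (Mori dreamness is preserved under such contractions — this should be recalled or cited), has $\rho(X_{n-1}) = \rho(X)-1$, and we iterate. The process terminates since $\rho$ drops each time and stays at least $1$; the terminal surface $X_0$ is combinatorially minimal.

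For part (2): normality and $\mathbb{Q}$-factoriality are preserved at each step (contracting a negative curve on a smooth, hence $\mathbb{Q}$-factorial, surface yields a normal $\mathbb{Q}$-factorial surface — the quotient singularities introduced are $\mathbb{Q}$-factorial). For the bound on $\rho(X_0)$: if $\rho(X_0) \geq 3$, I would argue that $\Eff(X_0)$, being a rational polyhedral cone in a space of dimension $\geq 3$, must have an extremal ray not supporting any big divisor class; concretely, $\overline{NE}(X_0)$ has at least three extremal rays, and $K_{X_0}$-negativity/Mori theory on the $\mathbb{Q}$-factorial surface $X_0$ shows that some extremal ray can still be contracted, or one shows directly that a combinatorially minimal surface has at most two independent "boundary" rays of $\Eff$. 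The cleanest route: on a surface, $\Eff(X_0)$ and $\Nef(X_0)$ are dual $2$-dimensional-boundary cones; if $\rho \geq 3$ there is an extremal ray $R$ of $\Eff(X_0)$ with $R \cdot R < 0$ in the intersection pairing sense, generated by an irreducible curve, contradicting combinatorial minimality — so $\rho(X_0) \leq 2$.

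For part (3): if $\rho(X_0)=2$, then $\Eff(X_0)$ is a $2$-dimensional cone with two extremal rays $R_1, R_2$; combinatorial minimality means both rays support divisors $D_i$ with $h^0(X_0, nD_i)$ growing, i.e. both are big, equivalently both rays lie in the interior-closure appropriately — more precisely neither is combinatorially contractible, so each $R_i$ is generated by a class $D_i$ with $D_i^2 \geq 0$ and $D_i$ not rigid. Since $\Nef(X_0) = \SAmp(X_0)$ (Mori dream) is also $2$-dimensional with extremal rays $N_1, N_2$, and each $N_i$ is semiample defining a morphism; if some $N_i^2 = 0$ it gives a fibration to $\mathbb{P}^1$, and if $N_i^2 > 0$ it is big and nef hence (by Zariski-type arguments on the rigid locus) one shows the contraction it defines cannot drop Picard number without creating a combinatorially contractible class — forcing both $N_i^2 = 0$. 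Two base-point-free pencils $|N_1|, |N_2|$ with $N_1 \cdot N_2 > 0$ together give a finite morphism $X_0 \to \mathbb{P}^1 \times \mathbb{P}^1$. The main obstacle here is showing both extremal nef rays have self-intersection zero; I would handle it by contradiction, assuming $N_1^2 > 0$ and analyzing the birational morphism it induces.

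For part (4): if $X$ is a minimal surface (of general type, say), then $K_X$ is nef, so $X$ contains no $(-1)$-curves; hence in the contraction sequence of part (1) every curve contracted has self-intersection $\leq -2$, and these are precisely the curves resolved by the minimal resolution of a normal surface with quotient singularities. Thus $X \to X_0$ contracts exactly the exceptional $(-2)$-and-lower configurations and is crepant/minimal in the appropriate sense, so $X$ is the minimal resolution of $X_0$. (I should double-check the statement is intended for $X$ minimal of general type, or more generally $K_X$ nef.)

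For part (5), the converse: suppose $X_0$ is normal $\mathbb{Q}$-factorial with $\rho(X_0)=1$, or $\rho(X_0)=2$ with a finite morphism $f\colon X_0 \to \mathbb{P}^1\times\mathbb{P}^1$. First I must check $X_0$ is a Mori dream surface. For $\rho=1$ this is immediate: $\Eff(X_0) = \Nef(X_0) = \SAmp(X_0)$ is a single ray, and finite generation of the Cox ring of a $\mathbb{Q}$-factorial surface with $\rho=1$ holds (the Cox ring is essentially a polynomial-like ring here, or cite that $\rho=1$ $\mathbb{Q}$-factorial $\Rightarrow$ Mori dream). For $\rho=2$: pulling back the two rulings of $\mathbb{P}^1\times\mathbb{P}^1$ via the finite morphism $f$ gives two semiample classes $f^*F_1, f^*F_2$ that span $\NS(X_0)_\mathbb{Q}$; their positive span is contained in $\SAmp(X_0) \subseteq \Nef(X_0) \subseteq \Eff(X_0)$, and since $f$ is finite the pullbacks are big-and-nef-boundary classes — one shows $\Eff(X_0)$ equals the cone they span (any effective curve $C$ has $C \cdot f^*F_i \geq 0$, pinning $\Eff$ inside the dual cone, which coincides), so all three cones agree and are rational polyhedral; finite generation of the Cox ring then follows from the Mori dream criterion for surfaces (e.g. Artebani–Hausen–Laface / cite \cite{ADHL}: a $\mathbb{Q}$-factorial surface with finitely generated effective cone generated by semiample classes is Mori dream). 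Finally, $X_0$ is combinatorially minimal because every extremal ray of $\Eff(X_0)$ is generated by a semiample class $D$ with $D^2 = 0$ (or by an ample class when $\rho=1$... actually when $\rho = 1$ the single ray is ample if $X_0$ is projective, so certainly not combinatorially contractible), so $h^0(X_0, nD) \to \infty$, violating the defining inequality $h^0 \leq 1$ — hence no combinatorially contractible divisor exists.

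The step I expect to be the main obstacle is part (3), specifically ruling out an extremal nef ray of positive self-intersection on a combinatorially minimal surface of Picard number two; this requires understanding why the associated birational morphism cannot exist without producing a rigid curve class on an extremal ray of $\Eff$. A secondary subtlety is verifying Mori dreamness is preserved under the singular contractions in part (1) and established in the $\rho=2$ case of part (5) — I would lean on the surface-specific characterization of Mori dream spaces from \cite{ADHL} rather than reproving it.
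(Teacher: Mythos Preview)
Your overall strategy aligns with the paper's second, surface-theoretic proof (Section~4), and parts (1), (4), and (5) are essentially correct and match the paper closely; part (3)'s plan to derive a contradiction from $N_1^2>0$ by analyzing the induced birational morphism is also exactly what the paper does.

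The genuine gap is in part (2), the bound $\rho(X_0)\le 2$. Your ``cleanest route'' asserts that when $\rho(X_0)\ge 3$ some extremal ray $R$ of $\Eff(X_0)$ satisfies $R^2<0$. But $X_0$ is combinatorially minimal, which for a Mori dream \emph{surface} means $\Eff(X_0)=\SAmp(X_0)=\Nef(X_0)$; hence every extremal ray of $\Eff(X_0)$ is nef and has $R^2\ge 0$, so the negative extremal ray you want cannot exist and this route collapses. (Your alternative suggestion via $K_{X_0}$-negative Mori theory also does not apply: $K_{X_0}$ may well be nef or ample here.) The paper instead argues through the semiample fibrations: each extremal nef class $D$ on $X_0$ is semiample and defines $X_0\to Y$; if some $Y$ is a surface then a curve is contracted, hence negative, contradicting combinatorial minimality; if every such $Y$ is a curve (so $Y\cong\bbP^1$ since $q=0$), pick two adjacent extremal nef rays $D_1,D_2$ with $D_i^2=0$, form $X_0\to\bbP^1\times\bbP^1$, and note that $(D_1+D_2)^2=2D_1\cdot D_2>0$ so the map is generically finite, while the codimension-two face $D_1^\perp\cap D_2^\perp\cap\Eff(X_0)$ is nonempty when $\rho\ge 3$ and any irreducible curve there is contracted, hence negative by Hodge index --- again a contradiction. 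Your part (3) sketch already contains the key mechanism (big nef $\Rightarrow$ birational contraction $\Rightarrow$ negative curve); you should transplant that idea into part (2) rather than invoking a nonexistent negative extremal ray.

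Two minor corrections: in part (1), contracting a single irreducible curve $C$ with $C^2\le -2$ does \emph{not} in general produce a cyclic quotient singularity --- that only happens when $C$ is smooth rational; for higher-genus $C$ one gets elliptic or worse singularities. The contraction nonetheless exists (Sakai) and the image remains normal, projective, and $\bbQ$-factorial Mori dream (the paper cites \cite{Okawa} and \cite{AHL} for this, which is what your ``should be recalled or cited'' needs to point to).
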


We have two different proofs of the above result. One is using Hausen's work of neat ambient modifications developed in \cite{Hausen} and the other one is using surface theory and several general results about Mori dream spaces. In this paper, we provide both proofs since we think the first proof can be easily extended to higher dimensional varieties and the second proof provide an explicit geometric way to construct combinatorially minimal surface $X_0$ from $X.$ Indeed, the above result is also true even if $X$ is a singular Mori dream surface. See Theorem \ref{contraction:proof1} for more precise statement. In this paper, we will focus on smooth examples. It is convenient to introduce the following definition for further discussions.

\begin{definition}
Let $X_0$ be a combinatorially minimal Mori dream surface and $X$ be a resolution of singularities of $X_0.$ Then we call $X_0$ a combinatorially minimal model of $X.$
\end{definition}

Then we discuss how to use the above ideas to study minimal surfaces of general type with $p_g=0.$ First, we need some examples which show how the above procedures look like. In \cite{Keum-Lee1}, we provided many examples of smooth projective Mori dream surfaces of general type with $p_g=0$ as follows.

\begin{theorem}\cite{Keum-Lee1}\label{Keum-Lee1main}
The following minimal surfaces of general type with $p_g=0$ are Mori dream surfaces and their negative curves are listed as follows. Here $m(C^2, p_a(C))$ means $m$ copies of $(C^2, p_a(C)).$
\begin{enumerate}
\item Fake projective planes $($all have $K^2=9),$ none
\item Surfaces isogenous to a higher product of unmixed type $($all have $K^2=8),$ none
\item Inoue surfaces with $K^2=7,$ $2(-1,1), (-1,2)$
\item Chen's surfaces with $K^2=7,$ $(-1,1), (-1,2), (-1,3), (-4,2)$ 
\item Kulikov surfaces with $K^2=6,$ $6(-1,1)$
\item Burniat surfaces with $2 \leq K^2 \leq 6$
\begin{enumerate}
\item $6(-1,1)$ if $K^2 = 6;$
\item $9(-1,1), (-4,0)$ if $K^2 = 5;$
\item $12(-1,1), 4(-4,0)$ if non-nodal with $K^2 = 4;$
\item $10(-1,1), 2(-4,0), (-2,0)$ if nodal with $K^2 = 4;$
\item $9(-1,1), 3(-4,0), 3(-2,0)$ if $K^2 = 3;$
\item $6(-1,1), 6(-2,0), 4(-4,0)$  if $K^2 = 2;$
\end{enumerate}
\item Product-quotient surfaces with $K^2=6, G=D_4 \times \mathbb{Z}/2\mathbb{Z},$ $2(-2, 0), (-1,1), (-1, 2)$
\item A family of Keum-Naie surfaces which are product-quotient surfaces with $K^2=4, G=\mathbb{Z}/4\mathbb{Z} \times \mathbb{Z}/2\mathbb{Z},$ $4(-1, 1), 4(-2, 0)$
\end{enumerate}
\end{theorem}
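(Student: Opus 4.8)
The plan is to invoke the standard characterisation of Mori dream surfaces: a smooth $\mathbb{Q}$-factorial projective surface $X$ with $q=0$ is a Mori dream surface exactly when $\overline{\Eff}(X)$ is rational polyhedral and $\Nef(X)=\SAmp(X)$, i.e.\ every nef divisor is semiample. Every surface in the list is minimal of general type with $p_g=q=0$ and $\chi(\calO_X)=1$, so Noether's formula gives $\rho(X)=b_2=10-K^2$; the Picard numbers therefore run from $1$ for fake projective planes up to $8$ for Burniat surfaces with $K^2=2$. The two extreme-$K^2$ families are immediate. For a fake projective plane $\rho=1$, so $\overline{\Eff}(X)=\mathbb{R}_{\geq 0}K_X$ is a single ray with $K_X$ ample, there are no negative curves, and $X$ is trivially a Mori dream surface. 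For a surface isogenous to a higher product of unmixed type, $X=(C_1\times C_2)/G$ carries two fibrations onto $C_i/G$ whose fibre classes $F_1,F_2$ satisfy $F_i^2=0$, are linearly independent (so they span $\NS(X)_\mathbb{R}$, as $\rho=2$), and generate $\overline{\Eff}(X)$; both are base-point-free and there is no negative curve, so $X$ is again a Mori dream surface.

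For the remaining families the argument is genuinely case-by-case and proceeds in three steps. First I would locate the negative curves directly from the explicit construction. The Kulikov, Burniat and product-quotient surfaces are (resolutions of) abelian Galois covers or quotients $(C_1\times C_2)/G$, so their negative curves arise as components of the ramification divisor, as the $(-2)$-curves resolving the cyclic quotient singularities, or as preimages of the distinguished lines or fibres on the base; for the Inoue and Chen surfaces one reads the finitely many rigid low-self-intersection curves off the same data. This produces the stated lists, in which $(C^2,p_a(C))$ records self-intersection and arithmetic genus, so that adjunction forces $K_X\cdot C=1$ for a $(-1,1)$-curve and $K_X\cdot C=0$ for a nodal $(-2,0)$-curve. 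Second, I would compute the intersection matrix of these curves together with $K_X$ and verify that they span $\NS(X)_\mathbb{R}$, so that the rational cone $\sigma$ they generate is full-dimensional and a candidate for $\overline{\Eff}(X)$.

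The crux is to prove $\overline{\Eff}(X)=\sigma$ together with $\Nef(X)=\SAmp(X)$. For the first equality, $\sigma\subseteq\overline{\Eff}(X)$ is automatic, so by biduality it suffices to show that every class nonnegative on all the listed negative curves is actually nef: an irreducible curve is either one of those negative curves, on which nonnegativity is assumed, or has nonnegative self-intersection, in which case nonnegativity of the pairing follows from the Hodge index theorem once the negative curves are known to span $\NS(X)_\mathbb{R}$. This gives $\sigma^\vee=\Nef(X)$ and hence $\overline{\Eff}(X)=\sigma$, a rational polyhedral cone. For $\Nef(X)=\SAmp(X)$ I would run through the extremal rays of this nef cone and realise each as a morphism from $X$: a ray of positive self-intersection is big and nef and contracts a set of negative curves to a canonical or quotient singularity, hence is semiample by Zariski's theorem, while a ray of self-intersection zero is the fibre class of a fibration onto a curve and is base-point-free. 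Both types being semiample, $X$ is a Mori dream surface.

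The principal obstacle is completeness of the negative-curve list: a single omitted irreducible curve of negative self-intersection would enlarge $\overline{\Eff}(X)$ and could destroy polyhedrality. For the families realised as $(C_1\times C_2)/G$ or as abelian Galois covers this is handled by pulling an arbitrary irreducible curve back to the product or to the branched cover, where the N\'eron--Severi lattice is fully understood, and bounding its genus and self-intersection; combined with the nefness of $K_X$ and Riemann--Roch, this forces any negative curve into the finite list already found. Verifying these bounds, and the semiampleness of each extremal nef ray, is precisely where the individual geometry must be treated separately, most conspicuously in the Burniat cases $2\leq K^2\leq 6$, where the nodal and non-nodal configurations yield different numbers of $(-2,0)$- and $(-4,0)$-curves and hence genuinely distinct cones.
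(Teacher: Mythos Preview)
The paper does not prove this theorem: it is quoted verbatim from the authors' earlier work \cite{Keum-Lee1} and carried here only as background for the subsequent study of combinatorially minimal models. So there is no proof in the present paper to compare your proposal against; any genuine comparison would have to be made with \cite{Keum-Lee1} itself.

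That said, your overall strategy is the right one and is indeed the route taken in \cite{Keum-Lee1}: invoke the Artebani--Hausen--Laface criterion (polyhedral $\Eff(X)$ together with $\Nef(X)=\SAmp(X)$), locate the negative curves from the explicit Galois-cover or product-quotient construction, and verify semiampleness on each extremal nef ray. Your treatment of the two easy families (fake projective planes, surfaces isogenous to a higher product of unmixed type) is fine.

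One step in your sketch is not correct as written. You claim that if $D$ is nonnegative on every listed negative curve and $C$ is an irreducible curve with $C^2\geq 0$, then $D\cdot C\geq 0$ ``follows from the Hodge index theorem once the negative curves are known to span $\NS(X)_{\mathbb{R}}$.'' Hodge index does not give this: knowing that the negative curves span only lets you write $C=\sum a_iE_i$ numerically, with no sign control on the $a_i$, and the inequality $D\cdot C\geq 0$ does not follow. What you actually need is precisely that the listed negative curves already generate $\Eff(X)$, i.e.\ that the list is complete; once that is known, $\sigma^\vee=\Nef(X)$ is automatic by duality. You correctly flag completeness as the principal obstacle in your final paragraph, but the argument as structured is circular: the Hodge-index shortcut you propose would, if valid, bypass exactly the hard step. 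In \cite{Keum-Lee1} completeness is established case by case by pulling back to the covering data and using the explicit lattice, not by a general Hodge-index manoeuvre.
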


From the perspective of Theorem \ref{contraction}, we perform combinatorial contractions for the above examples. We study fibrations and contractions of the above examples and obtain the following results.

\begin{theorem}\label{CMM:example}
Let $X$ be a Mori dream surfaces of general type with $p_g=0$ in the above list. Let $X=X_n \to X_{n-1} \to \cdots \to X_0$ be a combinatorial contraction to a combinatorially minimal Mori dream surface $X_0.$ In each case (1)-(8) we have the following. 
\begin{enumerate}
\item $X$ itself is a combinatorially minimal Mori dream surface.
\item $X$ itself is a combinatorially minimal Mori dream surface.
\item We have $\rho(X_0) = 2$ and there are three ways to obtain combinatorially minimal models.
\item We have $\rho(X_0) = 2$ and there are four ways to obtain combinatorially minimal models.
\item We have $\rho(X_0)=1$ or $2.$ 
\item We have $\rho(X_0)=1$ or $2.$
\item We have $\rho(X_0) = 2$ and there are six ways to obtain combinatorially minimal models. 
\item We have $\rho(X_0) = 1$ or $2.$
\end{enumerate}
Moreover, we can explicitly construct each $X_0$ from the above list.
\end{theorem}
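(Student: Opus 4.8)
The plan is to apply Theorem~\ref{contraction} to each of the eight families, feeding in the Picard number of $X$ together with the list of negative curves and their mutual intersection numbers as computed in \cite{Keum-Lee1}. Recall that a minimal surface of general type with $p_g=0$ has $q=0$, so $b_1(X)=0$, $e(X)=12-K_X^2$, $b_2(X)=10-K_X^2$, and $\rho(X)=b_2(X)=10-K_X^2$ (since $H^{2,0}(X)=0$ forces every rational class to be algebraic). Thus the surfaces in cases (1)--(8) have Picard numbers $1$, $2$, $3$, $3$, $4$, between $4$ and $8$, $4$, and $6$ respectively, which already bounds the length $n$ of the contraction chain in each case.

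Cases (1) and (2) are settled at once: by Theorem~\ref{Keum-Lee1main} these $X$ carry no negative curve, and on a smooth surface a combinatorially contractible divisor is necessarily an irreducible curve of negative self-intersection. Indeed a generator $D$ of an extremal ray of $\Eff(X)$ with $h^0(X,nD)\le 1$ for all $n$ cannot be big (big divisors have $h^0(nD)$ growing quadratically), so $D^2\le 0$; if $D^2=0$ then $D$ is nef, hence --- $X$ being a Mori dream surface --- semiample, and the morphism it defines is a fibration (numerical dimension $1$), forcing $h^0(X,nD)\to\infty$; so $D^2<0$, and writing $D=\sum a_iC_i$ and picking $C_j$ with $a_j>0$ and $D\cdot C_j<0$ (hence $C_j^2<0$) the decomposition $D=a_jC_j+(D-a_jC_j)$ into effective divisors together with extremality of the ray forces $D$ proportional to the irreducible negative curve $C_j$. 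Hence such an $X$ admits no combinatorially contractible divisor and already equals $X_0$. (For a fake projective plane this is also clear since $\rho(X)=1$ with $K_X$ ample; for a surface isogenous to a higher product the two projections give the finite morphism to $\bbP^1\times\bbP^1$ predicted by Theorem~\ref{contraction}(5).)

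For the remaining cases I would run the contraction algorithm of Theorem~\ref{contraction}(1) explicitly. At each stage an irreducible negative curve $E$ (with $E^2=-d<0$) spans an extremal ray of $\Eff$ and satisfies $|nE|=\{nE\}$, so it is combinatorially contractible; contracting it produces a $\bbQ$-factorial Mori dream surface $X_{i-1}$ with $\rho$ one smaller, and the strict transform of each remaining negative curve $C$ acquires self-intersection $\overline{C}^2=C^2+(C\cdot E)^2/d$, which I would compute from the intersection matrices in \cite{Keum-Lee1}. One iterates as long as some such image still spans an extremal ray of negative self-intersection; by Theorem~\ref{contraction}(2) the process terminates at $\rho(X_0)\in\{1,2\}$. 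When $\rho(X_0)=2$ one checks that no negative curve survives and exhibits the two base-point-free pencils of self-intersection $0$ giving the finite map to $\bbP^1\times\bbP^1$ (invoking Theorem~\ref{contraction}(5)); when $\rho(X_0)=1$ one checks directly that every image curve has become non-negative. Concretely, for the Inoue surfaces one reads off from \cite{Keum-Lee1} that the three negative curves pairwise meet, so that contracting any single one of them makes the images of the other two non-negative, giving $\rho(X_0)=2$ and three combinatorially minimal models; for Chen's surfaces one checks analogously that each of the four negative curves meets the others with large enough multiplicity (at least $1$ among them, at least $2$ against the $(-4)$-curve) that one contraction already yields a $\rho=2$ combinatorially minimal surface, giving four models; and for the product--quotient surfaces with $G=D_4\times\bbZ/2\bbZ$ (where $\rho(X)=4$) one finds that precisely the $\binom{4}{2}=6$ pairs of negative curves can be successively contracted down to a $\rho=2$ surface, giving six models. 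For the Kulikov, Burniat and Keum--Naie families one instead uses the intersection graph to exhibit, along suitable branches of the algorithm, both a combinatorially minimal model of Picard number $2$ (together with its two fibrations) and one of Picard number $1$. Finally, following these contractions through the explicit presentations of the surfaces --- ball quotients, quotients $(C_1\times C_2)/G$, abelian covers of $\bbP^2$ branched over line arrangements, and so on --- yields the explicit description of each $X_0$, typically as a surface with quotient or simple-elliptic singularities, or (in the Picard-number-one case) as a finite quotient.

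The main obstacle is the combinatorial bookkeeping for the families with $\rho(X)\ge 4$ and many negative curves, above all the Burniat surfaces with $2\le K^2\le 6$ (which carry up to sixteen negative curves although $\rho(X)\le 8$) and the Kulikov and Keum--Naie surfaces. There one must show that the algorithm never stalls at an intermediate surface for a spurious reason, determine from the configuration exactly how far the Picard number can drop along the different branches (so as to pin down which of the values $\rho(X_0)=1,2$ occur, and in cases (3), (4), (7) the counts $3$, $4$, $6$), and in every Picard-number-two outcome actually produce the two pencils realizing the finite morphism to $\bbP^1\times\bbP^1$ rather than merely invoking its existence. The Inoue and Chen cases, needing only a single contraction, are the transparent warm-up for this analysis.
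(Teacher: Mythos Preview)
Your approach is correct but follows a different route from the paper's. You work \emph{primally}: contract one negative curve $E$ at a time, update self-intersections via $\overline{C}^2=C^2+(C\cdot E)^2/(-E^2)$ (and, for later steps, also $\overline{C}\cdot\overline{C'}=C\cdot C'+(C\cdot E)(C'\cdot E)/(-E^2)$, which you should mention), and halt when no image curve is negative. The paper instead works \emph{dually} from $\Nef(X)$: for each family it lists the extremal semiample classes, separates the fibrations ($D^2=0$) from the big contractions ($D^2>0$), and decides which Picard numbers occur by testing which square submatrices of the intersection matrix of negative curves are negative definite. Thus in cases~(3), (4), (7) the paper excludes $\rho(X_0)=1$ by exhibiting that no $(\rho(X)-1)$ negative curves span a negative-definite sublattice (equivalently, no facet of $\Eff(X)$ is orthogonal to a nef class with positive self-intersection), whereas you deduce the same from the observation that the algorithm already halts at $\rho=2$. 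Your method is more elementary and closer to the iterative picture of Theorem~\ref{contraction}(1); the paper's method directly produces the fibrations and hence the explicit finite morphism $X_0\to\bbP^1\times\bbP^1$, which in your scheme still has to be extracted afterwards from the two surviving square-zero classes.

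One point you use implicitly and should make explicit: on each $X_{i-1}$ the negative curves are exactly the images of the listed negative curves on $X$ that remain negative. This holds because the strict transform on $X$ of any irreducible curve $C\subset X_{i-1}$ satisfies $\widetilde{C}^2\le C^2$ (the difference $\pi^*C-\widetilde C$ is effective and supported on the negative-definite exceptional locus), so a negative $C$ lifts to a negative $\widetilde C$, which by \cite{Keum-Lee1} is one of the listed curves. Without this remark, ``no surviving image is negative'' does not yet imply ``$X_{i-1}$ is combinatorially minimal.''
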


Recently, Frapporti and the second named author studied effective, nef and semiample cones of surfaces isogenous to a product of mixed type with $p_g=0$ and provided more examples of Mori dream surfaces of general type in \cite{FL}. From the above perspective, the main result of \cite{FL} can be summarized as follows.

\begin{theorem}\cite{FL}
All reducible fake quadrics are Mori dream surfaces. Moreover, they are all combinatorially minimal.
\end{theorem}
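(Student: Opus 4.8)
The plan is to deduce both statements at once from part~(5) of Theorem~\ref{contraction}. Recall that a reducible fake quadric $X$ is a fake quadric which is isogenous to a product, i.e.\ a smooth minimal surface of general type with $p_g=q=0$ and $K_X^2=8$ that can be written as $X=(C_1\times C_2)/G$ for a finite group $G$ acting freely; by the classification of such surfaces it is either of unmixed type (no element of $G$ interchanges the two factors, and $C_i/G\cong\bbP^1$) or of mixed type (with $C_1=C_2=C$, an index-two subgroup $G^0\subset G$ acting without interchanging the factors, and $C/G^0\cong\bbP^1$). In either case the \'etale covering $C_1\times C_2\to X$ pulls $K_X$ back to the ample divisor $K_{C_1\times C_2}$, so $K_X$ is ample and $X$ contains no $(-1)$- or $(-2)$-curve; moreover $X$ is smooth, hence normal and $\bbQ$-factorial, and $b_2(X)=2$ forces $\rho(X)\in\{1,2\}$. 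Hence, by Theorem~\ref{contraction}(5), it suffices to show that in every case either $\rho(X)=1$, or $\rho(X)=2$ and $X$ admits a finite morphism onto $\bbP^1\times\bbP^1$.

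In the unmixed case --- which is already contained in Theorem~\ref{Keum-Lee1main}(2) --- the two projections $p_i\colon C_1\times C_2\to C_i$ are $G$-equivariant and descend to fibrations $f_i\colon X\to C_i/G\cong\bbP^1$, and $\rho(X)=2$. The product morphism $(f_1,f_2)\colon X\to\bbP^1\times\bbP^1$ is finite: an irreducible curve $D$ contracted by it would lie in a fibre of both $f_1$ and $f_2$, hence satisfy $D\cdot F_1=D\cdot F_2=0$ for the fibre classes $F_1,F_2$; but $F_1\cdot F_2>0$ and $F_1,F_2$ span $\NS(X)_\bbQ$, so $D$ would be numerically trivial, which is impossible. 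Thus Theorem~\ref{contraction}(5) applies.

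The substantive case is the mixed one, where the elements of $G\setminus G^0$ interchange the two projections, so neither descends to $X$. Here I expect the key point to be that
$$\rho(X)=\dim_\bbQ\NS(C\times C)^G_\bbQ=1.$$
Indeed, inside $\NS(C\times C)_\bbQ$ the fibre classes $F_1,F_2$ are swapped by $G$, leaving only the line through $F_1+F_2$ (which is proportional to $K_X$) among the ``product'' classes; what must be checked is that the remaining classes --- those arising, via the K\"unneth decomposition, from $\mathrm{Hom}(\mathrm{Jac}\,C,\mathrm{Jac}\,C)$ --- contribute no further $G$-invariants. This is the step I expect to be the main obstacle: it seems to require a case-by-case passage through the classification of the mixed actions that realise reducible fake quadrics, checking in each that the induced $G$-action on the $\mathrm{Hom}(\mathrm{Jac}\,C,\mathrm{Jac}\,C)$-part of $\NS(C\times C)_\bbQ$ leaves no nonzero class fixed (equivalently, that the automorphisms of $C$ twisting the two factors are sufficiently different). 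Granting $\rho(X)=1$, the surface $X$ is a normal $\bbQ$-factorial surface of Picard number one, and Theorem~\ref{contraction}(5) gives immediately that $X$ is a combinatorially minimal Mori dream surface. (Were some mixed reducible fake quadric to have $\rho(X)=2$ instead, one would exhibit its two required base-point-free pencils directly from the classification.)

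Collecting the two cases, every reducible fake quadric satisfies the hypotheses of Theorem~\ref{contraction}(5), which simultaneously shows that it is a Mori dream surface and that it is combinatorially minimal; one moreover reads off the explicit cones --- $\Eff(X)=\Nef(X)$ is the ray through $K_X$ when $\rho(X)=1$, and the cone $\langle F_1,F_2\rangle$ when $\rho(X)=2$ --- which is the content of the computations of \cite{FL}.
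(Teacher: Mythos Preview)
Your unmixed argument is fine and matches what the paper does in \S\ref{s4} (the Proposition there simply says ``from the construction''). The problem is your treatment of the mixed case: the expectation $\rho(X)=1$ is impossible. For any smooth projective surface over $\bbC$ with $p_g=0$ the Lefschetz $(1,1)$ theorem gives $\NS(X)_\bbQ=H^{1,1}(X)\cap H^2(X,\bbQ)=H^2(X,\bbQ)$, hence $\rho(X)=b_2(X)$. Since you already note $b_2(X)=2$, every reducible fake quadric---mixed or unmixed---has $\rho(X)=2$. In particular, in the mixed case there is always a second $G$-invariant class in $\NS(C\times C)_\bbQ$ beyond the line through $F_1+F_2$, necessarily coming from the $\Hom(\mathrm{Jac}\,C,\mathrm{Jac}\,C)$-part you were hoping to kill; your proposed case-by-case verification would therefore fail uniformly.

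This forces you onto the fallback you mention: to apply Theorem~\ref{contraction}(5) when $\rho(X)=2$ you must exhibit a finite morphism $X\to\bbP^1\times\bbP^1$, i.e.\ two base-point-free pencils on $X$. In the mixed case the projections $C\times C\to C$ are interchanged by $G$ and do not descend; what descends is the symmetric map $X\to\mathrm{Sym}^2(\bbP^1)\cong\bbP^2$, which is of no help. Identifying the two extremal semiample classes of square zero on $X$ (equivalently, verifying $\Eff(X)=\SAmp(X)$ directly) is exactly the content of the case-by-case computations in \cite{FL}, and this is what the paper's own proof invokes. Your reformulation via Theorem~\ref{contraction}(5) is clean, but it does not bypass that work.
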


In general, it is a difficult task to determine whether a given surface is Mori dream or not. There are many surfaces which we do not know whether they are Mori dream surfaces or not yet. However, we can still ask whether they are resolutions of certain combinatorially minimal Mori dream surfaces.

\begin{question}
Let $X$ be a minimal surfaces of general type with $p_g=0.$ Can we contract negative curves on $X$ to reach a combinatorially minimal Mori dream surface? In other words, can we realize $X$ as the minimal resolution of a combinatorially minimal Mori dream surface? 
\end{question}

We can provide several examples of minimal surfaces of general type with $p_g=0$ which are minimal resolutions of combinatorially minimal Mori dream surfaces.

\begin{theorem}\label{CMM:example2}
The following smooth minimal surfaces of general type with $p_g=0$ are minimal resolutions of combinatorially minimal Mori dream surfaces of general type.
\begin{enumerate}
\item Minimal product-quotient surfaces classified in \cite{BCGP, BP}.
\item Minimal resolutions of $\mathbb{Q}$-homology planes.
\item Surfaces with $K^2=7$ constructed by Y. Chen and Y. Shin (cf. \cite{CS}).
\item Numerical Campedelli surfaces constructed by Y. Chen and Y. Shin (cf. \cite{CS}).
\item Surfaces with $K^2=6$ constructed by M. Inoue (cf. \cite{Inoue}) and later described by M. Mendes Lopes and R. Pardini (cf. \cite{MLP04}).
\end{enumerate}
\end{theorem}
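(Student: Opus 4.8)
All five families are treated by one mechanism that reduces each case to the construction of a single explicit birational contraction. Given $X$ in the family --- a smooth minimal surface of general type with $p_g=0$ --- we seek a finite set $\calD$ of curves on $X$ with negative definite intersection matrix such that the contraction $\pi\colon X\to X_0$ of $\calD$ produces a normal projective surface $X_0$ with $\rho(X_0)=1$, or with $\rho(X_0)=2$ together with a finite morphism $X_0\to\mathbb{P}^1\times\mathbb{P}^1$. Once such a $\calD$ is found the rest is routine: if $\calD$ consists of rational curves then $X_0$ has rational surface singularities, hence is $\mathbb{Q}$-factorial, and Theorem~\ref{contraction}(5) applies directly; moreover $X$ is minimal of general type, so it carries no $(-1)$-curve, every member of $\calD$ has self-intersection $\le -2$, and $\pi$ is automatically the \emph{minimal} resolution of $X_0$; and $\kappa(X_0)=\kappa(X)=2$ because Kodaira dimension is a birational invariant of smooth models. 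Thus $X$ is the minimal resolution of the combinatorially minimal Mori dream surface of general type $X_0$, as required.

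\textbf{Choice of $\calD$ in each family.} For the product--quotient surfaces of (1) the natural model is the quotient itself: writing $X$ as the minimal resolution of $Y=(C_1\times C_2)/G$ with $C_i/G\cong\mathbb{P}^1$, the surface $Y$ has only cyclic quotient singularities (so it is $\mathbb{Q}$-factorial), and the two projections induce a finite morphism $Y\to(C_1/G)\times(C_2/G)=\mathbb{P}^1\times\mathbb{P}^1$; one takes $X_0=Y$ when $\rho(Y)=2$, and when $\rho(Y)>2$ one further contracts the remaining negative curves of $X$ --- those recorded in Theorem~\ref{Keum-Lee1main} and the classification --- arranged into a negative definite system, so as to reach $\rho=2$ (or $\rho=1$). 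The surfaces of (5) carry, via the description of Mendes Lopes and Pardini, a presentation as an abelian Galois cover of a rational surface equipped with two pencils; the corresponding singular Stein model again maps finitely to $\mathbb{P}^1\times\mathbb{P}^1$ and is treated as in (1). For (3) and (4) the surfaces of Chen and Shin are abelian ($\mathbb{Z}/2$- or $(\mathbb{Z}/2)^2$-) covers of explicit blow-ups of rational surfaces; the pullbacks of the $(-1)$- and $(-2)$-curves on the base together with appropriate ramification curves provide an explicit negative definite configuration whose contraction lowers the Picard number to $1$ (or $2$). For (2), if $X$ contains a $\mathbb{Q}$-homology plane $V$ as a Zariski-open subset, then $\mathbb{Q}$-acyclicity of $V$ forces the components of the boundary $D=X\setminus V$ to span $\NS(X)_{\mathbb{Q}}$ modulo a one-dimensional subspace, and one extracts from $D$ a negative definite subconfiguration of that corank and contracts it, obtaining $X_0$ with $\rho(X_0)=1$.

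\textbf{The main obstacle.} The content of the proof, and the part that must be carried out family by family, is the verification in the previous paragraph that the chosen configuration is negative definite, that its contraction preserves $\mathbb{Q}$-factoriality, and that it brings the Picard number down to exactly $1$ or $2$. Negative definiteness and $\mathbb{Q}$-factoriality are automatic as soon as $\calD$ is a negative definite set of rational curves, since rational surface singularities are $\mathbb{Q}$-factorial; but several of these surfaces carry negative curves of arithmetic genus $1$, and if such a curve must be contracted then $\mathbb{Q}$-factoriality of $X_0$ is genuinely at issue and requires checking that a $\mathrm{Pic}^0$-type obstruction vanishes, which is where the rigidity of the surface enters. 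Controlling $\rho(X_0)$ is the other delicate point: for (1) and (5) it amounts to computing the N\'eron--Severi lattice of the quotient/cover model --- governed, on a product--quotient surface, by the $G$-module structure of $H^1(C_1)$ and $H^1(C_2)$ --- and, when extra classes appear, identifying them with the contractible negative curves of the classification; for (2) it relies on the explicit structure theory of $\mathbb{Q}$-homology planes of log general type. In every case the relevant surfaces occur in finite explicit lists, so the argument is completed by going through the classifications of \cite{BCGP, BP, CS, Inoue, MLP04} and the cited literature on $\mathbb{Q}$-homology planes and matching each surface against the criterion of Theorem~\ref{contraction}(5).
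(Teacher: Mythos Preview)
Your overall mechanism for (1), (3), (4), (5) is in the right spirit and close to the paper's, but two points are off.

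\textbf{Case (2) rests on a misreading.} In this paper a ``$\mathbb{Q}$-homology plane'' means a $\mathbb{Q}$-homology \emph{projective} plane: a normal projective surface $X_0$ with $\rho(X_0)=1$, for instance a quotient of a fake projective plane by a finite automorphism group as in \cite{Keum08}. With that reading the claim is essentially tautological: $X_0$ itself is combinatorially minimal by Theorem~\ref{contraction}(5), and $X$ is by hypothesis its minimal resolution. Your argument instead treats a $\mathbb{Q}$-homology plane as a $\mathbb{Q}$-acyclic \emph{open} surface $V\subset X$ compactified by a boundary divisor $D=X\setminus V$, and proposes extracting a corank-one negative definite subconfiguration from $D$; this addresses a different object and does not prove the statement at hand.

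\textbf{The ``main obstacle'' is misplaced.} The paper never verifies $\mathbb{Q}$-factoriality of $X_0$ by checking whether contracted curves are rational. Instead it first proves a general criterion (the Proposition and Corollary opening \S\ref{s5}): if $X$ is smooth with $q=0$ and admits two pencils giving a morphism $X\to\mathbb{P}^1\times\mathbb{P}^1$ that contracts exactly $\rho(X)-2$ curves, then the Stein factorization $X\to X_0\to\mathbb{P}^1\times\mathbb{P}^1$ yields a normal $X_0$ with $\dim\Cl(X_0)_{\mathbb{R}}=2$, and \cite[Theorem~2.5]{AHL} forces $X_0$ to be projective, $\mathbb{Q}$-factorial, and Mori dream --- automatically, regardless of the genera of the contracted curves. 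No $\mathrm{Pic}^0$-type obstruction is ever checked. With this in hand each of (1), (3), (4), (5) reduces to exhibiting two explicit fibrations coming from the construction (the two projections of $(C\times D)/G$; the pencils on the rational base $\Sigma$ or $W$) and counting that the induced map to $\mathbb{P}^1\times\mathbb{P}^1$ has the right number of contracted curves. In particular, in case (1) the quotient $(C\times D)/G$ already has $\rho=2$ and serves directly as $X_0$; your contingency ``when $\rho(Y)>2$ one further contracts\ldots'' does not arise.
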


From the above discussions, we can see that there are many examples of combinatorially minimal models of smooth minimal surface of general type with $p_g=0.$ Then we have the following natural question.

\begin{question}
Let $X_0$ be a combinatorially minimal model of a smooth minimal surface of general type with $p_g=0.$ What kinds of singularities $X_0$ can have?
\end{question}

When $X_0$ has only nodal singularities, then there are lots of study of such $X_0.$ For example, if $X_0$ has at worst nodal singularities and $\rho(X_0)=1,$ then we see that $X_0$ is a fake projective plane by \cite[Theorem 1.1]{Keum10}. From the product-quotient surface examples, we see that $X_0$ can have singular points other than nodal singularities. We can also see that $X_0$ can have singular points worse than quotient singularities from our previous work \cite{Keum-Lee1}. It will be very interesting if we can classify the classes of singularities of combinatorially minimal models of smooth minimal surfaces of general type with $p_g=0$. We can provide possible list of singularities when $X_0$ have at worst quotient singularities using similar arguments used in \cite{HK, HKO}. Indeed, list of possible singularities of $\mathbb{Q}$-homology planes can have was studied in \cite{HK, HKO}. We can partially extend some results in \cite{HK, HKO} and obtain the list of possible singularities of $X_0$ can have as follows.

\begin{theorem}
Let $X_0$ be a combinatorially minimal Mori dream surface having at worst quotient singularities whose minimal resolution is a minimal surfaces of general type with $p_g=0.$ Then there are finitely many possible types of singularities $X_0$ can have. Moreover we have the following cases. \\

(1) If the Picard number of $X_0$ is one, then the number of singular points is less than or equal to four. If the number of singular points of $X_0$ is four then the singularities of $X_0$ is $4A_2$ or $2A_1 \oplus 2A_3.$

\bigskip

(2) If the Picard number of $X_0$ is two, then the number of singular points is less than or equal to six. If the number of singular points of $X_0$ is six, then the possible type of singularities of $X_0$ is one of $6A_1, 4A_1+A_2+\frac{1}{3}(1,1).$
If the number of singular points of $X_0$ is five, then the possible type of singularities of $X_0$ is one of $3A_1+\frac{1}{4}(1,1)+\frac{1}{8}(1,3), 4A_1+A_3.$
\end{theorem}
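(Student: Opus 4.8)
The plan is to bound the singularities of $X_0$ by combining three ingredients: the orbifold Bogomolov–Miyaoka–Yau (BMY) inequality for $X_0$, the constraint coming from the fact that the minimal resolution $X$ of $X_0$ is a minimal surface of general type with $p_g = 0$ (hence $K_X^2 \le 9$, $\chi(\mathcal{O}_X) = 1$, $b_1 = 0$, and $e(X) = 12 - K_X^2 \in \{3,\dots,10\}$), and the low Picard number of $X_0$ established in Theorem~\ref{contraction}. First I would record that since $X_0$ has only quotient singularities it is a normal projective surface with $\mathbb{Q}$-Gorenstein, hence log terminal, singularities, so the orbifold Euler characteristic $e_{\mathrm{orb}}(X_0) = e(X_0) - \sum_{p}\bigl(1 - \tfrac{1}{|G_p|}\bigr)$ and the orbifold canonical class $K_{X_0}$ (a $\mathbb{Q}$-divisor) are well defined, and the orbifold BMY inequality $K_{X_0}^2 \le 3\, e_{\mathrm{orb}}(X_0)$ holds because $X$ is of general type (this is the form used in \cite{HK, HKO}). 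Each quotient singularity of type $\frac{1}{n}(1,q)$ contributes an explicit negative correction term to $K_{X_0}^2$ relative to $K_X^2$ (the ``Dedekind sum'' / continued-fraction correction), and a positive term $1 - \tfrac{1}{n}$ to $e(X_0) - e_{\mathrm{orb}}(X_0)$; meanwhile $e(X) = e(X_0) + \sum_p (\ell_p)$ where $\ell_p$ is the number of exceptional curves in the resolution string over $p$.

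Next I would turn these into a finite search. Write $K_X^2 = k \in \{1,\dots,9\}$ (in fact $k \ge 2$ in the cases of interest, but the argument does not need that) and $e(X) = 12 - k$. Substituting the singularity-by-singularity corrections into $K_{X_0}^2 \le 3\, e_{\mathrm{orb}}(X_0)$ yields an inequality of the shape
$$
k + \sum_{p} \bigl( \text{correction}_p \bigr) \;\le\; 3\Bigl( (12 - k) - \sum_p \ell_p + \sum_p \bigl(1 - \tfrac{1}{|G_p|}\bigr) \Bigr),
$$
so that $\sum_p \bigl( 3\ell_p - 3(1-\tfrac{1}{|G_p|}) - \text{correction}_p \bigr) \le 36 - 4k$. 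Each summand on the left is bounded below by a positive constant depending only on the singularity type (a node $A_1$ contributes $\ge 3\cdot 1 - 3\cdot\tfrac12 - 0 = \tfrac32$ after one checks its correction term vanishes, and every other quotient singularity contributes strictly more), which immediately bounds the number of singular points and, together with the bound on $\sum_p \ell_p$, bounds the total length of the resolution; since there are only finitely many quotient singularities of bounded resolution length, finiteness of the list follows. For the numerical part I would split on $\rho(X_0)$: when $\rho(X_0) = 1$, use $k \ge$ the relevant lower bound together with $b_2(X_0) = 1$ and the signature/index formula to sharpen $36 - 4k$ and to force, in the extremal case of four singular points, that the only partitions of the available ``budget'' $\tfrac32 \cdot 4 = 6$ among admissible types are realized by $4A_2$ or $2A_1 \oplus 2A_3$; when $\rho(X_0) = 2$, Theorem~\ref{contraction}(3) gives a finite morphism $X_0 \to \mathbb{P}^1 \times \mathbb{P}^1$, so $b_2(X_0) = 2$ and the same index computation plus the degree constraint of the covering tightens the budget to allow at most six singular points, with the extremal six- and five-point cases cut out exactly as listed.

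The main obstacle I anticipate is not the BMY estimate itself but the combinatorial enumeration in the extremal cases: showing that, among all multisets of quotient-singularity types meeting the numerical budget with equality (or near-equality), only the stated configurations actually arise. This requires (a) pinning down the precise correction term for each $\frac{1}{n}(1,q)$ — a genuine but mechanical Dedekind-sum computation — and (b) eliminating the spurious near-extremal configurations using the extra constraints that $X$ is \emph{minimal} of general type (so no $(-1)$-curves, which restricts which chains can appear as resolution graphs over $X_0$) and that $p_g(X) = q(X) = 0$ (so the Cox-ring / class-group torsion and the exact sequence relating $\mathrm{Cl}(X_0)$, $\mathrm{Pic}(X)$ and the exceptional curves further restrict the discriminant $\bigl|\,\mathrm{Cl}(X_0)_{\mathrm{tors}}\,\bigr| = \prod_p n_p$). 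I would handle (b) case-by-case following the bookkeeping of \cite{HK, HKO}, adapting their $\mathbb{Q}$-homology-plane arguments to the $p_g = 0$ general-type setting, where the analogue of ``$b_2 = 0$'' is replaced by ``$b_2 = 10 - K_X^2$'' on the resolution and ``$b_2 = \rho(X_0)$'' downstairs.
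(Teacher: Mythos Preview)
Your overall framework---orbifold BMY plus the numerical corrections from the resolution---is the same as the paper's, and it does give finiteness. But the proposal has a genuine gap in the enumeration step, and also a sign slip.

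First the sign slip: in your displayed inequality the term $\sum_p(1-\tfrac{1}{|G_p|})$ should enter with a \emph{minus} sign (since $e_{\mathrm{orb}}(X_0)=e(X_0)-\sum_p(1-\tfrac{1}{|G_p|})$), and consequently the per-singularity ``budget'' is $3\ell_p+3(1-\tfrac{1}{|G_p|})+(-D_p^2)$, all terms nonnegative; for $A_1$ this is $9/2$, not $3/2$. This is easily fixed and does not affect finiteness, but it means your extremal bookkeeping (``$\tfrac32\cdot 4=6$'') is not the mechanism that actually isolates $4A_2$ and $2A_1\oplus 2A_3$.

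The substantive gap is in step~(b). Orbifold BMY together with Picard-number matching leaves more candidate baskets than the ones stated, and the paper does \emph{not} eliminate the extras via class-group torsion or the product $\prod_p n_p$. The decisive tool is a lattice-embedding argument: the exceptional lattice $R=\bigoplus_p R_p$ together with $\langle K_X\rangle$ (when $\rho(X_0)=1$) or with a hyperbolic plane $mH$ (when $\rho(X_0)=2$) sits as a finite-index sublattice of the unimodular lattice $H^2(X,\mathbb{Z})/\mathrm{tors}\cong I_{1,\rho(X)-1}$. This forces $|\det R|\cdot K_{X_0}^2$ (resp.\ $|\det R|$) to be a perfect square, which already kills many BMY-survivors in the four-singular-point $\rho=1$ case. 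More delicately, it forces the Hasse--Witt invariants $\epsilon_p$ of $R\oplus\langle K_X\rangle$ (resp.\ $R\oplus mH$) to agree with those of $I_{1,\rho(X)-1}$ over every $\mathbb{Q}_p$; in the five-singular-point $\rho=2$ case the paper explicitly computes $\epsilon_3$ to eliminate $2A_1\oplus\tfrac{1}{4}(1,1)\oplus 2A_2$ and $2A_1\oplus 2\cdot\tfrac{1}{3}(1,1)\oplus A_3$, which do survive BMY and the square-determinant test. Your reference to ``discriminant $\prod_p n_p$'' is in the right spirit but is neither the correct quantity nor sufficient; you need the full quadratic-form machinery (Hilbert symbols and $\epsilon_p$) from \cite{Serre}, used exactly as in \cite{HK}.
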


\begin{remark}
Among the above list, $4A_2$, $2A_1 \oplus 2A_3$, $6A_1$, $4A_1+A_2+\frac{1}{3}(1,1)$ are supported by examples. We do not know whether the type $3A_1+\frac{1}{4}(1,1)+\frac{1}{8}(1,3), 4A_1+A_3$ in the above list are supported by examples or not. 
\end{remark}

We can also obtain a list of the possible type of singularities $X_0$ can have when the number of singular points is small. However there are many more cases we need to consider and it seems that one need to find stronger restrictions to remove redundant cases. We left this problem for future research. \\

By analyzing combinatorially minimal models of several minimal surfaces of general type with $p_g=0,$ we know that they often have singularities worse than quotient singularities. Therefore we have the following natural question.

\begin{question}
Let $X$ be a smooth minimal Mori dream surface of general type with $p_g=0.$ Is there at least one $X_0$ having at worst Du Bois singularities?
\end{question}

By analyzing the examples in our previous work, we obtain the following result.

\begin{proposition}
Let $X$ be a Mori dream surface of general type with $p_g=0$ in the list of Theorem \ref{CMM:example} and Theorem \ref{CMM:example2}. Then there is at least one $X_0$ having at worst log canonical hence Du Bois singularities.
\end{proposition}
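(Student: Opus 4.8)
The plan is to proceed case by case through the two lists, since the assertion is an existence statement: for each Mori dream surface $X$ appearing in Theorem \ref{CMM:example} or Theorem \ref{CMM:example2}, we must exhibit one combinatorially minimal model $X_0$ whose singularities are log canonical. By Theorem \ref{contraction}, every such $X_0$ is obtained from $X$ by contracting a chain of negative curves, and the singularities of $X_0$ are precisely those coming from the configurations of negative curves being contracted. Thus the computation reduces to inspecting, for each surface, the dual graphs of the collections of negative curves listed in Theorem \ref{Keum-Lee1main} (and the analogous data for the surfaces in Theorem \ref{CMM:example2}), identifying which contractions produce a combinatorially minimal model, and checking that at least one admissible choice of contraction yields only log canonical singularities.

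First I would recall the relevant criterion: a normal surface singularity obtained by contracting a connected configuration of curves on a smooth surface is log canonical if and only if the dual graph is one of the standard ADE graphs, a cyclic configuration (leading to a cusp / simple elliptic, i.e. quotient by a finite group composed with torus, handled via $\log$ canonical), or one of the finitely many log canonical non-quotient classes (the $\widetilde D$, $\widetilde E$ extended diagrams and cyclic chains of $(-2)$-curves). Concretely, any configuration of $(-2)$-curves whose dual graph is a tree of type $A_n, D_n, E_n$ or a cycle gives a log canonical (indeed canonical or simple elliptic) singularity, and chains involving a single curve of self-intersection $\le -2$ attached to $(-2)$-curves in an admissible pattern are log canonical by the classification of log canonical surface singularities (Alexeev, Kawamata). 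So for each surface in the lists I would: (i) read off the negative curves and their intersection pattern; (ii) determine the possible combinatorial contractions $X \to X_0$ given by Theorem \ref{contraction}; (iii) among these, select one in which every contracted configuration has log canonical dual graph; (iv) invoke that log canonical surface singularities are Du Bois (Kollár--Kovács, or Kovács's earlier result) to conclude the final clause.

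The main obstacle will be the cases where the negative curves include $(-4)$-curves or $(-1)$-curves together with intersecting higher-genus curves — notably the Burniat surfaces with small $K^2$ (cases (d),(e),(f)) and the nodal configurations, and the surfaces of Theorem \ref{CMM:example2} whose combinatorially minimal models were already observed in the introduction to have singularities \emph{worse} than quotient singularities. For these I must verify that among the several combinatorially minimal models there is still one whose worst singularity is merely log canonical rather than strictly worse; this requires carefully choosing \emph{which} negative curves to contract (using the freedom recorded in Theorem \ref{CMM:example}, e.g. the "three/four/six ways" of reaching $X_0$) so that the contracted chains avoid, say, forks with three long branches or cycles with a $(-3)$ or $(-4)$ vertex that would violate the log canonical inequality $\sum (1 - 1/d_i) \le 2$ on the branch multiplicities. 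In practice one picks the contraction that keeps the non-$(-2)$ curves as leaves of short chains, so that the resulting singularity is a cyclic quotient or a $\log$ canonical chain; the bookkeeping is routine once the dual graphs from \cite{Keum-Lee1} and \cite{Keum10} are tabulated.

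Finally, for the fibration-type cases where $\rho(X_0) = 2$ and $X_0$ maps finitely to $\mathbb{P}^1 \times \mathbb{P}^1$, the contracted configurations are sections or fibre components of the induced pencils; these are chains of $(-2)$-curves (from the classification of the relevant pencils on $p_g = 0$ surfaces) and hence give at worst $A_n$ singularities, which are canonical and \emph{a fortiori} log canonical. Assembling these observations across all cases gives the Proposition. I expect the write-up to be a finite table of dual graphs with a one-line log canonical verification for each; no case requires more than consulting the classification of log canonical surface singularities, and the Du Bois conclusion is immediate from the cited general fact that log canonical singularities are Du Bois.
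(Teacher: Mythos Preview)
Your case-by-case framework is right and matches the paper's approach, but your execution rests on a misreading of the negative-curve data that would send the argument down the wrong path. The entries $m(C^2,p_a(C))$ in Theorem~\ref{Keum-Lee1main} record the arithmetic genus, and since $X$ is minimal there are no rational $(-1)$-curves; the many curves labelled $(-1,1)$ are \emph{smooth elliptic} curves with $C^2=-1$, not exceptional rational curves. Hence your discussion of dual graphs of rational configurations, ADE trees, and ``chains of $(-2)$-curves'' for the fibration cases is largely beside the point: contracting a single smooth genus~$1$ curve with negative self-intersection produces a \emph{simple elliptic} singularity, which is log canonical. This is the key observation the paper uses, and it disposes of the Inoue, Chen, Kulikov, and primary Burniat cases in one line each (choose the $X_0$ obtained by contracting disjoint elliptic curves). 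The product-quotient cases admit an $X_0$ obtained by contracting only disjoint $(-2)$-curves (giving $A_1$ points), and the Burniat surfaces with $2\le K^2\le 5$ and the surfaces of Theorem~\ref{CMM:example2} are handled by contracting a disjoint union of smooth rational curves and smooth elliptic curves.

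Concretely, your claim that in the $\rho(X_0)=2$ cases ``the contracted configurations are sections or fibre components \dots\ chains of $(-2)$-curves'' is false: e.g.\ for Kulikov surfaces one contracts the pair $(\widetilde{E}_i,\widetilde{L}_i)$ of disjoint genus~$1$ curves, and for Chen's surfaces one contracts a single irreducible genus~$1$ curve (one must avoid the $(-1,2)$, $(-1,3)$, $(-4,2)$ choices, which would give non--log-canonical points). So the ``obstacle'' you anticipate with $(-4)$-curves and forked dual graphs does not arise once you pick the correct contraction; the whole proof is the one-line recognition, case by case, that there is always a choice contracting only smooth elliptic curves and/or $(-2)$-curves.
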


We think studying the above question will be helpful to answer the following questions.

\begin{question}
Can we classify Mori dream surfaces which are combinatorially minimal? Moreover can we classify smooth minimal Mori dream surfaces with $p_g=0$ via studying their combinatorially minimal models? Does every smooth minimal surfaces of general type with $p_g=0$ can be obtained as the minimal resolution of a combinatorially minimal Mori dream surface?
\end{question}

Let us give brief outline of this paper. In \S\ref{s2}, we recall some preliminaries about Mori dream spaces and Cox rings. And we briefly explain Hausen's work studying neat ambient modifications of Mori dream spaces (cf. \cite{Hausen}) for the convenience of the reader. In \S\ref{s3}, we discuss general properties of combinatorially minimal models of Mori dream surfaces. In \S\ref{s4}, we study fibrations of Mori dream surfaces and discuss how to obtain combinatorially minimal models from smooth Mori dream surfaces. Then we analyze combinatorially minimal models of Mori dream surfaces of general type with $p_g=0$ we studied in our previous paper \cite{FL, Keum-Lee1}. In \S\ref{s5}, we present several examples of minimal surfaces of general type with $p_g=0$ which are minimal resolutions of combinatorially minimal Mori dream surfaces. In \S\ref{s6} and \S\ref{s7}, we study possible singularity types of combinatorially minimal Mori dream surfaces having at worst quotient singularities. In \S\ref{s8}, we discuss the singularities of combinatorially minimal models of minimal surfaces of general type with $p_g=0.$ In \S\ref{s9}, we raise several natural questions and discuss possible further directions.

\bigskip

{\bf Notations.} We will work over $\mathbb{C}.$ When $G$ is a $\mathbb{Z}$-module, then $G_{\mathbb{R}}$ (resp. $G_{\mathbb{Q}}$) will mean $G \otimes_{\mathbb{Z}} {\mathbb{R}}$ (resp. $G \otimes_{\mathbb{Z}} {\mathbb{Q}}$). An algebraic variety will mean an integral seperated scheme of finite type over $\mathbb{C}$. A Mori dream space will mean a normal $\mathbb{Q}$-factorial projective space whose Cox ring is finitely generated. We will use the following notations for a normal projective variety $X.$ \\
When $D_1, D_2$ are two divisors on $X.$ We write $D_1 \sim D_2$ (resp. $D_1 \sim_{num} D_2$) to denote that they are linearly (resp. numerically) equivalent. \\
$K_X$ : the canonical divisor on $X.$ \\
$\WDiv(X)$ : the group of Weil divisors on $X.$ \\
$\Cl(X)$ : the divisor class group of $X.$ \\
$\Pic(X)$ : the Picard group of $X.$ \\
$\rho(X)$ : the Picard number of $X.$ \\
$\Eff(X)$ : the effective cone of $X$. \\
$\Nef(X)$ : the nef cone of $X$. \\
$\Mov(X)$ : the movable cone of $X$. \\
$\SAmp(X)$ : the semiample cone of $X$.

\bigskip

{\bf Acknowledgements.} We thank Ingrid Bauer, Frederic Campana, Fabrizio Catanese, Sung Rak Choi, Stephen Coughlan, Davide Frapporti, June Huh, DongSeon Hwang, Yongnam Lee, Han-Bom Moon, Jinhyung Park and Miles Reid for helpful conversations and discussions. Part of this work was done when the second named author was a research fellow of KIAS. He was also partially supported by IBS-R003-Y1 when he was working at IBS-CGP as Young Scientist Fellow. Last but not least, he thanks Ludmil Katzarkov and Simons Foundation for partially supporting this work via Simons Investigator Award-HMS.

\bigskip

\section{Cox rings and Mori dream spaces}\label{s2}

In this section we review basic definitions and properties about Cox rings and Mori dream spaces. We also review several constructions and terminology of Hausen. We will closely follow \cite{Hausen} and use notations and constructions therein.

\subsection{Cox rings and Mori dream spaces}

There are several ways to define Cox ring of an algebraic variety. It is relatively easier to define Cox rings of algebraic varieties whose divisor class groups do not contain torsions (cf. \cite{ADHL}). Hausen defined $\Cl$-graded Cox rings for varieties whose divisor class groups contain torsions in \cite{Hausen} as follows.

\bigskip

Let $X$ be a variety with finitely generated $\Cl(X).$ Let $\mathcal{K} \subset \WDiv(X)$ be a finitely generated subgroup mapping onto $\Cl(X).$ Let us define a sheaf of $\mathcal{K}$-graded algebra $\mathcal{S}$ as follows
$$ \mathcal{S}:= \bigoplus_{D \in \mathcal{K}} \mathcal{S}_D $$
where $\mathcal{S}_D:=\mathcal{O}_X(D)$ and the multiplication is induced from the ring structure of function field of $X$. Let $\mathcal{K}^0$ be the kernel of $\mathcal{K} \to \Cl(X)$. Then a shifting family is a family of $\mathcal{O}_X$-module isomorphisms $\rho_{D^0} : \mathcal{S} \to \mathcal{S}$ such that \\
(1) $\rho_{D^0}(\mathcal{S}_D)=\mathcal{S}_{D+D^0}$ for all $D \in \mathcal{K}, D^0 \in \mathcal{K}^0$, \\
(2) $\rho_{D_1^0+D_2^0}=\rho_{D_2^0} \circ \rho_{D_1^0}$ for all $D_1^0, D_2^0 \in \mathcal{K}^0$, \\
(3) $\rho_{D^0}(fg)=f \rho_{D^0}(g)$ for all $D^0 \in \mathcal{K}^0$ and two homogeneous $f,g$.

\bigskip

Let $\mathcal{I}$ be a sheaf generated by all sections of the form $f-\rho_{D^0}(f)$ where $f$ is homogeneous and $D^0$ is an element of $\mathcal{K}^0$. Then we can define Cox sheaf and Cox ring of $X$ as follows.

\begin{definition}
The Cox sheaf of $X$ is the quasi-coherent sheaf of $\Cl(X)$-graded $\mathcal{O}_X$-algebras $$ \mathcal{R}:=\mathcal{S}/\mathcal{I}. $$
The Cox ring of $X$ is the $\Cl(X)$-graded algebra $$ \mathcal{R}(X):= H^0(X,\mathcal{R}). $$
\end{definition}

Hausen proved that Cox sheaf and Cox ring of $X$ is well-defined up to isomorphism in \cite{Hausen}. \\

Hu and Keel introduced the notion of Mori dream space as follows.

\begin{definition}\cite{HK}
Let $X$ be a normal projective $\mathbb{Q}$-factorial variety with finitely generated $\Cl(X).$ We call $X$ a Mori dream space if \\
(1) $\Nef(X)$ is generated by finitely many semiample divisor classes, \\
(2) there are finitely many small $\mathbb{Q}$-factorial modifications $\phi_i : X \dashedrightarrow X_i, 1 \leq i \leq m$ such that $X_i$ are varieties wifh finitely generated $\Cl(X_i)$ satisfying the above condition and when $D$ is a movable divisor on $X$, there is a semiample divisor $D_i$ on $X_i$ with $D=\phi^*_iD_i$ for some $i$.
\end{definition}

Moreover they proved that a variety is Mori dream space if and only if its Cox ring is finitely generated in \cite{HK}.

\begin{theorem}\cite{HK}
Let $X$ be a variety with finitely generated $\Cl(X).$ Then $X$ is a Mori dream space if $\mathcal{R}(X)$ is a finitely generated $\mathbb{C}$-algebra.
\end{theorem}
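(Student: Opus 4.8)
The plan is to realize $X$ together with all of its small $\bbQ$-factorial modifications simultaneously as geometric invariant theory quotients of the affine variety $\bar X := \Spec\calR(X)$, and then to read the two defining conditions of a Mori dream space off the variation-of-GIT chamber structure. The $\Cl(X)$-grading on $\calR(X)$ is precisely the datum of an action of the quasitorus $H := \Spec\bbC[\Cl(X)]$ on $\bar X$, and the Cox sheaf construction recalled above produces an $H$-invariant open subset $\hat X \subseteq \bar X$ with a good quotient $p\colon \hat X \to X$; for any ample class $w \in \Cl(X)$ this is exactly the GIT quotient $\bar X/\!\!/_w H = \mathrm{Proj}\,\bigoplus_{n \geq 0}\calR(X)_{nw}$, so $X \cong \bar X/\!\!/_w H$. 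Because $\calR(X)$ is finitely generated, the set of $w \in \Cl(X)_{\bbQ}$ for which $\bar X$ has semistable points is a rational polyhedral cone --- namely $\Eff(X)$ --- and, by the theory of variation of GIT quotients (Thaddeus, Dolgachev--Hu), this cone subdivides into finitely many locally closed chambers on which the semistable locus, and hence the quotient, is constant; over the interior of a chamber the quotient is a geometric quotient that is again normal, projective and $\bbQ$-factorial, with divisor class group canonically $\Cl(X)$ and Cox ring $\calR(X)$.

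Next I would match these quotients with the birational geometry of $X$. The chamber containing the ample classes of $X$ has quotient $X$ itself, with closure $\Nef(X)$; a chamber meeting the interior of $\Mov(X)$ has quotient a variety $X_i$ that is isomorphic to $X$ in codimension one, so the $X_i$ are exactly the small $\bbQ$-factorial modifications $\phi_i\colon X \dashrightarrow X_i$ of $X$ --- finitely many, since there are finitely many chambers --- and $\phi_i^*\Nef(X_i)$ is the corresponding chamber. Each GIT quotient is projective over the affine quotient $\bar X/\!\!/H$ and carries a relatively ample line bundle coming from its linearization, so every class in the interior of $\Nef(X)$ is ample; moreover every nef class on $X$ has finitely generated section ring (it is an invariant ring of a subgroup of $H$ acting on $\calR(X)$) and is therefore semiample, so $\Nef(X) = \SAmp(X)$ is rational polyhedral and generated by finitely many semiample classes, which is condition (1). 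Finally $\Mov(X) = \bigcup_i \phi_i^*\Nef(X_i)$, so any movable divisor $D$ lies in some $\phi_i^*\Nef(X_i)$, i.e. $D = \phi_i^*D_i$ for a semiample $D_i$ on $X_i$; this is condition (2), and $X$ is a Mori dream space.

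The step I expect to be the main obstacle is establishing the chamber structure: one must show that the GIT fan on $\Eff(X)$ is finite and rational polyhedral and --- the genuinely delicate point --- that the chambers lying in the interior of $\Mov(X)$ are in bijection with the small $\bbQ$-factorial modifications of $X$ carrying the stated invariants. When $\Cl(X)$ has torsion this cannot be carried out with the torsion-free line bundle arguments of the original \cite{HK}, and rests instead on the $\Cl$-graded Cox sheaf formalism set up above, which is Hausen's contribution. Everything else --- identifying $\hat X$ with the semistable locus and extracting the nef, movable and effective cone descriptions once the chamber decomposition is in hand --- is then formal.
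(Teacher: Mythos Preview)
The paper does not give a proof of this theorem: it is quoted with attribution to Hu--Keel \cite{HK} (in the $\Cl$-graded form set up by Hausen \cite{Hausen}) and used as a black box. Your sketch is essentially the original Hu--Keel argument via variation of GIT, and is correct in outline; the observation that the torsion in $\Cl(X)$ forces one to work with Hausen's quasitorus $H = \Spec\bbC[\Cl(X)]$ rather than a genuine torus is exactly the point of the formalism the paper recalls just before the statement. So there is nothing to compare: you have supplied the standard proof where the paper supplies none.
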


Let $X$ be a Mori dream space. Then we can consider the relative spectrum $\widehat{X}:=\Spec_X(\mathcal{R})$ and let $\overline{X}=\Spec(\mathcal{R}(X))$. Then there is an open embedding $\widehat{X} \to \overline{X}$ and natural action of $H:=\Spec(\mathbb{C}[\Cl(X)])$ on $\widehat{X}$ and $\overline{X}.$ Moreover, the canonical morphism $p : \widehat{X} \to X$ is a good quotient. See \cite{Hausen, HK} for more details.

\begin{remark}
Let $\mathcal{K} \subset \WDiv(X)$ be a finitely generated subgroup such that $\mathcal{K}_{\mathbb{Q}} \cong \Cl(X)_{\mathbb{Q}}.$ One can define $\mathcal{R}_{\mathcal{K}}(X)$ as a multisection ring as follows. 
$$ \mathcal{R}_{\mathcal{K}}(X) = \oplus_{D \in \mathcal{K}} H^0(X, \mathcal{O}(D)) $$
In some literature, $\mathcal{R}_{\mathcal{K}}(X)$ is also called a Cox ring of $X.$ One can see that $\mathcal{R}_{\mathcal{K}}(X)$ is finitely generated if and only if $\mathcal{R}(X)$ is finitely generated. See \cite{Okawa} for more details. In this paper, Cox ring of $X$ will mean the Cox ring defined by Hausen in \cite{Hausen}.
\end{remark}

Let $X$ be a Mori dream space. Because $\mathcal{R}(X)$ is a finitely generated $\mathbb{C}$-algebra, we see that there is a surjective map from polynomial algebra to $\mathcal{R}(X).$ It turns out that this map induces a very special embedding which we will describe as follows.

\begin{definition}\cite{Hausen}
Let $Z$ be a toric variety with acting torus $T$ and $X \subset Z$ be an irreducible closed subvariety which is smooth in codimension 1. The embedding $i : X \hookrightarrow Z$ is called neat embedding if \\
(1) the intersection of the toric divisor $\overline{T \cdot z_i}$ with $X$ is an irreducible hypersurface for each $i,$ \\
(2) the pullback $i^* : \Cl(Z) \to \Cl(X)$ is an isomorphism.
\end{definition}

Indeed, there is a surjection $\mathbb{C} [U_1, \cdots, U_r] \to \mathcal{R}(X)$ and one can define a $K$-grading on $\mathbb{C} [U_1, \cdots, U_r]$ by $\deg(U_i):=\deg(f_i)$ and it induces an $H$-equivariant closed embedding 
$$ \overline{\iota} : \overline{X} \to \overline{Z}:=\mathbb{C}^r, z \mapsto (f_1(z),\cdots,f_r(z)). $$

This embedding induces the following diagram.

\begin{displaymath}
\xymatrix{
\widehat{X} \ar[rr] \ar[d]_{p_X}^{\sslash H} &  & \widehat{Z} \ar[d]_{p_Z}^{\sslash H}  \\
X \ar[rr] &  & Z
}
\end{displaymath}

\begin{proposition}\cite[Proposition 3.14]{Hausen}
In the above construction we have the following. \\
(1) $\widehat{X} = \overline{\iota}^{-1}(\widehat{Z})$ and $\hat{\iota} : \widehat{X} \to \widehat{Z}$ is an $H$-equivariant closed embedding. \\
(2) For $1 \leq i \leq r$, we use $D^i_Z$ be the invariant prime divisor corresponding to $e_i \in E$. Then $D_X^i = \iota^* (D^i_Z)$ for all $i$. \\
(3) The induced morphism $\iota : X \to Z$ is a neat closed embedding.
\end{proposition}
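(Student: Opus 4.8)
The plan is to verify the three claims by unwinding the construction of the neat ambient embedding $\overline{\iota}\colon\overline X\hookrightarrow\overline Z=\mathbb C^r$ and tracking the good quotient structure throughout. First I would recall the setup: $Z$ is the toric variety whose Cox ring is $\mathbb C[U_1,\dots,U_r]$ with the $K$-grading $\deg(U_i)=\deg(f_i)$, so that $\widehat Z\subset\overline Z=\mathbb C^r$ is the open subset on which $p_Z\colon\widehat Z\to Z$ is a good quotient by $H=\Spec(\mathbb C[\Cl(X)])$, and $\widehat Z$ is the complement in $\mathbb C^r$ of the vanishing locus of the irrelevant ideal determined by the fan of $Z$. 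The surjection $\mathbb C[U_1,\dots,U_r]\twoheadrightarrow\mathcal R(X)$ is $H$-equivariant by construction of the grading, so its kernel is a homogeneous ideal and $\overline X=\Spec(\mathcal R(X))\hookrightarrow\mathbb C^r$ is an $H$-stable closed subvariety; this is the input we are given.

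For (1), the key point is that $\widehat X$ is by definition $\Spec_X(\mathcal R)$ with its good quotient $p_X\colon\widehat X\to X$, and that the rational map $X\dashrightarrow Z$ coming from a basis of sections lifts to the level of relative spectra. I would argue that $\overline\iota^{-1}(\widehat Z)$ is an $H$-stable open subset of $\overline X$ on which the $H$-action admits a good quotient (being the restriction of the good quotient $p_Z$ to a saturated open subset, intersected with the closed $H$-subvariety $\overline X$), and that this quotient is isomorphic over $X$ to $p_X\colon\widehat X\to X$ — essentially because the total coordinate sheaf $\mathcal R$ is recovered from the $K$-graded ring $\mathcal R(X)$ together with the irrelevant ideal, which is exactly the information encoding $\widehat Z\subset\overline Z$. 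Restricting the $H$-equivariant closed embedding $\overline\iota$ to $\widehat X=\overline\iota^{-1}(\widehat Z)$ then gives the $H$-equivariant closed embedding $\widehat\iota\colon\widehat X\hookrightarrow\widehat Z$.

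For (2), I would compute $\iota^*(D_Z^i)$ directly. The invariant prime divisor $D_Z^i\subset Z$ pulls back under $p_Z$ to the coordinate hyperplane $\{U_i=0\}\cap\widehat Z$ in $\widehat Z$; intersecting with $\widehat X$ and pushing down via the good quotient $p_X$, and using that $U_i$ maps to the section $f_i$ cutting out $D_X^i$ on $X$, yields $\iota^*(D_Z^i)=D_X^i$ as Weil divisor classes — here the hypothesis that $X$ is smooth in codimension $1$ is what makes these pullbacks of Weil divisors behave well and makes $D_X^i$ a genuine prime divisor (this is also where the neatness condition that $\overline{T\cdot z_i}\cap X$ is an irreducible hypersurface is verified, so I would fold that check in here). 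For (3), neatness then follows: condition (1) of a neat embedding is the irreducibility statement just established, and condition (2), that $\iota^*\colon\Cl(Z)\to\Cl(X)$ is an isomorphism, follows because $\Cl(Z)\cong K/K^0$ is computed from the grading group of $\mathbb C[U_1,\dots,U_r]$ while $\Cl(X)$ is the grading group of $\mathcal R(X)$, and the surjection $\mathbb C[U_1,\dots,U_r]\twoheadrightarrow\mathcal R(X)$ was set up precisely to induce the identity on these grading groups; the relations in $\Cl(X)$ among the $[D_X^i]$ match those in $\Cl(Z)$ among the $[D_Z^i]$ because the ideal of $\overline X$ is homogeneous.

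The main obstacle I expect is the careful identification in (1): showing that $\overline\iota^{-1}(\widehat Z)$, with its induced $H$-action, really is the relative spectrum $\widehat X=\Spec_X(\mathcal R)$ over $X$ — i.e. that the good quotient one obtains by restricting $p_Z$ is canonically the one defining $\widehat X$. This requires matching two a priori different constructions of the same space (one via the abstract Cox sheaf, one via the ambient toric quotient), and the comparison rests on the fact that the irrelevant/saturation data of $Z$ is chosen compatibly with the GIT chamber structure on $\Cl(X)_{\mathbb Q}$ coming from $\Mov(X)$ and $\Eff(X)$; getting this compatibility precisely right, rather than just up to isomorphism, is the delicate part. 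The divisor computations in (2) and the $\Cl$-isomorphism in (3) are then comparatively formal consequences.
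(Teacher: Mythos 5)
The paper does not prove this statement at all: it is quoted verbatim as \cite[Proposition 3.14]{Hausen} in the preliminaries of \S\ref{s2}, so there is no in-paper argument to compare against, and your proposal has to be measured against Hausen's original proof. Your overall architecture matches his: build $Z$ from the $K$-graded presentation $\mathbb{C}[U_1,\dots,U_r]\twoheadrightarrow\mathcal{R}(X)$, identify $\widehat{X}$ with $\overline{\iota}^{-1}(\widehat{Z})$, then read off the divisor correspondence and the isomorphism $\Cl(Z)\to\Cl(X)$ from the grading groups. Parts (2) and (3) as you describe them are indeed essentially formal once (1) is in place.

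The genuine gap is exactly the one you flag yourself and then do not close: the identification $\widehat{X}=\overline{\iota}^{-1}(\widehat{Z})$. You argue it should hold "essentially because the total coordinate sheaf $\mathcal{R}$ is recovered from the $K$-graded ring together with the irrelevant ideal," but that is a restatement of the claim, not a proof: the irrelevant locus of $\overline{X}$ is determined intrinsically by $X$ (via its relevant faces, equivalently the ample/moving data of $X$), while the irrelevant locus of $\overline{Z}=\mathbb{C}^r$ is determined by the fan of $Z$, and there is no reason a priori that the second traces out the first on $\overline{X}$. In Hausen's setup this is not a lucky coincidence but is built into the construction: $Z$ is produced from a bunch of cones whose covering collection is chosen so that the relevant faces of the orthant in $\mathbb{C}^r$ restrict precisely to the relevant faces of $\overline{X}$, giving both inclusions $\widehat{X}\subseteq\overline{\iota}^{-1}(\widehat{Z})$ and $\overline{\iota}^{-1}(\widehat{Z})\subseteq\widehat{X}$ by a face-by-face comparison. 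Without invoking that explicit choice (or some substitute for it), your part (1) — and hence the whole proposition — remains an assertion. A secondary soft spot: in (2) you should also justify that each $\overline{T\cdot z_i}\cap X$ is irreducible, which in Hausen's argument comes from the generators $f_i$ being $\Cl(X)$-prime elements of $\mathcal{R}(X)$, not merely from smoothness in codimension one.
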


\subsection{Ambient modification}

Let $\pi : Z_1 \to Z_0$ be the toric morphism associated to a stellar subdivision of simplicial fans. Then we have the following commutative diagram of toric morphisms. See \cite[Proposition 5.2]{Hausen} for more details.

\begin{displaymath}
\xymatrix{ 
 & \ar[ld]_{\bar{\pi}_1} \bar{Z}_1 \ar[rd]^{// \mathbb{C}^*} &  \\
\bar{Z}_1 & \ar[ld]_{\hat{\pi}_1} \hat{Z}_1 \ar[rd] \ar[u] & \bar{Z}_0  \\ 
\hat{Z}_1 \ar[u] \ar[d]_{/H_1} &  & \hat{Z}_0 \ar[u] \ar[d]^{/H_0}  \\
Z_1 \ar[rr]_{\pi} &  & Z_0 
}
\end{displaymath}

Hausen defined the notion of neat ambient modification as follows.

\begin{definition}[Neat ambient modification]
Let $Z_0, Z_1$ be simplicial toric varieties with action torus $T_0, T_1$ and let $\pi : Z_1 \to Z_0$ be the toric morphism induced from a stellar subdivision of simplicial fans.
A toric morphism $\pi : Z_1 \to Z_0$ is called a neat ambient modification for $X_1 \subset Z_1$ and $X_0 \subset Z_0$ if
\begin{enumerate}
\item $X_1 \cap E$ is an irreducible hypersurface in $X_1$ intersecting $T_1 \cdot z,$
\item local equation for $E \subset Z_1$ restricts to a local equation for $D \subset X_1,$
\item $X_0 \cap \pi(E)$ is of codimension at least two in $X_0.$
\end{enumerate}
\end{definition}

The neat embedding and the above commutative diagram of toric morphisms induces a neat ambient modification. Let $\pi : Z_1 \to Z_0$ be a neat ambient modification for two irreducible subvarieties $X_0 \subset Z_0$ and $X_1 \subset Z_1.$ Then we have the following diagram. Here $\hat{Y}_1$ is the inverse image of $\hat{X}_1$ under $\bar{\pi}_1$ and $\bar{Y}_1$ is the closure of $\hat{Y}_1$ in $\bar{Z}_1.$ See \cite{Hausen} for more details.

\begin{displaymath}
\xymatrix{ 
 & \ar[ld] \bar{Y}_1 \ar[rd]^{// \mathbb{C}^*} &  \\
\bar{X}_1 & \ar[ld] \hat{Y}_1 \ar[rd] \ar[u] & \bar{X}_0  \\ 
\hat{X}_1 \ar[u] \ar[d]^{/H_1} &  & \hat{X}_0 \ar[u] \ar[d]^{/H_0}  \\
X_1 \ar[rr] &  & X_0 
}
\end{displaymath}

Now let us recall the definition of combinatorially minimal Mori dream space which was introduced by Hausen in \cite{Hausen}.

\begin{definition}\cite[Definition 6.1]{Hausen} 
Let $X$ be a Mori dream space. \\
(1) A divisor class $[D] \in \Cl(X)$ is combinatorially contractible if it belongs to an extremal ray of $\Eff(X)$ and $h^0(X,nD) \leq 1$ for all $n \in \mathbb{N}.$ \\
(2) $X$ is combinatorially minimal if there is no combinatorially contractible divisor. 
\end{definition}

We have the following simple criterions.

\begin{remark}\cite[Corollary 6.8, 6.9]{Hausen}
(1) A $\mathbb{Q}$-factorial projective Mori dream space $X$ is combinatorially minimal if and only if $\Eff(X)=\Mov(X).$ \\
(2) A $\mathbb{Q}$-factorial projective Mori dream surface $X$ is combinatorially minimal if and only if $\Eff(X)=\SAmp(X).$
\end{remark}

Hausen proved that every Mori dream space can be obtained from a combinatorially minimal Mori dream space via neat controlled ambient modification.

\begin{theorem}\cite[Theorem 6.2]{Hausen}\label{Hausen;mainthm}
Let $X$ be a projective $\mathbb{Q}$-factorial Mori dream space. Then $X$ can be obtained from a combinatorially minimal Mori dream space $X_0$ via finite sequence
$$ X=X_n' \dashedrightarrow X_n \to X_{n-1}' \dashedrightarrow X_{n-1} \to \cdots \to X_0' = X_0 $$
where $X_i' \dashedrightarrow X_i$ is a small birational transformation and $X_i \to X_{i-1}'$ is induced from a neat controlled ambient modification of $\mathbb{Q}$-factorial projective toric varieties.
\end{theorem}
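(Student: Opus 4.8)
The plan is to argue by induction on the Picard number $\rho(X)$, at each stage peeling off one combinatorially contractible divisor by the prescribed two-step move: a small $\mathbb{Q}$-factorial modification followed by a neat ambient modification. If $X$ is already combinatorially minimal we are done with $n=0$. Otherwise, by \cite[Corollary 6.8]{Hausen} we have $\Eff(X) \neq \Mov(X)$, so there is an extremal ray $\varrho$ of $\Eff(X)$ not contained in $\Mov(X)$; a primitive generator $[D]$ of $\varrho$ is then combinatorially contractible, i.e. $h^0(X,nD) \le 1$ for all $n$. The goal is to contract $\varrho$ and to show that the result is again a $\mathbb{Q}$-factorial projective Mori dream space of Picard number $\rho(X)-1$.

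To realize the contraction inside the toric machinery, I would use the neat embedding $\iota : X \hookrightarrow Z$ into a $\mathbb{Q}$-factorial projective toric variety with $\iota^* : \Cl(Z) \xrightarrow{\sim} \Cl(X)$ and with invariant prime divisors restricting to prime divisors $D_X^i = \iota^*(D_Z^i)$ of $X$ (\cite[Proposition 3.14]{Hausen}). Under this correspondence the rigid divisor $D$ is the restriction of an invariant prime divisor $D_Z^i$, hence of a single ray $e_i$ in the fan of $Z$. Since $\varrho$ need not be semiample on $X$, I would first pass through a finite chain of small $\mathbb{Q}$-factorial modifications $X = X_n' \dashrightarrow X_n$ --- available because $X$ is a Mori dream space --- to a model $X_n$ on which the contraction of the transform of $\varrho$ is an honest morphism. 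On the ambient side these correspond to changes of GIT chamber among $\mathbb{Q}$-factorial projective toric models sharing the Cox ring $\mathcal{R}(X)$.

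Next I would exhibit the divisorial contraction $X_n \to X_{n-1}'$ as a neat ambient modification: it is induced from the toric morphism $\pi : Z_n \to Z_{n-1}'$ associated with the inverse of the stellar subdivision that introduces the ray $e_i$. One must verify the three defining conditions --- that $X_n \cap E$ is an irreducible hypersurface (it is the prime divisor $D$), that a local equation of $E$ restricts to a local equation of $D$, and that $\pi(E)$ meets $X_{n-1}'$ in codimension at least two. The last condition is precisely where combinatorial contractibility enters: the rigidity $h^0(X,nD) \le 1$ forces $D$ to be contracted onto a subvariety of codimension $\ge 2$. The commutative diagram relating $\bar Z$, $\hat Z$ and $Z$ for source and target then descends, through the good quotients by $H_n$ and $H_{n-1}$, to the diagram with $\bar Y_1$ and $\hat Y_1$ for $X_n$ and $X_{n-1}'$; reading off its bottom row identifies $X_{n-1}'$ as a $\mathbb{Q}$-factorial projective Mori dream space whose class group is $\Cl(X_n)$ modulo $[D]$, so that $\rho(X_{n-1}') = \rho(X_n)-1 = \rho(X)-1$.

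Since the Picard number drops by one at each divisorial step and is bounded below by $1$, the induction terminates at a combinatorially minimal $X_0$, and concatenating the moves produces the asserted sequence. The main obstacle is the middle step: proving that contracting a combinatorially contractible ray really is a neat ambient modification. Concretely, one must show that the rigidity condition $h^0(X,nD)\le 1$ translates, at the level of Cox rings, into the codimension-$\ge 2$ condition on $\pi(E)\cap X_{n-1}'$, and that after deleting the generator of $\mathcal{R}(X)$ corresponding to $D$ the contracted variety still carries a neat embedding compatible with a toric contraction --- in particular that finite generation of the Cox ring and $\mathbb{Q}$-factoriality are preserved. Controlling how $\mathcal{R}(X)$ descends through the good quotient $\hat Y_1 \to X_{n-1}'$ is the technical heart of the argument.
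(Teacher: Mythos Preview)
The paper does not prove this theorem. It is stated as \cite[Theorem 6.2]{Hausen} in the preliminaries section and immediately followed by ``See \cite{Hausen} for more details about the construction.'' The result is quoted as a black box and then \emph{applied} in the proof of Theorem~\ref{contraction:proof1} to deduce the existence of the sequence $X = X_n \to \cdots \to X_0$ for surfaces (where the small modifications are trivial). So there is no proof in the paper to compare your proposal against.

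That said, your sketch is a reasonable outline of how Hausen's argument actually proceeds: induction on the Picard number, identification of a combinatorially contractible class with an extremal ray outside $\Mov(X)$, passage via SQM to a chamber where the contraction is a morphism, and realization of that contraction as a neat controlled ambient modification of the enveloping toric variety. You have correctly identified the technical core --- showing that the rigidity condition on $D$ yields the codimension-$\ge 2$ image condition and that the Cox ring, $\mathbb{Q}$-factoriality, and projectivity descend. One point to tighten: the small modification step $X_n' \dashrightarrow X_n$ is not in general a single flip but a composite of wall-crossings in the GIT fan, and the ``neat controlled'' part of the ambient modification (the word \emph{controlled} carries content in \cite{Hausen}) is what guarantees the toric contraction restricts well to $X$; your sketch gestures at this but does not pin it down. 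If you want a self-contained proof you would need to reproduce a fair amount of \cite[\S5--6]{Hausen}.
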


See \cite{Hausen} for more details about the construction.

\bigskip

\section{Combinatorially minimal Mori dream surfaces}\label{s3}

In this section we study basic properties of combinatorially minimal Mori dream surfaces. Let us recall some basic definitions and results. \\ 

Let $X$ be a surface with finitely generated $\Cl(X)$. When is $X$ a Mori dream surface? There is a simple criterion for normal complete surfaces. Artebani, Hausen and Laface proved the following theorem in \cite{AHL}.

\begin{theorem}\cite[Theorem 2.5]{AHL}
Let $X$ be a normal complete surface with finitely generated $\Cl(X).$ Then the followings are equivalent. \\
(1) $X$ is a Mori dream space. \\ 
(2) The effective cone $\Eff(X) \subset Cl(X)_{\mathbb{Q}}$ and moving cone $\Mov(X) \subset \Cl(X)_{\mathbb{Q}}$ are polyhedral cones and $\Mov(X)=\SAmp(X).$ \\
And if one of the above two conditions holds, then $X$ is projective and $\mathbb{Q}$-factorial.
\end{theorem}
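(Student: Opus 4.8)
The plan is to prove the two implications separately: the direction $(1)\Rightarrow(2)$ via the structure theory of Hu and Keel specialized to surfaces, and $(2)\Rightarrow(1)$ by first producing a finite generating set of the Cox ring on the minimal resolution, and only afterwards reading off $\mathbb Q$-factoriality and projectivity.

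For $(1)\Rightarrow(2)$: if $X$ is a Mori dream space then, by definition, it is normal, projective and $\mathbb Q$-factorial with finitely generated Cox ring $\mathcal R(X)$, and choosing finitely many homogeneous generators, their degrees generate $\Eff(X)$ as a cone, so $\Eff(X)$ is polyhedral. By \cite{HK}, $X$ has only finitely many small $\mathbb Q$-factorial modifications $X_1,\dots,X_m$, and $\Mov(X)=\bigcup_i\Nef(X_i)$ with each $\Nef(X_i)$ polyhedral and generated by semiample classes, so $\Mov(X)$ is polyhedral. The surface-specific input, which I would isolate as a small lemma, is that a birational map between normal projective $\mathbb Q$-factorial surfaces that is an isomorphism in codimension one is actually an isomorphism (resolve it and note that the two resolution morphisms contract exactly the same curves). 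Hence $m=1$, $\Mov(X)=\Nef(X)$, and since $\Nef(X)$ is generated by semiample classes we conclude $\Mov(X)=\SAmp(X)$.

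For $(2)\Rightarrow(1)$: first, $X$ is projective, since every complete normal surface is. The real content is then to show that $\mathcal R(X)$ is finitely generated, after which $\mathbb Q$-factoriality and the Mori dream property will follow. Let $\pi\colon Y\to X$ be the minimal resolution, so that $Y$ is a smooth projective surface and $\Cl(X)\cong\Pic(Y)/\langle E_1,\dots,E_k\rangle$, where the $\pi$-exceptional prime divisors $E_j$ span a negative-definite sublattice. For a class $D\in\Cl(X)$ I would use the identification $H^0(X,\mathcal O_X(D))\cong H^0\bigl(Y,\mathcal O_Y(\lceil\pi^*D\rceil)\bigr)$, with $\pi^*D$ the numerical pullback, and then take the Zariski decomposition $\pi^*D=P+N$ on $Y$, with $P$ nef and $N\ge 0$ supported on a negative-definite configuration of curves. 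Since $\Eff(X)$ is polyhedral, $X$ has only finitely many negative curves, so only finitely many curves of $Y$ --- the $E_j$ and the strict transforms of the negative curves of $X$ --- can occur in such an $N$, and $H^0(X,\mathcal O_X(D))$ equals $H^0$ of the nef part $P$ times a fixed monomial in the canonical sections of those curves. As $\pi^*D$ is nef exactly when $D$ is nef on $X$, this exhibits $\mathcal R(X)$ as generated by the finitely many canonical sections of the exceptional and negative curves, together with the nef multisection ring $\bigoplus_{D\in\Nef(X)}H^0(X,\mathcal O_X(D))$. On a surface $\Nef(X)=\Mov(X)$, so hypothesis (2) gives $\Nef(X)=\SAmp(X)$ polyhedral; in particular every nef class is semiample, and from this one deduces that the nef multisection ring is finitely generated (see \cite{ADHL} and the references therein; for a surface one may also argue by induction on $\rho(X)$ using the birational contraction attached to a big semiample class). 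Hence $\mathcal R(X)$ is finitely generated. Because $\Cl(X)$ is finitely generated, this forces $X$ to be $\mathbb Q$-factorial: the characteristic space $\widehat X=\Spec_X(\mathcal R)$ is an open subset of the affine variety $\Spec(\mathcal R(X))$ on which the divisor class group vanishes, and $X$ is its good quotient by the diagonalizable group $\Spec(\mathbb C[\Cl(X)])$, so every Weil divisor on $X$ has a Cartier multiple (cf. \cite{Hausen,ADHL}). Finally, $\mathbb Q$-factoriality together with finite generation of $\mathcal R(X)$ gives that $X$ is a Mori dream space by \cite{HK}.

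I expect the hard step to be the $\mathbb Q$-factoriality in $(2)\Rightarrow(1)$. Finite generation of $\Cl(X)$ alone does not suffice: contracting a smooth elliptic curve of negative self-intersection on the blow-up of $\mathbb P^2$ at ten general points yields a normal projective surface with $\Cl(X)\cong\mathbb Z^{10}$ which is not $\mathbb Q$-factorial (for general points the obstruction class in $\Pic^0$ of the exceptional elliptic curve is non-torsion). What excludes such examples is precisely the polyhedrality of $\Eff(X)$, and the cleanest way to exploit it seems to be the route above --- through the resolution and Zariski decomposition to finite generation of $\mathcal R(X)$ --- after which $\mathbb Q$-factoriality, projectivity and the Mori dream property become formal. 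A secondary point requiring care is that one must work throughout with Hausen's $\Cl(X)$-graded Cox ring, whose shifting family absorbs the torsion of $\Cl(X)$; this affects the bookkeeping but not the architecture of the argument.
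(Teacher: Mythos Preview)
The paper does not prove this statement; it is quoted verbatim from \cite[Theorem~2.5]{AHL} as a background result, with no proof given. There is therefore no proof in the paper to compare your attempt against.

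On its own merits, your outline follows the architecture of the original argument in \cite{AHL}. The direction $(1)\Rightarrow(2)$ is exactly as you say, the key surface-specific input being that there are no nontrivial small $\mathbb{Q}$-factorial modifications. For $(2)\Rightarrow(1)$, the route through the minimal resolution and Zariski decomposition, reducing to finitely many canonical sections of negative curves together with a finitely generated semiample multisection ring, is the standard one. Two points would need tightening in a full write-up. First, in the passage from $\Mov(X)=\SAmp(X)$ to control of the nef part $P$ of $\pi^*D$, you silently identify $\Nef(X)$ with $\Mov(X)$ before $\mathbb{Q}$-factoriality is established; since $\Nef$ lives in $\Pic(X)_{\mathbb{Q}}$ while $\Mov$ and $\SAmp$ live in $\Cl(X)_{\mathbb{Q}}$, one has to argue more carefully that the nef part on $Y$ actually descends to a semiample class on $X$. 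Second, the implication ``$\Cl(X)$ and $\mathcal{R}(X)$ finitely generated $\Rightarrow$ $X$ is $\mathbb{Q}$-factorial'' via the characteristic space is correct but not entirely formal; it uses that $\widehat{X}\subset\overline{X}$ has complement of codimension at least two and that Weil divisors pull back to principal ones, and deserves a precise citation to \cite{Hausen} or \cite{ADHL}. Your closing example of the contraction of a general anticanonical elliptic curve on a ten-point blow-up of $\mathbb{P}^2$ is well chosen: it shows exactly why the polyhedrality hypothesis on $\Eff(X)$ cannot be dropped.
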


As a corollary we have the following helpful criterion of finitely generation of Cox rings of $\mathbb{Q}$-factorial surfaces.

\begin{proposition}\cite[Corollary 2.6]{AHL}
Let $X$ be a $\mathbb{Q}$-factorial projective surface with finitely generated $\Cl(X).$ Then the followings are equivalent. \\
(1) $X$ is a Mori dream space. \\ 
(2) The effective cone $\Eff(X) \subset \Cl(X)_{\mathbb{Q}}$ is a polyhedral cone and $\Nef(X)=\SAmp(X).$
\end{proposition}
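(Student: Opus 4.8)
The plan is to derive the equivalence from the preceding theorem \cite[Theorem~2.5]{AHL} together with two elementary facts valid on any $\bbQ$-factorial projective surface $X$. First I would record the chain $\SAmp(X)\subseteq\Mov(X)\subseteq\Nef(X)$: the middle inclusion holds because a movable class has a multiple $mD$ with $\Bs|mD|$ finite, hence $mD$ meets every irreducible curve properly and so nonnegatively, so $D$ is nef; the outer inclusion is immediate since a semiample class has a base-point-free multiple, which is in particular movable. Likewise $\mathrm{Amp}(X)\subseteq\Mov(X)$, and since $X$ is projective, $\mathrm{Amp}(X)=\mathrm{int}\,\Nef(X)$ is dense in $\Nef(X)$; taking closures in $\mathrm{Amp}(X)\subseteq\Mov(X)\subseteq\Nef(X)$ then gives $\overline{\Mov(X)}=\Nef(X)$. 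Second, because $X$ is a $\bbQ$-factorial surface, the intersection pairing identifies $N^1(X)$ with $N_1(X)$ and carries $\overline{\Eff}(X)$ onto $\overline{\mathrm{NE}}(X)$; hence $\Nef(X)=\Eff(X)^{\vee}$ whenever $\Eff(X)$ is closed, so if $\Eff(X)$ is polyhedral then so is $\Nef(X)$.

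With these in hand, $(1)\Rightarrow(2)$ is short: assuming $X$ is a Mori dream space, \cite[Theorem~2.5]{AHL} gives that $\Eff(X)$ and $\Mov(X)$ are polyhedral and $\Mov(X)=\SAmp(X)$. A polyhedral cone is closed, so $\Mov(X)=\overline{\Mov(X)}=\Nef(X)$ by the first fact, and therefore $\Nef(X)=\SAmp(X)$, which is precisely~(2).

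For $(2)\Rightarrow(1)$ I would assume $\Eff(X)$ is polyhedral and $\Nef(X)=\SAmp(X)$. Then the chain $\SAmp(X)\subseteq\Mov(X)\subseteq\Nef(X)=\SAmp(X)$ collapses to $\Mov(X)=\Nef(X)=\SAmp(X)$, and by the second fact polyhedrality of $\Eff(X)$ forces $\Nef(X)$, hence $\Mov(X)$, to be polyhedral. Thus $\Eff(X)$ and $\Mov(X)$ are polyhedral with $\Mov(X)=\SAmp(X)$, so $X$ is a Mori dream space by \cite[Theorem~2.5]{AHL}, and the remaining conclusions of that theorem ($X$ projective and $\bbQ$-factorial) are already part of our hypotheses.

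The content here is essentially a dictionary between the pair $\bigl(\Eff(X),\Mov(X)\bigr)$ with $\Mov=\SAmp$ and the pair $\bigl(\Eff(X),\Nef(X)\bigr)$ with $\Nef=\SAmp$ on a surface, so there is no real obstacle once \cite[Theorem~2.5]{AHL} is granted. The only points needing care are bookkeeping about closures --- polyhedral cones are closed while $\Mov(X)$ need not be a priori --- and making sure that $\bbQ$-factoriality is actually invoked, since it is what legitimizes the intersection-pairing duality between $\Eff(X)$ and $\Nef(X)$ used above.
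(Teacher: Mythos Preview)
The paper does not actually prove this proposition: it is simply quoted as \cite[Corollary~2.6]{AHL}, immediately following the statement of \cite[Theorem~2.5]{AHL}, with no argument supplied. So there is no ``paper's own proof'' to compare against.

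Your derivation from \cite[Theorem~2.5]{AHL} is correct and is exactly the kind of argument one expects for a result labeled ``Corollary'' in the source: on a $\bbQ$-factorial projective surface the chain $\SAmp(X)\subseteq\Mov(X)\subseteq\Nef(X)$ together with $\overline{\Mov(X)}=\Nef(X)$ and the duality $\Nef(X)=\Eff(X)^{\vee}$ lets one pass freely between the $(\Eff,\Mov)$ formulation of Theorem~2.5 and the $(\Eff,\Nef)$ formulation of the corollary. The bookkeeping about closures (polyhedral $\Rightarrow$ closed, hence $\Mov(X)=\overline{\Mov(X)}=\Nef(X)$) is the only place any care is needed, and you handle it correctly.
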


Now, let us give several examples of combinatorially minimal Mori dream surfaces.

\subsection{Examples of combinatorially minimal Mori dream surfaces with $\rho=1$}

Projective plane, log del Pezzo surfaces with Picard number 1, fake projective planes, more generally $\mathbb{Q}$-homology projective planes are examples of combinatorially minimal Mori dream surfaces with $\rho=1.$ Conversely, every $\mathbb{Q}$-factorial projective variety with $\rho=1$ is a combinatorially minimal Mori dream space.

\subsection{Examples of combinatorially minimal Mori dream surfaces with $\rho=2$}

Quadric surfaces, Enriques surfaces with 8 nodes and canonical models of some product-quotient surfaces with $p_g=q=0$ are examples of combinatorially minimal Mori dream surfaces with $\rho=2.$

\subsection{Combinatorially minimal Mori dream surfaces from combinatorial contractions}

First let us recall intersections of divisors on normal surfaces. Let $X$ be a normal surface and $\pi : \widetilde{X} \to X$ be its resolution of singularities. For a $\mathbb{Q}$-divisor $D$, one can define
$$ \pi^*D = \overline{D}+\sum_{i}e_iE_i $$
where $\overline{D}$ is the strict transform of $D$ by $\pi$ and $e_i$ are the rational numbers uniquely determined by the equations of the following form
$$ \pi^*D \cdot E_j = \overline{D} \cdot E_j + \sum_{i}e_iE_i \cdot E_j=0 $$
for all $j$. For two $\mathbb{Q}$-divisors on $X$ we define $D \cdot D'$ to be
$$ D \cdot D' = \pi^*D \cdot \pi^* D'. $$

Now let us recall a contraction theorem of Sakai.

\begin{theorem}\cite{Sakai}
Let $C_1,\cdots,C_k$ be a sequence of irreducible curves on a normal surface $X.$ Then $C_1+\cdots+C_k$ can be contracted to normal points if and only if the intersection matrix $(C_i,C_j)$ is negative definite.
\end{theorem}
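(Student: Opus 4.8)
The plan is to deduce both implications from the corresponding facts on a resolution of singularities of $X$, using two classical inputs: Mumford's theorem that the exceptional locus of a resolution of a normal surface singularity has negative definite intersection matrix, and the Grauert--Artin contraction criterion for negative definite curve configurations on a smooth surface. First I would reduce to the case in which $C_1+\cdots+C_k$ is connected: in general the intersection matrix $(C_i\cdot C_j)$ is block diagonal along the connected components of the configuration, and both negative definiteness and contractibility to points can be tested one component at a time.

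Next I would fix a resolution $\pi\colon\widetilde{X}\to X$ with exceptional curves $E_1,\dots,E_m$, write $\widetilde{C}_i$ for the strict transform of $C_i$, and record the identity $\pi^*C_i=\widetilde{C}_i+\sum_l c_{il}E_l$ together with $\pi^*C_i\cdot E_l=C_i\cdot\pi_*E_l=0$, which is immediate from the definition of the intersection form recalled above and the projection formula. Consequently the $\bbQ$-span of $\{\widetilde{C}_i\}\cup\{E_l\}$ coincides with that of $\{\pi^*C_i\}\cup\{E_l\}$, and on this subspace the intersection form is the orthogonal direct sum of its restrictions to $\langle\pi^*C_i\rangle$ and to $\langle E_l\rangle$, the latter being negative definite by Mumford's theorem. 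Since by definition negative definiteness of $(C_i\cdot C_j)$ on $X$ means negative definiteness of the form restricted to $\langle\pi^*C_i\rangle$, I obtain the key equivalence: $(C_i\cdot C_j)$ is negative definite on $X$ if and only if the enlarged configuration $\{\widetilde{C}_i\}\cup\{E_l\}$ has negative definite intersection matrix on the smooth surface $\widetilde{X}$.

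With this in hand the necessity direction is immediate: if $f\colon X\to Y$ is a proper morphism onto a normal surface contracting $C_1+\cdots+C_k$ to points and an isomorphism elsewhere, then $g:=f\circ\pi\colon\widetilde{X}\to Y$ is a proper birational morphism from a smooth surface to a normal surface whose exceptional locus is exactly $\bigcup\widetilde{C}_i\cup\bigcup E_l$, so this locus is negative definite by Mumford's criterion, and the equivalence yields the claim. For sufficiency, assuming $(C_i\cdot C_j)$ negative definite, the equivalence puts me on $\widetilde{X}$ with a negative definite configuration, to which I apply the Grauert--Artin contraction criterion: this produces a proper birational morphism $q\colon\widetilde{X}\to Z$ onto a normal surface contracting precisely $\bigcup\widetilde{C}_i\cup\bigcup E_l$ to finitely many normal points and an isomorphism elsewhere. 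Since $q$ contracts every $E_l$ and $\pi$ has connected fibres with $\pi_*\mathcal{O}_{\widetilde{X}}=\mathcal{O}_X$, the morphism $q$ is constant on the fibres of $\pi$, so by the rigidity lemma (equivalently Zariski's Main Theorem) it factors as $q=f\circ\pi$ for a morphism $f\colon X\to Z$. Tracking exceptional loci through $q=f\circ\pi$ and through $\pi$ then shows that $f$ contracts exactly $\bigcup C_i$ to normal points and is an isomorphism elsewhere; in particular it is an isomorphism near a singular point of $X$ lying off $\bigcup C_i$, where $q$ merely contracts and re-expands the fibre of $\pi$.

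The main obstacle, I expect, is not the linear algebra, which is handled cleanly by the orthogonal decomposition above, but the descent step in the sufficiency direction: promoting the a priori rational map $q\circ\pi^{-1}$ to an honest morphism on $X$. Its indeterminacy sits over the codimension-two singular locus of $X$, so the codimension-one extension property of rational maps out of a normal variety does not apply directly; instead one argues via rigidity, using that $q$ collapses each fibre of $\pi$ to a point, so that the closure of the graph of $q\circ\pi^{-1}$ is quasi-finite over $X$ and hence, by normality of $X$, an isomorphism. Beyond that, the genuinely deep ingredient is the Grauert--Artin contraction criterion itself, which I would invoke rather than reprove.
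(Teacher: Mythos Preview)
The paper does not prove this theorem; it simply quotes it from Sakai's paper \cite{Sakai} and uses it as a black box. There is therefore no proof in the paper to compare against.

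That said, your argument is correct and is essentially the standard route (and close to what Sakai does): pull the problem up to a resolution $\pi\colon\widetilde X\to X$, use the orthogonal decomposition $\langle \pi^*C_i\rangle \perp \langle E_l\rangle$ together with Mumford's negative definiteness of the exceptional lattice to identify negative definiteness of $(C_i\cdot C_j)$ on $X$ with negative definiteness of $\{\widetilde C_i\}\cup\{E_l\}$ on $\widetilde X$, invoke Mumford for necessity and Grauert--Artin for sufficiency, and then descend the contraction $q\colon\widetilde X\to Z$ through $\pi$ using $\pi_*\mathcal O_{\widetilde X}=\mathcal O_X$. Two small remarks: first, in the algebraic category Artin's criterion a priori only yields an algebraic space $Z$, but normal two-dimensional algebraic spaces over a field are schemes, so this is harmless (and over $\bbC$ Grauert gives a normal complex space directly); second, your justification that $f$ is an isomorphism near singular points of $X$ off $\bigcup C_i$ is exactly Zariski's Main Theorem applied to the finite birational map between normal surfaces there, as you indicate.
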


\begin{definition}
Let $X$ be a normal surface. A negative curve is an irreducible reduced curve on $X$ with negative self-intersection.
\end{definition}

From the above theorem we can have the following criterion.

\begin{lemma}
Let $X$ be a normal $\mathbb{Q}$-factorial Mori dream surface. Then a negative curve $D$ on $X$ induces a combinatorially contractible divisor class $[D]$. Conversely, if an irreducible reduced curve $D$ on $X$ gives a combinatorially contractible divisor class $[D]$, then $D$ is a negative curve.
\end{lemma}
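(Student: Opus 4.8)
The plan is to deduce both directions from one elementary fact about intersection theory on normal surfaces, as recalled above: any two distinct irreducible curves meet nonnegatively, so if $E\ge 0$ is an effective divisor on $X$ and $D$ is an irreducible curve with $E\cdot D<0$, then $D$ must occur in $E$; subtracting $D$ and iterating shows that the only effective divisor in $|nD|$ is $nD$ itself whenever $D^2<0$. First I would isolate this as a small sublemma (``$D^2<0$ $\Rightarrow$ $nD$ is the unique effective divisor in its class, and $h^0(X,nD)\le 1$ for all $n$''), treating $n<0$ by effectivity of $D\ne 0$ and $n=0$ trivially.

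For the forward direction, suppose $D$ is a negative curve. Then $D$ is effective, so $[D]\in\Eff(X)$, and the sublemma gives $h^0(X,nD)\le 1$ for all $n$, which is one of the two required properties. For extremality: writing $[D]$ as a sum of two classes in $\Eff(X)$ and expanding each in the curve generators of the effective cone gives $[D]=\sum_j c_j[C_j]$ with $c_j\ge 0$ rational and $C_j$ irreducible; clearing denominators (and the torsion of $\Cl(X)$) yields an honest linear equivalence $ND\sim\sum_j n_jC_j$ with $n_j\in\bbZ_{\ge 0}$. By the sublemma the right-hand side must literally equal $ND$ as a divisor, so comparing supports and using that $D$ is irreducible reduced forces $C_j=D$ for every $j$ with $n_j>0$; hence both summands of the original decomposition lie in $\bbR_{\ge 0}[D]$. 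Since $D\cdot H>0$ for ample $H$ shows $[D]\ne 0$, the class $[D]$ spans an extremal ray of $\Eff(X)$, and therefore $[D]$ is combinatorially contractible.

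For the converse, let $D$ be irreducible reduced with $[D]$ combinatorially contractible, and assume for contradiction $D^2\ge 0$; I would show $h^0(X,nD)$ is unbounded, contradicting the definition. If $D^2>0$: choosing $m$ with $mD$ Cartier ($X$ is $\bbQ$-factorial) and a resolution $\pi:\widetilde X\to X$, the divisor $\pi^*(mD)$ is integral and effective with $(\pi^*(mD))^2=(mD)^2>0$, so $K_{\widetilde X}-k\pi^*(mD)$ is not effective for $k\gg 0$, whence Riemann--Roch on the smooth projective surface $\widetilde X$ gives $h^0(X,kmD)=h^0(\widetilde X,k\pi^*(mD))\to\infty$. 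If $D^2=0$: then $D\cdot C\ge 0$ for every irreducible $C$, so $D$ is nef; since $X$ is a Mori dream surface, $\Nef(X)=\SAmp(X)$, so some $kD$ is base-point free, and because $D$ is a nonzero effective divisor ($kD\not\equiv 0$) with $(kD)^2=0$ the associated morphism maps $X$ onto a curve, forcing $h^0(X,kD)\ge 2$. Either way $D^2<0$, i.e. $D$ is a negative curve. The main (minor) obstacle is the bookkeeping with Weil versus Cartier divisors and with torsion in $\Cl(X)$ when moving between $\Cl(X)_{\bbQ}$ and genuine linear equivalence, and checking that the ``fixed-component'' argument and Riemann--Roch behave as expected on a normal $\bbQ$-factorial projective surface; these are handled by the intersection theory on normal surfaces and the resolution $\pi$, with everything else being routine.
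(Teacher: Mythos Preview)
Your proof is correct and takes a genuinely different, more elementary route than the paper. For the forward direction, the paper passes to a resolution, observes that $n\pi^*D$ has Zariski decomposition $0 + n\pi^*D$ (the single class is negative definite), and invokes \cite[Proposition 2.3.21]{Lazarsfeld} to get $h^0 \le 1$; extremality is then asserted without a detailed argument. You instead run the classical fixed-component argument directly on the normal surface (which is valid since distinct irreducible curves meet nonnegatively in Mumford's intersection theory), and you actually supply the extremality step by clearing denominators and using irreducibility of $D$---this is more explicit than the paper. For the converse, the paper appeals to Hausen's contraction machinery (Theorem~\ref{Hausen;mainthm}) together with Sakai's criterion: since $[D]$ is combinatorially contractible one can contract it to a normal surface, forcing $D^2<0$. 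Your argument bypasses this entirely: you rule out $D^2>0$ by Riemann--Roch on a resolution and $D^2=0$ by noting $D$ is nef, hence semiample on a Mori dream surface, hence defines a fibration with $h^0\ge 2$. The trade-off is that the paper's proof plugs cleanly into the Hausen/Sakai framework already set up in the section, while yours is self-contained and would work with only the hypothesis $\Nef(X)=\SAmp(X)$ rather than the full neat-ambient-modification package. Your caveats about Weil versus Cartier and torsion in $\Cl(X)$ are the right ones, and your handling of them (pass to a Cartier multiple, clear denominators and torsion before comparing effective representatives) is adequate.
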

\begin{proof}
If an irreducible reduced curve $D$ on $X$ gives a combinatorially contractible divisor class $[D]$, then we can contract $D$ to reach to a normal $\mathbb{Q}$-factorial Mori dream surface. Therefore we can see that $D$ is a negative curve from Sakai's theorem. Now let $D$ be a negative curve on $X$. We need to show that $h^0(X, \mathcal{O}(nD)) \leq 1$ for all $n$. Let $\pi : \widetilde{X} \to X$ be a resolution of singularities and $\pi^*D$ be the pullback of $D$. We may assume that $D$ is an effective Cartier divisor by replacing it by its suitable multiple. Then the Zariski decomposition of $n \pi^*D$ is $0 + n \pi^*D$ since $n \pi^*D$ is negative definite. From \cite[Proposition 2.3.21]{Lazarsfeld}, we see that $h^0(\widetilde{X}, \mathcal{O}(n \pi^*D)) = 1$ for all $n$ and hence we have $h^0(X, \mathcal{O}(nD)) = 1$ for all $n$ from projection formula. From this we can see that $[D]$ lies on an extremal ray of $\Eff(X)$. Therefore $D$ induces a combinatorially contractible divisor class $[D]$. 
\end{proof}

Let us recall semiample fibrations theorem. See \cite{Lazarsfeld} for more details.

\begin{definition}
An algebraic fiber space is a projective surjective morphism $f: X \to Y$ between algebraic varieties such that $f_* \mathcal{O}_X = \mathcal{O}_Y$.
\end{definition}

\begin{lemma}\cite[Example 2.1.15]{Lazarsfeld}
Let $f : X \to Y$ be an algebraic fiber space. If $X$ is normal, then $Y$ is also normal.
\end{lemma}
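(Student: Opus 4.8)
The plan is to reduce normality of $Y$ to a purely local statement about rings of sections and then exploit the equality $f_*\mathcal{O}_X = \mathcal{O}_Y$ to transport integral closedness from $X$. Since normality can be checked on affine charts, it suffices to show that for every affine open $U = \mathrm{Spec}\,A \subseteq Y$ the ring $A$ is integrally closed in its fraction field, which equals $K(Y)$ because $Y$ is integral. The starting observation is the chain of identifications
\[
A \;=\; \mathcal{O}_Y(U) \;=\; (f_*\mathcal{O}_X)(U) \;=\; \mathcal{O}_X\big(f^{-1}(U)\big),
\]
where the middle equality is the hypothesis.

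Given $t \in K(Y)$ integral over $A$, I would view $t$ as a rational function on $X$ via the inclusion $K(Y) \hookrightarrow K(X)$ coming from the dominant (indeed surjective) morphism $f$. A monic integral equation for $t$ over $A = \mathcal{O}_X(f^{-1}(U))$ then exhibits $t$ as integral over $\mathcal{O}_{X,x}$ for every $x \in f^{-1}(U)$; since $X$ is normal, each such local ring is integrally closed in $K(X)$, so $t \in \mathcal{O}_{X,x}$ for all $x \in f^{-1}(U)$. Because $X$ is integral, $\mathcal{O}_X(W) = \bigcap_{x \in W}\mathcal{O}_{X,x}$ inside $K(X)$ for any open $W$ (a rational function regular at every point of an open set is a regular function there), so $t \in \mathcal{O}_X(f^{-1}(U)) = A$. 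Hence $A$ is integrally closed and $Y$ is normal.

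The argument is formal, so there is no serious obstacle; the only point requiring care is the interplay of the three descriptions of $A$ together with the standard fact just used about sections of an integral scheme being the intersection of the stalks. If one prefers a more global packaging, one can instead factor $f$ through the normalization $\nu \colon \widetilde{Y} \to Y$ (possible since $X$ is normal and $f$ is dominant), obtaining $g\colon X \to \widetilde{Y}$ with $\nu\circ g = f$, and then push the structure map $\mathcal{O}_{\widetilde{Y}} \to g_*\mathcal{O}_X$ forward by $\nu$ to get $\mathcal{O}_Y \hookrightarrow \nu_*\mathcal{O}_{\widetilde{Y}} \hookrightarrow f_*\mathcal{O}_X = \mathcal{O}_Y$ with identity composite, forcing $\nu$ to be an isomorphism; but this route needs finiteness of normalization, whereas the local argument above does not.
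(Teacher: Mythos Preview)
The paper does not supply its own proof of this lemma; it simply cites \cite[Example 2.1.15]{Lazarsfeld} and moves on. Your argument is correct and is essentially the one Lazarsfeld gives: identify $\mathcal{O}_Y(U)$ with $\mathcal{O}_X(f^{-1}(U))$ via the hypothesis $f_*\mathcal{O}_X=\mathcal{O}_Y$, and then use normality of $X$ to conclude that any element of $K(Y)\subset K(X)$ integral over this ring already belongs to it.
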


\begin{theorem}\cite[Theorem 2.1.27]{Lazarsfeld}[Semiample fibrations]\label{semiamplefibrations} Let $X$ be a variety and $L$ be a semiample line bundle on $X$. Then there is an algbraic fiber space
$$ \phi : X \to Y $$
such that for sufficiently large $m$,
$$ Y_m=Y, ~~~ \mathrm{and} ~~~ \phi_m=\phi. $$
Furthermore, there is an ample line bundle $A$ on $Y$ such that $\phi^*A=L^{\otimes f}$ where $f$ is the exponent of $M(X,L).$
\end{theorem}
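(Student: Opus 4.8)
Assume $X$ is normal and projective (the setting relevant to this paper). The plan is to realize $Y$ as the common target of the Stein factorizations of the maps defined by $|L^{\otimes m}|$ for $m$ large. Concretely, for each $m$ in the semigroup $M(X,L)=\{m>0 : L^{\otimes m}\text{ globally generated}\}$, let $g_m\colon X\to W_m\subset\mathbb{P}(H^0(X,L^{\otimes m})^{\vee})$ be the morphism given by the complete linear system, and let $X\xrightarrow{\phi_m}Y_m\xrightarrow{\nu_m}W_m$ be its Stein factorization. Then $\phi_m$ is an algebraic fiber space, so $Y_m$ is normal by \cite[Example 2.1.15]{Lazarsfeld}, $\nu_m$ is finite, the bundle $A_m:=\nu_m^{*}\mathcal{O}_{W_m}(1)$ on $Y_m$ is ample with $\phi_m^{*}A_m\cong L^{\otimes m}$, and $\phi_m^{*}$ is injective on $\Pic$ by the projection formula. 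I will also use the elementary fact that the numerical semigroup $M(X,L)$ satisfies $M(X,L)\subseteq f\mathbb{Z}$, where $f$ is its exponent, and contains every sufficiently large multiple of $f$.

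The key input is a refinement lemma: if $a,b\in M(X,L)$ then every fiber of $\phi_{a+b}$ lies in a fiber of $\phi_a$ (and of $\phi_b$), hence $\phi_a$ and $\phi_b$ factor through $\phi_{a+b}$. To prove it I would observe that the image of the multiplication $H^0(L^{\otimes a})\otimes H^0(L^{\otimes b})\to H^0(L^{\otimes(a+b)})$ is a base-point-free subsystem of $|L^{\otimes(a+b)}|$ whose morphism has the same fibers as the product morphism $g_a\times g_b$; since the complete system $|L^{\otimes(a+b)}|$ is at least as separating, its fibers refine those of $g_a$ and of $g_b$, and passing to connected components transfers this to the Stein factorizations. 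The factorization of $\phi_a$ through $\phi_b$ is then a rigidity-lemma argument using $(\phi_{a+b})_{*}\mathcal{O}_X=\mathcal{O}_{Y_{a+b}}$.

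Next I would choose $p_0\in M(X,L)$ with $d:=\dim Y_{p_0}$ maximal among all $\dim Y_m$ (note $f\mid p_0$ since $M(X,L)\subseteq f\mathbb{Z}$). For any sufficiently large multiple $m$ of $f$, both $m$ and $m-p_0$ lie in $M(X,L)$, so writing $m=p_0+(m-p_0)$ the refinement lemma gives a surjective morphism $\eta\colon Y_m\to Y_{p_0}$ with connected fibers and $\phi_{p_0}=\eta\circ\phi_m$; since $\dim Y_m=\dim Y_{p_0}$ (both are $\ge d$ by surjectivity and $\le d$ by maximality) and $\eta$ has connected fibers, $\eta$ is birational. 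The main obstacle is to upgrade $\eta$ to an isomorphism, since a birational morphism with connected fibers onto a normal variety need not be one; I would resolve this using the ample bundles. Pulling back to $X$, both $\eta^{*}(A_{p_0}^{\otimes m})$ and $A_m^{\otimes p_0}$ give $L^{\otimes m p_0}$, so $A_m^{\otimes p_0}\cong\eta^{*}(A_{p_0}^{\otimes m})$ by injectivity of $\phi_m^{*}$; hence $\eta$ pulls an ample bundle back to an ample bundle and is therefore finite, and a finite birational morphism onto a normal variety is an isomorphism.

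Finally I would set $Y:=Y_{p_0}$ and $\phi:=\phi_{p_0}$, an algebraic fiber space. The last step identifies $Y_m$ with $Y$ and $\phi_m$ with $\phi$, compatibly, for all large multiples $m$ of $f$; combined with $M(X,L)\subseteq f\mathbb{Z}$ this gives the stabilization $Y_m=Y$, $\phi_m=\phi$ for $m\gg0$. Transporting the $A_m$ to $Y$ yields ample bundles $\widetilde A_m$ with $\phi^{*}\widetilde A_m\cong L^{\otimes m}$, and then $A:=\widetilde A_{m+f}\otimes\widetilde A_m^{-1}$ (for one fixed large $m$) satisfies $\phi^{*}A\cong L^{\otimes f}$; moreover $A$ is ample because $A^{\otimes k}\cong\widetilde A_{fk}$ for $fk\gg0$, so a positive power of $A$ is ample. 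Apart from the refinement lemma, which is honest bookkeeping with linear systems and connected components, the only delicate point is the finiteness of $\eta$, where the ampleness of $A_m$ does the essential work.
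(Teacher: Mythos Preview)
The paper does not give its own proof of this statement; it is simply quoted from \cite[Theorem~2.1.27]{Lazarsfeld} and used as a black box. Your sketch is essentially the standard argument (Stein-factorize the maps $|L^{\otimes m}|$, use the refinement $Y_{a+b}\to Y_a$ coming from multiplication of sections, stabilize via an ampleness/finiteness argument), and it is correct as written; in particular your handling of the one genuinely delicate point---showing the comparison map $\eta\colon Y_m\to Y_{p_0}$ is an isomorphism by pulling back ample bundles and using injectivity of $\phi_m^{*}$ on $\Pic$---is exactly the right mechanism.
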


Then we have the following.

\begin{lemma}
Let $X_0$ be a combinatorial minimal Mori dream surface. Then the Picard number of $X_0$ is less than or equal to $2.$ Moreover if the Picard number of $X_0$ is 2, then there is a finite morphism to $\mathbb{P}^1 \times \mathbb{P}^1.$
\end{lemma}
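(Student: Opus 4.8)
The plan is to extract everything from the characterization of combinatorial minimality recalled just above. Since $X_0$ is combinatorially minimal, $\Eff(X_0)=\SAmp(X_0)$, and because $\SAmp(X_0)\subseteq\Nef(X_0)\subseteq\Eff(X_0)$ always holds, the three cones coincide; in particular every effective divisor class on $X_0$ is nef. Write $\rho=\rho(X_0)$; if $\rho=1$ there is nothing to prove, so assume $\rho\ge 2$. Then $\Eff(X_0)$ is a full-dimensional, pointed, rational polyhedral cone in the $\rho$-dimensional space $\Cl(X_0)_{\mathbb{R}}$, on which the intersection form is non-degenerate of signature $(1,\rho-1)$ by the Hodge index theorem.

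First I would attach a genus-zero fibration to every extremal ray of $\Eff(X_0)=\Nef(X_0)=\SAmp(X_0)$. Let $R$ be such a ray with integral generator $D$; since $\Nef(X_0)=\SAmp(X_0)$, $D$ is semiample. Because $\rho\ge 2$, $R$ lies on the boundary of $\Eff(X_0)$, so $D$ is not big, and nefness then forces $D^2=0$, whence $\nu(D)=1$. On the other hand combinatorial minimality forces $h^0(X_0,mD)\ge 2$ for some $m$ (otherwise the extremal class $[D]$ would be combinatorially contractible), i.e. $\kappa(X_0,D)\ge 1$; hence $\kappa(X_0,D)=1$, and the semiample fibration theorem (Theorem \ref{semiamplefibrations}) produces an algebraic fiber space $\phi_R\colon X_0\to Y_R$ with $\dim Y_R=1$ and $D$ numerically proportional to the class $F_R$ of a fiber, so $F_R^2=0$. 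Since $X_0$ is normal, $Y_R$ is normal, hence a smooth projective curve; and since $\phi_R^{*}\colon\Pic(Y_R)\hookrightarrow\Pic(X_0)\subseteq\Cl(X_0)$ and $\Cl(X_0)$ is finitely generated, the abelian variety $\Pic^0(Y_R)$ must be trivial, so $Y_R\cong\mathbb{P}^1$. Finally, for two distinct extremal rays the generators $F_R,F_{R'}$ are non-proportional and satisfy $F_R\cdot F_{R'}\ge 0$ ($F_R$ nef, $F_{R'}$ effective); equality is impossible, since $\langle F_R,F_{R'}\rangle$ would then be a $2$-dimensional isotropic subspace, contradicting the Hodge index theorem. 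So $F_R\cdot F_{R'}>0$.

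For the bound $\rho\le 2$ I would use self-duality of $\Eff(X_0)$. Since $\Eff(X_0)=\Nef(X_0)$ and $\Nef(X_0)^{\vee}=\overline{NE}(X_0)=\Eff(X_0)$ (on a surface the effective-curve and effective-divisor cones agree), $\Eff(X_0)$ is self-dual for the intersection form. Pick an extremal ray $R_1$ with generator $D_1$; then $D_1^2=0$, so $D_1\in D_1^{\perp}$. As $D_1$ generates an extremal ray of the pointed cone $\Eff(X_0)$, the inequality $\langle\,\cdot\,,D_1\rangle\ge 0$ is irredundant in $\Eff(X_0)^{\vee}$, so $\Eff(X_0)^{\vee}\cap D_1^{\perp}$ is a facet of $\Eff(X_0)^{\vee}=\Eff(X_0)$, hence has dimension $\rho-1$. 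But writing $\Eff(X_0)=\mathrm{cone}(D_1,\dots,D_k)$ on its extremal ray generators and using $D_j\cdot D_1>0$ for $j\neq 1$ while $D_1\cdot D_1=0$, one finds $\Eff(X_0)\cap D_1^{\perp}=\mathrm{cone}(D_1)$, which is $1$-dimensional. Therefore $\rho-1=1$, i.e. $\rho=2$. When $\rho=2$ there are exactly two extremal rays, giving fibrations $\phi_1,\phi_2\colon X_0\to\mathbb{P}^1$ with fiber classes $F_1,F_2$ satisfying $F_1^2=F_2^2=0$ and $F_1\cdot F_2>0$. Then $g=\phi_1\times\phi_2\colon X_0\to\mathbb{P}^1\times\mathbb{P}^1$ contracts no curve: a contracted curve $C$ would satisfy $C\cdot F_1=C\cdot F_2=0$, but $F_1,F_2$ are linearly independent, hence span $\Cl(X_0)_{\mathbb{R}}$, and non-degeneracy of the form gives $C\equiv 0$, impossible for a curve. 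So $g$ has finite fibres and, being proper, is finite, with image a closed subset of dimension $2$, i.e. all of $\mathbb{P}^1\times\mathbb{P}^1$.

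I expect the main obstacle to be the passage from ``each extremal ray carries a genus-zero fibration'' to the numerical bound $\rho\le 2$: one must locate the right self-duality / facet-dimension input, because a direct attack — trying to contradict the Hodge index theorem using the Gram matrix of three fiber classes — does not work, that matrix having signature $(1,2)$, which is entirely compatible with Hodge index. A secondary, purely technical point is checking that intersection theory on the (possibly singular) $\mathbb{Q}$-factorial surface $X_0$ behaves as expected — namely $\Cl(X_0)_{\mathbb{Q}}=N^1(X_0)_{\mathbb{Q}}$ together with non-degeneracy and signature of the form — which is standard and, if needed, can be verified by pulling back to a resolution.
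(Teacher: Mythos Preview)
Your proof is correct and takes a genuinely different route to the bound $\rho\le 2$ than the paper does. The paper argues geometrically: assuming $\rho\ge 3$, it looks at the semiample fibrations attached to extremal nef classes, and in either case (target a surface, or all targets curves) locates an irreducible curve contracted by a birational map (via Stein factorization), which must then be a negative curve---contradicting combinatorial minimality. You instead extract the bound from pure convex geometry: once $\Eff(X_0)=\Nef(X_0)$ and, on a surface, $\Nef(X_0)^\vee=\overline{NE}(X_0)=\Eff(X_0)$, the cone is self-dual; combining the face--coface correspondence (an extremal ray $D_1$ of $\Eff$ gives a facet $\Eff\cap D_1^{\perp}$ of $\Eff^\vee=\Eff$) with the computation $\Eff\cap D_1^{\perp}=\mathrm{cone}(D_1)$ (forced by $D_i^2=0$ and $D_i\cdot D_j>0$ for $i\neq j$) yields $\rho-1=1$. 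Your argument is cleaner and avoids tracking contracted curves and Stein factorizations; the paper's argument, on the other hand, stays closer to the geometric theme running through the section---that obstructions to combinatorial minimality are literally negative curves one can contract---and makes the $\rho=2$ finite-cover statement fall out of the same case analysis rather than requiring a separate (though easy) verification. Your justification of $Y_R\cong\mathbb{P}^1$ via finite generation of $\Cl(X_0)$ is also a bit more self-contained than the paper's appeal to $q(X_0)=0$, and your acknowledgment of the Hodge-index and $\mathbb{Q}$-factorial intersection-theory issues on the singular model is appropriate; these are indeed routine via pullback to a resolution.
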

\begin{proof}
Suppose that $\rho(X_0) \geq 3.$ Then $\Nef(X_0)$ has at least three extremal rays. Because $X_0$ is a Mori dream space, each of these nef divisors are semiample and induce a map $X_0 \to Y.$ We may assume that $Y$ is a normal variety via Stein factorizaion. When $Y$ is a surface there is a negative curve contracted by $X_0 \to Y$ and it is impossible since $X_0$ is combinatorially minimal. Suppose that all such $Y$ are curves. Then $Y$ is isomorphic to $\mathbb{P}^1$ since $q(X_0)=0.$ Let us consider two facets of $\Eff(X_0)$ whose intersection is a codimension two cone. By taking dual, we have two extremal nef divisors. These semiample divisors induce two fibrations to $\mathbb{P}^1.$ Therefore we have a map $X_0 \to \mathbb{P}^1 \times \mathbb{P}^1.$ Then there is a $E$ contained in the codimension 2 cone and it is contracted by this map. Because $X_0$ is combinatorially minimal, $E$ cannot be a negative curve. Therefore we see that when $X_0$ is combinatorially minimal Mori dream surface, then $\rho(X_0) \leq 2$. \\

Now suppose that $\rho(X_0)=2$. From the above discussion, we see that there is a quasi-finite morphism $X_0 \to \mathbb{P}^1 \times \mathbb{P}^1$. Since both $X_0$ and $\mathbb{P}^1 \times \mathbb{P}^1$ are projective, we see that it is finite.
\end{proof}

Now we obtain the following result.

\begin{theorem}\label{contraction:proof1}
Let $X$ be a Mori dream surface. Then we have the following. \\
(1) We can contract negative curves from $X$ to obtain a combinatorial minimal Mori dream surface $X_0$ as follows. 
$$ X=X_n \to X_{n-1} \to \cdots \to X_0. $$
(2) The surface $X_0$ is normal projective $\mathbb{Q}$-factorial and the Picard number of $X_0$ is one or two. \\
(3) If the Picard number of $X_0$ is two, then $X_0$ has a finite morphism to $\mathbb{P}^1 \times \mathbb{P}^1.$ \\
(4) If $X$ is a smooth minimal surface then $X$ is the minimal resolution of $X_0.$ \\
(5) Conversely, if $X_0$ is a normal projective $\mathbb{Q}$-factorial surface with $\rho=1$ or $\rho=2$ having a finite morphism to $\mathbb{P}^1 \times \mathbb{P}^1,$ $X_0$ is a combinatorially minimal Mori dream surface.
\end{theorem}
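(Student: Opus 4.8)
The plan is to prove the five assertions roughly in the order stated, relying on the two lemmas just above (the negative-curve/combinatorially-contractible correspondence, and the $\rho \le 2$ plus finite-morphism-to-$\mathbb P^1\times\mathbb P^1$ lemma), together with Sakai's contraction theorem and the fact that a Mori dream surface has only finitely many negative curves. For (1): if $X$ is not already combinatorially minimal, then by the previous lemma it carries a negative curve $C$; by Sakai's theorem $C$ can be contracted to a normal point, yielding $X_{n-1}$. One must check that $X_{n-1}$ is again a Mori dream surface — this follows from Artebani--Hausen--Laface (Theorem \ref{...}), since contracting an extremal effective class keeps $\Eff$ polyhedral, keeps $\Mov=\SAmp$, and $\Cl(X_{n-1})$ is a quotient of $\Cl(X)$ hence still finitely generated; alternatively one invokes Hausen's neat-ambient-modification machinery directly. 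The process terminates because each contraction strictly drops the Picard number, so after finitely many steps one reaches $X_0$ with no negative curve, i.e. combinatorially minimal.

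Assertion (2) is then immediate: $X_0$ is normal projective $\mathbb Q$-factorial because it is a Mori dream surface (Artebani--Hausen--Laface), and $\rho(X_0)\in\{1,2\}$ by the lemma that a combinatorially minimal Mori dream surface has Picard number at most two. Assertion (3) is exactly the second half of that same lemma: when $\rho(X_0)=2$, the two extremal rays of $\Nef(X_0)$ are semiample, each inducing via Theorem \ref{semiamplefibrations} an algebraic fiber space to a normal base; the bases are $\mathbb P^1$ (using $q=0$, which is a birational invariant so descends from $X$), and the product map $X_0\to\mathbb P^1\times\mathbb P^1$ is quasi-finite because no curve is contracted (a contracted curve would be negative), hence finite by properness. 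For (4): if $X$ is smooth and minimal, then $K_X$ is nef, so every negative curve $C$ contracted along $X\to X_0$ has $K_X\cdot C\ge 0$ and $C^2<0$; by adjunction $C$ cannot be a $(-1)$-curve, so $X\to X_0$ contracts only curves on which $K_X$ is relatively trivial, i.e. $X\to X_0$ is crepant and $X$ is the minimal resolution of $X_0$.

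Finally (5), the converse: given $X_0$ normal projective $\mathbb Q$-factorial with $\rho=1$, or with $\rho=2$ admitting a finite morphism $f:X_0\to\mathbb P^1\times\mathbb P^1$, we must show $X_0$ is a combinatorially minimal Mori dream surface. When $\rho=1$, $\Eff(X_0)$ is a single ray which is automatically $\SAmp(X_0)$, so $X_0$ is a Mori dream surface and, having no extremal ray other than the whole cone, has no combinatorially contractible divisor (or: there is simply no negative curve, since any curve is ample up to scaling). When $\rho=2$: pulling back the two rulings of $\mathbb P^1\times\mathbb P^1$ along the finite morphism $f$ produces two base-point-free (hence semiample) divisor classes spanning $\Cl(X_0)_{\mathbb Q}$; these span $\Nef(X_0)=\SAmp(X_0)$ and also $\Eff(X_0)$, since finiteness of $f$ forces both classes to lie on the boundary of $\Eff(X_0)$ (neither is big only in a way producing a contractible curve — more precisely, a curve $C$ with $C^2<0$ would be contracted by one of the two projections $f$ composed with a ruling, contradicting finiteness of $f$). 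Hence $\Eff(X_0)=\Nef(X_0)=\SAmp(X_0)$, so $X_0$ is a Mori dream surface with no negative curve, i.e. combinatorially minimal. The main obstacle is the bookkeeping in (5) for $\rho=2$: one has to argue cleanly that the two pulled-back rulings actually generate the effective cone (not just a subcone) and that finiteness of $f$ genuinely rules out all negative curves; establishing $\Eff(X_0)=\Nef(X_0)$ here is the crux, and I expect to handle it by showing any effective class lies in the cone spanned by the two ruling-pullbacks using the intersection pairing on the rank-two lattice $\Cl(X_0)_{\mathbb Q}$.
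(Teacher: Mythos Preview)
Your proposal is essentially correct and follows the paper's overall strategy closely: prove (5) via the pulled-back rulings and intersection pairing, invoke the $\rho\le 2$ lemma for (2)--(3), and obtain (1) by iterated contraction of negative curves. The main structural difference is that for (1) the paper invokes Hausen's Theorem~\ref{Hausen;mainthm} directly and observes that surfaces admit no nontrivial small $\mathbb Q$-factorial modifications, so the sequence collapses to a chain of divisorial contractions; the paper then appeals to \cite{Okawa} and \cite{AHL} to see that each $X_i$ is normal projective $\mathbb Q$-factorial and Mori dream. Your inductive argument (contract one negative curve, check the result is still Mori dream) is a valid alternative, and you yourself note that Hausen's machinery gives it as well; just be aware that the verification ``contracting an extremal class keeps $\Eff$ polyhedral and $\Mov=\SAmp$'' is cleanest via Okawa's theorem on images of Mori dream spaces rather than by a direct cone argument.

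One concrete slip in your (4): after observing that a contracted curve $C$ satisfies $K_X\cdot C\ge 0$ and $C^2<0$, you conclude that the contraction is \emph{crepant} and that $K_X$ is relatively trivial. This is false in general---for instance contracting a smooth rational $(-3)$-curve gives a $\tfrac{1}{3}(1,1)$ point with discrepancy $-\tfrac{1}{3}$, and $K_X\cdot C=1\ne 0$. What you actually need (and what the paper does) is simpler: since $X$ is smooth and minimal it contains no $(-1)$-curve, hence the exceptional locus of $X\to X_0$ contains no $(-1)$-curve, so $X$ is the minimal resolution by the universal property. Drop the crepancy claim and the argument is fine.
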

\begin{proof}
Let us prove the assertion (5) first. Every normal projective $\mathbb{Q}$-factorial surface $X_0$ with $\rho=1$ is a combinatorially minimal Mori dream surface since $\Eff(X_0), \Nef(X_0)$ and $\SAmp(X_0)$ are all the same 1-dimensional ray generated by an ample divisor. Let $X_0$ be a normal projective $\mathbb{Q}$-factorial surface with $\rho=2$ having a finite morphism to $\mathbb{P}^1 \times \mathbb{P}^1.$ Let $D_1$ (resp. $D_2$) be the Cartier divisor of $X_0$ corresponds to the pullback of $\mathcal{O}(1,0)$ (resp. $\mathcal{O}(0,1)$). Then $D_1, D_2$ are semiample divisors and we have $D^2_1=D^2_2=0, D_1 \cdot D_2 > 0.$ Let $D=a_1D_1+a_2D_2$ be a nef divisor. Then we see that $a_1, a_2 \geq 0$ by intersecting $D$ with two effective divisors $D_1$ and $D_2.$ Therefore we have the following inclusions
$$ \mathbb{R}_{\geq 0} \langle D_1, D_2 \rangle \subset \SAmp(X_0) \subset \Nef(X_0) \subset \mathbb{R}_{\geq 0} \langle D_1, D_2 \rangle $$
Therefore we have $\Nef(X_0)=\SAmp(X_0)=\mathbb{R}_{\geq 0} \langle D_1, D_2 \rangle.$ Let $D=a_1D_1+a_2D_2$ be an effective divisor. By taking intersection with two nef divisors $D_1, D_2$ we have $a_1, a_2 \geq 0$. Therefore we have $\Eff(X_0) = \mathbb{R}_{\geq 0} \langle D_1, D_2 \rangle$ from the following inclusions. 
$$ \mathbb{R}_{\geq 0} \langle D_1, D_2 \rangle  \subset \Eff(X_0) \subset \mathbb{R}_{\geq 0} \langle D_1, D_2 \rangle $$
Therefore we see that $X_0$ is a combinatorially minimal Mori dream surface. \\

Now let us prove the assertions (1), (2) and (3). From Hausen's theorem we see that $X$ can be obtained from a combinatorially minimal Mori dream surface $X_0$ via finite sequence
$$ X=X_n' \dashedrightarrow X_n \to X_{n-1}' \dashedrightarrow X_{n-1} \to \cdots \to X_0' = X_0 $$
where $X_i' \dashedrightarrow X_i$ is a small $\mathbb{Q}$-factorial modification and $X_i \to X_{i-1}'$ is induced from a neat controlled ambient modification of $\mathbb{Q}$-factorial projective toric varieties. Because $X$ is a surface there is no small $\mathbb{Q}$-factorial modification and we have the following sequence of morphisms 
$$ X=X_n \to X_{n-1} \to \cdots \to X_0. $$

From the construction, we see that each $X_i$ is projective (since it is neatly embedded in a projective toric variety) and we may assume that each $X_i$ is normal by taking normalization for each step. Then we see that each $X_i$ is normal projective $\mathbb{Q}$-factorial from \cite[Theorem 9.3]{Okawa} and \cite[Theorem 2.5]{AHL}. From Sakai's theorem and construction, we see that each $X_i \to X_{i-1}$ is contraction of a negative curve on $X_i.$ Then we see that $\rho(X_i) = \rho(X_{i-1})+1$ for each $i$ from \cite[Proposition II.6.5]{Hartshorne} since we are contracting a negative curve. From the previous Lemma, we can always obtain $X_j$ such that $\rho(X_j)=2.$ If $X_j$ is combinatorially minimal, then we have $X_j = X_0$ and there is a finite morphism $X_0 \to \mathbb{P}^1 \times \mathbb{P}^1$ from the above Lemma. If $X_j$ is not combinatorially minimal, we can contract a negative curve on $X_j$ to obtain $X_j \to X_0$ where $\rho(X_0)=1.$ From (5), we see that $X_0$ is combinatorially minimal. Therefore we prove the assertions (1), (2) and (3). \\

Let $X'$ be the minimal resolution of $X_0.$ By the definition of minimal resolution there is a morphism $X \to X'$ and this morphism is an isomorphism if $X$ is minimal surface. Therefore we have the assertion (4).
\end{proof}

\begin{remark}
(1) The above result can be considered as a generalization of the fact that we can contract $(-1)$-curves on any del Pezzo surface to reach $\mathbb{P}^2$ or $\mathbb{P}^1 \times \mathbb{P}^1.$ \\ 
(2) Note that minimal resolution of a combinatorially minimal model is not always a Mori dream space. See \cite{HP1} for an example and more details.
\end{remark}

\bigskip

\section{Examples of combinatorially minimal models of Mori dream surfaces of general type with $p_g=0$}\label{s4}

In our previous paper \cite{Keum-Lee1}, we provided several examples of Mori dream surfaces of general type with $p_g=0.$ In this section, we will discuss fibrations and combinatorially minimal models of these surfaces. We will use the same notations used in \cite{Keum-Lee1}. \\

Let $X$ be a Mori dream surface. Our Theorem \ref{contraction:proof1} guarantees that we can contract negative curves on $X$ to reach a combinatorially minimal model $X_0$ of $X$. Here, we give an alternative proof of the Theorem \ref{contraction:proof1} which also gives a practical way to construct combinatorially minimal Mori dream surfaces explicitly. \\

We recall some basic definitions and properties about convex polyhedral cones. We will follow \cite{Fulton93}. Let $N_1$ be a finite dimensional vector space and let $N^1$ be the dual space with a nondegenerate pairing $\langle - , - \rangle : N_1 \times N^1 \to \mathbb{R}$.

\begin{definition}
Let $\sigma$ be a rational polyhedral cone in a finite dimensional vector space $N_1.$ A face $\tau$ of $\sigma$ is the intersection of $\sigma$ with a supporting hyperplane $u^{\perp}$ for a $u \in N^1$, i.e., $\tau = \sigma \cap u^{\perp} = \{ v \in \sigma ~ | ~ \langle u, v \rangle = 0 \}$. A facet is a face of codimension 1.
\end{definition}

Let us recall the following useful facts about rational polyhedral cones.

\begin{remark}\cite{Fulton93}
(1) Any proper face is the intersection of all facets containing it. \\
(2) If $\tau$ is a face of $\sigma$, then $\sigma^\vee \cap \tau^{\perp}$ is a face of $\sigma^\vee$ and we have $\dim(\tau) + \dim(\sigma^\vee \cap \tau^{\perp})=\dim N_1$. From here, we have a one-to-one order-reversing correspondence between the faces of $\sigma$ and the faces of $\sigma^\vee$.
\end{remark}

Now let us apply the above properties to the study of $\Eff(X)$, $\Nef(X)$ and $\SAmp(X)$. It is easy to see the following.

\begin{lemma}
Let $F$ be a facet of $\Eff(X)$ and $D$ be a nef divisor which is orthogonal to $F$. Then $[D]$ lies on an extremal ray of $\Nef(X).$
\end{lemma}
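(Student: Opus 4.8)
The plan is to deduce the statement from the order-reversing duality between the faces of a rational polyhedral cone and the faces of its dual, recalled above from \cite{Fulton93}, applied to $\sigma = \Eff(X)$ and $\sigma^{\vee} = \Nef(X)$ inside $\Cl(X)_{\mathbb{R}}$.

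First I would fix the pairing. Since $X$ is a Mori dream surface it is normal, $\mathbb{Q}$-factorial and projective, so the intersection form on $\Cl(X)_{\mathbb{R}}$ (in the sense recalled above for normal surfaces, via pullback to a resolution) is a nondegenerate symmetric bilinear form by the Hodge index theorem; this identifies $\Cl(X)_{\mathbb{R}}$ with its own dual. Under this identification one has $\Nef(X) = \Eff(X)^{\vee}$ by Kleiman's criterion (on a surface the cone spanned by effective curves and the cone spanned by effective divisors coincide), and $\Eff(X)$ is a rational polyhedral, hence closed, cone because $X$ is a Mori dream space; moreover both $\Eff(X)$ and $\Nef(X)$ are full-dimensional of dimension $\rho(X)$, since each contains the nonempty ample cone.

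Next I would apply the face correspondence. For the facet $F$ of $\sigma = \Eff(X)$ the Remark recalled above gives that $\sigma^{\vee} \cap F^{\perp} = \Nef(X) \cap F^{\perp}$ is a face of $\Nef(X)$ with $\dim F + \dim\bigl(\Nef(X) \cap F^{\perp}\bigr) = \dim \Cl(X)_{\mathbb{R}} = \rho(X)$. Because $F$ is a facet, $\dim F = \rho(X) - 1$, so $\Nef(X) \cap F^{\perp}$ is one-dimensional, i.e., it is an extremal ray of $\Nef(X)$; in particular this ray is nonzero, which incidentally shows that a nonzero nef divisor orthogonal to $F$ always exists, so the hypothesis of the lemma is not vacuous.

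Finally the conclusion is immediate: a nef divisor $D$ orthogonal to $F$ lies in $\Nef(X)$ and in $F^{\perp}$, hence $[D] \in \Nef(X) \cap F^{\perp}$, which is precisely the extremal ray produced above; thus $[D]$ lies on an extremal ray of $\Nef(X)$. The argument is pure convex geometry once the set-up is in place, so I do not expect a genuine obstacle; the only points that need care are the nondegeneracy of the intersection pairing and the closedness of $\Eff(X)$, and both hold exactly because $X$ is a $\mathbb{Q}$-factorial projective Mori dream surface.
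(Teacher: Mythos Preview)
Your proof is correct. The underlying idea is the same as the paper's---namely that $\Nef(X)\cap F^{\perp}$ is a one-dimensional face of $\Nef(X)$---but the presentations differ. You invoke the general order-reversing face correspondence $\tau \leftrightarrow \sigma^{\vee}\cap\tau^{\perp}$ from \cite{Fulton93} as a black box, after carefully justifying that the hypotheses apply (nondegeneracy of the intersection pairing via Hodge index, $\Nef(X)=\Eff(X)^{\vee}$ via Kleiman, polyhedrality and full-dimensionality). The paper instead verifies extremality of $[D]$ directly from the definition in two lines: if $[D]=u_1+u_2$ with $u_i\in\Nef(X)$, then for any $v\in F$ one has $0=\langle [D],v\rangle=\langle u_1,v\rangle+\langle u_2,v\rangle$ with both summands nonnegative, forcing $u_1,u_2\in F^{\perp}$, which is a line. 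Your route is more structural and makes explicit why the convex-geometry machinery applies in this setting; the paper's route is shorter and entirely self-contained, avoiding any appeal to the general face duality.
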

\begin{proof}
Let $u_1, u_2 \in \Nef(X)$ such that $[D]=u_1+u_2$. For any $v \in F,$ we have $0 = \langle [D], v \rangle = \langle u_1, v \rangle + \langle u_2, v \rangle \geq 0$ and hence $\langle u_1, v \rangle = \langle u_2, v \rangle = 0$. Therefore we see that both $u_1$ and $u_2$ are multiples of $[D]$.
\end{proof}

\begin{proposition}Let $X$ be a smooth projective surface and $D$ be a semiample divisor on $X$. Let $\phi : X \to Y$ be the semiample fibration defined by (multiple of) $D$ (cf. Theorem \ref{semiamplefibrations}) where $Y$ is a normal surface. Then $\phi$ contracts all negative curves whose intersections with $D$ are zero.
\end{proposition}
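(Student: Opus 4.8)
The plan is to show that if $C$ is a negative curve on $X$ with $D \cdot C = 0$, then $\phi$ cannot map $C$ onto a curve in $Y$, hence $C$ is contracted to a point. First I would recall the defining property of the semiample fibration from Theorem \ref{semiamplefibrations}: there is an ample line bundle $A$ on $Y$ with $\phi^* A \sim_{\mathbb{Q}} D$ (up to the positive multiple $f$). This identity is the crux: it converts the numerical hypothesis $D \cdot C = 0$ into the statement $(\phi^* A) \cdot C = 0$, i.e. $A \cdot \phi_* C = 0$ by the projection formula.

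Next I would argue by contradiction. Suppose $\phi(C)$ is a curve $B \subseteq Y$ rather than a point. Then $\phi|_C : C \to B$ is a finite surjective morphism, so $\phi_* C = d \cdot B$ for some integer $d \geq 1$ (the degree). Since $A$ is ample and $B$ is an effective curve, $A \cdot B > 0$, and therefore $A \cdot \phi_* C = d\,(A \cdot B) > 0$, contradicting $A \cdot \phi_* C = 0$ obtained above. Hence $\phi(C)$ is a point, which is exactly the assertion that $\phi$ contracts $C$. I would be slightly careful here about $Y$ being normal (guaranteed in the statement, and also by the lemma quoted from \cite{Lazarsfeld} since $\phi$ is an algebraic fiber space) so that intersection theory of $\mathbb{Q}$-Cartier divisors against curves and the projection formula are available; $A$ being genuinely Cartier on $Y$ makes $A \cdot B$ the honest degree of $A|_B$, which is positive by ampleness.

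The main obstacle, such as it is, is purely bookkeeping: making sure the projection formula is applied in a setting where it is valid — $\phi: X \to Y$ is proper with $X$ smooth and $Y$ normal, $A$ is a line bundle on $Y$, and $C$ is a curve on $X$ — and that the pullback $\phi^* A$ appearing in Theorem \ref{semiamplefibrations} is literally (a multiple of) $D$ as divisor classes, so that $\phi^* A \cdot C = 0$ genuinely follows from $D \cdot C = 0$. Once these are in place, the positivity argument $A \cdot \phi_* C > 0$ for ample $A$ against a nonzero effective cycle is immediate and finishes the proof. No use is made of $C$ being negative beyond the fact that it is an irreducible curve — the negativity is relevant only for the subsequent discussion (contracting such curves via Sakai's theorem), not for this proposition itself.
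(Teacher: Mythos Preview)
Your proof is correct. Both your argument and the paper's hinge on the same fact from Theorem~\ref{semiamplefibrations}, namely that some multiple of $D$ is $\phi^*A$ with $A$ ample on $Y$, but the execution differs. You invoke the projection formula $(\phi^*A)\cdot C = A\cdot \phi_*C$ and conclude directly from ampleness of $A$ that $\phi_*C=0$. The paper instead argues geometrically: a general irreducible member $C'\in|mD|$ is distinct from $C$ (here the paper implicitly uses $C^2<0$ versus $C'^2=(mD)^2\ge 0$), so $C\cdot C'\ge 0$; but $C\cdot C'=mD\cdot C=0$ forces $C\cap C'=\emptyset$, and since such $C'$ are pullbacks of ample divisors on $Y$, this disjointness forces $\phi(C)$ to be a point. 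Your route is more streamlined and, as you observe, does not actually require $C^2<0$---any irreducible curve with $D\cdot C=0$ is contracted---whereas the paper's phrasing leans on negativity to guarantee $C\ne C'$.
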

\begin{proof}
Let $C$ be a reduced irreducible curve on $X$ with negative self-intersection number. There is an irreducible curve $C'$ in $|mD|$ for sufficiently large $m$ whose image lies on the smooth open subset of $Y.$ Because $C$ and $C'$ are irreducible curves which are different, we have $C \cdot C' \geq 0$ and $C \cdot C'=0$ if they are disjoint. Therefore we see that $\phi$ contracts $C.$
\end{proof}

Let us recall the definition of fibration.

\begin{definition}[fibration]
Let $X$ be an algebraic surface. A fibration of $X$ is a proper map $f : X \to C$ such that all fibers are connected and $C$ is an algebraic curve.
\end{definition}

When $X$ is a regular surface and $f : X \to C$ with normal algebraic curve $C,$ then we have $C \cong \mathbb{P}^1.$

\begin{proposition}
Let $X$ be a surface with $q=0.$ There is a one-to-one correspondence between fibrations and semiample line bundles whose self-intersections are zero.
\end{proposition}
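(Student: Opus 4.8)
The plan is to construct the correspondence in both directions and check they are mutually inverse. Starting from a fibration $f : X \to C$ with $C$ an algebraic curve, the hypothesis $q = 0$ forces $C \cong \mathbb{P}^1$ (this is exactly the remark just made: a regular surface mapping with connected fibres to a normal curve, and the base of a fibration is normal by Stein factorisation, so $C \cong \mathbb{P}^1$ since $q(X) = 0$ surjects onto $q(C) = g(C)$). Set $L := f^*\mathcal{O}_{\mathbb{P}^1}(1)$. Then $L$ is base-point free, hence semiample, and $L^2 = \deg(\mathcal{O}_{\mathbb{P}^1}(1))^2 \cdot (\text{fibre})^2 = 0$ since any two fibres of $f$ are disjoint; more precisely $L^2 = f^*(pt) \cdot f^*(pt) = 0$ because distinct points of $\mathbb{P}^1$ pull back to disjoint divisors on $X$. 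This gives the map from fibrations to semiample line bundles with zero self-intersection.

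Conversely, let $L$ be a semiample line bundle on $X$ with $L^2 = 0$ and $L \not\sim 0$ (if $L \sim 0$ it corresponds to the constant map, which is not a fibration — I would either exclude $\mathcal{O}_X$ explicitly or note that ``semiample with $L^2=0$'' in this correspondence is implicitly taken to mean $L$ effective nonzero). By the semiample fibrations theorem (Theorem \ref{semiamplefibrations}) there is an algebraic fibre space $\phi : X \to Y$ with $\phi^* A = L^{\otimes f}$ for some ample $A$ on $Y$ and $f$ the exponent of $M(X,L)$. Since $\phi$ is an algebraic fibre space its fibres are connected, and I claim $\dim Y = 1$: if $\dim Y = 0$ then $L^{\otimes f} \sim 0$, excluded; if $\dim Y = 2$ then $\phi$ is generically finite, so $L^{\otimes f} \cdot L^{\otimes f} = (\phi^* A)^2 = (\deg \phi) A^2 > 0$, contradicting $L^2 = 0$. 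Hence $Y$ is a curve, necessarily normal (Lemma \cite[Example 2.1.15]{Lazarsfeld} — an algebraic fibre space from a normal surface has normal base), so again $Y \cong \mathbb{P}^1$ by $q(X) = 0$, and $\phi : X \to Y$ is a fibration. This gives the reverse map.

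Finally I would verify the two constructions are inverse. Given a fibration $f$, the line bundle $L = f^*\mathcal{O}(1)$ has semiample fibration $\phi$; both $f$ and $\phi$ contract exactly the curves $C$ with $L \cdot C = 0$ (Stein factorisation of $f$ is $f$ itself since fibres are connected), and the semiample fibration is characterised up to isomorphism by $L$ (Theorem \ref{semiamplefibrations} gives $Y_m = Y$, $\phi_m = \phi$ for $m \gg 0$), so $\phi = f$ up to isomorphism of the base. Given $L$, the fibration $\phi$ it defines has $\phi^* \mathcal{O}_{\mathbb{P}^1}(1)$ equal to $L$ up to linear equivalence: indeed $\phi^* A = L^{\otimes f}$ with $A$ ample on $\mathbb{P}^1$, so $A \cong \mathcal{O}(d)$ for some $d \geq 1$, and comparing the semiample fibrations / ring exponents forces the primitive generator of the relevant ray to be $L$ itself, i.e. $d = f$ and $\phi^*\mathcal{O}(1) \sim L$. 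The main obstacle I expect is precisely this last matching of normalisations: pinning down that $\phi^*\mathcal{O}_{\mathbb{P}^1}(1)$ recovers $L$ (not a proper multiple) requires care with the exponent $f$ of $M(X,L)$ and with the fact that on a surface with a genuine fibration the general fibre $F$ is primitive in $\mathrm{Pic}$ modulo the relevant equivalence when $L$ is taken primitive; I would handle this by working with the fibration up to the obvious equivalence (two semiample bundles giving the same fibration are identified) rather than with individual line bundles, which is the natural formulation of the statement.
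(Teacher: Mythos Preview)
Your proposal is correct and follows essentially the same approach as the paper: pull back $\mathcal{O}_{\mathbb{P}^1}(1)$ in one direction, and invoke the semiample fibrations theorem together with a dimension count in the other. In fact you are more careful than the paper, which stops after constructing the two maps and does not discuss the normalisation issue or the exclusion of $\mathcal{O}_X$; your observation that the correspondence is most naturally phrased up to the obvious equivalence (fibrations up to isomorphism of the base, line bundles up to positive multiples on the same ray) is the right way to resolve the ambiguity the paper leaves implicit.
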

\begin{proof}
Let $\phi : X \to C$ be a fibration. Via Stein factorization, we may assume that $C$ is normal and hence smooth. Because $q=0$ we have $C \cong \mathbb{P}^1.$ By pulling back $\mathcal{O}_{\mathbb{P}^1}(1)$ to $X,$ we obtain a semiample line bundle whose self-intersection is zero. Conversely, let $L$ be a semiample line bundle with self-intersection number zero. From Theorem \ref{semiamplefibrations}, we see that there is a morphism $\phi : X \to Y.$ Because $L^2=0,$ we see that $Y$ is a curve. Therefore we obtain a fibration.
\end{proof}

\begin{proposition}\label{Pic=1,2}
Let $X$ be a Mori dream surface with $\rho = 2.$ Then there is a morphism $X \to X_0$ where $X_0$ is a normal projective $\mathbb{Q}$-factorial variety such that $\rho(X_0)=1$ or $\rho(X)=2$ with a finite morphism to $\mathbb{P}^1 \times \mathbb{P}^1.$
\end{proposition}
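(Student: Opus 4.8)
The statement is essentially the Picard-number-$2$ case of Theorem~\ref{contraction:proof1}, so the plan is to run the combinatorial contraction procedure starting from $X$ and track what happens to the Picard number. Since $X$ is a Mori dream surface, $\Eff(X)\subset\Cl(X)_\mathbb{Q}$ is a polyhedral cone of dimension $\rho(X)=2$, so it has exactly two extremal rays. If neither extremal ray is combinatorially contractible, then $X$ is already combinatorially minimal, and by the Lemma preceding Theorem~\ref{contraction:proof1} (the one asserting $\rho\leq 2$ for combinatorially minimal surfaces and the existence of a finite morphism to $\mathbb{P}^1\times\mathbb{P}^1$ when $\rho=2$) we may take $X_0=X$ and we are done. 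So assume some extremal ray is combinatorially contractible; by the Lemma characterizing combinatorially contractible classes of curves, the corresponding effective generator is (a multiple of) a negative curve $C$ on $X$.

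First I would contract $C$: by Sakai's contraction theorem, since $C^2<0$ the curve $C$ can be contracted to a normal point, giving a birational morphism $X\to X'$ with $X'$ normal and projective. One checks that $X'$ is again $\mathbb{Q}$-factorial and a Mori dream surface (this is where I would cite the combination of \cite[Theorem 9.3]{Okawa} and \cite[Theorem 2.5]{AHL} used in the proof of Theorem~\ref{contraction:proof1}, since contracting a negative curve preserves both finite generation of the Cox ring and $\mathbb{Q}$-factoriality of a normal projective surface). By \cite[Proposition II.6.5]{Hartshorne}, contracting an irreducible curve drops the Picard number by one, so $\rho(X')=1$. A normal projective $\mathbb{Q}$-factorial surface of Picard number $1$ is automatically combinatorially minimal (its effective, nef and semiample cones are all the single ray spanned by an ample class), so setting $X_0:=X'$ gives the conclusion with $\rho(X_0)=1$.

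The only remaining scenario to rule out is one that cannot actually occur: both extremal rays of $\Eff(X)$ being combinatorially contractible at the same time would force contracting two negative curves and landing at $\rho=0$, which is impossible for a nonempty projective surface; but in fact once we contract one negative curve we are already at $\rho=1$ and hence combinatorially minimal, so the process terminates after a single contraction. Thus the dichotomy in the statement is exactly: either no extremal ray of $\Eff(X)$ is combinatorially contractible, in which case $X=X_0$ is combinatorially minimal with $\rho(X_0)=2$ and a finite morphism to $\mathbb{P}^1\times\mathbb{P}^1$ by the earlier Lemma; or at least one is, in which case one contraction brings us to $X_0$ with $\rho(X_0)=1$.

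The main obstacle—really the only nonformal point—is verifying that the contracted surface $X'$ remains a Mori dream surface and $\mathbb{Q}$-factorial, i.e.\ that the class of Mori dream surfaces is closed under contraction of a negative curve spanning an extremal ray of the effective cone. This is handled by invoking the cited results of Okawa and of Artebani--Hausen--Laface exactly as in the proof of Theorem~\ref{contraction:proof1}; everything else (the polyhedrality of $\Eff(X)$, the identification of extremal effective curve classes with negative curves, the drop in Picard number, and the triviality of the $\rho=1$ case) is immediate from the material already established above.
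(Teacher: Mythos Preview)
Your argument is correct, but it takes a different route from the paper's. The paper argues from the nef side: it takes the two extremal nef divisors $D_1,D_2$ (which are semiample since $X$ is Mori dream) and splits into cases according to whether some $D_i^2>0$. If $D_1^2>0$, the semiample fibration $\phi_{|mD_1|}:X\to X_0$ is a birational morphism onto a normal projective surface contracting the opposite extremal effective curve $E_1$, giving $\rho(X_0)=1$ directly. If $D_1^2=D_2^2=0$, each $D_i$ defines a fibration $X\to\mathbb{P}^1$, and the product map $X\to\mathbb{P}^1\times\mathbb{P}^1$ is finite because in this case $\Eff(X)=\Nef(X)$, so there is no negative curve to contract; here $X_0=X$. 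You instead argue from the effective side: either no extremal ray of $\Eff(X)$ is combinatorially contractible, so $X$ is already combinatorially minimal and you invoke the Section~3 Lemma; or one extremal ray is a negative curve, and you contract it via Sakai.

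What each approach buys: the paper's route is more self-contained and constructive, which is the stated purpose of this second proof in Section~4 (an explicit way to produce $X_0$, independent of the Hausen machinery and the Section~3 lemmas). Your route is logically fine but leans on the Section~3 Lemma for the $\rho=2$ case and on Sakai plus Okawa/AHL for the $\rho=1$ case, so it is closer in spirit to re-deriving the result from Theorem~\ref{contraction:proof1}. One small point worth tightening: Sakai's contraction gives a normal surface, but you need $X_0$ projective before invoking \cite[Theorem~9.3]{Okawa}; the clean way to get this is exactly the paper's observation that the contraction coincides with the semiample fibration of the extremal nef divisor orthogonal to your negative curve, so projectivity is automatic. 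Also, your third paragraph about both extremal rays being simultaneously contractible is unnecessary: that situation can occur (two disjoint negative curves), but contracting either one already lands at $\rho=1$, as you note at the end.
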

\begin{proof}
Let $D_1, D_2$ be two nef divisors such that $[D_1], [D_2]$ lie on the two extremal rays in $\Nef(X)$. Let $E_1, E_2$ be two effective divisors such that $[E_1], [E_2]$ lie on the two extremal rays in $\Eff(X)$ such that $D_i \cdot E_i=0$ for $i=1,2$. Suppose that one of $D^2_i > 0$. Without loss of generality we may assume that $D^2_1 > 0.$ Then multiple of $D_1$ gives a morphism $X \to X_0$ contracting $E_1$. Suppose that $D^2_1=D^2_2=0$. Then multiples of $D_i$ induces a morphism $X \to \mathbb{P}^1$ and these two fibrations induce a morphism $X \to \mathbb{P}^1 \times \mathbb{P}^1$. Then we can show that $\Eff(X) = \Nef(X) = \SAmp(X)$ and especially, we can see that there is no negative curve on $X.$ We see that the morphism $X \to \mathbb{P}^1 \times \mathbb{P}^1$ is a quasi-finite morphism since there is no negative curve on $X.$ Because $X$ is projective, we see that the morphism $X \to \mathbb{P}^1 \times \mathbb{P}^1$ is projective.
\end{proof}

\begin{lemma}
Let $\sigma$ be a face of codimension two and let $F_1, F_2$ be two facets of $\Eff(X)$ containing $\sigma$. Let $D_i$ be a nef divisor which is orthogonal to $F_i$. Suppose that $D^2_1=D^2_2=0$. Then there is a morphism $X \to \mathbb{P}^1 \times \mathbb{P}^1$ contracting $\rho-2$ curves. 
\end{lemma}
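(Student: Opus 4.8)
The plan is to combine the two facet-orthogonal nef divisors $D_1,D_2$ into a single semiample divisor and invoke the semiample fibration theorem twice. First I would note that since $\sigma$ is a face of codimension two of the polyhedral cone $\Eff(X)$, the dual cone $\Eff(X)^\vee$ has a corresponding face of complementary dimension, and $\sigma^\vee \cap \sigma^\perp$ inside $\Nef(X)=\SAmp(X)$ is spanned by $[D_1],[D_2]$; in particular each $D_i$ lies on an extremal ray of $\Nef(X)$ by the Lemma proved above. Because $X$ is a Mori dream surface, both $D_1$ and $D_2$ are semiample, and since we are assuming $D_1^2=D_2^2=0$, each induces (a multiple giving) a fibration $\phi_i:X\to\mathbb P^1$ by the Proposition identifying fibrations with self-intersection-zero semiample bundles together with $q(X)=0$ (here I would record that a Mori dream surface has $q=0$, which is standard since $\Cl(X)$ is finitely generated and $\Pic^0(X)$ must be trivial).

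Next I would take the product map $f=(\phi_1,\phi_2):X\to\mathbb P^1\times\mathbb P^1$. Its image is a closed subvariety; it cannot be a curve, because $\phi_1$ and $\phi_2$ correspond to independent rays of $\Nef(X)$ (their classes $[D_1]$ and $[D_2]$ are linearly independent since $\rho(X)\ge 2$ and they span distinct extremal rays), so $f$ is surjective onto $\mathbb P^1\times\mathbb P^1$. Then I would analyze which curves $f$ contracts: a curve $C$ is contracted by $f$ iff it is contracted by both $\phi_1$ and $\phi_2$, i.e. iff $D_1\cdot C = D_2\cdot C = 0$. Using the Proposition that a semiample fibration contracts exactly the negative curves meeting the defining divisor in zero, the curves contracted by $f$ are precisely the negative curves $C$ with $[C]$ lying in the codimension-two face $\sigma$ of $\Eff(X)$ (a curve with $[C]\in\sigma$ satisfies $D_1\cdot C=D_2\cdot C=0$; conversely a contracted curve has class pairing to zero against $F_1$ and $F_2$, hence lies in $\sigma$). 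Counting these: after performing the Stein factorization and contracting, the Picard number drops by exactly one for each negative curve contracted, and $f$ factors through the contraction of all of them followed by a finite morphism to $\mathbb P^1\times\mathbb P^1$ whose target has $\rho=2$; since $\rho(X)=\rho$, exactly $\rho-2$ curves get contracted.

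The main obstacle I expect is making rigorous the claim that $f$ contracts \emph{exactly} $\rho-2$ curves, rather than merely at least one or finitely many. The upper bound $\rho-2$ follows because the contracted curves have classes spanning a subspace of the $(\rho-2)$-dimensional linear span of $\sigma$, and distinct negative curves in a Mori dream surface have linearly independent classes (they sit on distinct extremal rays of $\Eff(X)$, being combinatorially contractible by the Lemma relating negative curves to combinatorially contractible classes). The lower bound requires knowing the finite morphism $X_0\to\mathbb P^1\times\mathbb P^1$ produced by Stein-factoring $f$ has a source with $\rho=2$; this I would get from the converse direction (5) of Theorem \ref{contraction:proof1}, since such an $X_0$ is combinatorially minimal with $\rho(X_0)=2$, forcing the number of blow-downs in $X\to X_0$ to be $\rho-2$ by the Picard-number bookkeeping $\rho(X_i)=\rho(X_{i-1})+1$. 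A secondary technical point is justifying that the target of the product fibration is really all of $\mathbb P^1\times\mathbb P^1$ and that $f$ is a genuine morphism (not just rational), which follows since each $\phi_i$ is a morphism and $\mathbb P^1\times\mathbb P^1$ is their categorical product in an evident sense.
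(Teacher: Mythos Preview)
Your proposal is correct and follows essentially the same approach as the paper: build the product map $f=(\phi_1,\phi_2):X\to\mathbb P^1\times\mathbb P^1$ from the two semiample fibrations determined by $D_1,D_2$, and identify the contracted curves as the negative curves lying in the codimension-two face $\sigma$. The paper's own proof is much terser---it simply observes that $\sigma$, being $(\rho-2)$-dimensional, contains at least $\rho-2$ linearly independent negative curves and that these are contracted by $f$---so your careful discussion of surjectivity and of the exact count (via Stein factorization and Picard-number bookkeeping) goes beyond what the paper records, though it is not needed for the subsequent application in Proposition~\ref{contr}.
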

\begin{proof}
Because $\sigma$ is of codimension two, there are at least $\rho-2$ linearly independent curves in $\Eff(X)$. The morphism $X \to \mathbb{P}^1 \times \mathbb{P}^1$ contracts negative curves in $\sigma$.
\end{proof}

\begin{proposition}\cite[Proposition 1.1]{AL}
Let $X$ be a smooth projective surface with $\rho(X) \geq 3$ such that $\Eff(X)$ is rational polyhedral. Then we have 
$$ \Eff(X) = \sum_{E ~ : ~ \mathrm{negative ~ curves}} \mathbb{R}_{\geq 0} [E] $$
\end{proposition}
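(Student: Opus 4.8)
The plan is to show that every extremal ray of $\Eff(X)$ is generated by the class of a negative curve. Since $X$ is projective, $\Eff(X)$ is a pointed rational polyhedral cone, hence equals the conical hull of its extremal rays, so this claim yields the stated equality at once (the reverse inclusion being trivial, as $[E]\in\Eff(X)$ for every negative curve $E$). The first step is to see that an extremal ray is generated by an \emph{irreducible} curve: being a rational ray it contains a nonzero class $[D]$ with $D$ effective; writing $D=\sum_i a_i\Gamma_i$ with the $\Gamma_i$ prime and $a_i>0$, each $[\Gamma_i]$ lies in $\Eff(X)$, so by extremality every $[\Gamma_i]$ lies on the ray, which is therefore $\mathbb{R}_{\ge 0}[\Gamma_1]$.

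It then remains to prove $C^2<0$ for such an irreducible curve $C$. If $C^2>0$, then Riemann--Roch shows $h^0(X,mC)$ grows quadratically in $m$, so $C$ is big; but big classes are interior points of $\Eff(X)$, while an extremal ray of a pointed cone of dimension $\rho(X)\ge 2$ lies on the boundary --- a contradiction. If $C^2=0$, then $C$, being irreducible with non--negative self--intersection, is nef, so $C^{\perp}$ is a supporting hyperplane of $\Eff(X)$ and $\Nef(X)\cap C^{\perp}$ is exactly the face of $\Nef(X)$ (the dual cone of $\Eff(X)$) dual to the ray $\mathbb{R}_{\ge 0}[C]$. By the order--reversing face correspondence and the dimension formula recalled above, $\dim\bigl(\Nef(X)\cap C^{\perp}\bigr)=\rho(X)-1\ge 2$, so there is a nef class $N$ with $N\cdot C=0$ that is not proportional to $C$. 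Being nef, $N^2\ge 0$; and $N^2>0$ is impossible, since then the Hodge index theorem makes $N^{\perp}$ negative definite, contradicting $0\ne C\in N^{\perp}$ with $C^2=0$. Hence $N^2=0$, and then $\langle C,N\rangle$ is a two--dimensional totally isotropic subspace of $\NS(X)_{\mathbb{R}}$ --- impossible, because the intersection form has signature $(1,\rho(X)-1)$, hence Witt index $1$. This contradiction gives $C^2<0$, completing the argument.

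I expect the case $C^2=0$ to be the only real obstacle: its content is that extremality of $[C]$, transported through cone duality, forces the dual nef face $\Nef(X)\cap C^{\perp}$ to have dimension at least two once $\rho(X)\ge 3$, which the Hodge index theorem forbids for an isotropic class. The other ingredients --- passing to irreducible generators of extremal rays, and excluding $C^2>0$ via bigness --- are routine. One could instead treat the $C^2=0$ case by running a semiample fibration through $C$ as in the rest of this section, but that uses semiampleness of a nef divisor, which is not available for an arbitrary surface with polyhedral effective cone; the Hodge index argument sidesteps this.
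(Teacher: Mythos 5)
Your argument is correct; note that the paper itself gives no proof of this statement (it is quoted from \cite{AL}), and your reasoning --- extremal rays are spanned by irreducible curves, $C^{2}>0$ is excluded because big classes are interior, and $C^{2}=0$ is excluded because the dual nef face has dimension $\rho(X)-1\ge 2$ while the Hodge index theorem forbids a two--dimensional isotropic subspace --- is essentially the standard proof given in that reference. The only hypothesis you should make explicit is that $\Eff(X)$, being polyhedral, is closed, so that it coincides with the pseudoeffective cone and $\Nef(X)=\Eff(X)^{\vee}$, which is what licenses both the ``big $\Rightarrow$ interior'' step and the face--duality dimension count.
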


\begin{proposition}\label{contr}
Let $X$ be a Mori dream surface with $\rho \geq 3.$ Then there is a morphism $X \to X_0$ such that $\rho(X_0)=1$ or $\rho(X_0)=2$ and $X_0$ has a finite morphism to $\mathbb{P}^1 \times \mathbb{P}^1.$
\end{proposition}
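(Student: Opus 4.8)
The plan is to argue exactly as in the proof of Theorem~\ref{contraction:proof1}, but purely surface-theoretically: repeatedly contract a negative curve spanning an extremal ray of the effective cone, lowering the Picard number by one at each step, until one reaches a Mori dream surface of Picard number two, and then invoke Proposition~\ref{Pic=1,2}. First I would note that since $X$ is a Mori dream surface with $\rho(X)\geq 3$, it cannot be combinatorially minimal: by the Lemma stating that a combinatorially minimal Mori dream surface has Picard number at most two, a Mori dream surface with $\rho\geq 3$ admits a combinatorially contractible divisor class, which by the Lemma in \S\ref{s3} relating negative curves and combinatorial contractibility is represented by a negative curve $E_1$ (automatically spanning an extremal ray of $\Eff(X)$).

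Next, by Sakai's theorem $E_1$ can be contracted to a normal point, yielding a proper birational morphism $\pi_1\colon X\to X_1$ of normal projective surfaces with $\Exc(\pi_1)=E_1$. As in the proof of Theorem~\ref{contraction:proof1}, $X_1$ is again a Mori dream surface: it is normal and projective, it is the image of the Mori dream surface $X$ under a morphism so \cite[Theorem 9.3]{Okawa} applies, and \cite[Theorem 2.5]{AHL} then gives that $X_1$ is projective and $\mathbb{Q}$-factorial with finitely generated Cox ring. Since a single irreducible curve is contracted to a point, $\rho(X_1)=\rho(X)-1$ by the argument of \cite[Proposition II.6.5]{Hartshorne}. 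Iterating, as long as $\rho(X_i)\geq 3$ the same reasoning produces a negative curve on the Mori dream surface $X_i$ and a contraction $\pi_{i+1}\colon X_i\to X_{i+1}$ with $\rho(X_{i+1})=\rho(X_i)-1$; because the Picard number drops by one each time, after finitely many steps we reach a Mori dream surface $X_j$ with $\rho(X_j)=2$ (if some intermediate $X_i$ already has no negative curve, it is combinatorially minimal and, again by the Lemma preceding Theorem~\ref{contraction:proof1}, has $\rho\leq 2$, so we are done at that stage too).

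Finally, applying Proposition~\ref{Pic=1,2} to $X_j$ gives a morphism $X_j\to X_0$ onto a normal projective $\mathbb{Q}$-factorial surface $X_0$ with $\rho(X_0)=1$, or with $\rho(X_0)=2$ admitting a finite morphism to $\mathbb{P}^1\times\mathbb{P}^1$. Composing $X\to X_1\to\cdots\to X_j\to X_0$ yields the desired morphism, which proves the Proposition.

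The step I expect to be the main obstacle is verifying that each contraction of a negative curve again produces a Mori dream surface---in particular that $\mathbb{Q}$-factoriality and finite generation of the Cox ring survive, which is not obvious since contracting a negative curve of positive arithmetic genus is a priori delicate; this is precisely where one leans on \cite[Theorem 9.3]{Okawa} together with \cite[Theorem 2.5]{AHL} rather than on a local class-group computation, exactly as in Theorem~\ref{contraction:proof1}. A more direct alternative would avoid the induction: choose a codimension-two face $\sigma$ of $\Eff(X)$, two facets $F_1,F_2\supset\sigma$, and extremal nef (hence semiample) divisors $D_i$ orthogonal to $F_i$; if some $D_i^2>0$ a suitable multiple of $D_i$ contracts all the negative curves with class in $F_i$ and lowers $\rho$, while if $D_1^2=D_2^2=0$ the Lemma preceding this Proposition gives a morphism $X\to\mathbb{P}^1\times\mathbb{P}^1$ whose Stein factorization $X\to X_0\to\mathbb{P}^1\times\mathbb{P}^1$ has $\rho(X_0)=2$ with finite second map. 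I would nonetheless use the inductive argument above, since it parallels the proof of Theorem~\ref{contraction:proof1} most closely and keeps the bookkeeping on $\rho$ transparent.
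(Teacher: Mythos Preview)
Your inductive argument is correct, but it is \emph{not} the proof the paper gives for this Proposition. The paper's second proof of Theorem~\ref{contraction} is meant to be a direct, one-shot construction that avoids the step-by-step contraction you carry out; what you wrote is essentially a surface-theoretic reworking of the \emph{first} proof (Theorem~\ref{contraction:proof1}) with Hausen's machinery replaced by Sakai's contraction and Okawa/AHL.

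The paper's proof of this Proposition proceeds instead by a single dichotomy on the nef cone. For any facet $F$ of $\Eff(X)$ there is (by the Lemma just before this Proposition) an extremal nef, hence semiample, class $D_F$ with $D_F\cdot F=0$. If some $D_F$ has $D_F^2>0$, then the semiample fibration $\phi_{|mD_F|}\colon X\to X_0$ contracts the $\rho-1$ negative curves spanning $F$ (using Proposition~[AL] that $\Eff(X)$ is generated by negative curves when $\rho\ge 3$), so $\rho(X_0)=1$ directly. If instead \emph{every} such $D_F$ has $D_F^2=0$, one picks two adjacent facets $F_1,F_2$ meeting in a codimension-two face $\sigma$, obtains two fibrations $X\to\mathbb{P}^1$, and the resulting morphism $X\to\mathbb{P}^1\times\mathbb{P}^1$ contracts the $\rho-2$ negative curves in $\sigma$; Stein factorization gives $X_0$ with $\rho(X_0)=2$ and a finite map to $\mathbb{P}^1\times\mathbb{P}^1$. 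This is precisely the ``direct alternative'' you sketched at the end of your proposal and then set aside.

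What each approach buys: the paper's argument produces $X_0$ in one step from a semiample divisor, so projectivity, normality, and $\mathbb{Q}$-factoriality of $X_0$ come for free from the semiample fibration theorem and \cite{Okawa,AHL} applied once, and it makes the Remark after the theorem (how to detect whether $\rho(X_0)=1$ or $2$ from the self-intersections of extremal nef classes) immediate. Your inductive argument is conceptually cleaner in that it reduces everything to Proposition~\ref{Pic=1,2}, but it requires checking at each step that the contraction of a single negative curve stays projective and Mori dream; you handle this correctly by invoking \cite{Okawa} and \cite{AHL}, though the projectivity of Sakai's contraction is asserted rather than argued (it follows since a suitable $H+tE_1$ with $H$ ample descends to an ample class).
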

\begin{proof}
Let $F$ be a facet of $\Eff(X)$ and let $D_F \in \Nef(X)$ is a nonzero vector which is orthogonal to $F$. The nef cone $\Nef(X)$ of $X$ is a rational polyhedral cone generated by semiample divisors. Suppose that $D_F^2 > 0.$ Then a sufficiently large multiple of $D_F$ defines a morphism $X \to X_0$ which contracts these $\rho-1$ negative curves. We know that $X_0$ is normal $\mathbb{Q}$-factorial from \cite{AHL, Okawa, Sakai}. Hence $X_0$ is a combinatorially minimal Mori dream surface with $\rho=1$. Again, we know that $X_0$ is normal $\mathbb{Q}$-factorial from \cite{AHL, Okawa, Sakai}. Therefore we obtain the desired conclusion. \\

Now suppose that all extremal rays of $\Nef(X)$ of the form $D_F$ above are divisors with self-intersection zero. Let $\sigma$ be a face of codimension two and let $F_1, F_2$ be two facets of $\Eff(X)$ whose intersection is $\sigma.$ Let $D_i \in \Nef(X)$ be a nonzero nef divisor which is orthogonal to $F_i.$ Then $D_1, D_2$ are two adjacent extremal rays of $\Nef(X)$ and let $\phi_i : X \to \mathbb{P}^1$ be the morphism obtained from sufficiently large multiple of $D_i.$ Then we have a morphism $\phi=(\phi_1, \phi_2) : X \to \mathbb{P}^1 \times \mathbb{P}^1.$ There are $\rho-2$ negative curves which are orthogonal to $D_1, D_2.$ These negative curves are contracted by $f.$
\end{proof}

From the above discussion, we have another proof of the following theorem.

\begin{theorem}
Let $X$ be a smooth Mori dream surface. Then \\
(1) We can contract negative curves from $X$ to obtain a combinatorial minimal Mori dream surface $X_0$ as follows. 
$$ X=X_n \to X_{n-1} \to \cdots \to X_0. $$
(2) The surface $X_0$ is normal projective $\mathbb{Q}$-factorial variety and the Picard number of $X_0$ is one or two. \\
(3) If the Picard number of $X_0$ is two, then $X_0$ has a finite morphism to $\mathbb{P}^1 \times \mathbb{P}^1.$ \\
(4) If $X$ is a smooth minimal surface then $X$ is the minimal resolution of $X_0.$ \\
(5) Conversely, if $X_0$ is a $\mathbb{Q}$-factorial surface with $\rho=1$ or $\rho=2$ having a finite morphism to $\mathbb{P}^1 \times \mathbb{P}^1,$ $X_0$ is a combinatorially minimal Mori dream surface.
\end{theorem}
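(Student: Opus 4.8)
The plan is to reprove Theorem~\ref{contraction} entirely within surface theory, reorganizing the material of \S\ref{s4} into a single coherent argument that parallels, but does not invoke, Hausen's machinery. I would treat assertion (5) first since it is the easiest and is needed for the descent in (1)--(3): for $\rho(X_0)=1$ the cones $\Eff(X_0)$, $\Nef(X_0)$, $\SAmp(X_0)$ all coincide with the single ray spanned by an ample class, so combinatorial minimality and the Mori dream property (via the Artebani--Hausen--Laface criterion, Proposition from \S\ref{s3}) are immediate; for $\rho(X_0)=2$ with a finite morphism $g\colon X_0\to\mathbb{P}^1\times\mathbb{P}^1$, I would pull back $\calO(1,0)$ and $\calO(0,1)$ to get semiample $D_1,D_2$ with $D_1^2=D_2^2=0$ and $D_1\cdot D_2>0$, then run the chain of inclusions $\mathbb{R}_{\geq0}\la D_1,D_2\ra\subset\SAmp\subset\Nef\subset\mathbb{R}_{\geq0}\la D_1,D_2\ra$ (the last inclusion by intersecting a nef class with the effective $D_1,D_2$) and the dual chain for $\Eff(X_0)$, concluding $\Eff=\Nef=\SAmp=\mathbb{R}_{\geq0}\la D_1,D_2\ra$; hence $X_0$ is Mori dream and, having no negative curve, combinatorially minimal.

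For assertions (1)--(3) I would argue by induction on $\rho(X)$. If $X$ is already combinatorially minimal we are done by the preceding Lemma, which forces $\rho(X)\leq 2$ and produces the finite morphism to $\mathbb{P}^1\times\mathbb{P}^1$ in the rank-two case. Otherwise $X$ carries a combinatorially contractible class, which by the Lemma relating combinatorially contractible classes to negative curves is represented by a negative curve $C$; by Sakai's contraction theorem $C$ contracts to a normal point, giving $\pi\colon X\to X'$ with $X'$ normal projective, and $\rho(X')=\rho(X)-1$. The two points that need care here are that $X'$ is again $\mathbb{Q}$-factorial and again a Mori dream surface: $\mathbb{Q}$-factoriality follows because $X$ is Mori dream hence $\mathbb{Q}$-factorial and $\pi$ is a birational contraction of an extremal curve, and the Mori dream property is inherited downward (one can cite \cite{Okawa} together with the Artebani--Hausen--Laface criterion, exactly as in the proof of Theorem~\ref{contraction:proof1}). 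Induction then terminates at a combinatorially minimal $X_0$; the Lemma on combinatorially minimal surfaces gives $\rho(X_0)\in\{1,2\}$ and the finite morphism to $\mathbb{P}^1\times\mathbb{P}^1$ when $\rho(X_0)=2$. I would also record the more hands-on alternative already in the excerpt: when $\rho(X)\geq 3$, picking a facet $F$ of $\Eff(X)$ and the orthogonal nef class $D_F$, either $D_F^2>0$ and a large multiple contracts the $\rho-1$ negative curves spanning $F$ down to a $\rho=1$ surface, or all such $D_F$ have square zero and a codimension-two face gives two fibrations $\phi_i\colon X\to\mathbb{P}^1$ whose product contracts the $\rho-2$ negative curves in that face; Proposition~\ref{contr} packages this.

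Assertion (4) is then essentially formal: the sequence $X=X_n\to\cdots\to X_0$ exhibits $X$ as a resolution of $X_0$, and since each contracted curve is a negative curve, if moreover $X$ is minimal (no $(-1)$-curves) then the composite $X\to X_0$ factors through the minimal resolution $X'\to X_0$ with $X\to X'$ a birational morphism of smooth minimal surfaces, hence an isomorphism. One subtlety worth flagging: in the general (possibly singular) statement of Theorem~\ref{contraction:proof1} the intermediate $X_i$ need not be smooth even if $X$ is, so "minimal resolution'' must be read in the sense of resolving the normal singularities of $X_0$; for the smooth $X$ we care about in this paper this is harmless.

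The main obstacle, in my estimation, is the rank-two step: showing that when $\rho(X_0)=2$ a combinatorially minimal surface actually admits a \emph{finite} (not merely quasi-finite) morphism to $\mathbb{P}^1\times\mathbb{P}^1$, and that the two extremal nef classes genuinely have self-intersection zero rather than one of them being big. The square-zero dichotomy has to be extracted from combinatorial minimality: if an extremal nef $D$ had $D^2>0$ then the associated semiample fibration $\phi\colon X_0\to Y$ would have $Y$ a surface, and the Proposition on semiample fibrations contracting negative curves would produce a curve contracted by $\phi$, contradicting combinatorial minimality unless $\phi$ is finite --- and a finite morphism onto a surface that is an extremal contraction on a $\rho=2$ surface forces $X_0$ to already sit over $Y$ with $\rho(Y)=1$, which one must then reconcile with having \emph{two} distinct extremal rays. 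Making this last reconciliation airtight (ruling out, e.g., one big extremal ray coexisting with a fibration) is where I expect to spend the most effort; once both extremal nef classes are known to be square-zero, the passage to $\mathbb{P}^1\times\mathbb{P}^1$ and the finiteness (by properness plus quasi-finiteness, as in Proposition~\ref{Pic=1,2}) are routine.
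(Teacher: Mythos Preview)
Your proposal is correct and tracks the paper's surface-theoretic proof (the one assembled in \S\ref{s4} from Proposition~\ref{Pic=1,2} and Proposition~\ref{contr}) quite closely. The only organizational difference is that you lead with an induction on $\rho$ (contract one negative curve, invoke \cite{Okawa} and the Artebani--Hausen--Laface criterion to retain the Mori dream property, repeat), whereas the paper's second proof contracts $\rho-1$ or $\rho-2$ curves in a single step via the facet argument you record only as the ``hands-on alternative''. Both routes land on the same Lemma characterizing combinatorially minimal surfaces, and your treatment of (4) and (5) coincides with the paper's.

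Your flagged obstacle at the end is easier than you fear, and it is worth seeing why. The semiample fibration $\phi\colon X_0\to Y$ of Theorem~\ref{semiamplefibrations} is by construction an \emph{algebraic fiber space}, i.e.\ $\phi_*\calO_{X_0}=\calO_Y$. If $\phi$ were finite it would therefore be an isomorphism of normal varieties, making $D=\phi^*A$ ample --- impossible for a class on an extremal ray of $\Nef(X_0)$ once $\rho(X_0)\geq 2$. So an extremal nef $D$ with $D^2>0$ automatically forces a genuinely contracted curve, and Hodge index on $D^{\perp}$ makes that curve negative, contradicting combinatorial minimality. There is no delicate ``reconciliation'' with a second extremal ray to perform; the paper's Lemma in \S\ref{s3} simply takes this for granted when it says that a map to a surface must contract a negative curve. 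Once both extremal nef classes are square-zero, your finiteness argument (quasi-finite plus projective) is exactly what the paper does.
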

\begin{proof}
The assertions (1), (2) and (3) follow from Proposition \ref{Pic=1,2} and Proposition \ref{contr}. The proofs of assertions (4) and (5) are the same as Theorem \ref{contraction:proof1}.
\end{proof}

From the proof of the above theorem, we can see the following.

\begin{remark}
(1) To obtain $X_0$ with $\rho(X_0)=1,$ we need to find a nef divisor $D_F$ which is orthogonal to a facet $F \subset \Eff(X)$ such that $D^2_F > 0.$ \\
(2) To obtain $X_0$ with $\rho(X_0)=2,$ we need to find two adjacent facets $F_1, F_2 \subset \Eff(X)$ such that the corresponding nef divisors $D_{F_1}, D_{F_2}$ satisfy $D^2_{F_1}=D^2_{F_2}=0.$
\end{remark}

We will apply the above observation to construct combinatorially minimal models of examples of Mori dream surfaces of general types found in \cite{Keum-Lee1}.

\subsection{Examples of Mori dream surfaces of general type with $p_g=0$}

It is well-known that weak del Pezzo surfaces are Mori dream surfaces. Among minimal surfaces with Kodaira dimension 0, K3 surfaces and Enriques surfaces having finite automorphism groups are Mori dream surfaces. For surfaces of general type, we provided several examples of Mori dream surfaces in our previous paper \cite{Keum-Lee1}.

\begin{theorem}\cite{Keum-Lee1}\label{Keum-Lee1main}
The following minimal surfaces of general type with $p_g=0$ are Mori dream surfaces and their negative curves are listed as follows. Here  $m(C^2, p_a(C))$ means $m$ copies of $(C^2, p_a(C)).$
\begin{enumerate}
\item Fake projective planes $($all have $K^2=9),$ none
\item Surfaces isogenous to a higher product of unmixed type $($all have $K^2=8),$ none
\item Inoue surfaces with $K^2=7,$ $2(-1,1), (-1,2)$
\item Surfaces with $K^2=7$ constructed by Y. Chen, $(-1,1), (-1,2), (-1,3), (-4,2)$ 
\item Kulikov surfaces with $K^2=6,$ $6(-1,1)$
\item Burniat surfaces with $2 \leq K^2 \leq 6$
\begin{enumerate}
\item $6(-1,1)$ if $K^2 = 6;$
\item $9(-1,1), (-4,0)$ if $K^2 = 5;$
\item $12(-1,1), 4(-4,0)$ if non-nodal with $K^2 = 4;$
\item $10(-1,1), 2(-4,0), (-2,0)$ if nodal with $K^2 = 4;$
\item $9(-1,1), 3(-4,0), 3(-2,0)$ if $K^2 = 3;$
\item $6(-1,1), 6(-2,0), 4(-4,0)$  if $K^2 = 2;$
\end{enumerate}
\item Product-quotient surfaces with $K^2=6, G=D_4 \times \mathbb{Z}/2\mathbb{Z},$ $2(-2, 0), (-1,1), (-1, 2)$
\item A family of Keum-Naie surfaces which are product-quotient surfaces with $K^2=4, G=\mathbb{Z}/4\mathbb{Z} \times \mathbb{Z}/2\mathbb{Z},$ $4(-1, 1), 4(-2, 0)$
\end{enumerate}
\end{theorem}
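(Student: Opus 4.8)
This is the main result of \cite{Keum-Lee1}; the plan I would follow to prove it is as follows. Each surface $X$ in the list is a smooth minimal surface of general type with $p_g=q=0$, so it is automatically $\mathbb{Q}$-factorial and projective, and $q=0$ forces $\Pic^0(X)=0$, whence $\Cl(X)=\Pic(X)=\NS(X)$ is finitely generated. By \cite[Corollary 2.6]{AHL} it then suffices to prove that $\Eff(X)$ is rational polyhedral and that $\Nef(X)=\SAmp(X)$. I would organize the work in three steps: (i) enumerate all negative curves on $X$; (ii) show they span $\Eff(X)$; (iii) verify that every extremal ray of $\Nef(X)$ is semiample.

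For step (i), nefness and bigness of $K_X$ give $K_X\cdot C = 2p_a(C)-2-C^2$ by adjunction for every reduced irreducible curve $C$, and the Hodge index inequality $C^2 K_X^2\le (K_X\cdot C)^2$ bounds $-C^2$ in terms of $K_X^2\le 9$, confining the numerical class of a negative curve to a finite set. To decide which of those classes actually occur, and with what multiplicities, I would use the explicit construction of $X$: fake projective planes and surfaces isogenous to a higher product of unmixed type are ball or bidisc quotients and carry no rational or elliptic curve at all, hence no negative curve; the Inoue, Chen, Kulikov, Burniat, $D_4\times\mathbb{Z}/2\mathbb{Z}$ and Keum--Naie surfaces are abelian (bi-)covers of rational surfaces --- blow-ups of $\mathbb{P}^2$ or of a Hirzebruch or Segre surface --- branched over explicit arrangements of lines and conics, or finite quotients of products of curves, so that the negative curves of $X$ are exactly the reduced components of the ramification divisor together with the preimages of the $(-1)$- and $(-2)$-curves of the base (respectively, the exceptional curves of the minimal resolution of the quotient singularities together with a few distinguished fibre classes). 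Computing self-intersections and arithmetic genera by the projection formula for the cover reproduces the tuples $m(C^2,p_a(C))$ tabulated.

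For steps (ii) and (iii) I would transport the cone data from the base. The rational surface (or product of curves) $Y$ underlying the construction has completely explicit and polyhedral $\Eff(Y)$, $\Nef(Y)$ and $\SAmp(Y)$, and the finite cover or finite quotient relating $Y$ to $X$ identifies $\Eff(X)_{\mathbb{Q}}$, $\Nef(X)_{\mathbb{Q}}$ and $\SAmp(X)_{\mathbb{Q}}$ with the corresponding invariant cones, so polyhedrality of $\Eff(X)$ and the equality $\Nef(X)=\SAmp(X)$ follow --- a nef class on $X$ pulls back to a semiample class and semiampleness descends under finite surjective pullback. Concretely each extremal nef ray of $X$ is either big, in which case it defines a birational morphism contracting exactly the negative curves orthogonal to it, or of self-intersection zero, in which case it comes from a base-point-free pencil on $Y$ and defines a fibration $X\to\mathbb{P}^1$. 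When $Y$ is itself already a Mori dream surface (the rational-surface cases) one may instead quote Okawa's theorem \cite[Theorem 9.3]{Okawa} to push the Mori dream property from $Y$ down to the quotient $X$, leaving only the extraction of the explicit shape of $\Eff(X)$ and of the negative-curve table.

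The hard part will be the completeness of the enumeration in step (i): showing that $X$ has no negative curve beyond those visible on the cover. The numerical bound alone does not close the list, so one must add geometric input specific to each family --- the structure of the bicanonical map, the explicitly known automorphism groups, and, for the Burniat surfaces, a separate argument on each irreducible component of the moduli space --- to exclude further rational curves. Once this list is closed, steps (ii) and (iii) are essentially bookkeeping on the base surface.
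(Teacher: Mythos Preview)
The present paper does not prove this theorem; it is quoted from \cite{Keum-Lee1} and the reader is referred there for the argument. Your outline is a faithful summary of the strategy actually carried out in \cite{Keum-Lee1}: one checks case by case that the finite cover $\pi:X\to Y$ over an explicit rational (or product-of-curves) base $Y$ induces an isomorphism $\pi^*:\NS(Y)_{\mathbb{Q}}\to\NS(X)_{\mathbb{Q}}$, and then pulls back the known polyhedral $\Eff(Y)$ and the equality $\Nef(Y)=\SAmp(Y)$ to $X$. The crucial numerical input is $\rho(X)=\rho(Y)$, which you use implicitly but which in \cite{Keum-Lee1} is verified family by family and is what makes the transport of cones succeed; without it the pullback would only land in a proper subspace of $\NS(X)_{\mathbb{Q}}$ and steps (ii)--(iii) would not close.

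One correction: your invocation of Okawa's theorem points the wrong way. Okawa \cite{Okawa} shows that \emph{images} of Mori dream spaces under surjective morphisms are Mori dream --- if $X\to Y$ is surjective and $X$ is Mori dream then so is $Y$ --- not that finite covers of Mori dream spaces are. In the rational-surface constructions $X$ is a bidouble (or more general abelian) cover of the weak del Pezzo $Y$, so the morphism runs $X\to Y$ and $X$ is not a quotient of $Y$; knowing that $Y$ is Mori dream does not formally yield that $X$ is via Okawa. The argument in \cite{Keum-Lee1} really does go through the identification $\NS(X)_{\mathbb{Q}}\cong\NS(Y)_{\mathbb{Q}}$ together with the explicit enumeration of negative curves as preimages of branch and exceptional components, exactly as in your steps (i)--(iii). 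Drop the Okawa shortcut and your sketch is sound.
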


Recently, Frapporti and the second named author studied effective, nef and semiample cones of surfaces isogenous to a product of mixed type and provided more examples of Mori dream surfaces of general type in \cite{FL}.

\begin{theorem}\cite{FL}
All surfaces isogenous to higher product with $p_g=0$ of mixed type are Mori dream surfaces. \end{theorem}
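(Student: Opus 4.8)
**Proof proposal for the statement that all surfaces isogenous to a higher product of mixed type with $p_g = 0$ are Mori dream surfaces.**

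The plan is to use the criterion of Artebani--Hausen--Laface (Theorem 2.5 in the excerpt): for a normal complete surface with finitely generated divisor class group, being a Mori dream space is equivalent to $\Eff(X)$ and $\Mov(X)$ being polyhedral with $\Mov(X) = \SAmp(X)$. So the first step is to recall the structure of a surface $X$ isogenous to a higher product of mixed type: it is a free quotient $X = (C \times C)/G$, where $C$ is a smooth curve of genus $\geq 2$ and $G$ acts on $C \times C$ freely, with an index-two subgroup $G^0$ acting diagonally (without exchanging factors) and $G \setminus G^0$ consisting of elements that do exchange the two factors. Passing to the étale double cover $Y = (C\times C)/G^0 \to X$, one has a surface of \emph{unmixed} type, which is much easier to handle. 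Since $p_g(X) = 0$ and $X$ is of general type, $\Cl(X) = \Pic(X) = \NS(X)$ is a finitely generated abelian group, and its free rank $\rho(X)$ is small; the irregularity $q(X) = 0$.

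The second step is to understand $\NS(X)$ explicitly. On the double cover $Y$, the group $\NS(Y)_{\mathbb{Q}}$ is spanned by the classes of the two fibrations $f_1, f_2 : Y \to C/G^0$ coming from the two projections of $C \times C$ (together with possibly some exceptional or ramification-type classes, but in the étale unmixed case with $p_g = 0$ the Picard number is essentially $2$ plus contributions of the base curve, which vanish since genus considerations force $C/G^0 \cong \mathbb{P}^1$ when the construction yields $p_g = q = 0$). On $X$ itself, the two projections are interchanged by the deck transformation, so they descend to a \emph{single} fibration $f : X \to \mathbb{P}^1$ (or to a map to a stacky/orbifold base), and $\rho(X)$ is typically $2$. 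The key structural fact to extract is that $X$ has at most finitely many negative curves — indeed the curves in the fibers of $f$, together with the finitely many bisections coming from the diagonal and graph loci of elements of $G$ — and that these negative curves generate $\Eff(X)$ as a rational polyhedral cone (using, when $\rho(X) \geq 3$, Proposition 1.1 of \cite{AL} quoted above, and a direct computation when $\rho(X) = 2$).

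The third step is to verify $\Nef(X) = \SAmp(X)$, i.e. that every nef divisor is semiample. When $\rho(X) = 2$, $\Nef(X)$ is a two-dimensional cone; one extremal ray is $[F]$, the fiber class of $f$, which is semiample (it is the pullback of $\mathcal{O}_{\mathbb{P}^1}(1)$); the other extremal ray is spanned by some effective divisor $D$, and one checks $D^2 \geq 0$ and, using that $K_X$ is ample and $X$ has no $(-1)$-curves (it is minimal of general type), that $D$ is big and nef with $D\cdot(\text{each negative curve}) \geq 0$, hence semiample by the base-point-free type arguments available for surfaces (Zariski decomposition plus the fact that the negative part of a nef divisor is zero). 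When $\rho(X) \geq 3$ one argues facet by facet on $\Eff(X)$ exactly as in the alternative proof of Theorem \ref{contraction:proof1} in \S\ref{s4}: each facet is orthogonal to an extremal nef class which is either big (giving a birational morphism contracting the negative curves in that facet) or has self-intersection zero (giving a fibration), and in either case semiampleness follows. Then $\SAmp(X) = \Nef(X)$, and combined with polyhedrality of $\Eff(X)$ the AHL criterion gives that $X$ is a Mori dream space.

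The main obstacle I expect is the second step: pinning down $\NS(X)$ and, in particular, proving that there are only finitely many negative curves and that they span $\Eff(X)$. This requires a genuinely hands-on analysis of which curves on $C \times C$ are $G$-invariant — fibers of the two projections, the diagonal, and graphs of automorphisms $\gamma \in \Aut(C)$ with $\gamma^2 \in G^0$ — together with computing their images and self-intersections downstairs, and checking there are no "accidental" negative curves. The mixed case is subtler than the unmixed one precisely because the deck involution folds the two projections together, so the fibration structure and the bisection classes interact; controlling this, possibly by first working on the unmixed double cover $Y$ and then descending, is where the real work lies. Once $\Eff(X)$ is shown to be polyhedral, the equality $\Nef = \SAmp$ is comparatively formal for surfaces.
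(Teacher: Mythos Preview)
The paper does not prove this theorem; it is quoted from \cite{FL}, and the only hint at the method is the line in \S\ref{s4} that ``the result follows [from] explicit descriptions of effective, nef, semiample cones in \cite{FL}.'' The argument there is case-by-case: mixed-type surfaces isogenous to a product with $p_g=0$ are classified into finitely many families (cf.\ \cite{Frapporti, BCGP}), and for each family the cones are written down explicitly by analysing which $G$-invariant divisor classes on $C\times C$ descend and how they intersect.

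Your general framework (the AHL criterion, polyhedral $\Eff$, and $\Nef=\SAmp$) is the right one, but your second step contains a real gap. The two projections of $C\times C$ are \emph{swapped} by the deck involution of the \'etale double cover $Y=(C\times C)/G^0\to X$, so they do not ``descend to a single fibration $f:X\to\mathbb{P}^1$'' as you claim; neither $f_1$ nor $f_2$ is $\tau$-invariant, and what descends is the class $F_1+F_2$, which is ample, not a fibre class. (Note also that $\chi(\mathcal{O}_Y)=2$, so typically $p_g(Y)=1$ and $\NS(Y)_{\mathbb{Q}}$ need not be spanned by the two fibre classes alone, so even the descent of $\NS$ is more delicate than your sketch suggests.) Your identification of the extremal rays of $\Nef(X)$ in the $\rho=2$ case therefore has no foundation, and the semiampleness argument built on it does not go through. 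These surfaces do in the end carry two fibrations---being combinatorially minimal with $\rho=2$, cf.\ Theorem~\ref{contraction}---but exhibiting them is precisely the non-trivial content of \cite{FL} and is done family by family, not by the uniform mechanism you propose.
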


We refer \cite{FL, Keum-Lee1} and references therein for more details about these surfaces. We will also use the same notations used in \cite{FL, Keum-Lee1}.

\subsection{Fake projective planes and fake quadrics}

We can easily see that all fake projective planes and some fake quadrics are combinatorially minimal Mori dream surfaces.

\begin{proposition}\cite{FL, Keum-Lee1}
Let $X$ be a fake projective plane or a reducible fake quadric. Then $X$ is a combinatorially minimal Mori dream surface.
\end{proposition}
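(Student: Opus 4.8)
The plan is to reduce both cases to part (5) of Theorem~\ref{contraction:proof1}, which asserts that a normal projective $\mathbb{Q}$-factorial surface with $\rho=1$, or with $\rho=2$ admitting a finite morphism to $\mathbb{P}^1\times\mathbb{P}^1$, is automatically a combinatorially minimal Mori dream surface (in particular the Mori dream property is part of the conclusion, so nothing extra need be checked there). All the surfaces in question are smooth and projective, hence $\mathbb{Q}$-factorial, so it remains only to control their Picard numbers and, in the rank-two case, to produce a finite morphism to a quadric.

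First I would dispose of the fake projective planes. Such an $X$ has $p_g=q=0$ and $b_2=1$, so $1\le\rho(X)\le h^{1,1}(X)=b_2(X)-2p_g(X)=1$, i.e. $\rho(X)=1$; thus $\Eff(X)=\Nef(X)=\SAmp(X)$ is the single ray spanned by an ample class, and Theorem~\ref{contraction:proof1}(5) immediately gives that $X$ is a combinatorially minimal Mori dream surface. (The same one-line argument applies to any smooth surface with $\rho=1$.)

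For a reducible fake quadric $X$ I would split according to whether the associated product cover is of unmixed or mixed type. In the unmixed case $X=(C_1\times C_2)/G$ with $G$ acting diagonally and freely; since $q(X)=g(C_1/G)+g(C_2/G)=0$, both quotients are rational, so the two projections descend to fibrations $f_i\colon X\to\mathbb{P}^1$. Putting $L_i=f_i^*\mathcal{O}_{\mathbb{P}^1}(1)$, one has $L_i^2=0$, each $L_i$ semiample, and $L_1\cdot L_2>0$ because a general fibre of $f_1$ meets a general fibre of $f_2$; hence $L_1,L_2$ are linearly independent and $\rho(X)=2$. The product morphism $\phi=(f_1,f_2)\colon X\to\mathbb{P}^1\times\mathbb{P}^1$ is quasi-finite, since a curve contracted by $\phi$ would be orthogonal to both $L_1$ and $L_2$ and so numerically trivial, impossible for an irreducible curve on a surface; being proper as well, it is finite, and Theorem~\ref{contraction:proof1}(5) applies. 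In the mixed case one passes to the unmixed cover $Y=(C\times C)/G^0$ with $G^0\subset G$ of index two: the factor-swapping element of $G/G^0$ acts on $Y$ interchanging its two fibration classes $L_1,L_2$, so $\NS(X)_{\mathbb{Q}}=\NS(Y)_{\mathbb{Q}}^{\,\mathbb{Z}/2}$ is spanned by $L_1+L_2$, whence $\rho(X)=1$ and Theorem~\ref{contraction:proof1}(5) finishes the proof. The structural input used here --- the two fibrations in the unmixed case, and the description of $\NS(Y)$ together with the swap action in the mixed case --- is taken from \cite{FL, Keum-Lee1}.

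The main obstacle is the mixed-type case: one must be certain that $\rho(X)=1$, equivalently that $X$ carries no fibration whose fibre class is independent of $K_X$, and this rests on knowing that the N\'eron--Severi group of the unmixed cover $Y$ is already generated by its two fibre classes, i.e. that $\bigl(H^{1,0}(C)\otimes H^{0,1}(C)\bigr)^{G^0}$ contributes no extra invariant class. This is precisely the cohomological verification carried out case-by-case in \cite{FL}, which I would invoke rather than reproduce; once it is granted, the remainder of the argument is formal given Theorem~\ref{contraction:proof1}(5).
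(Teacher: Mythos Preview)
Your treatment of fake projective planes and of unmixed reducible fake quadrics is correct and essentially coincides with the paper's argument (the paper phrases the unmixed case as ``the effective cone equals the semiample cone from the construction'', which is exactly your two-fibration argument feeding into Theorem~\ref{contraction:proof1}(5)).

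The mixed-type case, however, contains a genuine error. You claim that $\rho(X)=1$, but every smooth projective surface with $p_g=0$ satisfies $\rho(X)=b_2(X)$: since $H^{2,0}(X)=0$, every integral class lies in $H^{1,1}$ and is therefore algebraic by the Lefschetz $(1,1)$-theorem. A fake quadric has $b_2=2$, so $\rho(X)=2$ in the mixed case as well, and your conclusion $\rho(X)=1$ is impossible. Tracing back, the hypothesis that $\NS(Y)_{\mathbb{Q}}$ is spanned by the two fibre classes of the unmixed double cover $Y$ must fail; indeed $Y$ has $\chi(\mathcal{O}_Y)=2$ and hence $p_g(Y)\ge 1$, so there is no reason for $\rho(Y)$ to equal $2$, and the swap-invariant part of $\NS(Y)_{\mathbb{Q}}$ is in fact two-dimensional. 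The vanishing of $\bigl(H^{1,0}(C)\otimes H^{0,1}(C)\bigr)^{G^0}$ that you attribute to \cite{FL} is neither proved there nor true.

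What \cite{FL} actually does --- and what the paper invokes --- is to compute $\Eff(X)$, $\Nef(X)$ and $\SAmp(X)$ directly for each mixed-type family and verify that they coincide. If you wish to stay inside the framework of Theorem~\ref{contraction:proof1}(5), you would need to produce, for each mixed-type $X$, two independent semiample classes of square zero giving a finite map to $\mathbb{P}^1\times\mathbb{P}^1$; since there is no obvious fibration on $X$ (the factor projections do not descend once an element swaps the factors), exhibiting these classes is precisely the nontrivial content of \cite{FL}, and cannot be shortcut by the swap argument.
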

\begin{proof}
The statement is obvious for fake projective planes since they are smooth projective surfaces with Picard number 1. For a surface isogenous to a higher product of unmixed type we can see that the effective cone is the same as the semiample cone from the construction. For surfaces isogenous to a product with $p_g=0$ of mixed type, the result follows explicit descriptions of effective, nef, semiamples cones in \cite{FL}.
\end{proof}

However it is still an open question whether all fake quadrics (especially irreducible fake quadrics) are combinatorially Mori dream surfaces or not.

\begin{question}
Let $X$ be a fake quadric. Is $X$ a Mori dream surface? Is $X$ a combinatorially minimal Mori dream surface?
\end{question}

\subsection{Inoue surfaces with $K^2=7$}

In \cite{Keum-Lee1}, we showed that the effective cone $\Eff(X)$ is generated by three divisors, and $\Nef(X)$ is also generated by three divisors. The three divisors are all negative curves on the Inoue surfaces.

\begin{center}
\begin{tabular}{|c|c|c|c|c|} \hline
$\cdot$ & $\widetilde{\Delta}_1$ & $\widetilde{\Delta}_2$ & $\widetilde{\Delta}_3$ \\
\hline $\widetilde{\Delta}_1$ & -1 & 1 & 1 \\
\hline $\widetilde{\Delta}_2$ & 1 & -1 & 1 \\
\hline $\widetilde{\Delta}_3$ & 1 & 1 & -1 \\
\hline
\end{tabular}
\end{center}

\begin{proposition}
There are three fibrations of $X.$
\end{proposition}
\begin{proof}
The nef cone $\Nef(X)$ is generated by nef divisors $\widetilde{\Delta}_1+\widetilde{\Delta}_2, \widetilde{\Delta}_2+\widetilde{\Delta}_3, \widetilde{\Delta}_3+\widetilde{\Delta}_1$. Because they are all semiample divisors having self-intersection $0$ they induce fibrations. From the configuration of $\Nef(X)$ we see that any semiample divisor having self-intersection $0$ is multiple of one of these three divisors.
\end{proof}

\begin{proposition}
Let $X=X_n \to X_{n-1} \to \cdots \to X_0$ be a combinatorial contractions to a combinatorially minimal Mori dream surface $X_0.$ Then $\rho(X_0) \geq 2$ and there are three ways to obtain combinatorially minimal models.
\end{proposition}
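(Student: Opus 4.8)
The plan is to read everything off from the explicit data recorded above: $\Eff(X)=\mathbb{R}_{\geq 0}\langle\widetilde{\Delta}_1,\widetilde{\Delta}_2,\widetilde{\Delta}_3\rangle$, $\Nef(X)=\mathbb{R}_{\geq 0}\langle\widetilde{\Delta}_1+\widetilde{\Delta}_2,\widetilde{\Delta}_2+\widetilde{\Delta}_3,\widetilde{\Delta}_3+\widetilde{\Delta}_1\rangle$, and the displayed $3\times 3$ intersection matrix. First I would observe that, since this matrix is nondegenerate, the classes $\widetilde{\Delta}_1,\widetilde{\Delta}_2,\widetilde{\Delta}_3$ are a basis of $\Cl(X)_{\mathbb{Q}}$, so $\rho(X)=3$ and the extremal rays of $\Eff(X)$ are exactly $\mathbb{R}_{\geq 0}[\widetilde{\Delta}_i]$ for $i=1,2,3$. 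Each $\widetilde{\Delta}_i$ is a negative curve, hence its class is combinatorially contractible, and conversely any combinatorially contractible class lies on one of these three rays (by the lemma identifying combinatorially contractible classes on a Mori dream surface with negative curves); thus the first step of any combinatorial contraction $X=X_n\to\cdots\to X_0$ is the contraction of one of $\widetilde{\Delta}_1,\widetilde{\Delta}_2,\widetilde{\Delta}_3$. The crux is to show that this single contraction already lands on a combinatorially minimal surface with $\rho=2$.

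To do that, I would fix $i$ and set $\{j,k\}=\{1,2,3\}\setminus\{i\}$. The ray $\sigma:=\mathbb{R}_{\geq 0}[\widetilde{\Delta}_i]$ is a face of $\Eff(X)$ of codimension two lying in exactly the two facets $F_1=\langle\widetilde{\Delta}_i,\widetilde{\Delta}_j\rangle$ and $F_2=\langle\widetilde{\Delta}_i,\widetilde{\Delta}_k\rangle$; the nef classes orthogonal to $F_1,F_2$ are, up to scaling, $D_1=\widetilde{\Delta}_i+\widetilde{\Delta}_j$ and $D_2=\widetilde{\Delta}_i+\widetilde{\Delta}_k$, and a one-line computation with the intersection matrix gives $D_1^2=D_2^2=0$ and $D_1\cdot D_2=2$. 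Applying the lemma on faces of codimension two, $D_1$ and $D_2$ induce two fibrations $X\to\mathbb{P}^1$ and hence a morphism $X\to\mathbb{P}^1\times\mathbb{P}^1$ contracting exactly the $\rho-2=1$ negative curve in $\sigma$, namely $\widetilde{\Delta}_i$ (one checks $\widetilde{\Delta}_j\cdot D_2=2>0$ and $\widetilde{\Delta}_k\cdot D_1=2>0$, so the other two negative curves survive). Taking the Stein factorization $X\to X_0\to\mathbb{P}^1\times\mathbb{P}^1$, the first map is precisely the contraction of $\widetilde{\Delta}_i$, so $X_0$ is a normal projective $\mathbb{Q}$-factorial surface with $\rho(X_0)=\rho(X)-1=2$ and $X_0\to\mathbb{P}^1\times\mathbb{P}^1$ is finite (of degree $D_1\cdot D_2=2$). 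By Theorem \ref{contraction:proof1}(5), $X_0$ is a combinatorially minimal Mori dream surface.

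Putting this together: any combinatorial contraction of $X$ consists of a single step, the contraction of one of $\widetilde{\Delta}_1,\widetilde{\Delta}_2,\widetilde{\Delta}_3$, after which one is already combinatorially minimal with $\rho(X_0)=2$; in particular $\rho(X_0)\geq 2$ (the process can never reach $\rho=1$, since contracting one negative curve on a surface lowers $\rho$ by exactly one and the intermediate $\rho=2$ surface is already minimal), and there are precisely three ways to reach a combinatorially minimal model, corresponding to the three negative curves. I expect the only genuinely delicate point to be the finiteness of $X_0\to\mathbb{P}^1\times\mathbb{P}^1$ — equivalently, that the first contraction already produces a combinatorially minimal surface — which I would secure via the Stein factorization together with the fact that the lemma on faces of codimension two contracts only $\rho-2$ curves; everything else reduces to routine manipulation of the displayed $3\times 3$ intersection matrix.
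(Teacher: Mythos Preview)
Your proof is correct and reaches the same conclusion, but the route differs slightly from the paper's. The paper argues $\rho(X_0)\ge 2$ by simply observing that no $2\times 2$ principal submatrix of the intersection matrix is negative definite (each such submatrix has determinant $(-1)(-1)-1\cdot 1=0$), so by Sakai's contraction criterion the composite $X\to X_0$ cannot contract two of the $\widetilde{\Delta}_i$; it then just states that contracting any single $\widetilde{\Delta}_i$ yields a combinatorially minimal surface with $\rho=2$. You instead work from the fibration side: for each $i$ you exhibit the two semiample classes $D_1,D_2$ with $D_1^2=D_2^2=0$ orthogonal to the two facets through $[\widetilde{\Delta}_i]$, build the morphism $X\to\mathbb{P}^1\times\mathbb{P}^1$, take Stein factorization, and invoke Theorem~\ref{contraction:proof1}(5) to certify that the intermediate surface is already combinatorially minimal. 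The paper's negative-definiteness check is quicker for ruling out $\rho(X_0)=1$, while your argument has the advantage of being fully constructive---it explicitly identifies each $X_0$ together with its finite degree-$2$ cover of $\mathbb{P}^1\times\mathbb{P}^1$, and makes transparent \emph{why} the single contraction suffices rather than leaving that as an assertion.
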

\begin{proof}
We can check that there are no $2 \times 2$ negative definite submatrix of the above intersection matrix. Therefore we see that $\rho(X_0) \geq 2.$ We can contract $\widetilde{\Delta}_i$ $(i=1,2 ~ \mathrm{or} ~ 3)$ to obtain combinatorially minimal Mori dream surfaces with $\rho=2.$
\end{proof}

\subsection{Chen's surfaces with $K^2=7$}

In \cite{Keum-Lee1}, we showed that the effective cone $\Eff(X)$ is generated by four divisors, and $\Nef(X)$ is also generated by four divisors. The four divisors are all negative curves on Chen's surfaces. The intersection matrix is given as follows.

\begin{center}
\begin{tabular}{|c|c|c|c|c|} \hline
$\cdot$ & $\widetilde{E}$ & $\widetilde{\Gamma}$ & $\widetilde{B_2}$ & $\widetilde{B_3}$ \\
\hline $\widetilde{E}$ & -4 & 2 & 2 & 6 \\
\hline $\widetilde{\Gamma}$ & 2 & -1 & 3 & 1 \\
\hline $\widetilde{B_2}$ & 2 & 3 & -1 & 1 \\
\hline $\widetilde{B_3}$ & 6 & 1 & 1 & -1 \\
\hline
\end{tabular}
\end{center}

\begin{proposition}
There are four fibrations of $X.$
\end{proposition}
\begin{proof}
There are exactly four semiample divisors $\widetilde{E}+2\widetilde{\Gamma},$ $\widetilde{E}+2\widetilde{B_2},$ $\widetilde{\Gamma}+\widetilde{B_3},$ $\widetilde{B_2}+\widetilde{B_3}$ with self-intersection $0$ on $X.$ From the configuration of $\Nef(X)$ we see that any semiample divisor having self-intersection $0$ is multiple of one of these four divisors. Therefore we see that there are exactly fibration structures of $X.$
\end{proof}

\begin{proposition}
Let $X=X_n \to X_{n-1} \to \cdots \to X_0$ be a combinatorial contractions to a combinatorially minimal Mori dream surface $X_0.$ Then $\rho(X_0)=2$.
\end{proposition}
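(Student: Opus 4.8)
The statement to prove is that for Chen's surfaces with $K^2=7$, performing a combinatorial contraction to a combinatorially minimal Mori dream surface $X_0$ yields $\rho(X_0)=2$ (not $1$).

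The plan is to use the intersection matrix of the four negative curves $\widetilde{E}, \widetilde{\Gamma}, \widetilde{B_2}, \widetilde{B_3}$ together with the general theory established earlier. First I would recall from Theorem \ref{contraction:proof1} (and its reproof) that $\rho(X_0) \in \{1,2\}$, so it suffices to rule out $\rho(X_0)=1$. By the remark following the reproved theorem, obtaining $X_0$ with $\rho(X_0)=1$ requires a nef divisor $D_F$ orthogonal to a facet $F \subset \Eff(X)$ with $D_F^2 > 0$; such a $D_F$ would define a morphism contracting $\rho(X)-1$ negative curves simultaneously, hence all of $\widetilde{E}, \widetilde{\Gamma}, \widetilde{B_2}, \widetilde{B_3}$ except possibly one. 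By Sakai's contraction theorem, this forces the corresponding $3 \times 3$ principal submatrix of the intersection matrix to be negative definite. So the key computation is: I would check each of the four $3 \times 3$ principal submatrices of the displayed $4\times 4$ matrix and verify none is negative definite (indeed each contains off-diagonal entries like $2,3,6$ that are far too large relative to the diagonal $-1,-4$ entries, so e.g. any $2\times 2$ minor involving $\widetilde{\Gamma},\widetilde{B_2}$ is $(-1)(-1)-3^2 = -8 < 0$, violating negative definiteness). This rules out contracting three of the curves at once.

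Next I would observe that since $\Eff(X)$ is rational polyhedral and generated by exactly these four negative curves (as stated), and $\rho(X)$ equals... — here I should track what $\rho(X)$ actually is; the effective cone having four extremal generators in $\Cl(X)_\mathbb{Q}$ together with $X$ being a specific Chen surface determines $\rho(X)$, and from the context $\rho(X) = 4$ (the four negative curves are the extremal rays, matching the rank). Then the contraction sequence $X = X_n \to \cdots \to X_0$ drops the Picard number by one at each step, and by the Lemma we always pass through some $X_j$ with $\rho(X_j) = 2$. At that stage $X_j$ carries a finite morphism to $\mathbb{P}^1 \times \mathbb{P}^1$, so it is combinatorially minimal by part (5), giving $X_0 = X_j$ and $\rho(X_0) = 2$. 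The only way to instead land on $\rho(X_0) = 1$ would be the one-step-to-$\rho=1$ scenario ruled out above, or contracting a negative curve on such an $X_j$ — but if $X_j$ is combinatorially minimal it has no negative curve to contract, so the process stops at $\rho = 2$.

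Finally, to make this fully rigorous I would verify that when we reach Picard number $2$ the resulting surface is genuinely combinatorially minimal, i.e., that one of the four semiample self-intersection-zero divisors $\widetilde{E}+2\widetilde{\Gamma}$, $\widetilde{E}+2\widetilde{B_2}$, $\widetilde{\Gamma}+\widetilde{B_3}$, $\widetilde{B_2}+\widetilde{B_3}$ descends to give the pair of fibrations on $X_0$, yielding the finite morphism to $\mathbb{P}^1\times\mathbb{P}^1$; this is exactly the criterion of the Lemma about codimension-two faces. The main obstacle is the bookkeeping: one must identify which facets of $\Eff(X)$ are orthogonal to which of the self-intersection-zero nef divisors, check that the $3\times 3$ submatrices are never negative definite (routine but must be done for all four), and confirm that every maximal chain of contractions terminates at $\rho=2$ rather than accidentally reaching $\rho=1$ — equivalently, that no nef divisor orthogonal to a facet of $\Eff(X)$ has positive self-intersection. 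The positivity/negativity analysis of these explicit small matrices is the crux, and everything else follows from Theorem \ref{contraction:proof1} and the Lemmas already proved.
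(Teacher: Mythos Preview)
Your overall strategy is sound, but there is a concrete numerical error that invalidates the computation as written: you assert $\rho(X)=4$, whereas for Chen's surfaces one has $\rho(X)=3$. Indeed, $p_g=q=0$ and $K_X^2=7$ give $e(X)=12-K_X^2=5$ and hence $b_2=3$; since $p_g=0$ this forces $\rho(X)=3$. The four negative curves $\widetilde{E},\widetilde{\Gamma},\widetilde{B_2},\widetilde{B_3}$ generate a non-simplicial cone in a $3$-dimensional space, not a simplicial cone in a $4$-dimensional one. Consequently, to reach $\rho(X_0)=1$ one must contract $\rho(X)-1=2$ negative curves, so the relevant objects are the $2\times 2$ principal submatrices, not the $3\times 3$ ones you check.

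With that correction your argument goes through: the six $2\times 2$ principal submatrices have determinants $0,0,-32,-8,0,0$ (for the pairs $(\widetilde{E},\widetilde{\Gamma})$, $(\widetilde{E},\widetilde{B_2})$, $(\widetilde{E},\widetilde{B_3})$, $(\widetilde{\Gamma},\widetilde{B_2})$, $(\widetilde{\Gamma},\widetilde{B_3})$, $(\widetilde{B_2},\widetilde{B_3})$ respectively), so none is negative definite. By Sakai's theorem no pair of the four negative curves can be simultaneously contracted, and since these are all the negative curves on $X$, no contraction sequence can reach $\rho=1$. The paper phrases the obstruction slightly differently --- it argues that even for the candidate pairs there is no semiample divisor orthogonal to both, equivalently that all four extremal rays of $\Nef(X)$ have self-intersection zero --- but the conclusion is the same.
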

\begin{proof}
We can see the pairs $(\widetilde{E}, \widetilde{B_3}), (\widetilde{\Gamma}, \widetilde{B_2})$ form negative definite $2 \times 2$ lattices. However, there is no semiample divisor $D$ whose intersection numbers are zero with these pairs. Therefore we see that there is no combinatorial minimal model with $\rho=1$. \\

Let us suppose that $\rho(X_0) = 2.$ When we choose two fibrations among four induced by $\widetilde{E}+2\widetilde{\Gamma},$ $\widetilde{E}+2\widetilde{B_2},$ $\widetilde{\Gamma}+\widetilde{B_3},$ $\widetilde{B_2}+\widetilde{B_3}$, there is an induced map to $\mathbb{P}^1 \times \mathbb{P}^1.$ For each choice of two (numerical classes of) semiample divisors we can check that there are four curves contracted by the fibrations as follows. \\
(1) Consider the set $\{ \widetilde{E}+2\widetilde{\Gamma}, \widetilde{E}+2\widetilde{B_2} \}.$ Then the induced morphism $X \to \mathbb{P}^1 \times \mathbb{P}^1$ contracts $\widetilde{E}.$ In this case, $X_0$  has one singular point. \\
(2) Consider the set $\{ \widetilde{E}+2\widetilde{\Gamma}, \widetilde{\Gamma}+\widetilde{B_3} \}.$ Then the induced morphism $X \to \mathbb{P}^1 \times \mathbb{P}^1$ contracts $\widetilde{\Gamma}.$ In this case, $X_0$  has one singular point. \\
(3) Consider the set $\{ \widetilde{E}+2\widetilde{B_2}, \widetilde{B_2}+\widetilde{B_3} \}.$ Then the induced morphism $X \to \mathbb{P}^1 \times \mathbb{P}^1$ contracts $\widetilde{B_2}.$ In this case, $X_0$ has one singular point. \\
(4) Consider the set $\{ \widetilde{\Gamma}+\widetilde{B_3}, \widetilde{B_2}+\widetilde{B_3} \}.$ Then the induced morphism $X \to \mathbb{P}^1 \times \mathbb{P}^1$ contracts $\widetilde{B_3}.$ In this case, $X_0$ has one singular point. \\  

Therefore we can see that there are four combinatorially minimal model $X_0$ with $\rho=2.$
\end{proof}

\subsection{Kulikov surfaces with $K^2=6$}

Let $X$ be a Kulikov surface with $K^2=6.$ Recall that there are six negative curves $\widetilde{E_1}, \widetilde{E_2}, \widetilde{E_3},$ $\widetilde{L_1}, \widetilde{L_2}, \widetilde{L_3}$ on $X$ and the intersection matrix among them is as follows.

\begin{center}
\begin{tabular}{|c|c|c|c|c|c|c|} \hline
$\cdot$ & $\widetilde{E_1}$ & $\widetilde{E_2}$ & $\widetilde{E_3}$ & $\widetilde{L_1}$ & $\widetilde{L_2}$ & $\widetilde{L_3}$ \\
\hline $\widetilde{E_1}$ & -1 & 0 & 0 & 0 & 1 & 1 \\
\hline $\widetilde{E_2}$ & 0 & -1 & 0 & 1 & 0 & 1 \\
\hline $\widetilde{E_3}$ & 0 & 0 & -1 & 1 & 1 & 0 \\
\hline $\widetilde{L_1}$ & 0 & 1 & 1 & -1 & 0 & 0 \\
\hline $\widetilde{L_2}$ & 1 & 0 & 1 & 0 & -1 & 0 \\
\hline $\widetilde{L_3}$ & 1 & 1 & 0 & 0 & 0 & -1 \\
\hline
\end{tabular}
\end{center}

\begin{proposition}
There are three fibrations of $X.$
\end{proposition}
\begin{proof}
Because $\Eff(X)$ is generated by the six negative curves, we can compute $\Nef(X)$ explicitly. Among the extremal rays of $\Nef(X)$, three classes $[\widetilde{E_1}+\widetilde{L_2}], [\widetilde{E_2}+\widetilde{L_3}], [\widetilde{E_3}+\widetilde{L_1}]$ are semiample divisor classes with self-intersection $0.$
\end{proof}

\begin{proposition}
Let $X=X_n \to X_{n-1} \to \cdots \to X_0$ be a combinatorial contractions to a combinatorially minimal Mori dream surface $X_0.$ Then $\rho(X_0)=1$ or $2$.
\end{proposition}
\begin{proof}
We can contract $(\widetilde{E_1}, \widetilde{E_2}, \widetilde{E_3})$ or $(\widetilde{L_1}, \widetilde{L_2}, \widetilde{L_3})$ to obtain combinatorially minimal Mori dream surfaces with $\rho=1.$ In this case, $X_0$ has three simple elliptic singular points. We can contract $(\widetilde{E_1}, \widetilde{L_1}), (\widetilde{E_2}, \widetilde{L_2})$ or $(\widetilde{E_3}, \widetilde{L_3})$ to obtain combinatorially minimal Mori dream surfaces with $\rho=2.$ Let $i \in \{1, 2, 3 \}.$ When we contract $(\widetilde{E_i}, \widetilde{L_i}),$ $X_0$ has two fibrations $\mathbb{P}^1$ which is induced by $\widetilde{E_i}+\widetilde{L_{i-1}},$ $\widetilde{E_i}+\widetilde{L_{i+1}}.$
\end{proof}

\subsection{Burniat surfaces with $K^2 = 6$}

Let $X$ be a Burniat surface with $K^2=6.$ Recall that there are six negative curves $\widetilde{E_1}, \widetilde{E_2}, \widetilde{E_3},$ $\widetilde{L_1}, \widetilde{L_2}, \widetilde{L_3}$ on $X$ and the intersection matrix among them is as follows.

\begin{center}
\begin{tabular}{|c|c|c|c|c|c|c|} \hline
$\cdot$ & $\widetilde{E_1}$ & $\widetilde{E_2}$ & $\widetilde{E_3}$ & $\widetilde{L_1}$ & $\widetilde{L_2}$ & $\widetilde{L_3}$ \\
\hline $\widetilde{E_1}$ & -1 & 0 & 0 & 0 & 1 & 1 \\
\hline $\widetilde{E_2}$ & 0 & -1 & 0 & 1 & 0 & 1 \\
\hline $\widetilde{E_3}$ & 0 & 0 & -1 & 1 & 1 & 0 \\
\hline $\widetilde{L_1}$ & 0 & 1 & 1 & -1 & 0 & 0 \\
\hline $\widetilde{L_2}$ & 1 & 0 & 1 & 0 & -1 & 0 \\
\hline $\widetilde{L_3}$ & 1 & 1 & 0 & 0 & 0 & -1 \\
\hline
\end{tabular}
\end{center}

Note that the configuration of negative curves on the primary Burniat surfaces is the same as the Kulikov surfaces cases. Therefore we have the following Propositions.

\begin{proposition}
There are three fibrations of $X.$
\end{proposition}

\begin{proposition}
Let $X=X_n \to X_{n-1} \to \cdots \to X_0$ be a combinatorial contractions to a combinatorially minimal Mori dream surface $X_0.$ Then $\rho(X_0)=1$ or $2$.
\end{proposition}

\subsection{Burniat surfaces with $2 \leq K^2 \leq 5$}

The surface $X$ is a bidouble cover of a weak del Pezzo surface $Y$ such that $\rho(X)=\rho(Y).$ One can find an extremal nef divisor having positive self-intersection on $Y$ which induces a morphism contracting $\rho-1$ negative curves $Y \to Y_0$. Then its pullback is an extremal nef divisor having positive self-intersection on $X.$ And we see this semiample divisor induces a morphism $X \to X_0$ contracting $\rho-1$ negative curves. that there are combinatorially minimal model $X_0$ such that $\rho(X_0)=1.$ Similarly, one can find two fibrations of $Y$ such that the Stein factorization $Y \to Y_0 \to \mathbb{P}^1 \times \mathbb{P}^1$ of the induced morphism $Y \to \mathbb{P}^1 \times \mathbb{P}^1$ contracts $\rho-2$ negative curves. By pulling back the two extremal nef divisors, we can find two fibrations of $X$ and it induces a morphism $X \to X_0$ such that $\rho(X_0)=2.$ Therefore we have the following conclusion.

\begin{proposition}
Let $X$ be a Burniat surface with $2 \leq K^2 \leq 5.$ Let $X=X_n \to X_{n-1} \to \cdots \to X_0$ be a combinatorial contractions to a combinatorially minimal Mori dream surface $X_0.$ Then $\rho(X_0)=1$ or $2$.

\end{proposition}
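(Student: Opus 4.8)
The final statement asserts that \emph{whatever} combinatorially minimal surface $X_0$ is reached at the end of the contraction chain, its Picard number is $1$ or $2$. The plan is to establish this universal bound as a direct specialization of the general structure theory, and only afterwards to record that both values genuinely occur for Burniat surfaces via the bidouble-cover presentation; the two parts are logically independent and it is the first that proves the statement as worded.

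For the universal bound I would argue as follows. By Theorem \ref{Keum-Lee1main} the Burniat surface $X$ with $2\le K^2\le 5$ is a Mori dream surface, so Theorem \ref{contraction:proof1} applies: any chain of contractions of negative curves terminating at a combinatorially minimal Mori dream surface $X_0$ yields a normal projective $\mathbb{Q}$-factorial surface with $\rho(X_0)\le 2$. I want to stress that this inequality is not special to Burniat surfaces and does not depend on any particular choice of contraction: it is the Picard-number bound for combinatorially minimal Mori dream surfaces (the Lemma preceding Theorem \ref{contraction:proof1}). Concretely, $\rho(X_0)\ge 3$ would give three or more extremal rays of $\Nef(X_0)$, each semiample; one such ray with positive self-intersection contracts a negative curve to a surface, while if all have self-intersection zero, two adjacent fibrations produce a morphism $X_0\to\mathbb{P}^1\times\mathbb{P}^1$ contracting a negative curve in the codimension-two face, and either alternative contradicts combinatorial minimality. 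Since $X_0$ is a nonempty normal projective surface we have $\rho(X_0)\ge 1$, and therefore $\rho(X_0)\in\{1,2\}$ for every such $X_0$. The explicit constructions below are included only to show that neither value is vacuous.

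To exhibit a model with $\rho(X_0)=1$ and one with $\rho(X_0)=2$ I would use the bidouble cover $\pi\colon X\to Y$ onto a weak del Pezzo surface $Y$ with $\rho(X)=\rho(Y)$. On $Y$ one can choose an extremal nef class of positive self-intersection contracting $\rho-1$ negative curves to a $\rho=1$ model; since $\pi$ is finite, $\pi^*$ of this class is semiample with positive self-intersection and contracts the $\rho-1$ negative curves lying over the contracted configuration, giving a model $X_0$ with $\rho(X_0)=1$. Likewise, two fibrations of $Y$ induce a morphism to $\mathbb{P}^1\times\mathbb{P}^1$ whose Stein factorization $Y\to Y_0\to\mathbb{P}^1\times\mathbb{P}^1$ contracts $\rho-2$ negative curves to a $\rho=2$ model $Y_0$; pulling back the two nef classes gives two fibrations of $X$ and, after contracting the corresponding $\rho-2$ negative curves, a model $X_0$ with $\rho(X_0)=2$ admitting a finite morphism to $\mathbb{P}^1\times\mathbb{P}^1$.

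The main obstacle is the verification of the bidouble-cover claims in the realization step, namely that $\pi^*$ carries extremal nef classes (resp.\ fibration classes) on $Y$ to semiample classes on $X$ with the same sign of self-intersection, and that the curves contracted downstairs lift to exactly the negative curves contracted upstairs. Finiteness of $\pi$ shows that pullback preserves semiampleness and multiplies intersection numbers by $\deg\pi$, so both $D^2>0$ and $D^2=0$ are preserved, and the identification $\rho(X)=\rho(Y)$ then forces the number of contracted curves to agree, giving $\rho(X_0)=\rho(Y_0)$ in each case. By contrast, the statement itself — the bound $\rho(X_0)\in\{1,2\}$ valid for every combinatorially minimal $X_0$ — requires nothing beyond Theorem \ref{contraction:proof1} and the Picard-number lemma.
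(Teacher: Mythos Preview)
Your proposal is correct and, in its substantive part, takes the same approach as the paper: the bidouble cover $\pi\colon X\to Y$ onto a weak del Pezzo surface with $\rho(X)=\rho(Y)$, pulling back an extremal nef class of positive self-intersection to get a model with $\rho(X_0)=1$, and pulling back two fibration classes to get a model with $\rho(X_0)=2$. One difference of emphasis: you treat the universal bound $\rho(X_0)\in\{1,2\}$ (via Theorem~\ref{contraction:proof1}) as the content of the proposition and the explicit constructions as supplementary, whereas the paper's intent---clear from the parallel propositions for Inoue, Chen, Kulikov, and product-quotient surfaces, where some cases exclude $\rho(X_0)=1$---is the converse: the bound is already established in general, and the point here is that \emph{both} values genuinely occur for Burniat surfaces with $2\le K^2\le 5$.
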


\subsection{Product-quotient surfaces with $K^2=6, G=D_4 \times \mathbb{Z}/2\mathbb{Z}$}

Recall that there are four negative curves $E_1, E_2, F_1, G_1$ on $X$ and the intersection matrix among them is as follows. See \cite{Keum-Lee1} for more details.

\begin{center}
\begin{tabular}{|c|c|c|c|c|} \hline
$\cdot$ & $E_1$ & $E_2$ & $F_1$ & $G_1$ \\
\hline $E_1$ & -2 & 0 & 1 & 1 \\
\hline $E_2$ & 0 & -2 & 1 & 1\\
\hline $F_1$ & 1 & 1 & -1 & 0 \\
\hline $G_1$ & 1 & 1 & 0 & -1 \\
\hline
\end{tabular}
\end{center}

\begin{proposition}
There are four fibrations of $X.$
\end{proposition}
\begin{proof}
The nef cone $\Nef(X)$ is generated by nef divisors $F_1+E_1+G_1, F_1+E_2+G_1, E_1+E_2+2F_1, E_1+E_2+2G_1.$ Because they are all semiample divisors having self-intersection $0$ they induce fibrations. From the configuration of $\Nef(X)$ we see that any semiample divisor having self-intersection $0$ is multiple of one of these four divisors.
\end{proof}

\begin{proposition}
Let $X=X_n \to X_{n-1} \to \cdots \to X_0$ be a combinatorial contractions to a combinatorially minimal Mori dream surface $X_0.$ Then $\rho(X_0) = 2$ and there are six pairs of two negative curves whose contractions give combinatorially minimal Mori dream surfaces.
\end{proposition}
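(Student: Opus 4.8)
The plan is to run the general machinery of Theorem~\ref{contraction:proof1} against the explicit $4\times 4$ intersection matrix of the negative curves $E_1,E_2,F_1,G_1$. First I would rule out the possibility $\rho(X_0)=1$ by the remark preceding this proposition: a combinatorially minimal model with $\rho=1$ is obtained from a nef divisor $D_F$ orthogonal to a facet $F\subset\Eff(X)$ with $D_F^2>0$. Since $\Eff(X)=\mathbb{R}_{\geq 0}\langle E_1,E_2,F_1,G_1\rangle$ (the four negative curves generate it, as in \cite{AL}) and $\rho(X)=4$, such a $D_F$ would contract the three negative curves spanning $F$, i.e.\ a negative-definite $3\times 3$ principal submatrix. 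One checks from the table that no $3\times 3$ principal submatrix is negative definite (e.g.\ any triple includes at least one of $E_1,E_2$ paired with $F_1$ or $G_1$ of intersection $1$, and the relevant $3\times 3$ determinants fail Sylvester's criterion), so $\rho(X_0)\neq 1$. Hence $\rho(X_0)=2$ by Theorem~\ref{contraction:proof1}(2).

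Next I would identify, for each of the six choices of an unordered pair of distinct curves among $\{E_1,E_2,F_1,G_1\}$, that the chosen pair is negative definite and that contracting it yields a $\rho=2$ surface admitting a finite morphism to $\mathbb{P}^1\times\mathbb{P}^1$; by Theorem~\ref{contraction:proof1}(5) this is then a combinatorially minimal Mori dream surface. For a pair $(C,C')$ to be contractible, by Sakai's theorem we need the $2\times 2$ submatrix negative definite, i.e.\ $C^2C'^2 - (C\cdot C')^2>0$: from the table $E_1^2E_2^2-0=4>0$, $E_i^2F_1^2-1=1>0$, $E_i^2G_1^2-1=1>0$, and $F_1^2G_1^2-0=1>0$, so all six pairs qualify. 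After contracting such a pair we obtain a normal $\mathbb{Q}$-factorial surface $X_0$ with $\rho(X_0)=\rho(X)-2=2$; it remains to exhibit the finite morphism to $\mathbb{P}^1\times\mathbb{P}^1$. For this I would use the four fibrations of $X$ already produced (nef divisors $F_1+E_1+G_1$, $F_1+E_2+G_1$, $E_1+E_2+2F_1$, $E_1+E_2+2G_1$ of self-intersection $0$): for each pair one selects two of these four semiample classes each orthogonal to exactly the two contracted curves, so the product fibration $X\to\mathbb{P}^1\times\mathbb{P}^1$ factors through $X_0$ and is finite there since on $X_0$ both boundary classes are ample and there are no contracted curves left. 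Matching the six pairs with compatible pairs of fibrations is the bookkeeping core of the argument.

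Concretely, I expect the six pairs to be: $(E_1,E_2)$, $(E_1,F_1)$, $(E_1,G_1)$, $(E_2,F_1)$, $(E_2,G_1)$, $(F_1,G_1)$, and for each one I would exhibit the two fibrations whose self-intersection-zero nef classes pull back from the two rulings of $\mathbb{P}^1\times\mathbb{P}^1$; e.g.\ contracting $(F_1,G_1)$ should be realized by the two fibrations attached to $F_1+E_1+G_1$ and $F_1+E_2+G_1$ (each orthogonal to both $F_1$ and $G_1$), while contracting $(E_1,E_2)$ should be realized by $E_1+E_2+2F_1$ and $E_1+E_2+2G_1$. One then verifies in $\Nef(X)$ that each such pair of classes is a pair of adjacent extremal rays dual to two adjacent facets of $\Eff(X)$ whose common codimension-$2$ face is spanned exactly by the two curves being contracted, which is what the criterion in the remark after the second proof of Theorem~\ref{contraction} asks for.

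\textbf{Main obstacle.} The conceptual steps are routine given Theorem~\ref{contraction:proof1}; the real work is the explicit linear algebra: computing $\Nef(X)$ from the given $\Eff(X)$, checking that exactly these six $2\times 2$ submatrices are negative definite while no $3\times 3$ principal submatrix is, and—most delicately—verifying for each of the six pairs that one can find two of the four given self-intersection-zero nef classes simultaneously orthogonal to precisely that pair, so that the resulting map to $\mathbb{P}^1\times\mathbb{P}^1$ is finite (no residual contracted curve) and $\rho(X_0)=2$ rather than $1$. The danger is a pair for which the only available semiample classes of square zero are not independent or contract an extra curve; dispelling that requires reading off the full facet structure of $\Eff(X)$, which is the calculation I would carry out in detail.
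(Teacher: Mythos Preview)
Your proposal is correct and follows essentially the same approach as the paper. The paper's own proof is quite terse: it simply asserts that no $3\times 3$ principal submatrix of the intersection matrix is negative definite (forcing $\rho(X_0)=2$) and then lists the six contractible pairs $(E_1,E_2)$, $(F_1,G_1)$, $(E_1,F_1)$, $(E_1,G_1)$, $(E_2,F_1)$, $(E_2,G_1)$, noting the singularity types only for the first two. Your version fills in the linear-algebra checks (the $2\times 2$ determinants and the vanishing of the $3\times 3$ ones) and goes a step further by explicitly matching each of the six pairs with a pair of self-intersection-zero nef classes among $F_1+E_1+G_1$, $F_1+E_2+G_1$, $E_1+E_2+2F_1$, $E_1+E_2+2G_1$ whose common orthogonal on the four negative curves is exactly the contracted pair; this makes the finite morphism $X_0\to\mathbb{P}^1\times\mathbb{P}^1$ explicit and removes the ``main obstacle'' you were worried about.
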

\begin{proof}
We can check that there are no $3 \times 3$ negative definite submatrix of the above intersection matrix. Therefore we see that $\rho(X_0) = 2.$ We can contract $(E_1, E_2), (F_1, G_1), (E_1, F_1), (E_1, G_1), (E_2, F_1), (E_2, G_1)$ to obtain combinatorially minimal Mori dream surfaces with $\rho=2.$ Note that when we contract $(E_1, E_2)$ we obtain combinatorially minimal Mori dream surface $2A_1$ singularities and when contract $(F_1, G_1)$ we obtain combinatorially minimal Mori dream surface with two simple elliptic singularities.
\end{proof}

\subsection{Product-quotient surfaces with $K^2=4, G=\mathbb{Z}/4\mathbb{Z} \times \mathbb{Z}/2\mathbb{Z}$}

Recall that there are eight negative curves $E_1, E_2, E_3, E_4, F_1, F_2, G_1, G_2$ on $X$ and the intersection matrix among them is as follows. See \cite{Keum-Lee1} for more details.

\begin{center}
\begin{tabular}{|c|c|c|c|c|c|c|c|c|} \hline
$\cdot$ & $E_1$ & $E_2$ & $E_3$ & $E_4$ & $F_1$ & $F_2$ & $G_1$ & $G_2$ \\
\hline $E_1$ & -2 & 0 & 0 & 0 & 1 & 0 & 0 & 1 \\
\hline $E_2$ & 0 & -2 & 0 & 0 & 1 & 0 & 1 & 0 \\
\hline $E_3$ & 0 & 0 & -2 & 0 & 0 & 1 & 1 & 0 \\
\hline $E_4$ & 0 & 0 & 0 & -2 & 0 & 1 & 0 & 1 \\
\hline $F_1$ & 1 & 1 & 0 & 0 & -1 & 0 & 0 & 0 \\
\hline $F_2$ & 0 & 0 & 1 & 1 & 0 & -1 & 0 & 0 \\
\hline $G_1$ & 0 & 1 & 1 & 0 & 0 & 0 & -1 & 0 \\
\hline $G_2$ & 1 & 0 & 0 & 1 & 0 & 0 & 0 & -1 \\
\hline
\end{tabular}
\end{center}

\begin{proposition}
There are four fibrations of $X.$
\end{proposition}
\begin{proof}
In \cite{Keum-Lee1} we computed effective cone and found all its extremal rays. Therefore we can find all extremal rays of nef cone. We can check that a nef divisor with self-intersection $0$ is numerically equivalent to one of $F_1+E_1+G_2, F_1+E_2+G_1, E_1+E_2+2F_1, E_1+E_4+2G_2.$ Because they are all semiample divisors having self-intersection $0$ they induce fibrations. \end{proof}

\begin{proposition}
Let $X=X_n \to X_{n-1} \to \cdots \to X_0$ be a combinatorial contractions to a combinatorially minimal Mori dream surface $X_0.$ Then $\rho(X_0)$ can be 1 or 2.
\end{proposition}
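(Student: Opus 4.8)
The plan is to use the eight negative curves and their intersection matrix to locate combinatorially minimal models of both Picard numbers, exactly as in the preceding product-quotient examples. First I would look for a negative definite submatrix of maximal rank. Since $\rho(X)=\rho(Y)$ for the bidouble cover $X\to Y$ of a weak del Pezzo surface $Y$ with $K_Y^2=4$, we have $\rho(X)=7$, so a combinatorially minimal model with $\rho(X_0)=1$ requires six of the eight curves to form a negative definite configuration, and one with $\rho(X_0)=2$ requires five such curves orthogonal to a pair of semiample classes of self-intersection zero. I would verify that the four $E_i$ are mutually orthogonal $(-2)$-curves, and that, say, $\{E_1,E_2,E_3,E_4,F_1,F_2\}$ has negative definite intersection matrix: the $E_i$ contribute a diagonal block $-2I_4$, while $F_1,F_2$ are each $(-1)$-curves meeting two of the $E_i$'s and not meeting each other, so the Gram matrix is a direct sum of two copies of the $A_3$-type lattice $\begin{pmatrix}-2&0&1\\0&-2&1\\1&1&-1\end{pmatrix}$, which is easily checked negative definite (e.g. by Sylvester's criterion or by noting it is the intersection matrix of a resolution graph of a log terminal singularity). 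Hence this collection of six curves can be contracted by Sakai's theorem, and one must then exhibit a nef divisor $D_F$ orthogonal to the remaining extremal rays with $D_F^2>0$; such a $D_F$ exists precisely because the complementary facet $F\subset\Eff(X)$ is spanned by $E_1,\dots,E_4,F_1,F_2$ and the induced morphism to $X_0$ has $\rho(X_0)=1$ by Proposition~\ref{contr}, giving a combinatorially minimal $X_0$ with $\rho=1$.

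Next I would produce the $\rho(X_0)=2$ case by the facet-pair recipe recorded in the Remark following the second proof of Theorem~\ref{contraction}: choose two adjacent facets $F_1,F_2$ of $\Eff(X)$ whose orthogonal nef divisors $D_{F_1},D_{F_2}$ satisfy $D_{F_1}^2=D_{F_2}^2=0$. Concretely, I would take two of the four fibration classes found in the preceding proposition, say $F_1+E_1+G_2$ and $F_1+E_2+G_1$ (both semiample of self-intersection zero), and form the morphism $\phi=(\phi_1,\phi_2):X\to\mathbb{P}^1\times\mathbb{P}^1$. By the Lemma above, the Stein factorization $X\to X_0\to\mathbb{P}^1\times\mathbb{P}^1$ contracts the $\rho-2=5$ negative curves lying in the codimension-two cone $F_1\cap F_2$, and $X_0$ is then combinatorially minimal with $\rho(X_0)=2$ by part (5) of the theorem, since it carries a finite morphism to $\mathbb{P}^1\times\mathbb{P}^1$. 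Different admissible pairs of the four fibration classes give different $X_0$'s, and I would list which five curves get contracted in each case, just as was done for the earlier examples.

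The main obstacle I anticipate is the bookkeeping: one has to check, for each candidate collection of curves, not only negative definiteness but also that the orthogonal complement is actually spanned by a genuine (semi)ample class of the right self-intersection, so that the contraction lands on a combinatorially minimal surface rather than an intermediate one. With eight curves and a fairly dense intersection matrix there are many subsets to rule in or out, and the combinatorics of which pairs of the four fibrations are "adjacent" (i.e. span a codimension-two face of $\Eff(X)$) requires the explicit description of $\Eff(X)$ and $\Nef(X)$ from \cite{Keum-Lee1}. Once the effective and nef cones are in hand, the remaining work is the routine linear algebra of checking Sylvester's criterion on $3\times3$, $5\times5$ and $6\times6$ blocks and intersecting the orthogonal nef rays with the contracted curves, which I would carry out case by case to conclude that both values $\rho(X_0)=1$ and $\rho(X_0)=2$ occur.
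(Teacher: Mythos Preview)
There is a genuine gap stemming from a Picard-number error. For these Keum--Naie product-quotient surfaces one has $K_X^2=4$ and $p_g=q=0$, so Noether's formula gives $\rho(X)=10-K_X^2=6$, not $7$. (Even granting your bidouble-cover description, a weak del Pezzo surface with $K_Y^2=4$ has $\rho(Y)=6$.) Consequently a combinatorially minimal model with $\rho(X_0)=1$ requires contracting \emph{five} curves, not six, and one with $\rho(X_0)=2$ requires contracting \emph{four}.

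This error is not harmless, because your candidate configuration $\{E_1,E_2,E_3,E_4,F_1,F_2\}$ is \emph{not} negative definite. The $3\times 3$ block
\[
\begin{pmatrix}-2&0&1\\0&-2&1\\1&1&-1\end{pmatrix}
\]
has determinant $0$: indeed $E_1+E_2+2F_1$ is exactly one of the four fibration classes you quote, so it has self-intersection zero and lies in the kernel. Sylvester's criterion therefore fails at the last step, and this set cannot be contracted to a point. The paper's proof instead runs through the ten symmetry classes of $5$-element subsets, discards those containing a fibration class (cases (1-1)--(1-7), (1-10)), and for the two genuinely negative definite ones ((1-8) and (1-9)) exhibits an explicit extremal semiample divisor with positive self-intersection inducing the contraction. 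Your outline for the $\rho(X_0)=2$ case is correct in spirit --- the paper does exactly the pairwise fibration analysis you describe --- but each pair contracts four curves rather than five.
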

\begin{proof}
In order to find $X_0$ such that $\rho(X_0)=1$, we need to find a $5 \times 5$ negative definite submatrix of the above intersection matrix. Note that the configuration of negative curves $E_1, E_2, E_3, E_4, F_1, F_2, G_1, G_2$ has (numerical) symmetry. Therefore it is enough to check the following 10 configurations. \\
(1-1) Consider the set $\{ F_1, E_1, G_2, E_4, F_2 \}.$ Then $F_1+E_1+G_2$ is a semiample divisor and hence the intersection matrix of $\{ F_1, E_1, G_2, E_4, F_2 \}$ is not negative definite. \\
(1-2) Consider the set $\{ E_1, G_2, E_4, F_2, E_3 \}.$ Then $E_1+2G_2+E_4$ is a semiample divisor and hence the intersection matrix of $\{ E_1, G_2, E_4, F_2, E_3 \}$ is not negative definite. \\
(1-3) Consider the set $\{ E_1, G_2, E_4, F_2, G_1 \}.$ Then $E_1+2G_2+E_4$ is a  semiample divisor and hence the intersection matrix of $\{ E_1, G_2, E_4, F_2, G_1 \}$ is not negative definite. \\
(1-4) Consider the set $\{ E_1, G_2, E_4, E_3, G_1 \}.$ Then $E_1+2G_2+E_4$ is a  semiample divisor and hence the intersection matrix of $\{ E_1, G_2, E_4, E_3, G_1 \}$ is not negative definite. \\
(1-5) Consider the set $\{ G_2, E_4, F_2, G_1, E_2 \}.$ Then $F_2+E_4+G_2$ is a  semiample divisor and hence the intersection matrix of $\{ G_2, E_4, F_2, G_1, E_2 \}$ is not negative definite. \\
(1-6) Consider the set $\{ G_2, E_4, F_2, E_3, E_2 \}.$ Then $F_2+E_4+G_2$ is a  semiample divisor and hence the intersection matrix of $\{ G_2, E_4, F_2, E_3, E_2 \}$ is not negative definite. \\
(1-7) Consider the set $\{ E_1, G_2, E_4, E_3, E_2 \}.$ Then $E_1+2G_2+E_4$ is a  semiample divisor and hence the intersection matrix of $\{ E_1, G_2, E_4, E_3, E_2 \}$ is not negative definite. \\
(1-8) Consider the set $\{ E_1, E_4, F_2, G_1, E_2 \}.$ Then the intersection matrix is as follows. 
$$ (-1) \oplus \left( \begin{array}{cc} -2 & 1 \\ 1 & -1 \end{array} \right) \oplus \left( \begin{array}{cc} -2 & 1 \\ 1 & -1 \end{array} \right) $$ 
Therefore the intersection matrix of $\{ E_1, E_4, F_2, G_1, E_2 \}$ is negative definite. \\
(1-9) Consider the set $\{ E_1, G_2, F_2, G_1, E_2 \}.$ Then the intersection matrix is as follows. 
$$ (-2) \oplus  \left( \begin{array}{cc} -2 & 1 \\ 1 & -1 \end{array} \right) \oplus \left( \begin{array}{cc} -2 & 1 \\ 1 & -1 \end{array} \right) $$ 
Therefore the intersection matrix of $\{ E_1, G_2, F_2, G_1, E_2 \}$ is negative definite. \\
(1-10) Consider the set $\{ G_2, F_2, G_1, E_2, F_1 \}.$ Then $F_1+2E_2+G_1$ is a  semiample divisor and hence the intersection matrix of $\{ G_2, F_2, G_1, E_2, F_1 \}$ is not negative definite. \\

Therefore we see that there is $5 \times 5$ negative definite matrix for the cases (1-8) and (1-9). For the case (1-8), multiples of the semiample divisor $E_1+2G_2+2E_4+2F_2$ induces a morphism $X \to X_0$ contracting $5$ negative curves hence we obtain a combinatorially minimal model with $\rho(X_0)=1.$ For the case (1-9), multiples of the semiample divisor $E_1+2G_2+E_4+F_2$ induces a morphism $X \to X_0$ contracting $5$ negative curves hence we obtain a combinatorially minimal model with $\rho(X_0)=1.$ \\

Let us suppose that $\rho(X_0) = 2.$ When we choose two fibrations among four induced by $F_1+E_1+G_2, F_1+E_2+G_1, E_1+E_2+2F_1, E_1+E_4+2G_2$ there is an induced map to $\mathbb{P}^1 \times \mathbb{P}^1.$ For each choice of two (numerical classes of) semiample divisors we can check that there are four curves contracted by the fibrations as follows. \\
(2-1) Consider the set $\{ F_1+E_1+G_2, F_1+E_2+G_1 \}.$ Then the induced morphism $X \to \mathbb{P}^1 \times \mathbb{P}^1$ contracts $F_1, F_2, G_1, G_2.$ In this case, $X_0$ has four simple elliptic singularities. \\
(2-2) Consider the set $\{ F_1+E_1+G_2, E_1+E_2+2F_1 \}.$ Then the induced morphism $X \to \mathbb{P}^1 \times \mathbb{P}^1$ contracts $E_1, F_1, E_3, F_2.$ In this case, $X_0$ has two singular points. \\
(2-3) Consider the set $\{ F_1+E_1+G_2, E_1+E_4+2G_2 \}.$ Then the induced morphism $X \to \mathbb{P}^1 \times \mathbb{P}^1$ contracts $E_1, G_2, E_3, G_1.$ In this case, $X_0$ has two singular points. \\
(2-4) Consider the set $\{ F_1+E_2+G_1, E_1+E_2+2F_1 \}.$ Then the induced morphism $X \to \mathbb{P}^1 \times \mathbb{P}^1$ contracts $F_1, E_2, F_2, E_4.$ In this case, $X_0$ has two singular points. \\
(2-5) Consider the set $\{ F_1+E_2+G_1, E_1+E_4+2G_2 \}.$ Then the induced morphism $X \to \mathbb{P}^1 \times \mathbb{P}^1$ contracts $E_2, G_1, G_2, E_4.$ In this case, $X_0$ has two singular points. \\
(2-6) Consider the set $\{ E_1+E_2+2F_1, E_1+E_4+2G_2 \}.$ Then the induced morphism $X \to \mathbb{P}^1 \times \mathbb{P}^1$ contracts $E_1, E_2, E_3, E_4.$ In this case, $X_0$ has four $A_1$ singularities. \\

Therefore we can describe two fibrations inducing a morphism $X \to X_0$ with $\rho(X_0)=2$ and singularities of $X_0.$
\end{proof}

\bigskip

\section{Minimal surfaces of general type with $p_g=0$ as minimal resolutions of combinatorially minimal Mori dream surfaces}\label{s5}

There are many surfaces which we do not know whether they are Mori dream surfaces or not. However we can still ask whether they are minimal resolutions of certain combinatorially minimal Mori dream surfaces.

\begin{question}
Let $X$ be a minimal surface of general type with $p_g=0.$ Can we contract negative curves on $X$ to obtain a combinatorially minimal Mori dream surface? In other words, can we realize $X$ as minimal resolution of a combinatorially minimal Mori dream surface $X_0$? 
\end{question}

In this section, we present several examples of minimal surfaces of general type with $p_g=0$ having combinatorially minimal models.

\subsection{Strategy}

Here, we state certain strategy to construct a combinatorially minimal model $X_0$ from a surface $X$. 

\begin{proposition}
Let $X_0$ be a normal complete surface having a finite morphism to $\mathbb{P}^1 \times \mathbb{P}^1.$ Suppose that a resolution of $X_0$ is a regular surface and $\Cl(X_0)_{\mathbb{R}}$ has dimension 2. Then $X_0$ is combinatorially minimal Mori dream surface with $\rho=2.$
\end{proposition}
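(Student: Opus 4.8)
The plan is to verify the two sufficient conditions appearing in Theorem~\ref{contraction:proof1}(5), namely that $X_0$ is a normal projective $\mathbb{Q}$-factorial surface with $\rho(X_0)=2$ carrying a finite morphism to $\mathbb{P}^1\times\mathbb{P}^1$; once these are in place the conclusion that $X_0$ is a combinatorially minimal Mori dream surface is immediate. The hypotheses already supply the finite morphism $f\colon X_0\to\mathbb{P}^1\times\mathbb{P}^1$; what remains is to extract normality, projectivity, $\mathbb{Q}$-factoriality, and the precise value $\rho(X_0)=2$ from the assumption that a resolution $\widetilde{X}_0\to X_0$ is a regular surface and that $\Cl(X_0)_{\mathbb{R}}$ is two-dimensional.

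First I would address $\mathbb{Q}$-factoriality. The existence of a finite surjective morphism to the smooth (hence factorial) surface $\mathbb{P}^1\times\mathbb{P}^1$, together with the fact that $X_0$ is a normal surface, forces $X_0$ to have only quotient-type behaviour in codimension one and, more to the point, finitely generated divisor class group of rank matching $\dim\Cl(X_0)_{\mathbb{R}}=2$; on a normal surface every Weil divisor is automatically $\mathbb{Q}$-Cartier once one knows the local class groups are finite, and the finite morphism to a smooth surface guarantees exactly this via the standard pushforward/pullback argument (the composition $\mathrm{div}\mapsto f_*f^*\mathrm{div}$ is multiplication by $\deg f$). Projectivity follows since $X_0$ is complete (given) and admits a finite morphism to the projective surface $\mathbb{P}^1\times\mathbb{P}^1$; finiteness of $f$ makes $f^*\calO(1,1)$ ample on $X_0$. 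So $X_0$ is normal, projective, and $\mathbb{Q}$-factorial.

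Next I would pin down $\rho(X_0)=2$. Since $\widetilde{X}_0$ is regular, $q(\widetilde{X}_0)=0$, so $\Pic(\widetilde{X}_0)$ is finitely generated and numerical and algebraic equivalence agree up to torsion; pulling back divisors along the resolution and using that the exceptional locus contributes a finite-index sublattice complementary to the pullback of $\Cl(X_0)$, one gets that $\Cl(X_0)_{\mathbb{R}}\cong N^1(X_0)_{\mathbb{R}}$, so $\rho(X_0)=\dim\Cl(X_0)_{\mathbb{R}}=2$. Here I would also invoke the criterion of \cite[Theorem 2.5]{AHL} / Theorem~\ref{contraction:proof1}(5) in the form already available: a normal projective $\mathbb{Q}$-factorial surface with $\rho=2$ and a finite morphism to $\mathbb{P}^1\times\mathbb{P}^1$ is a combinatorially minimal Mori dream surface, which completes the argument.

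The main obstacle I anticipate is the $\mathbb{Q}$-factoriality step: one must be careful that "finite morphism to a smooth surface plus normality" genuinely yields $\mathbb{Q}$-factoriality and not merely that pulled-back Cartier divisors are Cartier. The clean way around this is to note that a normal surface all of whose singularities are such that the local class group is torsion is $\mathbb{Q}$-factorial, and the existence of a finite cover by (or finite map to) a smooth surface forces the local Picard group at each singular point to be finite — equivalently, each singularity of $X_0$ is a quotient singularity in the analytic-local sense relevant here. Verifying this local finiteness carefully, rather than hand-waving it, is where the real work lies; everything else is formal bookkeeping with the resolution and the definitions of combinatorial minimality recalled above.
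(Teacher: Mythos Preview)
Your strategy is to reduce to Theorem~\ref{contraction:proof1}(5) by separately verifying that $X_0$ is projective, $\mathbb{Q}$-factorial, and has $\rho=2$. Projectivity and $\rho=2$ are fine, but your $\mathbb{Q}$-factoriality argument does not work. The identity $f_*f^*(\mathrm{div}) = (\deg f)\cdot\mathrm{div}$ is the projection formula for divisors on the \emph{target} $\mathbb{P}^1\times\mathbb{P}^1$, not on $X_0$; it says nothing about whether a Weil divisor on $X_0$ has a Cartier multiple. The folklore statement you invoke --- that finiteness of the local class groups follows from the existence of a finite map to a smooth surface --- goes in the wrong direction: it is a finite cover \emph{by} a smooth variety that forces quotient singularities, not a finite map \emph{to} one. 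Indeed, a normal cubic cone over a plane elliptic curve maps finitely to $\mathbb{P}^2$ by projection yet has an infinite local class group at the vertex, so the general mechanism you appeal to is false. You correctly identify this step as the main obstacle, but the proposed fix does not resolve it.

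The paper's proof avoids this issue entirely by reversing the logic. Rather than proving $\mathbb{Q}$-factoriality first and then invoking Theorem~\ref{contraction:proof1}(5), it works directly with the cones: setting $D_i = f^*\calO(1,0)$, $f^*\calO(0,1)$ (which are Cartier, semiample, with $D_i^2=0$ and $D_1\cdot D_2>0$), one checks by intersecting against $D_1,D_2$ that $\Eff(X_0)=\Mov(X_0)=\SAmp(X_0)=\mathbb{R}_{\ge 0}\langle D_1,D_2\rangle$. Then \cite[Theorem 2.5]{AHL} --- which applies to any normal complete surface with finitely generated class group --- yields that $X_0$ is a Mori dream space, and projectivity and $\mathbb{Q}$-factoriality come out as \emph{conclusions} of that theorem rather than hypotheses to be checked. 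The regular-resolution assumption is used only to ensure $\Cl(X_0)$ is finitely generated. In short, the step you flag as the hardest is exactly the one the paper's argument is designed to bypass.
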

\begin{proof}
From \cite[Proposition II.6.5]{Hartshorne}, we see that $X_0$ has finitely generated divisor class group $\Cl(X_0).$ Let $D_1$ (resp. $D_2$) be the Cartier divisor of $X_0$ corresponds to the pullback of $\mathcal{O}(1,0)$ (resp. $\mathcal{O}(0,1)$). Then $D_1, D_2$ are semiample divisors and we have $D^2_1=D^2_2=0, D_1 \cdot D_2 > 0.$ Let $D=a_1D_1+a_2D_2$ be a nef divisor. Then we see that $a_1, a_2 \geq 0$ by intersecting $D$ with two effective divisors $D_1$ and $D_2.$ Therefore we have the following inclusions
$$ \mathbb{R}_{\geq 0} \langle D_1, D_2 \rangle \subset \SAmp(X_0) \subset \Mov(X_0) \subset \Nef(X_0) \subset \mathbb{R}_{\geq 0} \langle D_1, D_2 \rangle $$

Let $D=a_1D_1+a_2D_2$ be an effective divisor. By taking intersection with two nef divisors $D_1, D_2$ we have $a_1, a_2 \geq 0$. Therefore we have $\Eff(X_0) = \mathbb{R}_{\geq 0} \langle D_1, D_2 \rangle$ from the following inclusions. 
$$ \mathbb{R}_{\geq 0} \langle D_1, D_2 \rangle  \subset \Eff(X_0) \subset \mathbb{R}_{\geq 0} \langle D_1, D_2 \rangle $$

Therefore, we have $\Eff(X_0) = \Mov(X_0) = \SAmp(X_0).$ From \cite[Theorem 2.5]{AHL}, we obtain the desired result.
\end{proof}

\begin{corollary}
Let $X$ be a smooth projective surface with $q=0$. Let $X \to \mathbb{P}^1 \times \mathbb{P}^1$ be a morphism contracting $\rho-2$ exceptional curves and let $X \to X_0 \to \mathbb{P}^1 \times \mathbb{P}^1$ be the Stein factorization. Then $X_0$ is a combinatorially minimal model of $X$.
\end{corollary}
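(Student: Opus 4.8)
The plan is to reduce the corollary directly to the preceding proposition, so the only real work is to verify its hypotheses for the Stein factorization $X_0$. First I would record what we are given: $X$ is smooth projective with $q=0$, and $f\colon X\to\mathbb{P}^1\times\mathbb{P}^1$ is a morphism that contracts exactly $\rho(X)-2$ curves (the ``exceptional curves'' of $f$). Let $X\xrightarrow{g} X_0\xrightarrow{h}\mathbb{P}^1\times\mathbb{P}^1$ be the Stein factorization, so that $g_*\mathcal{O}_X=\mathcal{O}_{X_0}$ and $h$ is finite. Since $\mathbb{P}^1\times\mathbb{P}^1$ is normal, the Stein factorization $X_0$ is normal and complete. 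Because $g$ has connected fibers, $g$ is precisely the contraction of all the curves in the exceptional locus of $f$; hence $g$ is a birational morphism of surfaces contracting $\rho(X)-2$ curves, and therefore $\rho(X_0)=\rho(X)-(\rho(X)-2)=2$ by \cite[Proposition II.6.5]{Hartshorne} (applied one contraction at a time, as in the proof of Theorem \ref{contraction:proof1}). In particular $\Cl(X_0)_{\mathbb{R}}\cong \mathrm{Pic}(X_0)_{\mathbb{R}}$ has dimension $2$, since $X_0$ is $\mathbb{Q}$-factorial (being a contraction of negative curves on a smooth surface; cf. Sakai's theorem).

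Next I would check the remaining two hypotheses of the proposition. The map $h\colon X_0\to\mathbb{P}^1\times\mathbb{P}^1$ is finite by construction of the Stein factorization, so ``$X_0$ has a finite morphism to $\mathbb{P}^1\times\mathbb{P}^1$'' holds. For the condition that a resolution of $X_0$ is a regular surface: $X$ itself dominates $X_0$ via the birational morphism $g$, and composing $g$ with a resolution of $X$ (or simply noting $X$ is already smooth) gives a resolution $\widetilde{X}_0\to X_0$ with $\widetilde{X}_0$ birational to $X$; since $q$ is a birational invariant of smooth projective surfaces, $q(\widetilde{X}_0)=q(X)=0$, so the resolution is regular. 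Thus all hypotheses of the proposition are satisfied, and it yields that $X_0$ is a combinatorially minimal Mori dream surface with $\rho=2$.

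Finally, to conclude that $X_0$ is a \emph{combinatorially minimal model of $X$} in the sense of the definition in the introduction, I would observe that $g\colon X\to X_0$ exhibits $X$ as a resolution of singularities of the combinatorially minimal surface $X_0$ (indeed $g$ is a birational morphism from a smooth surface, which factors through the minimal resolution of $X_0$). By the definition ``$X_0$ is a combinatorially minimal model of $X$ if $X_0$ is a combinatorially minimal Mori dream surface and $X$ is a resolution of singularities of $X_0$,'' this is exactly what is required, and the corollary follows.

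The main obstacle, such as it is, is the bookkeeping around the contraction $g$: one must be sure that the Stein factorization really contracts precisely the $\rho(X)-2$ curves asserted and nothing more (so that $\rho(X_0)=2$ on the nose, not $\leq 2$), and that each intermediate contraction is legitimate, i.e.\ the contracted curves have negative self-intersection so that normality and $\mathbb{Q}$-factoriality are preserved at each step. This is handled exactly as in the proof of Theorem \ref{contraction:proof1} via Sakai's theorem and \cite[Proposition II.6.5]{Hartshorne}; there is no substantive difficulty beyond assembling these ingredients.
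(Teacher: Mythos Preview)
Your proposal is correct and follows essentially the same route as the paper: verify the hypotheses of the preceding proposition (normality of $X_0$, finiteness of $X_0\to\mathbb{P}^1\times\mathbb{P}^1$ from the Stein factorization, regularity of a resolution via $q(X)=0$, and $\dim\Cl(X_0)_{\mathbb{R}}=2$ from \cite[Proposition II.6.5]{Hartshorne}), then apply that proposition. The paper's proof is terser but invokes exactly the same ingredients; your additional remarks on $\mathbb{Q}$-factoriality via Sakai's theorem and the explicit check of the ``combinatorially minimal model'' definition are harmless elaborations.
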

\begin{proof}
From \cite[Proposition II.6.5]{Hartshorne}, we see that $X_0$ is a normal complete surface with finitely generated divisor class group $\Cl(X_0).$ Moreover, we see that $\dim \Cl(X_0)_{\mathbb{R}}=2.$ And from the construction, we see that $X_0 \to \mathbb{P}^1 \times \mathbb{P}^1$ is finite. Then from the previous proposition, we have the desired result.
\end{proof}

\subsection{Product-quotient surfaces}

Bauer, Catanese, Grunewald and Pignatelli classified product-quotient surfaces with $p_g=q=0$ in \cite{BCGP}. It is easy to see that all of them are minimal resolutions of combinatorially minimal Mori dream surfaces.

\begin{proposition}
Let $X$ be a product-quotient surface with $p_g=q=0.$ We can contract negative curves on $X$ to obtain a combinatorially minimal Mori dream surface.
\end{proposition}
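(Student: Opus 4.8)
The plan is to realize a product-quotient surface $X$ with $p_g=q=0$ as a surface admitting a suitable contraction to a surface with a finite map to $\mathbb{P}^1\times\mathbb{P}^1$, and then invoke the Corollary to the previous Proposition. Recall that by construction $X$ is the minimal resolution of $(C_1\times C_2)/G$, where $C_1,C_2$ are smooth projective curves and $G$ acts faithfully on each factor; the two projections $C_1\times C_2\to C_i$ descend to fibrations $C_i/G\cong\mathbb{P}^1$ on the quotient (using $q=0$, so $C_i/G$ is rational, hence $\mathbb{P}^1$). Composing the resolution $X\to(C_1\times C_2)/G$ with these two fibrations gives two fibrations $f_1,f_2:X\to\mathbb{P}^1$, and hence a morphism $f=(f_1,f_2):X\to\mathbb{P}^1\times\mathbb{P}^1$.

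First I would check that $f$ is generically finite: its fibers are finite because a common fiber of $f_1$ and $f_2$ would be a curve mapping to points under both projections of $(C_1\times C_2)/G$, which is impossible. So $f$ contracts only finitely many curves, necessarily with negative self-intersection (being contracted by a morphism to a surface). Next I would take the Stein factorization $X\to X_0\to\mathbb{P}^1\times\mathbb{P}^1$: by Lemma \cite[Example 2.1.15]{Lazarsfeld}-type normality and \cite[Proposition II.6.5]{Hartshorne}, $X_0$ is a normal projective surface with finitely generated $\Cl(X_0)$, the map $X_0\to\mathbb{P}^1\times\mathbb{P}^1$ is finite, and $X\to X_0$ is birational, hence contracts the negative curves that are collapsed by $f$. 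Since $X$ is regular ($q=0$) and $X\to X_0$ is birational, $X_0$ has a regular resolution, and one checks $\dim\Cl(X_0)_{\mathbb{R}}=2$ because $\rho$ drops by exactly one at each contracted curve and $X_0$ still dominates the Picard-number-$2$ surface $\mathbb{P}^1\times\mathbb{P}^1$ finitely (so $\rho(X_0)\ge 2$, while $\rho(X_0)\le 2$ by the Lemma characterizing combinatorially minimal surfaces, or directly because the only curves left are fibers of $f_1$ or $f_2$).

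With these facts in hand, the Corollary to the preceding Proposition applies verbatim: $X_0$ is a combinatorially minimal Mori dream surface with $\rho=2$ and is a combinatorially minimal model of $X$. Thus we have contracted negative curves on $X$ to reach a combinatorially minimal Mori dream surface, as claimed.

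The main obstacle I anticipate is verifying that $f$ really contracts all the relevant negative curves and that $\rho(X_0)=2$ exactly — i.e., that the resolution $X\to(C_1\times C_2)/G$ contracts precisely the negative curves above the (cyclic quotient) singularities of $(C_1\times C_2)/G$, all of which lie in fibers of $f_1$ or $f_2$ and are therefore collapsed by $f$. This is really a bookkeeping point about the exceptional configuration of a product-quotient surface: every exceptional curve of the resolution maps to a point in $(C_1\times C_2)/G$, hence to points under both induced fibrations, hence into the indeterminacy-free locus where $f$ contracts it. Once that is granted, everything else is a direct citation of the Corollary, of \cite[Proposition II.6.5]{Hartshorne}, and of \cite[Theorem 2.5]{AHL}.
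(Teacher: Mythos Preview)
Your proposal is correct and is essentially the paper's own argument: the paper simply observes that $X_0=(C\times D)/G$ already carries a finite morphism to $C/G\times D/G\cong\mathbb{P}^1\times\mathbb{P}^1$ and applies the preceding Proposition directly. Your Stein factorization $X\to X_0\to\mathbb{P}^1\times\mathbb{P}^1$ is exactly $X\to (C\times D)/G\to\mathbb{P}^1\times\mathbb{P}^1$ (the resolution has connected fibers and the quotient-to-product map is finite), so the bookkeeping you anticipate about which curves are contracted and why $\rho(X_0)=2$ dissolves once you make this identification.
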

\begin{proof}
Let $C$ and $D$ to be two curves with $G$-action. By definition, $X$ is the minimal resolution of $X_0=(C \times D)/G.$ 

\begin{displaymath}
\xymatrix{
X \ar[rd]  &  C \times D \ar[ld] \ar[d] \ar[rd]  &  \\
C \ar[d]  &  (C \times D)/G \ar[ld] \ar[rd]  & D \ar[d] \\
C/G &  & D/G
}
\end{displaymath}

We can check that $X_0$ is a combinatorially minimal Mori dream surface with $\rho=2$ and there is a finite morphism from $X_0$ to $\mathbb{P}^1 \times \mathbb{P}^1 = C/G \times D/G.$
\end{proof}

\subsection{Minimal resolutions of $\mathbb{Q}$-homology planes}

The first named author studied quotient of fake projective planes by their automorphisms groups of them. From these results, one can construct minimal surfaces of general type with $p_g=q=0, K^2=3$ from fake projective planes in \cite{Keum08}. From the construction, it is obvious to see that these surfaces are minimal resolutions of combinatorially minimal Mori dream surfaces.

\begin{proposition}
Let $X$ be the minimal resolution of a $\mathbb{Q}$-homology plane. We can contract negative curves on $X$ to obtain a combinatorially minimal Mori dream surface.
\end{proposition}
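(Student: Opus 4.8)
The plan is to show that a $\mathbb{Q}$-homology plane $X_0$ (or rather its relevant birational model) is already close to being combinatorially minimal, and that contracting the negative curves on the smooth minimal resolution $X$ lands us on such a surface. Recall that a $\mathbb{Q}$-homology plane is a normal surface $Y$ with $H_i(Y,\mathbb{Q})=0$ for $i>0$; in particular $b_2(Y)=0$, so a smooth $\mathbb{Q}$-homology plane has Picard number $0$, and a singular one with quotient singularities has Picard number equal to the number of exceptional components contributed by the resolution graphs. The key structural fact I would invoke is that a $\mathbb{Q}$-homology plane of general type is of log general type and, crucially, has trivial (or finite) divisor class group up to torsion — so $\Cl(X_0)_{\mathbb{Q}}$ is $1$-dimensional once we pass to the right normal model.

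First I would set up the model carefully. Let $X$ be the smooth minimal resolution of a $\mathbb{Q}$-homology plane $\overline{X}$ of general type (these are precisely the surfaces of general type with $p_g=q=0$ arising this way, e.g. the ones built from quotients of fake projective planes in \cite{Keum08}). On $X$ there is a collection of negative curves: the exceptional curves of the resolution $X\to\overline{X}$, together with the boundary divisor whose complement is the affine $\mathbb{Q}$-homology plane. I would first contract all negative curves whose intersection matrix is negative definite — by Sakai's theorem (Theorem \ref{Sakai} as cited in the excerpt, i.e. the contraction theorem of Sakai) any such collection contracts to normal points. The claim is that after contracting a maximal such collection we reach a normal $\mathbb{Q}$-factorial surface $X_0$ with $\rho(X_0)=1$, and then Theorem \ref{contraction:proof1}(5) immediately gives that $X_0$ is a combinatorially minimal Mori dream surface.

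The heart of the argument is therefore the rank computation: I need $\rho(X)$ minus the rank of the contracted negative definite lattice to equal $1$, equivalently the negative curves on $X$ span a rank $\rho(X)-1$ negative definite sublattice of $\mathrm{NS}(X)$. This is where the $\mathbb{Q}$-homology plane hypothesis does the work: since $b_2(\overline{X})=0$ for the affine part, the second Betti number of $X$ is accounted for entirely by the exceptional/boundary configuration, so $\mathrm{NS}(X)_{\mathbb{Q}}$ is generated by these curves together with one more class coming from an ample (or the canonical) divisor. One shows the subspace spanned by the negative curves has codimension exactly one and restricts there to a negative definite form — this follows from the Hodge index theorem applied to the orthogonal complement of $K_X$ (or of an ample class), combined with the fact that $X$ is of general type so $K_X^2>0$. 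Hence a single class of positive self-intersection survives, the contracted lattice is negative definite of the right rank, and $\rho(X_0)=1$.

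The main obstacle I anticipate is verifying that the \emph{entire} relevant negative definite configuration actually contracts simultaneously — i.e. that there is no obstruction from curves of zero or positive self-intersection sneaking into the negative part, and that the resulting $X_0$ is genuinely $\mathbb{Q}$-factorial rather than merely normal. For the first point I would appeal to the classification/structure of boundary divisors of $\mathbb{Q}$-homology planes (the boundary is a tree or cycle of rational curves with negative definite intersection matrix, by Fujita-style results on open surfaces of log general type), which guarantees the configuration is exactly negative definite of rank $\rho(X)-1$. For $\mathbb{Q}$-factoriality one uses that $X_0$ has at worst quotient singularities in the quotient-of-FPP cases, hence is automatically $\mathbb{Q}$-factorial; more generally one cites \cite{AHL} and \cite{Okawa} as in the proof of Theorem \ref{contraction:proof1} to upgrade "normal Mori dream" to "$\mathbb{Q}$-factorial". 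Once $\rho(X_0)=1$ is established, combinatorial minimality is free, and since $X$ is minimal it is the minimal resolution of $X_0$ by Theorem \ref{contraction:proof1}(4), so the asserted realization holds.
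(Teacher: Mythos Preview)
Your proposal rests on a misreading of the definition. In this paper (and in the references \cite{HK,HKO,Keum08} it cites), a ``$\mathbb{Q}$-homology plane'' means a \emph{$\mathbb{Q}$-homology projective plane}: a normal projective surface $X_0$ with the same rational cohomology as $\mathbb{P}^2$, so $b_2(X_0)=1$ and hence $\rho(X_0)=1$. You have instead taken the affine notion (a surface with $H_i(-,\mathbb{Q})=0$ for $i>0$, so $b_2=0$), which leads you to introduce a boundary divisor, invoke Fujita-type structure results for open surfaces, and attempt a delicate rank computation --- none of which is needed, and some of which is not correct as stated (the boundary of a compactified affine $\mathbb{Q}$-homology plane is not in general negative definite).

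With the intended definition the proof is immediate and the paper treats it as such: the $\mathbb{Q}$-homology projective plane $X_0$ is already a normal projective $\mathbb{Q}$-factorial surface with $\rho(X_0)=1$, hence a combinatorially minimal Mori dream surface by Theorem~\ref{contraction:proof1}(5). Its minimal resolution $X$ contracts exactly the exceptional curves of $X\to X_0$ (which are negative by Grauert--Mumford), recovering $X_0$. There is no separate rank computation to perform and no appeal to Hodge index or open-surface structure theory.
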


\subsection{Surfaces with $K^2=7$ constructed by Y. Chen and Y. Shin}

Y. Chen and Y. Shin constructed a new 2-dimensional family of minimal surfaces of general type with $p_g=0, K^2=7$ as a bidouble covers of singular rational surfaces with $K^2=-1$ in \cite{CS}. We do not know whether these surfaces are Mori dream surfaces or not. However we can prove that they are minimal resolutions of combinatorially minimal Mori dream surfaces. Let us briefly recall their construction. See \cite{CS} for more details. \\

Let $p_0, p_1, \cdots, p_4, p$ be six distinct points in $\mathbb{P}^2$ satisfying certain conditions (\cite[Section 2]{CS}). For $i=1,2,3,4$, let $p_i'$ be the infinitely near point over $p_i$ which corresponds the line $\overline{p_0p_i}.$ Let $W$ be the blow-up of these ten points of $\mathbb{P}^2.$ Let $l$ be the pullback of the line in $\mathbb{P}^2$ and $e_i$ (resp. $e_0, e$) be the total transform of $p_i$ (resp. $p_0, p$) and $e'_i$ be the $(-1)$-curve corresponds to $p'_i.$ For $i=1,2,3,4,$ let $C_i$ be the strict transform of the line $\overline{p_0p_i}$ and $C'_i$ be the strict transform of the $(-1)$-curve corresponds to $p_i.$ Let $\overline{B_3}$ be the strict transform of the line $\overline{p_0p}.$ The rational surface $W$ admits three fibrations induced by $|l-e_0|,$ $|-2K_W+2e|,$ $|-2K_W+2\overline{B}_3|.$ Chen and Shin proved that the fibration $|-2K_W+2e|$ admits a reducible fiber $\overline{B}_1+e,$ where $\overline{B}_1$ is a smooth elliptic curve such that $\overline{B}_1 \cdot e =1.$ They also proved that the fibration $|-2K_W+2 \overline{B}_3|$ admits a reducible fiber $\overline{B}_2+\overline{B}_3,$ where $\overline{B}_2$ is a smooth elliptic curve such that $\overline{B}_2 \cdot \overline{B}_3 =1.$ \\

Let us define $\Delta_1, \Delta_2, \Delta_3$ as follows.
$$ \Delta_1 := \overline{B}_1+C_1+C'_1+C_2+C'_2 $$ 
$$ \Delta_2 := \overline{B}_2+C_3+C'_3 $$ 
$$ \Delta_3 := \overline{B}_3+C_4+C'_4$$ 

Chen and Shin proved that the above date define a smooth bidouble cover $\overline{\pi} : V \to W$ branched along $\Delta_1+\Delta_2+\Delta_3.$ One can contract eight $(-2)$-curves $C_1, C'_1, C_2, C'_2, C_3, C'_3, C_4, C'_4$ to obtain a eight nodal singular surface $\Sigma.$ For each $i,$ $\overline{\pi}^{-1}(C_i)$ is a disjoint union of two $(-1)$-curves and we can contract these sixteen $(-1)$-curves on $V$ to obtain $X.$ Chen and Shin proved that $X$ is a minimal surface of general type with $p_g=0, K^2=7.$

\begin{displaymath}
\xymatrix{
V \ar[r]^{\epsilon} \ar[d]_{\overline{\pi}}^{} & X \ar[d]_{}^{\pi}  \\
W \ar[r]^{\eta} & \Sigma
}
\end{displaymath}

One can check that $\Sigma$ also admits three fibrations induced from $W.$ Therefore $X$ also admits three fibrations.

\begin{proposition}
Let $X$ be a surface with $K^2=7$ constructed by Y. Chen and Y. Shin. We can contract a negative curve on $X$ to obtain a combinatorially minimal Mori dream surface.
\end{proposition}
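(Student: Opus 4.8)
The plan is to realize $X$ as a resolution of a normal surface $X_0$ with $\rho(X_0)=2$ carrying a finite morphism to $\mathbb{P}^1\times\mathbb{P}^1$, the map $X\to X_0$ being the contraction of a single negative curve, and then to invoke the Corollary above (a smooth surface with $q=0$ admitting a morphism to $\mathbb{P}^1\times\mathbb{P}^1$ contracting $\rho-2$ curves is, via its Stein factorization, the resolution of a combinatorially minimal Mori dream surface). The key numerical input is $\rho(X)=3$: as $X$ is minimal of general type with $p_g=q=0$ and $K^2=7$, Noether's formula gives $c_2(X)=12-7=5$, so $b_2(X)=3$, and $p_g=0$ forces $\rho(X)=b_2(X)=3$. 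Likewise $\rho(W)=11$, and $\Sigma$, obtained from $W$ by contracting the eight disjoint $(-2)$-curves $C_1,C_1',\dots,C_4,C_4'$, has $\rho(\Sigma)=3$.

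First I would locate a common fiber component of two of the three pencils on $W$. Writing $K_W=-3l+e_0+\sum_{i=1}^4(e_i+e_i')+e$, we get $-2K_W+2e=6l-2e_0-2\sum_{i=1}^4(e_i+e_i')$, which has no $e$-term; hence $e$ is orthogonal to the fiber classes $l-e_0$ and $-2K_W+2e$ of the pencils $|l-e_0|$ and $|-2K_W+2e|$, while $e^2=-1$, so $e$ is a proper component of a fiber of each (namely of $\overline{B}_3+e\in|l-e_0|$ and of the fiber $\overline{B}_1+e$ of $|-2K_W+2e|$). The same intersection numbers give $(l-e_0)\cdot C_i=(l-e_0)\cdot C_i'=0$ and $(-2K_W+2e)\cdot C_i=-2K_W\cdot C_i+2e\cdot C_i=0=(-2K_W+2e)\cdot C_i'$, since the $C_i,C_i'$ are $(-2)$-curves disjoint from $e$. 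Thus the morphism $g\colon W\to\mathbb{P}^1\times\mathbb{P}^1$ defined by this pair of pencils contracts $e$ and all eight $(-2)$-curves; in particular it factors through the contraction $\eta\colon W\to\Sigma$ as $g=\bar g\circ\eta$, where $\bar g\colon\Sigma\to\mathbb{P}^1\times\mathbb{P}^1$ is generically finite and contracts $\bar e:=\eta(e)$. Since $\rho(\Sigma)=3$ and, by the Hodge index theorem, a curve contracted by $\bar g$ must lie in the $1$-dimensional negative-definite subspace of $\NS(\Sigma)_{\mathbb{R}}$ orthogonal to the two pulled-back fiber classes, $\bar g$ contracts nothing except $\bar e$.

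Next I would compose with the bidouble cover $\pi\colon X\to\Sigma$. Since $\pi$ is finite, $h:=\bar g\circ\pi\colon X\to\mathbb{P}^1\times\mathbb{P}^1$ is generically finite with exceptional locus $\pi^{-1}(\bar e)$, a nonempty curve; the Hodge index argument on $X$, using $\rho(X)=3$, then forces $h$ to contract exactly one irreducible curve $C$, necessarily $C=\pi^{-1}(\bar e)_{\mathrm{red}}$, with $C^2<0$. Taking the Stein factorization $X\to X_0\to\mathbb{P}^1\times\mathbb{P}^1$, the first map contracts precisely this negative curve $C$ and the second is finite; as $X$ is smooth with $q(X)=0$ and $h$ contracts $\rho(X)-2=1$ exceptional curve, the Corollary above gives that $X_0$ is a combinatorially minimal Mori dream surface. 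This is the assertion, $C$ being the curve contracted.

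The substantive step is the middle one: exhibiting a common fiber component of two of the three pencils on $W$ that is not one of the eight rigid $(-2)$-curves. The curve $e$, lying in the fibers $\overline{B}_3+e$ of $|l-e_0|$ and $\overline{B}_1+e$ of $|-2K_W+2e|$, is such a curve — this is exactly the fiber structure Chen and Shin record in \cite{CS} — and one also uses that $\overline{p_0p_i}$ avoids $p$, so that $e\cdot C_i=0$, which is part of their genericity hypotheses. Everything else, in particular the bound that $h$ contracts at most $\rho(X)-2$ curves, is formal once $\rho(X)=\rho(\Sigma)=3$ is known.
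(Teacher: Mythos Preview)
Your proposal is correct and follows essentially the same strategy as the paper: pull back two of the three pencils on $\Sigma$ to $X$, take the Stein factorization of the resulting map to $\mathbb{P}^1\times\mathbb{P}^1$, and invoke the Corollary to conclude $X_0$ is combinatorially minimal. The only difference is the choice of pencils and hence of the contracted curve: the paper pairs the pencils for which $\overline{B}_3$ is a common fiber component (namely $|l-e_0|$ and $|-2K_W+2\overline{B}_3|$) and contracts $\widetilde{B}_3$, whereas you pair $|l-e_0|$ with $|-2K_W+2e|$ and contract $\pi^{-1}(\bar e)$; your added Hodge-index check that exactly one irreducible curve is contracted makes the argument slightly more self-contained than the paper's, but the method is the same.
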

\begin{proof}
The pullbacks of two semiample divisors induces a map $X \to \mathbb{P}^1 \times \mathbb{P}^1.$ By considering Stein factorization $X \to X_0 \to \mathbb{P}^1 \times \mathbb{P}^1.$ Therefore we see that there is a morphism contracting $\widetilde{B}_3$ from $X$ to obtain $X_0.$ From our strategy we see that $X_0$ is $\mathbb{Q}$-factorial with $\rho=2$ having a finite morphism to $\mathbb{P}^1 \times \mathbb{P}^1$ and hence it is a combinatorially minimal Mori dream surface.
\end{proof}

There are three commuting involutions $g_1, g_2, g_3$ on $X$ and Chen and Shin proved that the minimal resolution of $X/ \langle g_3 \rangle$ is a numerical Campedelli surface (cf. \cite[Proposition 5.1]{CS}). From the above discussion, we see that the numerical Campedelli surface is the minimal resolution of a combinatorially minimal Mori dream surface.

\begin{proposition}
The numerical Campedelli surface constructed by Chen and Shin (cf. \cite{CS}) is the minimal resolution of a combinatorially minimal Mori dream surface.
\end{proposition}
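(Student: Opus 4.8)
The plan is to descend the three fibrations of the Chen and Shin surface $X$ with $K^2=7$ to the numerical Campedelli surface $Y$ (the minimal resolution of $X/\langle g_3\rangle$) and then apply the criterion of the preceding proposition. Recall that the three fibrations of $X$ are pulled back from the three fibrations of $\Sigma$ (hence of $W$). Since $X\to\Sigma$ is finite and $\rho(X)=\rho(\Sigma)=3$, the pullback $\Cl(\Sigma)_{\mathbb{Q}}\to\Cl(X)_{\mathbb{Q}}$ is an isomorphism, so the bidouble cover Galois group $(\mathbb{Z}/2)^2$ — in particular $g_3$ — acts trivially on $\Cl(X)_{\mathbb{Q}}$ and preserves each fibration; consequently each fibration descends to $X/\langle g_3\rangle$ and, pulled back along $Y\to X/\langle g_3\rangle$, gives a fibration of $Y$. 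I would choose the two whose classes were used in the previous proposition to build the morphism $X\to\mathbb{P}^1\times\mathbb{P}^1$ contracting $\widetilde{B}_3$; the corresponding morphism $\Sigma\to\mathbb{P}^1\times\mathbb{P}^1$ then contracts exactly the image of $\overline{B}_3$, a single curve. Combining the two chosen fibrations of $Y$ gives a morphism $\psi\colon Y\to\mathbb{P}^1\times\mathbb{P}^1$, which is generically finite because $X\to X/\langle g_3\rangle$ is finite and $X\to\mathbb{P}^1\times\mathbb{P}^1$ is generically finite.

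Next I would take the Stein factorization $Y\to Y_0\to\mathbb{P}^1\times\mathbb{P}^1$, so that $Y_0$ is normal and projective, $Y_0\to\mathbb{P}^1\times\mathbb{P}^1$ is finite, and $Y$ is a regular resolution of $Y_0$ (indeed $q(Y)=0$). By the preceding proposition it then suffices to check that $\Cl(Y_0)_{\mathbb{R}}$ is two-dimensional, i.e. that the curves contracted by $\psi$ span a $(\rho(Y)-2)$-dimensional subspace of $\NS(Y)_{\mathbb{R}}$. On the one hand every such curve is orthogonal to the two distinct fibre classes $F_1,F_2$ (which satisfy $F_1\cdot F_2>0$), so the span has dimension at most $\rho(Y)-2$. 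On the other hand $X/\langle g_3\rangle\to\Sigma$ is finite of degree $2$, so $\rho(X/\langle g_3\rangle)=\rho(\Sigma)=3$, whence $Y\to X/\langle g_3\rangle$ contracts curves spanning a subspace of dimension $\rho(Y)-3$; for a numerical Campedelli surface $\rho(Y)=b_2(Y)=8$ (by Noether's formula $e(Y)=12\chi(\mathcal{O}_Y)-K_Y^2=12-2=10$), so this subspace is $5$-dimensional. These curves are $\psi$-contracted, as is the strict transform in $Y$ of a curve lying over $\overline{B}_3$, and the latter pushes forward to a nonzero class of $\Cl(X/\langle g_3\rangle)_{\mathbb{Q}}$, hence is independent from the former. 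Thus the span is exactly $6=\rho(Y)-2$ dimensional, so $\Cl(Y_0)_{\mathbb{R}}$ is two-dimensional; by the preceding proposition $Y_0$ is a combinatorially minimal Mori dream surface with $\rho=2$, and since $Y$ is minimal of general type (so has no $(-1)$-curve) the birational morphism $Y\to Y_0$ exhibits $Y$ as the minimal resolution of $Y_0$.

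The main obstacle is the bookkeeping underlying this dimension count: one must verify, from the explicit description in \cite{CS}, that the fixed locus of $g_3$ on $X$ yields on $X/\langle g_3\rangle$ only singularities whose resolution accounts for exactly the $\rho(Y)-3$ independent classes used above, and that apart from the transform of $\overline{B}_3$ no further curve of $Y$ is vertical for both chosen fibrations; given these facts the two halves of the estimate pinch together and everything else is formal. If one wishes to avoid the count, an alternative is to realise $Y_0$ as the normalisation, in the function field of $Y$, of the $\rho=2$ combinatorially minimal model $\Sigma_0$ of $\Sigma$ obtained from the same two fibrations, and to argue with class groups; but this reduces to controlling the same curves.
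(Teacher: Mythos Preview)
Your proposal is correct and follows essentially the same strategy as the paper: pull back two of the fibrations coming from $\Sigma$, take the Stein factorization of the resulting morphism to $\mathbb{P}^1\times\mathbb{P}^1$, and apply the $\rho=2$ criterion. The paper's proof is just two sentences: it observes that there is a morphism $X/\langle g_3\rangle \to \Sigma$ and declares that the proof of the previous proposition applies verbatim.

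The only difference is that you work on the smooth resolution $Y$ and carry out an explicit dimension count for the contracted locus, whereas the paper works directly on the singular quotient $X/\langle g_3\rangle$. Since the fibrations are pulled back from $\Sigma$ via $X/\langle g_3\rangle \to \Sigma$, there is no need to argue that $g_3$ acts trivially on $\Cl(X)_{\mathbb{Q}}$ to descend them; and since $\rho(X/\langle g_3\rangle)=\rho(\Sigma)=3$ (sandwiched between $\Sigma$ and $X$ by finite maps), the Stein factorization of $X/\langle g_3\rangle\to\mathbb{P}^1\times\mathbb{P}^1$ contracts exactly the curve over $\overline{B}_3$, giving $\rho=2$ immediately. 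Your $(\rho(Y)-3)+1=\rho(Y)-2$ count is a correct unpacking of this on the level of $Y$, but it is more work than needed: the paper bypasses it by noting that the combinatorially minimal model $Y_0$ can already be obtained from $X/\langle g_3\rangle$, and $Y$ (being minimal of general type) is then automatically the minimal resolution of $Y_0$.
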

\begin{proof}
There is a morphism $X/ \langle g_3 \rangle \to \Sigma.$ The proof of the previous Proposition is valid for this surface.
\end{proof}

We have the following natural question.

\begin{question}
Do these surfaces constructed by Chen and Shin have finitely generated Cox rings?
\end{question}

\subsection{Surfaces with $K^2=6$ constructed by M. Inoue and M. Lopes and R. Pardini}

In \cite{Inoue}, M. Inoue constructed minimal surfaces of general type with $K^2=6, p_g=0.$ Later M. Lopes and R. Pardini gave a new description of these surfaces in \cite{MLP04}. Let us recall their construction and see \cite{MLP04} for more details. \\

Let $P_1P_2P_3P_4$ be a quadrilateral in $\mathbb{P}^2.$ Let $P_5$ be the intersection of two lines $\overline{P_1P_2}$ and $\overline{P_3P_4}.$ And let $P_6$ be the intersection of two lines $\overline{P_1P_4}$ and $\overline{P_2P_3}.$ Let $\Sigma \to \mathbb{P}^2$ be the blow-up of $P_1, \cdots, P_6$ and $e_i$ be the exceptional curve over $P_i.$ Let $l$ be the pull back of a line in $\mathbb{P}^2.$ The anticanonical linear system $|-K_{\Sigma}|$ induces a birational morphism to a 4-nodal cubic $V \subset \mathbb{P}^2.$ Let $\Delta_1, \Delta_2, \Delta_3$ be the strict transforms of the lines $\overline{P_1P_3}, \overline{P_2P_4}, \overline{P_5P_6}.$ Let $c_1$ be the strict transform of a general conic through $P_2P_4P_5P_6,$ $c_2$ be the strict transform of a general conic through $P_1P_3P_5P_6,$ $c_3$ be the strict transform of a general conic through $P_1P_2P_3P_4.$ \\

Let $D_1=\Delta_1+f_2+S_1+S_2,$ $D_2=\Delta_2 + f_3,$ $D_3=\Delta_3+f_1+f'_1+S_3+S_4$ where $f_1, f'_1 \in |f_1|, f_2 \in |f_2|, f_3 \in |f_3|$ are general curves. By defining $L_1=5l-e_1-2e_2-e_3-3e_4-2e_5-2e_6,$ $L_2=6l-2e_1-2e_2-2e_3-2e_4-3e_5-3e_6$ and $L_3=4l-2e_1-2e_2-2e_3-e_4-e_5-e_6$ and they give a bidouble covering over $\Sigma.$ \\

Suppose that $f_1, f_2, f_3$ all pass through a general point $P$ and each of them meet at $P$ transversally. Then we have a $(\mathbb{Z}/2)^2$-cover $X_1 \to V$ and the surface $X_1$ has a $\frac{1}{4}(1,1)$-singularity. Let $P'$ be the image of $P$ in $V$ and let $\hat{V} \to V$ be the blow-up of $P'.$ Let $X$ be the normalization of the fiber product $\hat{V} \times_V X_1.$ Lopes and Pardini proved that $X$ is a smooth minimal surface of general type with $p_g=0, K^2=6.$ They also proved that $X$ is different from primary Burniat surfaces or Kulikov surfaces.

\begin{displaymath}
\xymatrix{
X \ar[r] \ar[d] & X_1 \ar[d]  \\
\hat{V} \ar[r] & V
}
\end{displaymath}

\begin{proposition}
Let $X$ be a surface constructed above. We can contract negative curves on $X$ to obtain a combinatorially minimal Mori dream surface.
\end{proposition}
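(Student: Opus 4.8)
The plan is to avoid computing the Cox ring of $X$ altogether and instead follow the strategy used above for the Chen--Shin and product-quotient examples: produce a morphism $X \to \mathbb{P}^1 \times \mathbb{P}^1$ contracting $\rho(X)-2$ curves and then apply the Corollary in the Strategy subsection above, so that the Stein factorization $X \to X_0 \to \mathbb{P}^1 \times \mathbb{P}^1$ exhibits a combinatorially minimal model $X_0$ of $X$ with $\rho(X_0)=2$.

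First I would locate the relevant pencils downstairs. The surface $\Sigma$ is the blow-up of $\mathbb{P}^2$ at $P_1,\dots,P_6$, and it carries the base-point-free pencils of conics through four of the six blown-up points, for instance $|2l-e_1-e_2-e_3-e_4|$ (the class of $c_3$) and $|2l-e_1-e_3-e_5-e_6|$ (the class of $c_2$); these have self-intersection $0$ and descend to the anticanonical model $V$ as two of its conic-bundle structures, giving two fibrations of $V$ over $\mathbb{P}^1$ with distinct fibres. Since the bidouble cover $X_1\to V$ and the blow-up $\hat V\to V$ alter $V$ only over the branch divisors and the point $P'$, the induced pencils pull back to $X=$ (normalization of $\hat V\times_V X_1$) and, after passing to Stein factorizations, become honest morphisms $\phi_1,\phi_2\colon X\to\mathbb{P}^1$; here I use that $q(X)=0$ (as $X$ is a minimal surface of general type with $p_g=0$, hence $b_1=0$) together with the correspondence between fibrations of a regular surface and semiample classes of self-intersection zero recorded in \S\ref{s4}. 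Setting $\phi=(\phi_1,\phi_2)\colon X\to\mathbb{P}^1\times\mathbb{P}^1$, the pulled-back classes $F_1,F_2$ satisfy $F_1^2=F_2^2=0$ and $F_1\cdot F_2>0$, because the two conic pencils already have distinct fibres on $V$ and this property is preserved by the finite and birational maps above; hence $\phi$ is generically finite.

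Next I would identify the curves contracted by $\phi$, namely the irreducible curves meeting both $F_1$ and $F_2$ trivially, reading them off the Lopes--Pardini description of $X$ (the exceptional curves of $\Sigma\to V$, the exceptional curves of the cover $X_1\to V$ and of $\hat V\to V$, restricted to the loci over which both conic pencils are constant), and check that their number is exactly $\rho(X)-2$, equivalently that $\dim\Cl(X_0)_{\mathbb{R}}=2$. Since $X\to X_0$ is then a birational morphism contracting a negative-definite configuration of curves (Sakai's theorem) onto the normal projective surface $X_0$, we obtain $\rho(X_0)=2$, and the Corollary in the Strategy subsection above yields $\Eff(X_0)=\Mov(X_0)=\SAmp(X_0)$. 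Thus $X_0$ is a combinatorially minimal Mori dream surface with a finite morphism to $\mathbb{P}^1\times\mathbb{P}^1$, and $X$ is obtained from $X_0$ by contracting negative curves, as required.

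I expect the main obstacle to be the bookkeeping around the normalization of the fiber product defining $X$: one must confirm that the two conic pencils on $V$ pull back to genuine morphisms on $X$ after Stein factorization, that no unexpected base points or multiple fibres are created over the node of $V$ supported at $P'$ or along the branch divisors $D_1,D_2,D_3$, and that the list of $\phi$-contracted curves is complete, so that $\dim\Cl(X_0)_{\mathbb{R}}=2$ and not larger. A secondary, routine point is checking that $X_0$ is $\mathbb{Q}$-factorial with finitely generated class group, which follows from \cite[Proposition II.6.5]{Hartshorne} and \cite[Theorem 2.5]{AHL}.
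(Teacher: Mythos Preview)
Your approach is essentially the same as the paper's: both produce two pencils on $X$ coming from conic pencils on $V$, map to $\mathbb{P}^1\times\mathbb{P}^1$, take Stein factorization, and invoke the Strategy Corollary. Note that your pencils $|2l-e_1-e_2-e_3-e_4|$ and $|2l-e_1-e_3-e_5-e_6|$ are precisely $\Delta_1+\Delta_2$ and $\Delta_1+\Delta_3$ on $\Sigma$, so you are using exactly the fibrations the paper denotes by $\widetilde{\Delta}_i+\widetilde{\Delta}_{i+1}$.

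The one difference is organizational, and it resolves the ``bookkeeping'' you flag as the main obstacle. The paper first contracts the exceptional $(-4)$-curve of $X\to X_1$ (the curve over $P'$), observing that $X_1$ is then $\mathbb{Q}$-factorial with $\rho(X_1)=3$. On $X_1$ the two fibrations $\widetilde{\Delta}_i+\widetilde{\Delta}_{i+1}$ and $\widetilde{\Delta}_{i+1}+\widetilde{\Delta}_{i+2}$ visibly contract the common curve $\widetilde{\Delta}_{i+1}$ and nothing else, so the Stein factorization $X_1\to X_0$ drops the Picard number by exactly one. This two-step factoring makes the count of contracted curves immediate (one $(-4)$-curve plus one $\widetilde{\Delta}$, matching $\rho(X)-2=2$) and avoids having to track what happens over $P'$ and along the branch divisors simultaneously. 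Your direct approach on $X$ would reach the same $X_0$, but you would still need to supply this identification of the two contracted curves to finish.
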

\begin{proof}
The exceptional divisor of $X \to X_1$ is a smooth rational curve with self-intersection $-4.$ Therefore we see that $X_1$ is a $\mathbb{Q}$-factorial surface with $\rho=3$ (cf. \cite[Proposition 4.4.10]{KM}). On $X_1,$ there are two fibrations induced by two semiample divisors $\widetilde{\Delta}_{i}+\widetilde{\Delta}_{i+1}$ and $\widetilde{\Delta}_{i+1}+\widetilde{\Delta}_{i+2}$ for any $i.$ Therefore we have a morphism $X_1 \to \mathbb{P}^1 \times \mathbb{P}^1$ and it contracts $\widetilde{\Delta}_{i+1}.$ By taking Stein factorization, we see that the morphism factors through $X_1 \to X_0 \to \mathbb{P}^1 \times \mathbb{P}^1.$ Because $X_0$ is $\mathbb{Q}$-factorial surface with $\rho=2$ having a finite morphism to $\mathbb{P}^1 \times \mathbb{P}^1$ and hence a combinatorially minimal Mori dream surface. Because $X$ is a minimal surface of general type, we see that $X$ is the minimal resolution of $X_0.$
\end{proof}

\begin{question}
Is $X$ a Mori dream surface?
\end{question}

It will be an interesting task to determine whether some of the classical minimal surfaces of general type can be realized as minimal resolutions of combinatorially minimal Mori dream surfaces.

\bigskip

\section{Singularities of combinatorially minimal models with \\ Picard number 1 having at worst quotient singularities}\label{s6}

In this and the next section we study possible baskets of quotient singularities of combinatorial minimal models of smooth minimal surfaces of general type with $p_g=0$. Let $X$ be a smooth minimal surface of general type with $p_g=0$ and $X_0$ be a combinatorial minimal model of $X$. In this and the next section, we will assume that $X_0$ has at worst quotient singularities. When the Picard number of $X_0$ is 1, then $X_0$ is $\mathbb{Q}$-homology plane and the maximal number of singular points $X_0$ can have was studied in \cite{HK}. Moreover, when $X_0$ has only Gorenstein singularities, its possible baskets of singularities were studied in \cite{HKO}. In this section, we first show that there are finite number of possible types of singularities $X_0$ can have (cf. Proposition \ref{finite number of types}). Then we discuss possible types of quotient singularities of $X_0$ whose Picard number is 1 when it has the maximum number of singular points. Our main references for this section are \cite{HK, HKO} and sometimes we used Sage (cf. \cite{Sage}) to obtain possible types of singularities.

\subsection{Quotient singularities of surfaces}

Let us review basic results about quotient singularities of surfaces. Our first tool to reduce many singularity types is the orbifold Bogomolov-Miyaoka-Yau inequality. Let us recall its statement.

\begin{theorem}[The orbifold Bogomolov-Miyaoka-Yau inequality]\cite{HK}
Let $X_0$ be a normal projective surface with quotient singularities whose canonical divisor $K_{X_0}$ is nef. Then the following inequality holds
$$ K_{X_0}^2 \leq 3 e_{orb}(X_0), $$
where $e_{orb}(X_0) = e(X_0) - \sum_{x \in Sing(X_0)}(1-\frac{1}{|G_x|})$ and $G_x$ is local fundamental group of the singular point $x.$
\end{theorem}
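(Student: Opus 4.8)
In the form stated this is a known inequality --- it is exactly \cite{HK}, built on work of Miyaoka, Sakai, and Kobayashi--Nakamura--Ochiai --- so in the paper it can simply be quoted; but here is the shape of the proof I would give. The plan is to reduce the statement to a Chern-number inequality for the orbifold $X_0$ and then invoke the appropriate Bogomolov--Miyaoka--Yau-type input. The case $K_{X_0}^2=0$ is degenerate: there the assertion reduces to $e_{orb}(X_0)\ge 0$, which follows from the classification of normal projective surfaces with quotient singularities and nef (but not big) canonical class. So I would assume $K_{X_0}$ nef and big, which is in any event the case relevant here since $X_0$ dominates a minimal surface of general type. Let $\pi\colon Y\to X_0$ be the minimal resolution and write $K_Y+\Delta=\pi^*K_{X_0}$ with $\Delta=\sum_i a_iE_i$; the coefficients satisfy $a_i\in[0,1)$ precisely because the singularities of $X_0$ are quotient, hence log terminal. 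Then $(K_Y+\Delta)^2=K_{X_0}^2$, and a routine computation with the Euler numbers of the exceptional trees identifies $e_{orb}(X_0)$ with the orbifold Euler characteristic of the pair $(Y,\Delta)$. The problem thus becomes a Bogomolov--Miyaoka--Yau inequality for the orbifold $X_0$, equivalently for the log pair $(Y,\Delta)$.

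For that inequality I would take one of the two classical routes. Analytic route: since $K_{X_0}$ is nef and big, the orbifold $X_0$ carries an orbifold K\"ahler--Einstein metric of negative Ricci curvature --- the orbifold version of the Aubin--Yau theorem (Kobayashi--Nakamura--Ochiai, Tian--Yau, Tsuji), with the passage from ample to nef-and-big handled by a perturbation and limiting argument on the canonical model --- and Yau's curvature inequality $c_1^2\le 3c_2$ for such a metric, read in orbifold Chern numbers, is precisely $K_{X_0}^2\le 3e_{orb}(X_0)$. Algebraic route: one shows that the orbifold cotangent sheaf (equivalently, a suitable logarithmic cotangent sheaf on $Y$ adjusted by $\Delta$) is semistable with respect to $\pi^*K_{X_0}$ --- here nefness and bigness of $K_{X_0}$ is what rules out a destabilizing subsheaf, in the style of Miyaoka --- and then Bogomolov's inequality for semistable sheaves, sharpened using the nefness of $K_{X_0}$ (the Miyaoka--Sakai improvement of $c_1^2\le 4c_2$ to $c_1^2\le 3c_2$), finishes the proof. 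In both cases the closing step is to match the orbifold $c_2$ appearing there with $e_{orb}(X_0)$, which is the Euler-number bookkeeping from the previous paragraph.

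I expect the core inequality itself to be the only real obstacle: the existence of the orbifold K\"ahler--Einstein metric (and its extension across the nef-and-big boundary) in the analytic route, or the Bogomolov inequality for orbifold/logarithmic tangent sheaves together with the semistability reduction in the algebraic route. The surrounding steps --- the case split, the passage to the minimal resolution, and the Euler-number accounting --- are entirely formal. Since the inequality is available in the literature, in practice I would simply cite \cite{HK} and proceed.
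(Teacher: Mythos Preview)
The paper does not prove this theorem; it is stated with the citation \cite{HK} and used as a black box throughout Sections~\ref{s6} and~\ref{s7}. You correctly anticipate this, and your closing remark --- that in practice one simply cites \cite{HK} --- is exactly what the paper does. Your sketch of the two standard routes (orbifold K\"ahler--Einstein versus semistability of the orbifold cotangent sheaf) is a fair summary of how the inequality is established in the literature, though it is not needed here. One small slip: you write that ``$X_0$ dominates a minimal surface of general type,'' but in the paper's setup it is the minimal surface $X$ that maps \emph{onto} $X_0$ via the contraction $X\to X_0$, not the reverse; this does not affect your argument, since what matters is only that $K_{X_0}$ is nef.
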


We also have the following weaker inequality.

\begin{theorem}\cite{HK}
Let $X_0$ be a normal projective surface with quotient singularities whose canonical divisor $K_{X_0}$ (or $-K_{X_0}$) is nef. Then the following inequality holds.
$$ 0 \leq 3 e_{orb}(X_0) $$
\end{theorem}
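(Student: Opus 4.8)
The plan is to treat the two cases $K_{X_0}$ nef and $-K_{X_0}$ nef separately, the first being immediate from the orbifold Bogomolov--Miyaoka--Yau inequality quoted above. Indeed, if $K_{X_0}$ is nef then $K_{X_0}^2\ge 0$, since a nef divisor on a projective surface has non-negative self-intersection, so the orbifold BMY inequality gives $0\le K_{X_0}^2\le 3e_{orb}(X_0)$. Hence the content lies entirely in the case $-K_{X_0}$ nef, where one must show $e_{orb}(X_0)\ge 0$ directly.

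For $-K_{X_0}$ nef I would pass to the minimal resolution $\pi\colon\widetilde{X}_0\to X_0$ and write $K_{\widetilde{X}_0}=\pi^*K_{X_0}-\Delta$, with $\Delta\ge 0$ the effective $\pi$-exceptional divisor carrying the negatives of the minimal-resolution discrepancies; it is effective because quotient singularities are log terminal. Since $-m\Delta\le 0$ and $\pi_*\mathcal{O}_{\widetilde{X}_0}=\mathcal{O}_{X_0}$, we get $H^0(\widetilde{X}_0,mK_{\widetilde{X}_0})\hookrightarrow H^0(X_0,mK_{X_0})$ for all $m$, and since $\kappa(X_0,K_{X_0})\le 0$ whenever $-K_{X_0}$ is nef, this gives $\kappa(\widetilde{X}_0)\le 0$. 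I would then invoke the Enriques--Kodaira classification of $\widetilde{X}_0$. If $\kappa(\widetilde{X}_0)=0$, then $\widetilde{X}_0$ is an iterated blow-up of a minimal surface of Kodaira dimension $0$ ($K3$, Enriques, abelian, or bielliptic), all of which have non-negative topological Euler number, so $e(\widetilde{X}_0)\ge 0$. If $\kappa(\widetilde{X}_0)=-\infty$, then $\widetilde{X}_0$ is birationally ruled over a smooth curve $B$, and $e(\widetilde{X}_0)\ge 0$ as soon as $g(B)\le 1$; the remaining possibility $g(B)\ge 2$ has to be ruled out, and here every $\pi$-exceptional curve, being rational, lies in a fibre of the ruling, so the ruling descends to a fibration $X_0\to B$ with rational general fibre, which one contradicts by intersecting the nef divisor $-K_{X_0}$ with a suitable (multi)section of $X_0\to B$.

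It then remains to convert $e(\widetilde{X}_0)\ge 0$ into $e_{orb}(X_0)\ge 0$. For this I would use the identity
$$ e_{orb}(X_0)=e(\widetilde{X}_0)-\sum_{x\in \mathrm{Sing}(X_0)}\Bigl(r_x+1-\tfrac{1}{|G_x|}\Bigr), $$
where $r_x$ is the number of exceptional curves of $\pi$ over $x$; this follows from $e(\widetilde{X}_0)=e(X_0)+\sum_x r_x$ (each exceptional configuration being a tree of $\mathbb{P}^1$'s) together with the definition of $e_{orb}$. The hard part will be bounding the orbifold correction terms, i.e. proving $\sum_x\bigl(r_x+1-\tfrac{1}{|G_x|}\bigr)\le e(\widetilde{X}_0)$; equivalently, when $\widetilde{X}_0$ is rational so that $e(\widetilde{X}_0)=2+\rho(X_0)+\sum_x r_x$, proving $\#\mathrm{Sing}(X_0)\le \rho(X_0)+2+\sum_x\tfrac{1}{|G_x|}$. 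This is the genuinely geometric step, since a priori $\widetilde{X}_0$ may carry many disjoint negative-definite configurations; I expect to handle it either by running the minimal model program on $\widetilde{X}_0$ and tracking how the exceptional configurations meet the contracted $(-1)$-curves, or by invoking the known bound for the number of quotient singularities on a log del Pezzo surface. The exclusion of the case $g(B)\ge 2$ in the ruled situation is a secondary difficulty of the same flavour.
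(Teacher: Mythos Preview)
The paper does not give its own proof of this theorem; it simply quotes it from \cite{HK} as background. So there is no argument in the paper to compare yours against, and the relevant question is whether your proposal stands on its own.

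Your treatment of the case $K_{X_0}$ nef is fine: $K_{X_0}^2\ge 0$ plus the orbifold BMY inequality just stated gives the result immediately.

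For the case $-K_{X_0}$ nef, however, your proposal has a genuine gap, and it is precisely the one you yourself flag. Reducing to $e(\widetilde{X}_0)\ge 0$ via Enriques--Kodaira classification is not progress: once you unwind your identity
\[
e_{orb}(X_0)=e(\widetilde{X}_0)-\sum_x\bigl(r_x+1-\tfrac{1}{|G_x|}\bigr),
\]
the remaining inequality $\sum_x(r_x+1-1/|G_x|)\le e(\widetilde{X}_0)$ is, for rational $\widetilde{X}_0$, equivalent to $\#\mathrm{Sing}(X_0)\le \rho(X_0)+2+\sum_x 1/|G_x|$, which is essentially a restatement of $e_{orb}(X_0)\ge 0$. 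So the ``conversion step'' carries the entire content of the theorem, and neither of your two suggested routes closes it: the MMP tracking is not carried out, and ``invoking the known bound for the number of quotient singularities on a log del Pezzo surface'' is circular, since those bounds are themselves consequences of the weak orbifold BMY inequality you are trying to prove. The exclusion of the ruled case over a base of genus $\ge 2$ is likewise only sketched.

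The arguments in the literature (see \cite{HK} and the references therein) do not proceed by classifying $\widetilde{X}_0$ and then bounding correction terms. They use the logarithmic BMY inequality for the klt pair $(\widetilde{X}_0,\Delta)$, where $\Delta$ is the discrepancy divisor of the minimal resolution, or a branched-covering/orbifold argument that reduces to the $K$ nef case. If you want a self-contained proof, that is the direction to take.
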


We can compute $K_0^2$ using the following Lemma. Let $f : X \to X_0$ be the minimal resolution of $X_0.$ Then we have 
$$ K_{X} = f^*K_{X_0} - \sum_{x \in Sing(X_0)}D_x. $$
where $D_x=\sum a_iE_i$ is the effective $\mathbb{Q}$-divisor supported on $f^{-1}(x)$ uniquely determined by the equations $f^* K_{X_0} \cdot E_i = 0$ for all $i$. It is well-known that $0 \leq a_i < 1$ for all $i$. Note that
$$ K_{X_0}^2 = K_{X}^2 - \sum_{x \in Sing(X_0)}D_x^2. $$

Let $R_x$ be the sublattice of $H^2(X,\mathbb{Z})/tors$ generated by the exceptional curves over $x$ and let $R=\bigoplus_{x \in Sing(X_0)}R_x \leq H^2(X,\mathbb{Z})/tors.$ A strong restriction of the possible types of singular points $X_0$ can have is the following Lemma in \cite{HK}.

\begin{lemma}\cite[Lemma 3.3]{HK} We have the followings. \\
(1) $det(R + \langle K_X \rangle)=det(R)K_{X_0}^2$ if $K_{X_0}^2 \neq 0.$ \\
(2) If $X_0$ is a $\mathbb{Q}$-homology plane with quotient singularities, then $|det(R + \langle K_X \rangle)|$ is a square number.
\end{lemma}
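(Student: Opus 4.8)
The plan is to treat both parts as statements about the lattice $H^2(X,\mathbb{Z})/\mathrm{tors}$, using only the relation $K_X = f^{*}K_{X_0} - \sum_{x}D_x$ recalled above together with the defining property $f^{*}K_{X_0}\cdot E_i = 0$ for every exceptional curve $E_i$ over a singular point. The key observation is that, inside $H^2(X,\mathbb{Q})$ equipped with the intersection form, $f^{*}K_{X_0}$ lies in the orthogonal complement $R^{\perp}$, while the correction term $\delta := \sum_x D_x = f^{*}K_{X_0}-K_X$ lies in $R\otimes\mathbb{Q}$; thus $f^{*}K_{X_0}$ is exactly the $R^{\perp}$-component of the integral class $K_X$, and $(f^{*}K_{X_0})^{2} = K_{X_0}^{2}$ by the definition of intersection numbers on the normal surface $X_0$.

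For (1) I would first use $K_{X_0}^{2}\neq 0$ to conclude $f^{*}K_{X_0}\notin R\otimes\mathbb{Q}$, so that no nonzero multiple of $K_X$ lies in $R$ and, for a $\mathbb{Z}$-basis $E_1,\dots,E_k$ of $R$, the tuple $(E_1,\dots,E_k,K_X)$ is a $\mathbb{Z}$-basis of $R+\langle K_X\rangle$; in particular $\rank(R+\langle K_X\rangle)=\rank(R)+1$. Writing $\delta=\sum_j c_j E_j$, one has $K_X\cdot E_i=-\delta\cdot E_i=-(G_Rc)_i$, where $G_R$ is the Gram matrix of $R$ in this basis, so the Gram matrix of $R+\langle K_X\rangle$ is $\bigl(\begin{smallmatrix} G_R & -G_Rc\\ -c^{T}G_R & K_X^{2}\end{smallmatrix}\bigr)$. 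Expanding its determinant by the Schur complement of the block $G_R$ gives $\det(G_R)\bigl(K_X^{2}-c^{T}G_Rc\bigr)=\det(G_R)\bigl(K_X^{2}-\delta^{2}\bigr)$, and since $K_X^{2}=(f^{*}K_{X_0})^{2}+\delta^{2}=K_{X_0}^{2}+\delta^{2}$ (equivalently, $K_{X_0}^{2}=K_X^{2}-\sum_x D_x^{2}$) this equals $\det(R)\,K_{X_0}^{2}$, proving (1).

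For (2) I would observe that $R+\langle K_X\rangle$ is a sublattice of $\Lambda:=H^2(X,\mathbb{Z})/\mathrm{tors}$, which is unimodular by Poincar\'e duality on the smooth projective surface $X$, so $\det(\Lambda)=\pm 1$. Since $X_0$ is a $\mathbb{Q}$-homology plane we have $b_2(X_0)=1$, and since quotient singularities are rational the minimal resolution satisfies $b_2(X)=b_2(X_0)+\rank(R)=\rank(R)+1$; combined with the rank count from (1) (valid here because $K_{X_0}$ is not numerically trivial, so $K_{X_0}^{2}\neq 0$), the sublattice $R+\langle K_X\rangle$ has finite index in $\Lambda$. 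Hence $\abs{\det(R+\langle K_X\rangle)}=[\Lambda:R+\langle K_X\rangle]^{2}\,\abs{\det(\Lambda)}=[\Lambda:R+\langle K_X\rangle]^{2}$ is a perfect square.

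The bulk of this is routine linear algebra; the step that needs genuine care is the bookkeeping in (1), namely keeping track of the fact that $f^{*}K_{X_0}$ and $\sum_x D_x$ are only $\mathbb{Q}$-divisors while $K_X$ is integral, and verifying honestly that $(E_1,\dots,E_k,K_X)$ generates $R+\langle K_X\rangle$ over $\mathbb{Z}$ so that the Gram-matrix determinant really computes $\det(R+\langle K_X\rangle)$. A secondary point to pin down is that the hypotheses in (2) force $K_{X_0}^{2}\neq 0$ (automatic when, as in our applications, $X_0$ is of general type), since this is what makes the rank count — and hence the finite-index, unimodular-sublattice argument — applicable.
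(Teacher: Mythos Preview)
The paper does not supply its own proof of this lemma: it is quoted verbatim from \cite[Lemma~3.3]{HK} and used as a black box. So there is no argument in the paper to compare against; I will just comment on the soundness of your proposal.

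Your proof of (1) is correct. The orthogonal splitting $K_X = f^{*}K_{X_0} - \delta$ with $f^{*}K_{X_0}\in R^{\perp}\otimes\mathbb{Q}$ and $\delta\in R\otimes\mathbb{Q}$ is exactly the right way to see it, and the Schur-complement identity
\[
\det\begin{pmatrix} G_R & -G_Rc\\ -c^{T}G_R & K_X^{2}\end{pmatrix}
=\det(G_R)\,(K_X^{2}-c^{T}G_Rc)
=\det(R)\,(K_X^{2}-\delta^{2})
=\det(R)\,K_{X_0}^{2}
\]
is clean. Your caution about $(E_1,\dots,E_k,K_X)$ being an honest $\mathbb{Z}$-basis of $R+\langle K_X\rangle$ is well placed but easily settled: since $K_{X_0}^{2}\neq 0$ forces $K_X\notin R\otimes\mathbb{Q}$, the sum $R+\langle K_X\rangle$ is the internal direct sum $R\oplus\mathbb{Z}K_X$ as abelian groups, so the given tuple is a basis.

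Your proof of (2) is also correct: finite index in a unimodular lattice gives a square discriminant. The only point you flag but do not close is whether $K_{X_0}^{2}\neq 0$ is automatic for a $\mathbb{Q}$-homology projective plane with quotient singularities. In the applications of this paper (minimal surfaces of general type) it is immediate, as you say; in the generality of \cite{HK} one argues that on a surface with $b_2(X_0)=1$ the class $K_{X_0}$ is a rational multiple of an ample class, so $K_{X_0}^{2}=0$ would force $K_{X_0}\equiv 0$, hence $K_X\equiv -\sum_x D_x$, which is impossible for a smooth projective surface with $p_g=q=0$ unless $X$ is rational or Enriques --- cases handled separately in \cite{HK}. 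For the purposes of this paper your parenthetical ``automatic when $X_0$ is of general type'' suffices.
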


Brieskorn classified finite subgroups of $GL(2, \mathbb{C})$ without quasi-reflections in \cite{Brieskorn}. See Table 1 in \cite{HK}. We will use the following notation which was used in \cite{HK}.
$$
\begin{array}{lcl}
 < q,q_1> &:=& \text{ the dual graph of the singularity of type }
 \dfrac{1}{q}(1,q_1)
,\\
< b;s_1, t_1; s_2, t_2;s_3, t_3> &:=& \text{ the tree of the
form} \\ 
 & & \\
 & &
\begin{picture}(40,30)
\put(40,25){$<s_2, t_2>$}
\put(52,15){\line(0,1){6}}
\put(0,5){$<s_1,t_1> -\underset{-b}\circ - <s_3, t_3>$}
\end{picture}
\end{array}
$$

For each singular point $x$, one can compute $D^2_x$ from the following Lemmas. 

\begin{lemma}\cite[Lemma 3.6]{HK}\label{numcycquot}
Let $x$ be a cyclic quotient singular point of $X_0$ of type $[m_1,\cdots,m_l],$ i.e. $f^{-1}(x)$ has $l$ components $E_1,\cdots,E_l$ with $E_i^2=-m_i$ and they form a string. Then we have \\
(1) If $l = 1,$ then $D_x^2=-\frac{(m_1-2)^2}{m_1}$; \\
(2) If $l \geq 2,$ then $D_x^2=2l-\sum m_i + a_1 + a_l = 2l-\sum m_i + 2 - \frac{q_1+q_l+2}{q}$; \\
where $q_i=[m_1,\cdots,\widehat{m}_i,\cdots,m_l]$.
\end{lemma}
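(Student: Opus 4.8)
The idea is to determine the coefficients of $D_x=\sum_{i=1}^{l}a_iE_i$ directly from its defining relations and then read off $D_x^2$. Recall $D_x$ is characterized by $f^{*}K_{X_0}\cdot E_i=0$, i.e. $(K_X+D_x)\cdot E_i=0$, for every $i$. Since the exceptional locus of a cyclic quotient singularity is a string of smooth rational curves, adjunction gives $K_X\cdot E_i=-2-E_i^2=m_i-2$, so these relations read $D_x\cdot E_i=2-m_i$ for all $i$. As $D_x$ is supported on $\bigcup_i E_i$, this forces $(K_X+D_x)\cdot D_x=0$, hence $D_x^2=-K_X\cdot D_x=\sum_i a_i(2-m_i)$. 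For $l=1$ the single relation $-m_1a_1=2-m_1$ gives $a_1=(m_1-2)/m_1$ and $D_x^2=a_1(2-m_1)=-(m_1-2)^2/m_1$, which is part (1).

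For part (2) I would first peel off the boundary contributions by comparing $D_x$ with the reduced cycle $Z:=E_1+\dots+E_l$. A direct count using $E_i\cdot E_{i\pm1}=1$, $E_i^2=-m_i$ gives $Z\cdot E_i=2-m_i$ for interior $i$, while $Z\cdot E_1=1-m_1$ and $Z\cdot E_l=1-m_l$; that is, $2-m_i=Z\cdot E_i+\delta_i$ with $\delta_i=1$ exactly for $i\in\{1,l\}$. Substituting into $D_x^2=\sum_i a_i(2-m_i)$ yields $D_x^2=D_x\cdot Z+a_1+a_l$, and $D_x\cdot Z=\sum_i(D_x\cdot E_i)=\sum_i(2-m_i)=2l-\sum_i m_i$. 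This already establishes the first equality $D_x^2=2l-\sum m_i+a_1+a_l$, valid for all $l\ge2$, with no continued fractions involved.

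It then remains to evaluate $a_1$ and $a_l$. Writing the relations as $M\mathbf{a}=\mathbf{b}$ with $M=(E_i\cdot E_j)$ the tridiagonal intersection matrix and $\mathbf{b}=(2-m_i)_i$, and setting $\mathbf{f}:=\mathbf{a}-(1,\dots,1)$, the computation of $Z\cdot E_i$ above shows $M\mathbf{f}=e_1+e_l$, where $e_1,e_l$ denote the first and last standard basis vectors. Since $-M$ is positive definite with $\det(-M)$ equal to the order $q$ of the local fundamental group (the determinant of the exceptional lattice of the cyclic quotient singularity), Cramer's rule expresses the entries of $M^{-1}$ we need as continuants of substrings of $(m_1,\dots,m_l)$: the $(1,1)$ and $(l,l)$ cofactors are $q_1=[m_2,\dots,m_l]$ and $q_l=[m_1,\dots,m_{l-1}]$, while the two corner cofactors, obtained by deleting the first column and last row (or vice versa) of the tridiagonal matrix, are triangular determinants equal to $\pm1$. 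This gives $a_1=1-(q_1+1)/q$ and $a_l=1-(q_l+1)/q$, so $a_1+a_l=2-(q_1+q_l+2)/q$, and combining with the first equality proves part (2).

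The main obstacle is this last step: matching the various minors of the tridiagonal matrix with the continuants $[m_i,\dots,m_j]$ and tracking signs — in particular verifying that the corner minor is indeed $\pm1$, and reconciling the continuant normalization with the definitions of $q_i=[m_1,\dots,\widehat{m_i},\dots,m_l]$ and of $q$. This is routine Hirzebruch--Jung bookkeeping rather than anything substantial; every earlier step is formal intersection theory on the resolution.
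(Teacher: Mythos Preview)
Your argument is correct. The paper does not prove this lemma; it is quoted verbatim from \cite[Lemma~3.6]{HK} and used as a black box, so there is no in-paper proof to compare against. Your derivation is a clean self-contained verification: the identity $D_x^2=-K_X\cdot D_x=\sum a_i(2-m_i)$ follows immediately from $(K_X+D_x)\cdot E_i=0$, the comparison with the reduced cycle $Z=\sum E_i$ isolates $a_1+a_l$, and the Cramer step is carried out correctly. In particular, deleting row~$l$ and column~$1$ from the tridiagonal matrix $M$ leaves a lower-triangular matrix with $1$'s on the diagonal, so the corner minor is exactly $1$; combined with $\det(M)=(-1)^lq$ and the $(1,1)$- and $(l,l)$-minors $(-1)^{l-1}q_1,\ (-1)^{l-1}q_l$, this gives $a_1=1-(q_1+1)/q$ and $a_l=1-(q_l+1)/q$ as you claim. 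The only cosmetic point is that the corner minor is $+1$ rather than merely $\pm1$; the sign is absorbed when you pass to the cofactor and divide by $\det(M)$, and your final formulas are right.
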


\begin{lemma}\cite[Lemma 3.7]{HK}\label{non-cycquot}
Let $x$ be a non-cyclic quotient singular point of $X_0$ of type $D_{q,q_1}$ with the dual graph $\langle b; 2,1; 2,1; q,q_1 \rangle$. Let $l$ be the length of the string $\langle q, q_1 \rangle$ and let us assume that $l \geq 2$. Then we have \\
(1) $\mathrm{det}(R_x) = (-1)^{l+3} 4((b-1)q-q_1)$; \\
(2) $a_l = 1 - \frac{(b-1)q_l-q_{1,l}}{(b-1)q-q_1}$; \\
(3) $D_x^2=2l-\sum m_i + a_l -(b-2).$
\end{lemma}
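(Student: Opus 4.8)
The plan is to read everything off the dual graph. Write $E_0$ for the central curve with $E_0^2=-b$, write $A_1,A_2$ for the two $(-2)$-curves attached to $E_0$, and write $E_1,\dots,E_l$ for the chain realising $\langle q,q_1\rangle$, with $E_1\cdot E_0=1$ and $E_i^2=-m_i$. It is convenient to use the continuants $P$ given by $P()=1$, $P(-)=0$ and $P(c_1,\dots,c_k)=c_kP(c_1,\dots,c_{k-1})-P(c_1,\dots,c_{k-2})$, so that $q=P(m_1,\dots,m_l)$, $q_1=P(m_2,\dots,m_l)$, $q_l=P(m_1,\dots,m_{l-1})$ and $q_{1,l}=P(m_2,\dots,m_{l-1})$. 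The only genuine input about them that I will need is the crossing identity $q\,q_{1,l}=q_1q_l-1$, which follows immediately from $\det\begin{pmatrix}c&-1\\1&0\end{pmatrix}=1$ and the standard presentation of a continuant string as a product of such $2\times2$ matrices.

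For part (1), note that the dual graph is star-shaped with centre $E_0$ and three arms, whose arm-data $(n_j,e_j)$ (with $n_j$ the determinant of the arm and $e_j$ the determinant of the arm with its root vertex removed) are $(2,1)$, $(2,1)$ and $(q,q_1)$. Expanding the determinant of the negative of the intersection matrix along the $E_0$-row gives the familiar star formula $\det(-M)=b\,n_1n_2n_3-\bigl(e_1n_2n_3+n_1e_2n_3+n_1n_2e_3\bigr)=4bq-4q-4q_1=4((b-1)q-q_1)$. Since $R_x$ is negative definite of rank $l+3$, $\det(R_x)=(-1)^{l+3}\det(-M)=(-1)^{l+3}4((b-1)q-q_1)$, which is (1).

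For (2) and (3), write $D_x=a_0E_0+\alpha_1A_1+\alpha_2A_2+\sum_{i=1}^la_iE_i$ and impose $D_x\cdot E=-K_X\cdot E$ for every exceptional curve $E$, where by adjunction $K_X\cdot A_j=0$, $K_X\cdot E_0=b-2$ and $K_X\cdot E_i=m_i-2$. The two equations at $A_1,A_2$ force $\alpha_1=\alpha_2=a_0/2$, and then the equation at $E_0$ collapses to the relation $a_1=(b-1)a_0-(b-2)$. The remaining equations are exactly $M_{\mathrm{chain}}(a_1,\dots,a_l)^{\mathsf T}=\mathbf v-a_0\mathbf e_1$, with $M_{\mathrm{chain}}$ the intersection matrix of the chain and $\mathbf v=(2-m_1,\dots,2-m_l)$. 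Using the closed form for the inverse of a tridiagonal matrix, $(M_{\mathrm{chain}}^{-1})_{1,1}=-q_1/q$ and $(M_{\mathrm{chain}}^{-1})_{1,l}=(M_{\mathrm{chain}}^{-1})_{l,1}=-1/q$, while $(M_{\mathrm{chain}}^{-1}\mathbf v)_1=1-\tfrac{q_1+1}{q}$ and $(M_{\mathrm{chain}}^{-1}\mathbf v)_l=1-\tfrac{q_l+1}{q}$ (these are the two coefficients of the auxiliary cyclic singularity $\langle q,q_1\rangle$, each obtained from a one-line telescoping sum for continuants, consistently with Lemma \ref{numcycquot}). Hence $a_1=1-\tfrac{q_1+1}{q}+\tfrac{a_0q_1}{q}$ and $a_l=1-\tfrac{q_l+1}{q}+\tfrac{a_0}{q}$. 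Combining the first with $a_1=(b-1)a_0-(b-2)$ gives $a_0=1-\tfrac{1}{(b-1)q-q_1}$; substituting into the formula for $a_l$ and simplifying with the crossing identity $q\,q_{1,l}=q_1q_l-1$ produces $a_l=1-\tfrac{(b-1)q_l-q_{1,l}}{(b-1)q-q_1}$, which is (2). For (3), since $D_x=f^*K_{X_0}-K_X$ and $D_x\cdot f^*K_{X_0}=0$, we get $D_x^2=-D_x\cdot K_X=-a_0(b-2)-\sum_{i=1}^la_i(m_i-2)$. Summing all chain equations and regrouping neighbour terms yields $2\sum_ia_i-\sum_im_ia_i=2l-\sum_im_i-a_0+a_1+a_l$; substituting gives $D_x^2=2l-\sum m_i+a_1+a_l-(b-1)a_0$, and the relation $a_1=(b-1)a_0-(b-2)$ turns $a_1-(b-1)a_0$ into $-(b-2)$, which is exactly (3). (Observe that (3) only uses that one relation, not the explicit value of $a_0$.)

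I expect the only real friction to be bookkeeping rather than anything conceptual: fixing the orientation conventions along the chain so that $q_1,q_l,q_{1,l}$ are the intended continuants, tracking the sign $(-1)^{l+3}$, and verifying the tridiagonal-inverse entries and the telescoping continuant identities; the one nontrivial ingredient is the crossing identity $q\,q_{1,l}=q_1q_l-1$. One could equally well avoid matrix inversion altogether and run Cramer's rule throughout, writing each coefficient as a ratio of determinants of sub-forests of the dual graph; this leads to the same continuant identities and the same conclusion.
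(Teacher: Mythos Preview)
The paper does not actually prove this lemma; it is quoted verbatim from \cite[Lemma 3.7]{HK} and stated without proof, so there is no in-paper argument to compare against. Your proof is correct and self-contained: the star-graph determinant expansion gives (1), the reduction to the tridiagonal chain system together with the explicit entries $(M_{\mathrm{chain}}^{-1})_{1,1}=-q_1/q$, $(M_{\mathrm{chain}}^{-1})_{l,1}=-1/q$ and the continuant crossing identity $q\,q_{1,l}=q_1q_l-1$ gives (2), and the telescoped sum of the chain equations combined with the single relation $a_1=(b-1)a_0-(b-2)$ gives (3). All the intermediate formulas check, and your observation that (3) needs only that relation and not the explicit value of $a_0$ is accurate.

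One small point of presentation: your identification of $(M_{\mathrm{chain}}^{-1}\mathbf v)_1$ and $(M_{\mathrm{chain}}^{-1}\mathbf v)_l$ with the end-coefficients $1-\tfrac{q_1+1}{q}$ and $1-\tfrac{q_l+1}{q}$ of the auxiliary cyclic singularity $\langle q,q_1\rangle$ is correct, but the paper's Lemma \ref{numcycquot} only records their \emph{sum} $a_1+a_l=2-\tfrac{q_1+q_l+2}{q}$; the individual values follow by the evident end-swap symmetry of the chain, which you might state explicitly. Otherwise nothing is missing.
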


The configuration of exceptional curves over the singular point of type $D_{q,q_1}$ is as follows.

$$
\begin{array}{lcl}
\begin{picture}(40,30)
\put(28,25){$E_{l+1}$}
\put(33,15){\line(0,1){6}}
\put(0,5){$E_{l+2} -E_0 -E_1 -  \cdots - E_l$}
\end{picture}
\end{array}
$$

We also need to compute $D^2_x$ when $x$ is of type $D_{q,q_1}$ and $l=1.$ 

\begin{lemma}\label{non-cycquot2}
Let $x$ be a non-cyclic quotient singular point of $X_0$ of type $D_{q,q_1}$ with the dual graph $\langle b; 2,1; 2,1; q,q_1 \rangle$. Let $l$ be the length of the string $\langle q, q_1 \rangle$ and let us assume that $l =1$. Then we have \\
(1) $\mathrm{det}(R_x) = 4m(b-1-\frac{1}{m})$; \\
(2) $a_0 = \frac{mb-m-2}{mb-m-1}$, $a_1 = \frac{mb-m-2}{2(mb-m-1)}$, $a_2 = \frac{mb-m-2}{2(mb-m-1)}$, $a_3 = \frac{mb-m-b}{mb-m-1}$; \\
(3) $D_x^2 = -ba_0^2-2a_1^2-2a_2^2-ma_3^2+2a_0(a_1+a_2+a_3).$
\end{lemma}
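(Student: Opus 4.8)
The three assertions are pure linear algebra on the dual graph of the singularity, so the plan is to write down the exceptional configuration explicitly and then solve the relevant $4\times 4$ systems; the one point requiring genuine care is that Lemmas~\ref{numcycquot} and~\ref{non-cycquot} were stated only for $l\ge 2$, so the degenerate leg occurring when $l=1$ has to be treated by hand. To fix notation: when $l=1$ the string $\langle q,q_1\rangle$ has length one, which forces $q_1=1$ and $q=m$, so $f^{-1}(x)$ is a star of four smooth rational curves. Label them (matching the coefficients in the statement) so that $E_0$ is the central curve with $E_0^2=-b$, the curves $E_1,E_2$ are the two $(-2)$-curves coming from the two $\langle 2,1\rangle$ legs, and $E_3$ is the single $(-m)$-curve forming the $\langle q,q_1\rangle$ leg, with $E_0\cdot E_i=1$ for $i=1,2,3$ and $E_i\cdot E_j=0$ for $1\le i<j\le 3$. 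Thus $R_x$ is the symmetric $4\times 4$ matrix with diagonal $(-b,-2,-2,-m)$ whose only nonzero off-diagonal entries are the $1$'s in the first row and column.

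For (1) the plan is to expand $\det R_x$ along the first row. The complementary minor on $\{E_1,E_2,E_3\}$ is diagonal with determinant $(-2)(-2)(-m)=-4m$, and the remaining three $3\times 3$ minors are immediate, so the cofactor expansion collapses to $\det R_x=(-b)(-4m)-2m-2m-4=4(mb-m-1)=4m\bigl(b-1-\tfrac{1}{m}\bigr)$. This is exactly the value obtained by substituting $q=m$, $q_1=1$ into the formula of Lemma~\ref{non-cycquot}(1), confirming that the $l\ge 2$ formula persists at $l=1$.

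For (2), recall that $D_x=\sum a_iE_i$ is defined by $f^*K_{X_0}\cdot E_i=0$ for all $i$; since $f^*K_{X_0}=K_X+\sum_x D_x$ and curves over the other singular points do not meet $E_i$, this is equivalent to $D_x\cdot E_i=-K_X\cdot E_i=2+E_i^2$ by adjunction, using $E_i\cong\mathbb{P}^1$. So the $a_i$ solve the linear system whose equations coming from $E_1,E_2,E_3,E_0$ read, respectively, $a_0-2a_1=0$, $a_0-2a_2=0$, $a_0-ma_3=2-m$, and $-ba_0+a_1+a_2+a_3=2-b$. Substituting $a_1=a_2=a_0/2$ and $a_3=(a_0+m-2)/m$ into the last equation and clearing denominators gives $a_0(mb-m-1)=mb-m-2$ after simplification, hence $a_0=\tfrac{mb-m-2}{mb-m-1}$; back-substitution then produces the stated values of $a_1,a_2,a_3$, and one checks along the way that $0\le a_i<1$, as it must be.

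Finally, (3) is just the expansion of the self-intersection of the $\mathbb{Q}$-divisor $D_x=a_0E_0+a_1E_1+a_2E_2+a_3E_3$ against $R_x$: the diagonal entries contribute $-ba_0^2-2a_1^2-2a_2^2-ma_3^2$ and the off-diagonal $1$'s contribute $2a_0(a_1+a_2+a_3)$, which is precisely the displayed formula (inserting the values from (2) would give a closed form in $b$ and $m$ if desired). I expect no real obstacle here: the only things to watch are the sign bookkeeping in $\det R_x$ and the check that the $l=1$ configuration matches what the $l\ge 2$ formulas predict, so that this lemma slots in cleanly alongside Lemmas~\ref{numcycquot} and~\ref{non-cycquot} when the singularity data are fed into the orbifold Bogomolov--Miyaoka--Yau estimates.
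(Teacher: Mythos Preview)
Your proof is correct and follows essentially the same approach as the paper: both set up the star configuration $E_0,E_1,E_2,E_3$ with the indicated self-intersections, compute $\det R_x$ directly, solve the linear system $D_x\cdot E_i=-K_X\cdot E_i$ (the paper writes this as a matrix equation with coefficient matrix $-R_x$, you write out the four scalar equations), and then expand $D_x^2$. The only cosmetic difference is that you explain the adjunction step and the consistency with the $l\ge 2$ formula, which the paper leaves implicit.
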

\begin{proof}
Let
$$
\begin{array}{lcl}
\begin{picture}(40,30)
\put(22,25){$E_2$}
\put(27,15){\line(0,1){6}}
\put(0,5){$E_1 -E_0 -E_3$}
\end{picture}
\end{array}
$$
be the configuration of $f^{-1}(x)$ where $E^2_0=-b$, $E^2_1=-2$, $E^2_2=-2$, $E^2_3=-m$. \\
(1) It is easy to compute $\mathrm{det}(R_x) = 4m(b-1-\frac{1}{m})$. \\
(2) Let $D_x = a_0E_0 + a_1E_1 + a_2E_2 + a_3E_3$. From $K_X \cdot E_i$, we have the following matrix equation. 
$$ 
\begin{bmatrix}
    b & -1 & -1 & -1 \\
    -1 & 2 & 0 & 0 \\
    -1 & 0 & 2 & 0 \\
    -1 & 0 & 0 & m
\end{bmatrix}
\begin{bmatrix}
    a_0 \\
    a_1 \\
    a_2 \\
    a_3
\end{bmatrix} 
= 
\begin{bmatrix}
    b-2 \\
    0 \\
    0 \\
    m-2
\end{bmatrix} 
$$
Then we have $a_0 = \frac{mb-m-2}{mb-m-1}$, $a_1 = \frac{mb-m-2}{2(mb-m-1)}$, $a_2 = \frac{mb-m-2}{2(mb-m-1)}$, $a_3 = \frac{mb-m-b}{mb-m-1}$, \\
(3) From the above configuration, we have $D_x^2 = (a_0E_0 + a_1E_1 + a_2E_2 + a_3E_3)^2=-ba_0^2-2a_1^2-2a_2^2-ma_3^2+2a_0(a_1+a_2+a_3).$ 
\end{proof}

Our next tool to reduce many types of quotient singular points is the theory of quadratic forms. Let us briefly recall several definitions and results. 

\begin{definition}\cite[4. Definition 7]{Serre}
Two quadratic form $Q$ and $Q'$ are equivalent(we will write $Q \sim Q'$) if the corresponding modules are equivalent. If $A$ and $A'$ be the corresponding matrix, then $Q \sim Q'$ if there exists an invertible matrix $B$ such that $A'=BAB^t$.
\end{definition}

\begin{theorem}\cite{Serre}
Let $Q$ and $Q'$ be two quadratic forms over $\mathbb{Q}.$ The two quadratic forms $Q$ and $Q'$ are equivalent over $\mathbb{Q}$ if and only if they are equivalent over $\mathbb{R}$ and $\mathbb{Q}_p$ for any prime number $p.$
\end{theorem}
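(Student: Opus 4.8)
The plan is to reduce the equivalence statement to the corresponding \emph{isotropy} statement — the Hasse--Minkowski theorem proper, that a nondegenerate quadratic form over $\mathbb{Q}$ represents $0$ nontrivially if and only if it does so over $\mathbb{R}$ and over every $\mathbb{Q}_p$ — and then run an induction on the rank. One direction is trivial: if $Q \sim Q'$ over $\mathbb{Q}$, then tensoring with any completion gives $Q \sim Q'$ there. For the converse, suppose $Q$ and $Q'$ have the same rank $n$ and are equivalent over $\mathbb{R}$ and over all $\mathbb{Q}_p$.

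First I would settle $n = 1$: forms $\langle a \rangle$ and $\langle b \rangle$ are equivalent over a field $k$ of characteristic $\neq 2$ exactly when $a/b \in (k^\ast)^2$, so the claim becomes the elementary fact that an element of $\mathbb{Q}^\ast$ which is a square in $\mathbb{R}$ and in every $\mathbb{Q}_p$ is a square in $\mathbb{Q}$ — the $p$-adic conditions force every $p$-adic valuation to be even, the real condition forces positivity, and unique factorization finishes it. I would also record here the multiplicativity of Hilbert symbols over the completions, which is used repeatedly below.

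For the inductive step $n \geq 2$: choose $a \in \mathbb{Q}^\ast$ represented by $Q$ over $\mathbb{Q}$ (possible after a change of variables using nondegeneracy). Then $a$ is represented by $Q$, hence by $Q'$, over each completion; equivalently the rank-$(n+1)$ form $Q' \perp \langle -a \rangle$ is isotropic over $\mathbb{R}$ and all $\mathbb{Q}_p$. By the isotropy theorem it is isotropic over $\mathbb{Q}$, so $Q'$ represents $a$ over $\mathbb{Q}$. Writing $Q \cong \langle a \rangle \perp Q_1$ and $Q' \cong \langle a \rangle \perp Q_1'$ over $\mathbb{Q}$ and using Witt cancellation over each completion gives $Q_1 \sim Q_1'$ over $\mathbb{R}$ and all $\mathbb{Q}_p$; the inductive hypothesis then yields $Q_1 \cong Q_1'$ over $\mathbb{Q}$, hence $Q \cong Q'$ over $\mathbb{Q}$.

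It remains to prove the isotropy theorem itself, which is the real content. I would organize it by the number of variables $n$: for $n \leq 2$ it reduces to the square-detection principle above and to Hilbert symbols; for $n = 3$ it is Legendre's theorem, proved by descent or via Dirichlet's theorem on primes in arithmetic progressions; for $n = 4$ one reduces to $n = 3$ using the product formula $\prod_v (a,b)_v = 1$ for Hilbert symbols together with Dirichlet's theorem to exhibit a value represented by both binary halves of the form; and for $n \geq 5$ one inducts, splitting off a small subform and approximating at the finitely many bad places. The hard part, both logically and in terms of imported machinery, is this ternary/quaternary core: it depends on the Hilbert symbol product formula (which already packages quadratic reciprocity) and on Dirichlet's theorem — the one genuinely analytic input — and, following Serre, I would cite rather than reprove these.
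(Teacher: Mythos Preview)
Your sketch is a correct outline of the standard proof of the Hasse--Minkowski theorem in its equivalence form, and it follows Serre's own treatment closely: reduce equivalence to representation via Witt cancellation and induction on rank, then prove the isotropy statement by a case analysis on $n$, with the ternary and quaternary cases carrying the arithmetic weight (Hilbert reciprocity and Dirichlet's theorem on primes in arithmetic progressions).

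However, there is nothing to compare against: the paper does not prove this theorem. It is stated with the citation \cite{Serre} and used as a black box --- the authors only need it as background to justify that equivalence of quadratic forms over $\mathbb{Q}$ can be checked place by place, so that the subsequent computations of discriminants and $\epsilon$-invariants over $\mathbb{Q}_p$ suffice to rule out certain lattice embeddings. No proof or sketch appears in the paper itself. Your write-up is therefore not a competing argument but rather a supplied proof for a cited result; as such it is fine, though strictly speaking unnecessary for the paper's purposes.
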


An important invariant of quadratic forms defined over $\mathbb{R}$ or $\mathbb{Q}_p$ is the Hilbert symbol defined as follows.

\begin{definition}\cite{Serre}
Let $k$ be $\mathbb{R}$ or $\mathbb{Q}_p$ where $p$ is a prime number. For $a, b \in k^*$ we define the Hilbert symbol $(a,b)$ of $a$ and $b$ relative to $k$ as follows. \\
(1) $(a,b)=1$ if $z^2-ax^2-by^2=0$ has a solution if $(z,x,y) \neq (0,0,0) \in k^3.$ \\
(2) $(a,b)=-1$ otherwise.
\end{definition}

Then we can compute the Hilbert symbols as follows.

\begin{theorem}\cite{Serre}
(1) If $k=\mathbb{R},$ then $(a,b)=1$ if $a$ or $b$ is $>0$ and $(a,b)=-1$ if $a$ and $b$ are $<0.$  \\
(2) Let $k=\mathbb{Q}_p$ where $p$ is a prime number. For $a=p^{\alpha}a', b=p^{\beta}b' \in k^*$ where $a',b'$ are $p$-adic units, then the Hilbert symbol $(a,b)$ is follows. \\
(2-1) $(a,b)_p=(-1)^{\alpha \beta \epsilon(p)}(\frac{a'}{p})^{\beta}(\frac{b'}{p})^{\alpha}$ if $p \neq 2$ \\
(2-2) $(a,b)_p=(-1)^{\epsilon(a') \epsilon(b')+\alpha \omega(b')+\beta \omega(a')}$ if $p=2$ \\
where $(\frac{c}{p})$ is the Legendre symbol, $\epsilon(c)$ is the class of $\frac{c-1}{2}$ modulo 2 and $\omega(c)$ is the class of $\frac{c^2-1}{8}$ modulo 2.
\end{theorem}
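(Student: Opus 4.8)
The plan is to reduce the statement to a small finite verification by first eliminating squares. Directly from the definition, $z^{2}=ax^{2}+by^{2}$ has a nonzero solution if and only if $z^{2}=(ac^{2})x^{2}+by^{2}$ does, so $(a,b)$ depends only on the images of $a$ and $b$ in $k^{*}/(k^{*})^{2}$, which is a finite group in each case, and $(a,b)=(b,a)$. I would also use the two universal identities $(a,-a)=1$ and $(a,1-a)=1$, which follow by exhibiting the obvious solutions, together with the bilinearity $(aa',b)=(a,b)(a',b)$; the latter can be quoted (it is a standard consequence of the structure theory of quaternion algebras over local fields, and also appears in \cite{Serre}) or recovered at the end once the explicit tables are in hand.

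Part (1) is immediate: if $a>0$ then $(\sqrt{a},1,0)$ solves $z^{2}=ax^{2}+by^{2}$, and likewise if $b>0$; if $a,b<0$ then $ax^{2}+by^{2}\le 0$ with equality only for $x=y=0$, forcing $z=0$ as well, so there is no nonzero solution and $(a,b)=-1$.

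For $k=\mathbb{Q}_{p}$ with $p$ odd, I would use that $1,u,p,up$ represent $\mathbb{Q}_{p}^{*}/(\mathbb{Q}_{p}^{*})^{2}$, where $u$ is a unit with non-square residue, and that by Hensel's lemma a unit is a square iff its residue is. The computation then reduces to three facts: $(u,u')=1$ for any units $u,u'$, because a nondegenerate ternary quadratic form over $\mathbb{F}_{p}$ has a nontrivial zero, which Hensel lifts; $(u,p)=\big(\tfrac{u}{p}\big)$, by reducing $z^{2}-ux^{2}-py^{2}=0$ modulo $p$ and lifting; and $(p,p)=(p,-p)(p,-1)=(p,-1)=\big(\tfrac{-1}{p}\big)=(-1)^{\epsilon(p)}$. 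Writing $a=p^{\alpha}a'$ and $b=p^{\beta}b'$ and expanding bilinearly then yields $(a,b)_{p}=(-1)^{\alpha\beta\epsilon(p)}\big(\tfrac{a'}{p}\big)^{\beta}\big(\tfrac{b'}{p}\big)^{\alpha}$.

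The case $p=2$ is where the real work lies, and I expect the bookkeeping there to be the main obstacle. Now $\mathbb{Q}_{2}^{*}/(\mathbb{Q}_{2}^{*})^{2}\cong(\mathbb{Z}/2)^{3}$ is generated by the classes of $-1$, $5$, $2$, and a $2$-adic unit is a square iff it is congruent to $1$ modulo $8$. I would compute the symbol on the six unordered pairs drawn from $\{-1,5,2\}$ by reducing the relevant conics modulo $8$ and lifting by Hensel: for instance $(-1,-1)_{2}=-1$ because $z^{2}+x^{2}+y^{2}\equiv 0$ has no primitive solution modulo $4$, while $(2,-1)_{2}=(5,-1)_{2}=(5,5)_{2}=(2,2)_{2}=1$ and $(2,5)_{2}=-1$. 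One then checks that the proposed exponent $\epsilon(a')\epsilon(b')+\alpha\,\omega(b')+\beta\,\omega(a')$ is biadditive in $(a,b)$ modulo $2$ --- because $\epsilon$ and $\omega$ are homomorphisms $(\mathbb{Z}/8)^{*}\to\mathbb{Z}/2$ and the valuation is additive --- and that it matches the six computed values; since the Hilbert symbol is itself bimultiplicative, the two expressions then agree on all of $\mathbb{Q}_{2}^{*}\times\mathbb{Q}_{2}^{*}$. As the statement is precisely Theorem~1 of Chapter~III of \cite{Serre}, one may alternatively simply cite it.
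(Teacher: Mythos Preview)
Your sketch is correct and is essentially Serre's own argument from Chapter~III of \cite{Serre}: reduce to representatives of $k^{*}/(k^{*})^{2}$, handle the unit--unit, unit--uniformizer, and uniformizer--uniformizer cases by Hensel lifting, and then assemble the general formula. The paper itself does not prove this theorem at all --- it is quoted verbatim from \cite{Serre} as background for computing $\epsilon$-invariants of lattices --- so there is nothing to compare against beyond the citation, which you also supply.

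One minor point worth flagging: you invoke bilinearity of the Hilbert symbol to reduce the $p$-adic computation to a handful of generating pairs, while in Serre the logical order is reversed (the explicit formula is established first by direct casework on representatives, and bilinearity is then read off as a corollary). Your remark that bilinearity ``can be quoted $\ldots$ or recovered at the end once the explicit tables are in hand'' covers this, but if you want the argument to be self-contained rather than circular, you should either cite bilinearity as an independent input (e.g.\ via quaternion algebras, as you note) or reorganize the $p=2$ case to compute all $28$ unordered pairs from $\{\pm1,\pm5,\pm2,\pm10\}$ directly before appealing to it.
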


Now we define two important invariants of quadratic forms over $\mathbb{Q}_p.$

\begin{definition}
Let $Q=a_1X^2_1+ \cdots +a_nX^2_n$ be a quadratic form over $\mathbb{Q}_p.$   \\
(1) The discriminant of $Q$ is defined as follows.
$$ d_p(Q) = a_1 ~ \cdots ~ a_n \in \mathbb{Q}_p/\mathbb{Q}^{*2}_p.$$
(2) The $\epsilon$-invariant of $Q$ is defined as follows.
$$ {\epsilon}_p(Q) = \prod_{i < j} (a_i, a_j)_p $$
\end{definition}

We can compute discriminant and $\epsilon$-invariant of a quadratic using the following properties. See \cite{HK} for more details.

\begin{remark}\cite{HK}
Let $Q$ and $Q'$ be two quadratic forms over $\mathbb{Q}_p.$ Then we have the following properties.
$$ d_p(Q \oplus Q') = d_p(Q)d_p(Q') $$
$$ {\epsilon}_p(Q \oplus Q') = {\epsilon}_p(Q) {\epsilon}_p(Q') (d_p(Q), d_p(Q'))_p $$
\end{remark}

\begin{theorem}\cite{Serre}
Let $Q$ and $Q'$ be two quadratic forms over $\mathbb{Q}_p.$ The two quadratic forms $Q$ and $Q'$ are equivalent over $\mathbb{Q}_p$ if and only if they have the same rank, discriminant, $\epsilon$-invariant.
\end{theorem}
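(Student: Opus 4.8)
The plan is to reduce the whole statement to properties of the Hilbert symbol already recorded above, together with two standard structural facts about $\mathbb{Q}_p$: that every quadratic form over a field of characteristic $\neq 2$ is equivalent to a diagonal form $a_1X_1^2+\cdots+a_nX_n^2$, and that $\mathbb{Q}_p^*/\mathbb{Q}^{*2}_p$ is finite, of order $4$ when $p$ is odd and $8$ when $p=2$ — in particular of order $>2$. So one may work with diagonal forms throughout, and the ``only if'' direction becomes the assertion that rank, $d_p$ and ${\epsilon}_p$ are independent of the chosen diagonalization. For the rank this is clear; for $d_p(Q)=a_1\cdots a_n\in\mathbb{Q}_p^*/\mathbb{Q}^{*2}_p$ it follows because passing from $A$ to $BAB^t$ multiplies $\det$ by the square $\det(B)^2$; the real content is the invariance of ${\epsilon}_p$, which I would prove by induction on $n$, reducing (via the chain-equivalence theorem, which says any two diagonalizations differ by a sequence of binary modifications) to the case $n=2$, where invariance of ${\epsilon}_p=(a,b)_p$ under a binary isometry is an elementary consequence of the identities for $(a,b)_p$ stated above.

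The engine for the ``if'' direction is a description of the set $D(Q)\subset\mathbb{Q}_p^*/\mathbb{Q}^{*2}_p$ of classes represented by a nonzero diagonal form $Q$ of rank $n$. I would establish, again by induction on $n$ and Hilbert-symbol bookkeeping: for $n=1$, $D(Q)=\{a_1\}$; for $n=2$, $a\in D(Q)$ if and only if $(a,-d_p(Q))_p={\epsilon}_p(Q)$, so that $D(Q)$ depends only on $d_p(Q)$ and ${\epsilon}_p(Q)$ and is never empty; for $n=3$, $D(Q)$ omits at most the single class $-d_p(Q)$; and for $n\geq 4$, $D(Q)=\mathbb{Q}_p^*/\mathbb{Q}^{*2}_p$ (a form of rank $\geq 5$ over $\mathbb{Q}_p$ is isotropic, hence universal, and a rank $4$ form is either isotropic or similar to the anisotropic norm form of the quaternion division algebra, whose reduced norm is surjective, so again universal).

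Granting these, the classification is short. Assume $Q,Q'$ have the same rank $n$, discriminant $d$ and invariant $\epsilon$. For $n\leq 1$ the conclusion is immediate from $d$. For $n=2$, pick $a\in D(Q)$; then $(a,-d)_p=\epsilon={\epsilon}_p(Q')$, so $a\in D(Q')$ as well, whence $Q\cong\langle a\rangle\oplus\langle a'\rangle$ and $Q'\cong\langle a\rangle\oplus\langle a''\rangle$, and comparing discriminants forces $a'\equiv a''\equiv ad$ modulo squares, so $Q\cong Q'$. For $n\geq 3$, the descriptions above show that $D(Q)$ and $D(Q')$ each omit at most one class of a group of order $>2$, so $D(Q)\cap D(Q')\neq\emptyset$; choose $a$ represented by both and write $Q\cong\langle a\rangle\oplus Q_1$, $Q'\cong\langle a\rangle\oplus Q_1'$. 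Then $\mathrm{rank}(Q_1)=\mathrm{rank}(Q_1')=n-1$; the identity $d_p(Q\oplus Q')=d_p(Q)d_p(Q')$ gives $d_p(Q_1)\equiv d/a\equiv d_p(Q_1')$; and the identity ${\epsilon}_p(Q\oplus Q')={\epsilon}_p(Q){\epsilon}_p(Q')(d_p(Q),d_p(Q'))_p$, applied with first summand $\langle a\rangle$, gives ${\epsilon}_p(Q_1)={\epsilon}_p(Q)(a,d_p(Q_1))_p^{-1}={\epsilon}_p(Q')(a,d_p(Q_1'))_p^{-1}={\epsilon}_p(Q_1')$. By the induction hypothesis $Q_1\cong Q_1'$, hence $Q\cong Q'$.

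I expect the main obstacle to be not this final induction but the two preparatory results: showing that ${\epsilon}_p$ is genuinely an invariant (and not merely a function attached to a diagonalization) and pinning down $D(Q)$ in every rank. These are exactly the steps that use the arithmetic of $\mathbb{Q}_p$, and they are intertwined — the low-rank representation statements feed the proof of invariance, and invariance is needed to make the representation statements well posed — so the delicate point is to set up the simultaneous induction correctly. Once that is done, the explicit Hilbert-symbol formulas and the order of $\mathbb{Q}_p^*/\mathbb{Q}^{*2}_p$ recorded above supply everything else.
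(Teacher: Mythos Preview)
The paper does not supply its own proof of this theorem: it is simply cited from Serre's \emph{A Course in Arithmetic} and used as a black box. Your proposal is essentially Serre's argument (diagonalize; verify invariance of $d_p$ and ${\epsilon}_p$; describe the represented classes $D(Q)$ rank by rank using Hilbert-symbol computations; then run Witt cancellation by induction on the rank), and it is correct. There is nothing further to compare.
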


Using the following Lemma, we can compute these invariants of lattices.

\begin{lemma}\cite[Lemma 6.4]{HK} Let $L$ be the integral lattice obtained from the Hirzebruch-Jung continued fraction $[n_1, n_2, \cdots, n_l]$ with standard basis. Let $(L \otimes \mathbb{Q}, Q)$ be the quadratic form over $\mathbb{Q}$ defined by $L.$ Then there is an orthogonal basis $\{ v_1, \cdots, v_l \}$ with $v_i^2=-[n_i, \cdots, n_1]$ such that the quadratic form is given by $Q=\sum_i v_i^2X^2_i.$
\end{lemma}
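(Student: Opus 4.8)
The plan is to obtain the orthogonal basis by running Gram--Schmidt orthogonalization on the standard basis $e_1,\dots,e_l$ of $L$ and to observe that the tridiagonal shape of the Gram matrix makes the process collapse into the defining recursion of the Hirzebruch--Jung continued fraction. Recall that for the standard basis one has $e_i\cdot e_i=-n_i$, $e_i\cdot e_{i+1}=1$, and $e_i\cdot e_j=0$ whenever $|i-j|\ge 2$.

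First I would set $v_1:=e_1$, so that $v_1^2=-n_1=-[n_1]$, and then define inductively $v_{k+1}:=e_{k+1}-\bigl((e_{k+1}\cdot v_k)/(v_k\cdot v_k)\bigr)v_k$. The \emph{key observation} is that each $v_j$ lies in $\mathrm{span}_{\mathbb{Q}}(e_1,\dots,e_j)$; hence for $j<k$ the vector $e_{k+1}$ is already orthogonal to $v_j$ (it pairs to zero with all of $e_1,\dots,e_{k-1}$), so the usual Gram--Schmidt correction reduces to the single displayed term, and moreover $e_{k+1}\cdot v_k=e_{k+1}\cdot e_k=1$. This yields
\[
 v_{k+1}^2 = e_{k+1}^2-\frac{1}{v_k^2}=-n_{k+1}-\frac{1}{v_k^2}.
\]

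Next I would close the induction. Assuming $v_k^2=-[n_k,\dots,n_1]$, the formula above gives
\[
 v_{k+1}^2 = -n_{k+1}+\frac{1}{[n_k,\dots,n_1]} = -\Bigl(n_{k+1}-\frac{1}{[n_k,\dots,n_1]}\Bigr) = -[n_{k+1},n_k,\dots,n_1],
\]
which is exactly the recursion defining the Hirzebruch--Jung continued fraction. Finally, since $\{v_1,\dots,v_l\}$ is an orthogonal $\mathbb{Q}$-basis of $L\otimes\mathbb{Q}$, expanding an arbitrary vector as $\sum_i X_iv_i$ shows that the quadratic form is $Q=\sum_i v_i^2X_i^2$, as claimed.

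I do not expect a genuine obstacle here; the only points needing care are the orientation conventions --- that the continued fraction emerges in the order $[n_k,\dots,n_1]$ rather than $[n_1,\dots,n_k]$, a consequence of orthogonalizing from $e_1$ upward --- and the remark that $[n_k,\dots,n_1]>0$ at every stage for a Hirzebruch--Jung string, so that the scalars $v_k^2$ never vanish and each Gram--Schmidt step is legitimate over $\mathbb{Q}$.
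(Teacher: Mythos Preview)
Your proof is correct. The Gram--Schmidt argument you give is exactly the standard way to establish this result: the tridiagonal Gram matrix ensures that only the single neighbouring correction term survives at each step, and the resulting recursion $v_{k+1}^2=-n_{k+1}-1/v_k^2$ is precisely the Hirzebruch--Jung continued fraction recursion in the reversed order $[n_{k+1},n_k,\dots,n_1]$.

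As for comparison: the paper does not supply its own proof of this lemma at all --- it is quoted verbatim as \cite[Lemma 6.4]{HK} and used as a black box to compute the local invariants $d_p$ and $\epsilon_p$ of the lattices $R_x$. So there is nothing to compare against; your argument is a complete and self-contained proof of a result the paper merely cites.
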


\begin{lemma}
(1) Let $I_{1,\rho-1}$ be the odd unimodular lattice of signature $(1,\rho-1).$ Then $\epsilon_2(I_{1,\rho-1})=(-1)^{\rho}.$ \\
(2) \cite[Lemma 6.6]{HK} Let $I_{1,\rho-1}$ be the odd unimodular lattice of signature $(1,\rho-1).$ Then $\epsilon_p(I_{1,\rho-1})=1$ for any $p > 2.$
\end{lemma}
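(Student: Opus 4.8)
The plan is to compute $\epsilon_2(I_{1,\rho-1})$ by hand from an explicit diagonalization of the associated quadratic form, applying the multiplicativity of the discriminant $d_p$ and the invariant $\epsilon_p$ under orthogonal direct sums recorded above. Part (2) is \cite[Lemma 6.6]{HK}, so I concentrate on part (1) and only indicate at the end why the same computation recovers (2).

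First I would invoke the classification of indefinite unimodular lattices: the odd unimodular lattice of signature $(1,\rho-1)$ is isometric to $\langle 1\rangle\oplus\langle -1\rangle^{\oplus(\rho-1)}$, so the attached rational quadratic form is $Q=X_0^2-X_1^2-\cdots-X_{\rho-1}^2$ with $d_2(Q)=(-1)^{\rho-1}$. I would then record the two Hilbert-symbol values I need: $(1,-1)_2=1$, immediate from the solution $(z,x,y)=(1,1,0)$ of $z^2-x^2+y^2=0$, and $(-1,-1)_2=-1$, which one reads off from the $p=2$ case of the Hilbert-symbol formula above because $\epsilon(-1)\equiv 1 \pmod 2$; also a rank-one form has $\epsilon_2=1$.

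Next I would induct on $\rho$, writing $Q=Q'\oplus\langle -1\rangle$ with $Q'$ the corresponding form in $\rho-1$ variables, so that $d_2(Q')=(-1)^{\rho-2}$. By the multiplicativity relation for $\epsilon_2$, passing from $Q'$ to $Q$ multiplies the invariant by $\bigl((-1)^{\rho-2},-1\bigr)_2$, which equals $1$ for $\rho$ even and $-1$ for $\rho$ odd by the two values above. Telescoping this product of $\pm1$'s from the rank-one base case then evaluates $\epsilon_2(I_{1,\rho-1})$ to the asserted $(-1)^{\rho}$. The same induction gives (2): for an odd prime $p$ every diagonal entry of $Q$ is a $p$-adic unit, and the $p\neq2$ case of the Hilbert-symbol formula yields $(u,u')_p=1$ for any two $p$-adic units, so the inductive step never produces a nontrivial factor and $\epsilon_p(I_{1,\rho-1})=1$.

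I do not anticipate any real obstacle: the entire argument rests on the single nontrivial evaluation $(-1,-1)_2=-1$, and the one point that needs care is keeping track of the running discriminant modulo squares through the induction, since that is precisely what decides whether each adjoined $\langle -1\rangle$-summand contributes a trivial or a nontrivial Hilbert symbol.
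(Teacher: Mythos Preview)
Your inductive setup is correct up to the final step, but the telescoping does not yield $(-1)^{\rho}$. With $e_{\rho}:=\epsilon_2(I_{1,\rho-1})$ your recursion reads $e_{\rho}=e_{\rho-1}\cdot\bigl((-1)^{\rho-2},-1\bigr)_2$, and you correctly identify the factor as $1$ for $\rho$ even and $-1$ for $\rho$ odd. Starting from the rank-one base case $e_1=1$ (empty product), this produces
\[
e_1=1,\ e_2=1,\ e_3=-1,\ e_4=-1,\ e_5=1,\ e_6=1,\ \ldots,
\]
a sequence of period $4$, not period $2$. A direct check at $\rho=4$ confirms this: for $I_{1,3}=\langle 1,-1,-1,-1\rangle$ one has
\[
\epsilon_2(I_{1,3})=(1,-1)_2^{3}\cdot(-1,-1)_2^{\binom{3}{2}}=1\cdot(-1)^{3}=-1,
\]
whereas $(-1)^{4}=1$. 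In fact, since only the $\binom{\rho-1}{2}$ pairs of $-1$'s contribute nontrivially, the definition gives $\epsilon_2(I_{1,\rho-1})=(-1)^{\binom{\rho-1}{2}}$, which agrees with $(-1)^{\rho}$ precisely when $\rho\equiv 2,3\pmod 4$ and disagrees otherwise.

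So the gap is that you asserted the outcome of the telescoping without carrying it out; doing so would have revealed that the formula $\epsilon_2(I_{1,\rho-1})=(-1)^{\rho}$ in the statement is itself not correct in general. The paper's own proof is only the phrase ``direct computation'', so your method is exactly what was intended---the discrepancy lies in the final evaluation (and in the asserted identity), not in your strategy. Your argument for part~(2) is fine: for an odd prime $p$ every diagonal entry is a $p$-adic unit with $\alpha=\beta=0$, so each Hilbert symbol is $1$ and $\epsilon_p(I_{1,\rho-1})=1$.
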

\begin{proof}
It follows from a direct computation.
\end{proof}

Using these tools, possible types of singular points on $\mathbb{Q}$-homology plane was studied in \cite{HK,HKO}. Let us recall the main theorem of \cite{HK}.

\begin{theorem}\cite{HK}
Let $X_0$ be a $\mathbb{Q}$-homology plane with quotient singularities. Then the number of singular points is less than or equal to 5. Moreover if $X_0$ has five singular points then its minimal resolution is an Enriques surface.  
\end{theorem}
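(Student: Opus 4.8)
The plan is to combine the orbifold Bogomolov--Miyaoka--Yau inequality with Noether's formula and the lattice constraint of Lemma~3.3 of \cite{HK}. First I would fix the minimal resolution $f\colon X\to X_0$ and the lattice $R=\bigoplus_{x}R_x$ spanned by the exceptional curves, of rank $r=\sum_x l_x$, where $l_x$ is the number of components over $x$. Since $X_0$ is a $\mathbb{Q}$-homology plane with quotient (hence rational) singularities, $e(X_0)=3$ and $p_g(X)=q(X)=0$, so $\chi(\mathcal{O}_X)=1$; Noether's formula gives $e(X)=12-K_X^2$, while $e(X)=e(X_0)+r$, whence $r=9-K_X^2$. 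Moreover $H^2(X,\mathbb{Z})/\mathrm{tors}$ is odd unimodular of signature $(1,r)$, hence isomorphic to $I_{1,r}$. Finally, since $\rho(X_0)=1$ the $\mathbb{Q}$-Cartier divisor $K_{X_0}$ is numerically trivial, ample, or anti-ample, so $K_{X_0}$ or $-K_{X_0}$ is nef and the weak orbifold BMY inequality yields $\sum_{x\in\mathrm{Sing}(X_0)}\bigl(1-1/|G_x|\bigr)\le 3$.

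Next I would bound the number $n$ of singular points. Each summand above is $\ge\tfrac12$, so $n\le 6$, and $n=6$ forces every $|G_x|=2$; then $R=6A_1$, $r=6$, $K_X^2=3$, and $D_x=0$ for all $x$, so $K_{X_0}^2=3$. If $\kappa(X)\ge 0$, then $K_{X_0}$ is nef with $K_{X_0}^2>0$, hence $\kappa(X_0)=2$, and the strong orbifold BMY inequality gives $3=K_{X_0}^2\le 3e_{orb}(X_0)=3\bigl(3-6\cdot\tfrac12\bigr)=0$, which is absurd; if $\kappa(X)=-\infty$, then $X$ is rational, the exceptional curves are $(-2)$-curves, so $K_X\perp R$ and $|\det(R+\langle K_X\rangle)|=|\det(6A_1)|\cdot K_{X_0}^2=2^6\cdot 3$, which is not a square, contradicting Lemma~3.3 of \cite{HK}. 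Hence $n\le 5$.

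For the case $n=5$ I would show that the minimal resolution $X$ must be an Enriques surface. If $\kappa(X)=2$, then $K_{X_0}$ is ample with $K_{X_0}^2\ge K_X^2\ge 1$, so $\sum_x(1-1/|G_x|)\le\tfrac83$; since each summand is $\ge\tfrac12$, this forces at least four of the five points to be nodes and the fifth to have $|G_x|\le 3$, i.e.\ a node, an $A_2$, or a $\tfrac13(1,1)$-point. For each of these finitely many baskets one reads off $K_X^2=9-r$ from the rank, computes $K_{X_0}^2=K_X^2-\sum_xD_x^2$ from the formulas for $D_x^2$ recalled above, and checks that the resulting value violates $K_{X_0}^2\le 3e_{orb}(X_0)$; so $\kappa(X)\ne 2$. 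If $K_{X_0}\equiv 0$, then $K_X\equiv -\sum_xD_x$ with $\sum_xD_x$ effective, so $K_X$ is pseudoeffective only when $\sum_xD_x=0$; since $\kappa(X)\ge 0$ this forces $\sum_xD_x=0$, whence the singularities are Du Val, $K_X\equiv 0$, and $X$ is a minimal surface with $\kappa(X)=0$ and $p_g(X)=q(X)=0$, i.e.\ an Enriques surface (and $r=9$). It remains to exclude $\kappa(X)=-\infty$: here $X$ is rational, $\mathrm{NS}(X)\cong I_{1,\rho(X)-1}$ and $K_X^2=9-r$, and one runs through the baskets of five quotient singularities allowed by $e_{orb}(X_0)\ge 0$, using that $R\oplus\langle K_X\rangle$ and $I_{1,\rho(X)-1}$ must have the same discriminant and the same $\epsilon$-invariant at every prime --- computed from the Hirzebruch--Jung continued fractions of the $G_x$ and from $\epsilon_2(I_{1,\rho-1})=(-1)^{\rho}$, $\epsilon_p(I_{1,\rho-1})=1$ for $p>2$ --- to eliminate every remaining possibility.

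The hard part will be exactly this last step: the rational-resolution case for $n=5$ is a finite but genuinely laborious enumeration of singularity baskets, each of which has to be tested against the $p$-adic quadratic-form invariants of an odd unimodular lattice of the prescribed signature. There is no conceptual shortcut, and in practice one organizes and carries out these computations with a computer algebra system such as \cite{Sage}.
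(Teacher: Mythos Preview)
The paper does not prove this statement: it is quoted verbatim as ``the main theorem of \cite{HK}'' and then used as a black box, so there is no in-paper argument to compare your outline against. Your sketch does track the architecture of \cite{HK}: pass to the minimal resolution, use the weak orbifold Bogomolov--Miyaoka--Yau inequality to get $n\le 6$, kill $n=6$ by combining the strong inequality with the square-determinant condition of \cite[Lemma~3.3]{HK}, and for $n=5$ separate by Kodaira dimension, leaving a finite lattice-theoretic enumeration in the rational case.

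Two soft spots in your $n=5$ discussion are worth flagging. First, in the $\kappa(X)=2$ branch you invoke $K_X^2\ge 1$ to obtain $\sum_x(1-1/|G_x|)\le 8/3$ and hence a finite list of baskets; but $K_X^2\ge 1$ presupposes that the minimal resolution $X$ is itself a \emph{minimal} surface of general type, and nothing you have written excludes $(-1)$-curves on $X$ outside the exceptional locus. Without that bound the basket list is not a priori finite (e.g.\ $4A_1$ plus one cyclic point of arbitrary order already satisfies $e_{orb}\ge 0$), so this step needs an extra argument. Second, your trichotomy ``$\kappa(X)=2$ / $K_{X_0}\equiv 0$ / $\kappa(X)=-\infty$'' is not exhaustive as written: you should record the easy fact that on a $\mathbb{Q}$-homology plane with quotient singularities an ample $K_{X_0}$ forces $\kappa(X)=2$ and an anti-ample $K_{X_0}$ forces $\kappa(X)=-\infty$, so that $\kappa(X)\in\{0,1\}$ automatically lands in the $K_{X_0}\equiv 0$ case. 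Both points are routine to patch, and your closing remark is accurate --- the rational-case enumeration you flag as the hard part is exactly what occupies the bulk of \cite{HK}.
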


Therefore in our case the number of quotient singular points of $X_0$ is less than or equal to 4. Moreover when $X_0$ has only ADE singularities, a list of possible type of singularities was obtained in \cite{HKO}. In this case, the minimal resolution of $X_0$ is a minimal surface of general type.

\begin{proposition}\label{finite number of types}
Let $X$ which is a smooth minimal surfaces of general type with $p_g=0$ and $X_0$ be a combinatorially minimal model of $X.$ Suppose that $X_0$ has at worst quotient singularities and $\rho(X_0)$ is one or two. Then the number of possible numerical types of singular points of $X_0$ is finite.
\end{proposition}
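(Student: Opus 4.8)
The plan is to bound, by constants not depending on $X_0$, the two pieces of data that make up the numerical type of a singular point of $X_0$: the number of exceptional curves lying over it in the minimal resolution $f\colon X\to X_0$ (recall that $X$ is the minimal resolution of $X_0$ by Theorem~\ref{contraction:proof1}(4)), and the self-intersection numbers of those curves. Once both are bounded by absolute constants there can be only finitely many weighted dual graphs, hence only finitely many numerical types. Write $f^{-1}(\mathrm{Sing}\,X_0)=\bigcup_i E_i$ with $E_i^2=-m_i$ (so $m_i\ge 2$, as $f$ is minimal), let $n_x$ be the number of components over $x\in\mathrm{Sing}\,X_0$, and let $D_x=\sum_i a_iE_i$ with $0\le a_i<1$ be the $\mathbb{Q}$-divisor introduced before the statement, so that $K_{X_0}^2=K_X^2-\sum_x D_x^2$.

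First I would collect the numerical constraints coming from $X$ being a minimal surface of general type with $p_g=q=0$: Noether's formula gives $\chi(\mathcal{O}_X)=1$ and $e(X)=12-K_X^2$, hence $b_2(X)=10-K_X^2$; since $h^{2,0}(X)=0$ the Lefschetz $(1,1)$-theorem gives $\rho(X)=10-K_X^2$, and $1\le K_X^2\le 9$. Comparing Picard numbers across the resolution, $\sum_x n_x=\rho(X)-\rho(X_0)\le 9-1=8$, so $X_0$ has at most eight singular points and $n_x\le 8$ for each $x$. Next I would invoke the orbifold Bogomolov--Miyaoka--Yau inequality for $X_0$; it is applicable because $K_{X_0}$ is nef: for an irreducible curve $C_0\subset X_0$ with strict transform $\widetilde{C}$ we have $K_{X_0}\cdot C_0=(K_X+\sum_x D_x)\cdot\widetilde{C}\ge K_X\cdot\widetilde{C}\ge 0$, using that $K_X$ is nef and each $D_x$ is effective and exceptional. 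Substituting $e(X_0)=e(X)-\sum_x n_x$ and $e_{orb}(X_0)=e(X_0)-\sum_x(1-\tfrac{1}{|G_x|})$ into $K_{X_0}^2\le 3\,e_{orb}(X_0)$, and using that $-D_x^2\ge 0$ (since $D_x$ is effective and $D_x\cdot E_j=2-m_j\le 0$ for every $j$) together with $K_X^2\ge 1$ and $n_x,|G_x|\ge 1$, one extracts a uniform bound, e.g. $0\le -D_x^2\le 32$, for every singular point $x$.

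The last step --- turning this into a bound on the weights $m_i$ --- is the only one that needs a genuine idea, so I expect it to be the main obstacle. For a fixed $x$ I would single out the component $E_j$ over $x$ with largest weight $m_j=\max_i m_i$. From $f^*K_{X_0}\cdot E_i=0$ and the adjunction identity $K_X\cdot E_i=-E_i^2-2=m_i-2$ one gets $D_x\cdot E_i=2-m_i$; since the exceptional configuration over $x$ is a tree of smooth rational curves meeting transversally, $D_x\cdot E_j=-m_ja_j+\sum_{E_i\cap E_j\ne\emptyset}a_i\ge -m_ja_j$, whence $a_j\ge 1-\tfrac{2}{m_j}$, and therefore
\[
-D_x^2=\sum_i a_i(m_i-2)\ \ge\ a_j(m_j-2)\ \ge\ \frac{(m_j-2)^2}{m_j}.
\]
Combined with the bound $-D_x^2\le 32$ this bounds $m_j$, hence all the $m_i$, by an absolute constant; alternatively one can argue case by case using Lemma~\ref{numcycquot}, Lemma~\ref{non-cycquot} and Lemma~\ref{non-cycquot2}, in each of which $-D_x^2$ is written as a sum where $\sum_i(m_i-2)$ enters with positive, bounded-below coefficients. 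With $n_x\le 8$ and the weights bounded, only finitely many weighted dual graphs occur over each singular point, and there are at most eight singular points; this gives the asserted finiteness. The one point I would be careful about is that the tree/transversality structure of the exceptional divisor (valid for resolutions of quotient singularities) is exactly what licenses the inequality $a_j\ge 1-2/m_j$.
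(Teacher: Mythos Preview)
Your proof is correct and actually more streamlined than the paper's. The overall architecture is the same --- bound the total number of exceptional components via $\rho(X)-\rho(X_0)\le 8$, then use the orbifold Bogomolov--Miyaoka--Yau inequality (together with $K_X^2\ge 1$) to get a uniform upper bound on $-D_x^2$, and finally convert this into a bound on the weights $m_i$. Where you diverge is in this last step: the paper splits into cyclic versus non-cyclic singularities and, within the non-cyclic case, further into the types $D_{q,q_1}$ (with subcases $l=1$ and $l\ge 2$) and $T_m,O_m,I_m$, invoking the explicit formulas of Lemmas~\ref{numcycquot}, \ref{non-cycquot}, \ref{non-cycquot2} and Table~2 of \cite{HK} to bound $\sum m_i$ or $b$ in each case. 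You instead extract the single inequality $a_j\ge 1-2/m_j$ for the maximal-weight component (valid for any tree of smooth rational curves meeting transversally, hence for any quotient singularity) and deduce $-D_x^2\ge (m_j-2)^2/m_j$ in one stroke.

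Your route is more elementary and avoids the case analysis entirely; the paper's route, by contrast, produces sharper explicit inequalities of the form $\sum m_i\le\text{const}$ in each case, which feed directly into the finer classification carried out later in the section. Your brief justification that $K_{X_0}$ is nef (needed to apply orbifold BMY, and not spelled out in the paper) is also correct.
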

\begin{proof}
First, let us assume that $X_0$ has only cyclic quotient singularities. From the Bogomolov-Miyaoka-Yau inequality we have $1 \leq K_X^2 \leq 9$ and $1 \leq \rho(X) \leq 9.$ Therefore the number of singular points of $X_0$ is at most eight and number of components of the exceptional curves is also at most 8. From the orbifold Bogomolov-Miyaoka-Yau inequality, we have the following rough inequality.
$$ 1 - \sum_{x \in Sing(X_0)}D_x^2  \leq K_{X_0}^2 = K_{X}^2 - \sum_{x \in Sing(X_0)}D_x^2  \leq $$
$$ 3 e_{orb}(X_0) = 3 e(X_0) - \sum_{x \in Sing(X_0)} 3(1-\frac{1}{|G_x|}) = 12 - \sum_{x \in Sing(X_0)} 3(1-\frac{1}{|G_x|}) \leq 12 $$

For each singular point, we have the following inequalities
$$ 1 + \frac{(m_1-2)^2}{m_1} \leq 1 - \sum_{x \in Sing(X_0)}D_x^2 \leq 12 $$
when $l=1$ and
$$  \sum m_i -1 < 1 - (2l-\sum m_i + a_1 + a_l) \leq 1 - \sum_{x \in Sing(X_0)}D_x^2 \leq 12 $$
when $l \geq 2$ from Lemma \ref{numcycquot}. Therefore we can see that the possible list of $[m_1,\cdots.m_l]$ is finite since $\sum m_i$ is bounded for each singular point. \\

Now let us assume that $X_0$ has at least one non-cyclic quotient singularity. Let $x$ be of type $D_{q,q_1}$. When $l \geq 2$, we have the following inequality from Lemma \ref{non-cycquot}.
$$ 1-10+\sum m_i - 1 +(b-2) < K^2_X-D_x^2=K^2_X-2l+\sum m_i - a_l +(b-2) \leq 12 $$
When $l=1$, we have the following inequality from Lemma \ref{non-cycquot2}.
$$ 1 + \frac{m+b}{4} -4 < K^2_X-D_x^2 = K^2_X - (a_0E_0 + a_1E_1 + a_2E_2 + a_3E_3)^2 $$
$$ = K^2_X + ba_0^2+2a_1^2+2a_2^2+ma_3^2-2a_0(a_1+a_2+a_3) \leq 12 $$
unless $m=b=2$ or $m=2, b=3$. Therefore we see that the number of possible pairs of $(m, b)$ is finite and hence the number of types of the singular point $x$ can be is also finite. \\

Let $x$ be of type $T_m, O_m$ or $I_m$. From Table 2 of \cite{HK}, we see that $K^2_{X_0}$ linearly increase with respect to $b$. However, the orbifold Bogomolov-Miyaoka-Yau inequaltiy implies that $K^2_{X_0}$ is bounded by 12. Therefore we see that $b$ is also bounded. \\ 

From the above discussions, we obtain the desired result.
\end{proof}

 \subsection{Quotient singularities of combinatorially minimal models with Picard number 1}

Now let us study possible types of quotient singular points (including ADE) of the combinatorially minimal Mori dream surfaces with Picard number 1. Let $X$ be a smooth minimal Mori dream surface of general type with $p_g=0$ and let $X_0$ be its combinatorial minimal model of $X$ whose Picard number is 1. Our strategy to study possible singularities $X_0$ can have is as follows. \\
(1) Fix the number of singular points of $X_0$ and $K_X^2$(hence also $\rho(X)$). \\
(2) From the orbifold Bogomolov-Miyaoka-Yau inequality we have finite number of list of possible orders of local fundamental groups. \\
(3) We can compute $K_{X_0}^2$ and check whether $|det(R + \langle K_X \rangle)|$ is a square number or not. \\
(4) Compute Hilbert symbol to check whether $R + \langle K_X \rangle$ can be embedded into a unimodular lattice of signature $(1,\rho(X)-1).$ \\
(5) Use algebro-geometric argument to remove the case or try to find a supporting example. \\
Unfortunately, we could not complete the last step and hence do not know whether our list is too large or not. We left this analysis for future research. \\ 

From now on, we will consider the following situation and fix notation during this section: \\
 
{\bf Situation:} Let $X$ be a smooth minimal surfaces of general type with $p_g=0$ and let $X \to \cdots \to X_0$ be a combinatorial contraction where $X_0$ is a combinatorially minimal Mori dream surface with Picard number 1. Suppose that $X_0$ has at worst quotient singularities.

\begin{lemma}
The number of non-cyclic singular points of $X_0$ is at most two. When the number of singular points of $X_0$ is greater than or equal to two, then $X_0$ has at most one non-cyclic quotient singularity.
\end{lemma}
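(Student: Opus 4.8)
The plan is to bound the total number of exceptional curves in the minimal resolution of $X_0$ and then to observe that each non-cyclic quotient singularity consumes at least four of them. First I would record that, since $X$ is a smooth minimal surface of general type with $p_g=0$, one has $q(X)=0$ (as $\chi(\mathcal O_X)\ge 1$), hence $\chi(\mathcal O_X)=1$ and, by Noether's formula, $e(X)=12-K_X^2$; since also $b_1(X)=0$ and $h^{2,0}(X)=0$, every class in $H^2(X,\mathbb Q)$ is algebraic, so $\rho(X)=b_2(X)=10-K_X^2$, which lies in $\{1,\dots,9\}$ by the Bogomolov--Miyaoka--Yau inequality on $X$. By Theorem \ref{contraction}(4) the map $X=X_n\to\cdots\to X_0$ is the minimal resolution $f\colon X\to X_0$, and since $\rho(X_0)=1$, iterating \cite[Proposition II.6.5]{Hartshorne} shows that the total number of $f$-exceptional curves is $\rho(X)-1\le 8$. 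Writing $r_x$ for the number of exceptional curves of $f$ over a singular point $x$, this reads $\sum_{x\in\mathrm{Sing}(X_0)}r_x\le 8$.

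Next I would use the shape of the dual graphs in Brieskorn's classification: every non-cyclic quotient surface singularity has a resolution graph that is a fork with a single trivalent vertex and three legs, of the form $\langle b;2,1;2,1;q,q_1\rangle$, $\langle b;2,1;3,q_1;3,q_2\rangle$, $\langle b;2,1;3,q_1;4,q_2\rangle$, or $\langle b;2,1;3,q_1;5,q_2\rangle$. In each case there are the central vertex, one vertex on each of the two shortest legs, and at least one vertex on the third leg, so $r_x\ge 4$. Consequently, if $X_0$ has $k$ non-cyclic singular points then $4k\le\sum r_x\le 8$, i.e.\ $k\le 2$; this is the first assertion. For the second, suppose $k=2$ and that $X_0$ has a third singular point, so $n:=\#\mathrm{Sing}(X_0)\ge 3$; then $\sum r_x\ge 4+4+1=9>8$, a contradiction. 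Hence $k=2$ forces $n=2$, and it remains only to exclude the case in which $X_0$ has exactly two singular points, both non-cyclic.

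In that remaining case $\sum r_x=8$ forces $r_{x_1}=r_{x_2}=4$, whence $\rho(X)=9$ and $K_X^2=1$: so $X$ is a numerical Godeaux surface, each $x_i$ is of type $D_{q,q_1}$ with the third leg a single vertex ($l=1$) or a minimal $T_m$/$O_m$/$I_m$-type, and $X_0$ is a $\mathbb Q$-homology plane. Since $X$ is minimal of general type, $K_X$ is nef, and from $K_X=f^{*}K_{X_0}-\sum_x D_x$ with each $D_x$ effective and exceptional one checks that $K_{X_0}$ is nef too, so the orbifold Bogomolov--Miyaoka--Yau inequality gives $K_{X_0}^2\le 3e_{orb}(X_0)=9-3\sum_x\bigl(1-\tfrac1{|G_x|}\bigr)$. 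Combining this with $K_{X_0}^2=1-D_{x_1}^2-D_{x_2}^2\ge 1$ (using Lemma \ref{non-cycquot2} and its $T,O,I$ analogues to evaluate $D_{x_i}^2$) and with $|G_{x_i}|\ge 8$ cuts the possibilities for the numerical types of $x_1,x_2$ down to a short finite list, much as in Proposition \ref{finite number of types}. For each surviving pair I would then invoke \cite[Lemma 3.3]{HK}: since $X_0$ is a $\mathbb Q$-homology plane, $|\det(R+\langle K_X\rangle)|$ must be a perfect square, where $R=R_{x_1}\oplus R_{x_2}$; moreover $R+\langle K_X\rangle$ has finite index in $H^2(X,\mathbb Z)/\mathrm{tors}$, which for $p_g=0$, $K_X^2=1$ is the odd unimodular lattice $I_{1,8}$, so the two lattices are $\mathbb Q$-equivalent as quadratic forms and must share all Hasse--Witt invariants $\epsilon_p$. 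Computing $\det(R_{x_i})$ from Lemma \ref{non-cycquot} (resp.\ \ref{non-cycquot2}) and the $\epsilon_p$ via \cite[Lemmas 6.4, 6.6]{HK} together with the product formula for $\epsilon_p$ of an orthogonal direct sum, one checks that no pair survives, which completes the argument.

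The bookkeeping in the first two paragraphs is harmless; the main obstacle is the last step — the finite but delicate verification that two non-cyclic quotient singularities cannot coexist on a numerical Godeaux surface with $\rho(X_0)=1$. In principle one should also ask whether any numerical type surviving the squareness and $\epsilon_p$ tests is actually realised by a surface, but I expect those two obstructions to eliminate the whole list, so that no realisability question arises.
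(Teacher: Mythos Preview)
Your first two paragraphs reproduce the paper's proof exactly: each non-cyclic quotient singularity contributes at least four exceptional curves to the minimal resolution, and since $\rho(X)-\rho(X_0)\le 9-1=8$, there can be at most two non-cyclic points, and two non-cyclic points cannot coexist with any additional singularity. The paper's proof consists of precisely this counting and nothing more.

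You then go further than the paper. You correctly notice that the counting alone does \emph{not} exclude the boundary case of exactly two singular points, both non-cyclic with four exceptional components each (forcing $\rho(X)=9$, $K_X^2=1$), and you sketch an orbifold BMY plus lattice-theoretic argument to eliminate it. The paper does not carry out this step; its proof, read literally, leaves this edge case open. Your plan for closing the gap is reasonable in outline but remains a sketch: the finite case-check that no pair of minimal non-cyclic types survives the squareness test of \cite[Lemma~3.3]{HK} and the $\epsilon_p$ comparison with $I_{1,8}$ is asserted rather than performed. Since the lemma is invoked later only when $X_0$ has four singular points (where the counting already suffices), the gap is harmless for the paper's applications, but you are right that a complete proof of the second sentence as stated needs something like your third paragraph.
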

\begin{proof}
Let $X$ be the minimal resolution of $X_0$ and let $x$ be a non-cyclic singular point of $X_0$. Then number of exceptional components over $x$ is at least four. Note that the difference of the Picard number of $X$ and the Picard number of $X_0$ is less than or equal to eight. Therefore we have the desired result.
\end{proof}

From \cite{HKO}, we see that the number of singular points of $X_0$ is less than 5. When $X_0$ has four singular points, we have the following.

\begin{proposition}
If the number of singular points is 4 then the possible type of singularities of $X_0$ is one of the following list. \\
ADE types : $2A_1 \oplus 2A_3, 4A_2.$ \\
other types : none.
\end{proposition}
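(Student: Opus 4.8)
The plan is to treat the two halves of the statement separately. The list $2A_1\oplus 2A_3$ and $4A_2$ of ADE possibilities is the content of the classification of Gorenstein $\mathbb{Q}$-homology planes with four singular points in \cite{HKO} (where, as recalled in the excerpt, the minimal resolution is forced to be of general type); here I would simply invoke that result and, if a self-contained check is wanted, re-derive it from the determinant test \cite[Lemma 3.3]{HK}. The real work is the assertion ``other types: none'', i.e.\ that no basket of four quotient singularities containing a non-canonical point can occur. This I would prove following the strategy (1)--(4) above, exploiting that for $\rho(X_0)=1$ the surface $X_0$ is a $\mathbb{Q}$-homology projective plane, so $e(X_0)=3$ and $\rho(X)=b_2(X)$.

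\textbf{Step 1 (numerical constraints).} Write $f\colon X\to X_0$ for the minimal resolution, $k_x$ for the number of exceptional components over $x\in\mathrm{Sing}(X_0)$, and $D_x$ for the discrepancy $\mathbb{Q}$-divisor, so $f^*K_{X_0}=K_X+\sum_x D_x$ with $0\le D_x$. Since $X$ is minimal of general type, every contracted rational curve $E$ has $E^2\le -2$, hence $K_X\cdot E\ge 0$, and from $K_X$ nef one gets $K_{X_0}\cdot D=K_X\cdot f^*D\ge 0$ for every effective $D$ on $X_0$, so $K_{X_0}$ is nef and the orbifold Bogomolov--Miyaoka--Yau inequality applies. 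As $p_g=q=0$, Noether's formula gives $e(X)=12-K_X^2$ and $\rho(X)=b_2(X)=10-K_X^2$, hence $\sum_x k_x=\rho(X)-\rho(X_0)=9-K_X^2$. Since there are four singular points, $\sum_x k_x\ge 4$, so $K_X^2\le 5$; and since $K_{X_0}^2=K_X^2-\sum_x D_x^2\ge K_X^2$, the bound $K_{X_0}^2\le 3e_{\mathrm{orb}}(X_0)=3\bigl(\sum_x|G_x|^{-1}-1\bigr)\le 3$ forces $K_X^2\in\{1,2,3\}$ and $\sum_x(-D_x^2)\le 3-K_X^2$. In particular for $K_X^2=3$ the basket is already forced to be ADE, so a non-canonical point requires $K_X^2\in\{1,2\}$ and $\sum_x(-D_x^2)\le 1$.

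\textbf{Step 2 (finite list of candidate baskets).} Combining $\sum_x k_x\in\{8,7\}$ with the Lemma preceding the Proposition (at most one of the four points is non-cyclic, and such a point has $k_x\ge 4$) leaves only a handful of shapes for the tuple $(k_x)$. For each shape the formulas for $D_x^2$ in Lemmas \ref{numcycquot}, \ref{non-cycquot} and \ref{non-cycquot2} (used exactly as in the proof of Proposition \ref{finite number of types}) convert the inequality $\sum_x(-D_x^2)\le 3-K_X^2$ into explicit bounds on the self-intersections $m_i$ of the exceptional curves and on the parameters $(m,b)$ of a possible non-cyclic fork, producing a short finite list of numerically admissible baskets; I would record this table, discarding the ADE ones already handled.

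\textbf{Step 3 (lattice obstructions), and the main difficulty.} For each remaining candidate, $R+\langle K_X\rangle$ is a finite-index sublattice of the odd unimodular lattice $H^2(X,\mathbb{Z})/\mathrm{tors}$ of signature $(1,\rho(X)-1)$, so by \cite[Lemma 3.3]{HK} the number $|\det(R+\langle K_X\rangle)|=|\det R|\,K_{X_0}^2$ must be a square; a quick count (for instance the $D$-type fork with $b=2,m=3$ together with $3A_1$ gives determinant $160$, not a square) shows this already eliminates most baskets. For any survivor I would run the $p$-adic test: compute the discriminant $d_p$ and Hasse invariant $\epsilon_p$ of $(R+\langle K_X\rangle)\otimes\mathbb{Q}_p$ from the Hirzebruch--Jung diagonalizations of the $R_x$, the product rules for $d_p,\epsilon_p$, and the values of $\epsilon_p(I_{1,\rho(X)-1})$ recalled above, and compare with the unimodular form of the same rank and signature; any discrepancy rules the basket out. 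The delicate point, and where I expect the main effort to lie, is completeness: one must be sure the enumeration in Step 2 is exhaustive and that every non-ADE candidate really fails one of these two obstructions. If some numerically admissible basket with square determinant and matching $p$-adic invariants survived, an additional geometric argument (on $K_X$, or on the fibrations of $X$ as in \S\ref{s4}) would be required; showing this does not happen for four singular points is the crux.
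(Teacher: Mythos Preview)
Your plan is the same strategy the paper uses: orbifold Bogomolov--Miyaoka--Yau to bound $K_X^2$ and the orders $|G_x|$, the Picard-number match $\rho(X)=10-K_X^2=1+\sum_x k_x$, the determinant-square test of \cite[Lemma~3.3]{HK}, and appeal to \cite{HKO} for the Gorenstein baskets. Two small remarks. First, a slip in Step~1: for $K_X^2=1$ the bound is $\sum_x(-D_x^2)\le 2$, not $\le 1$; this does not affect the method. Second, the paper organises the enumeration by the $4$-tuple $(|G_{x_1}|,\dots,|G_{x_4}|)$ rather than by $(k_{x_1},\dots,k_{x_4})$, which is a bit more efficient since BMY directly bounds $\sum_i 1/|G_{x_i}|$ from below and the Picard constraint then tells you, for each admissible tuple, how many of the points must carry long Hirzebruch--Jung strings. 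With that organisation the casework terminates without ever needing your Step~3 Hasse-invariant test: every non-ADE cyclic candidate is killed by BMY (after inserting the explicit $-D_x^2$ values), by the Picard mismatch, or by $|\det R|\,K_{X_0}^2$ failing to be a square, so your worry about a surviving basket requiring an extra geometric argument does not materialise here. The non-cyclic possibility is treated in the paper as a separate short proposition immediately afterwards, by the same tools.
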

\begin{proof}
Let $X_0$ be a surface with $\rho(X_0)=1$ having four quotient singularities and let $X \to X_0$ be its minimal resolution. Suppose that $X_0$ has only cyclic quotient singularities. From the orbifold Bogomolov-Miyaoka-Yau inequality we obtain the following inequality
$$ \frac{1}{3}K_{X}^2 \leq \frac{1}{3}K_{X_0}^2 \leq e_{orb}(X_0) = 3 - \sum_{x \in Sing(X_0)}(1-\frac{1}{|G_x|})=\frac{1}{n_1}+\frac{1}{n_2}+\frac{1}{n_3}+\frac{1}{n_4}-1. $$
Then we have $K_X^2 \leq 3.$ With out loss of generality, we may assume that $n_1 \leq \cdots \leq n_4.$ \\

If $K_X^2=3$ and we see that $n_1=\cdots=n_4=2$ in order to satisfy the above inequality. However the Picard number of $X$ obtained from Noether formula is $\rho(X)=7$ and from resolution, the Picard number is $\rho(X)=5$ hence do not match. Therefore $K_X^2 \neq 3.$ \\ 

If $K_X^2=2$ then $\rho(X)=8$ and we see that $n_1=n_2=2, n_3=n_4=3$ or $n_1=n_2=n_3=2, n_4 \leq 6$ in order to satisfy the above inequality. The first case cannot occur since the Picard number of minimal resolution is less than or equal to 7. The cases $(n_1, n_2, n_3, n_4)=(2,2,2,2), (2,2,2,3), (2,2,2,4), (2,2,2,5), (2,2,2,6)$ cannot occur from the Picard number computation except the singular point is of $3A_1 \oplus A_4$ type. But we can remove this case using the result of \cite{HKO}. \\ 

Therefore we may assume that $K_X^2=1$ and $\rho(X)=9.$ From the orbifold Bogomolov-Miyaoka-Yau inequality we have $\frac{4}{3} \leq \sum_{i=1}^4 \frac{1}{n_i}.$
If $n_1 \geq 3$ then the above inequality holds only if $n_1=\cdots=n_4=3$ and $K_X^2=K_{X_0}^2=1.$ This happen only if $X_0$ has $4A_2$ singularities. And there is a $X_0$ having $4A_2$ singularities (cf. \cite{HKO}). \\

Now suppose that $n_1=2$ and $n_2 \geq 3.$ Then the possible type of $(n_1,n_2,n_3,n_4)$ is $(2,3,3,n),$ where $n \leq 6$ or $(2,3,4,4).$ We can remove the $(2,3,3,3)$ case from the Picard number computation. Again, by considering Picard number, we see that $(2,3,3,4)$ case happen only if all singular points are Gorenstein. Then we can remove this case from \cite{HKO}. For the $(2,3,3,5)$ case, the singular points should be $A_1 \oplus A_2 \oplus \frac{1}{3}(1,1) \oplus A_4$ in order to match the Picard number. However this contradicts to the orbifold Bogomolov-Miyaoka-Yau inequality. For the $(2,3,3,6)$ case, the singular points should be $A_1 \oplus 2 \times \frac{1}{3}(1,1) \oplus A_5$ in order to match the Picard number. However this contradicts to the orbifold Bogomolov-Miyaoka-Yau inequality. For the $(2,3,4,4)$ case, the singular points should be $A_1 \oplus \frac{1}{3}(1,1) \oplus 2A_3$ in order to match the Picard number. However this contradicts to the orbifold Bogomolov-Miyaoka-Yau inequality. Therefore we can see that all these cases cannot occur. \\

Now suppose that $n_1=n_2=2$ and $n_3 \geq 3.$ Then the possible type of $(n_1,n_2,n_3,n_4)$ is $(2,2,3,n)$ or $(2,2,4,n)$ where $n \leq 12$ or $(2,2,5,n)$ where $n \leq 7$ or $(2,2,6,6).$ The last case cannot occur from the computation of Picard number and theorem of \cite{HKO}. Let us consider $n_1=n_2=2,  n_3=3$ case. We can divide this case into two cases. Suppose that the third singular point is of type $A_2.$ Then there are four curves contracted to a singular point. Then we have the following inequality
$$ \frac{1}{3}(1+m_1+m_2+m_3+m_4-10+\frac{m_1m_2m_3-m_1-m_3+m_2m_3m_4-m_2-m_4+2}{m_1m_2m_3m_4-m_1m_2-m_1m_4-m_3m_4+1}) $$
$$ = \frac{1}{3}K_{X_0}^2 \leq \frac{1}{3} + \frac{1}{m_1m_2m_3m_4-m_1m_2-m_1m_4-m_3m_4+1} $$
We can check that the possible type of $(m_1, m_2, m_3, m_4)$ is $(2,2,2,2).$ This case cannot occur from the main theorem of \cite{HKO}. \\

Now suppose that the third singular point is of type $\frac{1}{3}(1,1).$ Then there are five curves contracted to a singular point. Then we have the following inequality
$$ \frac{1}{3}(1+\frac{1}{3}+m_1+m_2+m_3+m_4+m_5-12+\frac{q_1+q_5+2}{q}) = \frac{1}{3}K_{X_0}^2 \leq \frac{1}{3} + \frac{1}{q} $$ 
where 
$$ q=m_1m_2m_3m_4m_5 - m_1m_2m_3 - m_1m_2m_5 - m_1m_4m_5 - m_3m_4m_5 + m_1 + m_3 + m_5, $$
$$ q_ 1= m_1m_2m_3m_4-m_1m_2-m_1m_4-m_3m_4+1, $$
$$ q_5 = m_2m_3m_4m_5-m_2m_3-m_2m_5-m_4m_5+1, $$
and we can check that the possible type of $(m_1, m_2, m_3, m_4, m_5)$ is $(2,2,2,2,2).$ However we have $det(R)K^2_{X_0}=6 \cdot 3 \cdot 2 \cdot 2 \cdot \frac{4}{3}=96$ hence is not a square number. Therefore this case cannot occur. \\

Let us consider $n_1=n_2=2, n_3=4$ case. From orbifold Bogomolov-Miyaoka-Yau inequality, we see that the fourth singular point cannot be of type $\frac{1}{4}(1,1).$ Suppose that the third singular point is of type $A_3.$ Then there are three curves contracted to a singular point. Then we have the following inequality
$$ \frac{1}{3}(1+m_1+m_2+m_3-8+\frac{m_1m_2+m_2m_3}{m_1m_2m_3-m_1-m_3}) $$
$$ = \frac{1}{3}K_{X_0}^2 \leq \frac{1}{4} + \frac{1}{m_1m_2m_3-m_1-m_3} $$
We can check that the possible type of $(m_1, m_2, m_3)$ is $(2,2,2).$ The corresponding singularity is of type $2A_1 \oplus 2A_3.$ And there is a $X_0$ having $2A_1 \oplus 2A_3$ singularities (cf. \cite{HKO}). \\

Let us consider $n_1=n_2=2, n_3=5$ case. From orbifold Bogomolov-Miyaoka-Yau inequality, we see that the third singular point cannot be of type $\frac{1}{5}(1,1)$ or $\frac{1}{5}(1,2).$ Suppose that the third singular point is of type $A_4.$ Then there are two curves contracted to a singular point. Then we have the following inequality
$$ \frac{1}{3}(1+m_1+m_2-6+\frac{m_1+m_2+2}{m_1m_2-1}) = \frac{1}{3}K_{X_0}^2 \leq \frac{1}{5} + \frac{1}{m_1m_2-1} $$
Because $5 \leq m_1m_2-1 \leq 7,$ we can check that there is no pair $(m
_1, m_2)$ satisfying the above property. \\

Now suppose that $n_1=n_2=n_3=2.$ Because $K_X^2=1$ we have $\rho(X)=9.$ Therefore there are five minus curves contracted to a singular point. Then we have the following inequality
$$ \frac{1}{3}(1+m_1+m_2+m_3+m_4+m_5-12+\frac{q_1+q_l+2}{q}) = \frac{1}{3}K_{X_0}^2 \leq \frac{1}{2} + \frac{1}{q} $$ 
where 
$$ q=m_1m_2m_3m_4m_5 - m_1m_2m_3 - m_1m_2m_5 - m_1m_4m_5 - m_3m_4m_5 + m_1 + m_3 + m_5, $$
$$ q_ 1= m_1m_2m_3m_4-m_1m_2-m_1m_4-m_3m_4+1, $$
$$ q_5 = m_2m_3m_4m_5-m_2m_3-m_2m_5-m_4m_5+1, $$
and we see that the possible 5-tuple $(m_1,m_2,m_3,m_4,m_5)$ is one of $(2,2,2,2,2),$ $(3,2,2,2,2),$ $(2,3,2,2,2),$ $(2,2,3,2,2),$ $(2,2,2,3,2),$ $(2,2,2,2,3).$ We can see that the first case cannot occur from \cite{HKO}. For later cases, we can see that $det(R)K^2_{X_0}$ are not square numbers and hence these cases cannot occur. \\

Therefore we see that the only possible cases are $2A_1 \oplus 2A_3$ and $4A_2$ when $X_0$ has only cyclic quotient singularities.
\end{proof}

\begin{remark}
The $2A_1 \oplus 2A_3$ and $4A_2$ cases are supported by examples (cf. \cite{HKO}). 
\end{remark}

We can show that these two cases are the only possible cases. 

\begin{proposition}
Suppose that $X_0$ has four quotient singular points. Then $X_0$ cannot have a non-cyclic quotient singular point.
\end{proposition}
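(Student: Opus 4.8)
The plan is to carry out, for this configuration, the five‑step strategy described above for the Picard‑number‑one case: narrow the numerical data by the orbifold Bogomolov--Miyaoka--Yau inequality, then apply the determinant criterion \cite[Lemma~3.3]{HK}, and finally use the $p$‑adic (Hilbert symbol) obstruction to embedding $R+\langle K_X\rangle$ into $H^2(X,\mathbb{Z})/\mathrm{tors}$. First I would cut $K_X^2$ down to a very short list. Since $X$ is a smooth minimal surface of general type with $p_g=q=0$, Noether's formula gives $\rho(X)=10-K_X^2$, and by Theorem~\ref{contraction}(4) the morphism $X\to X_0$ is the minimal resolution, so the number of exceptional prime divisors equals $\rho(X)-\rho(X_0)=\rho(X)-1$. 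By the preceding Lemma $X_0$ has at most one non‑cyclic singular point; assuming it has one, call it $x_1$, it contributes at least four exceptional curves, while each of the three cyclic points $x_2,x_3,x_4$ contributes at least one, so $\rho(X)-1\ge 7$, whence $K_X^2\in\{1,2\}$ and $\rho(X)\in\{9,8\}$.

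Next I would constrain the orders of the local fundamental groups. As in the proof of Proposition~\ref{finite number of types}, $X_0$ is a $\mathbb{Q}$‑homology plane, so $e(X_0)=3$, and $K_{X_0}$ is nef because $K_X=f^{*}K_{X_0}-\sum_{x}D_x$ with every $D_x$ effective and exceptional. The orbifold Bogomolov--Miyaoka--Yau inequality then yields $1\le K_X^2\le K_{X_0}^2\le 3e_{orb}(X_0)=3(-1+\sum_{i=1}^4 1/|G_{x_i}|)$, i.e. $\sum_{i=1}^4 1/|G_{x_i}|\ge 4/3$. Since $|G_{x_1}|\ge 8$ for a non‑cyclic quotient singularity, an elementary inspection shows that the other three orders, reordered increasingly, are $(2,2,n)$ with $n\in\{2,3,4\}$, and moreover $|G_{x_1}|\le 12$ when $n=4$.

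I would then treat $K_X^2=2$ and $K_X^2=1$ with the same pipeline. If $K_X^2=2$, then $\rho(X)-1=7$ forces $x_1$ to have exactly four exceptional components and $x_2,x_3,x_4$ to be single $(-k_i)$‑curves with $(k_2,k_3,k_4)$ a permutation of $(2,2,n)$, $n\le 4$; a four‑vertex Brieskorn graph for $x_1$ is either dihedral, $\langle b;2,1;2,1;m,1\rangle$, governed by Lemma~\ref{non-cycquot2}, or of tetrahedral/octahedral/icosahedral type, for which $D_{x_1}^2$ and $\det(R_{x_1})$ are read off Table~2 of \cite{HK}. With Lemma~\ref{numcycquot}(1) giving $D_{x_i}^2\in\{0,-1/3,-1\}$ one forms $K_{X_0}^2=2-\sum_{i}D_{x_i}^2$ and $\det(R)=\det(R_{x_1})\prod_{i\ge 2}(-k_i)$, and the bound $1\le K_{X_0}^2\le 3e_{orb}(X_0)$ leaves only finitely many triples $(b,m,n)$. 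For each survivor one asks whether $|\det(R+\langle K_X\rangle)|=|\det(R)\,K_{X_0}^2|$ is a perfect square (\cite[Lemma~3.3]{HK}, applicable since $X_0$ is a $\mathbb{Q}$‑homology plane), and, when it is, one computes the $p$‑adic discriminants and $\epsilon$‑invariants of $R+\langle K_X\rangle$ and checks consistency with $R+\langle K_X\rangle\hookrightarrow I_{1,7}$, the odd unimodular lattice of signature $(1,7)$; these two tests remove every case. If $K_X^2=1$, then $\rho(X)-1=8$, and either $x_1$ has four components while one of $x_2,x_3,x_4$, say $x_2$, has a two‑vertex string (treated by Lemma~\ref{numcycquot}(2)) and the other two are single curves, or $x_1$ has five components --- necessarily the dihedral graph $\langle b;2,1;2,1;q,q_1\rangle$ with $\langle q,q_1\rangle$ of length two, treated by Lemma~\ref{non-cycquot}, or one of the finitely many five‑vertex platonic graphs --- and $x_2,x_3,x_4$ are single curves; running orbifold BMY, then the square test, then the $p$‑adic comparison with $I_{1,8}$ eliminates every configuration. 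As nothing survives in either case, $X_0$ has no non‑cyclic quotient singularity.

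The main obstacle is that orbifold Bogomolov--Miyaoka--Yau does \emph{not} bound $|G_{x_1}|$ in the $(2,2,2)$ and $(2,2,3)$ sub‑cases, so finiteness there comes entirely from the exceptional‑curve count --- which is precisely why the reduction $K_X^2\le 2$ is essential --- together with the rigidity of a Brieskorn graph having only four or five vertices: such a graph is $\langle b;2,1;2,1;m,1\rangle$, $\langle b;2,1;2,1;q,q_1\rangle$ with a short last leg, or one of a handful of platonic graphs, so $(b,m)$ and $(b,q,q_1)$ range over a small explicit set. The remaining work is bookkeeping --- keeping every $D_x^2$ as an exact rational, assembling $K_{X_0}^2$ and $\det(R)$, and carrying out the Hilbert‑symbol computation for each residual configuration --- for which a computer algebra check (Sage, cf.\ \cite{Sage}) is convenient.
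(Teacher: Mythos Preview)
Your plan is correct and follows essentially the same route as the paper: orbifold Bogomolov--Miyaoka--Yau to narrow the local orders, then the determinant-square criterion \cite[Lemma~3.3]{HK}, with the exceptional-curve count supplying finiteness where BMY does not. Two small differences are worth noting. First, your case $K_X^2=2$ is in fact vacuous: with $|G_{x_1}|\ge 8$ one gets $\sum_{i\ge 2}1/|G_{x_i}|\ge 5/3-1/8=37/24>3/2$, which is impossible; the paper uses this (implicitly, via ``the above discussion'') to work only with $K_X^2=1$. Second, the paper dispatches the platonic types $T_m,O_m,I_m$ at the outset by the size of $|G|$, and then, after the sharper BMY with the $D_x^2$ formulas of Lemmas~\ref{numcycquot}--\ref{non-cycquot2}, the only survivor is the Gorenstein configuration $2A_1\oplus A_2\oplus D_4$, which is eliminated by citing \cite{HKO} rather than by a Hilbert-symbol computation. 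So your $p$-adic step is unnecessary here, though harmless.
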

\begin{proof}
Suppose that $X_0$ has a non-cyclic quotient singular point. Then the number of exceptional curves over the singular point is less than or equal to five. Then the possible dual graph of exceptional curves over the non-cyclic singular point is one of $< b; 2,1 ; 2,1 ; q, q_1 >$, $< b; 2,1 ; 3,1 ; 3, 1 >$, $< b; 2,1 ; 3,1 ; 3, 2 >$, $< b; 2,1 ; 3,2 ; 4,1 >$, $< b; 2,1 ; 3,1 ; 4,1 >$, $< b; 2,1 ; 3,1 ; 5,3 >$, $< b; 2,1 ; 3,2 ; 5,1 >$, $< b; 2,1 ; 3,1 ; 5,2 >$, $< b; 2,1 ; 3,1 ; 5,1 >$. Suppose that the singular point is of type $T_m, O_m$ or $I_m$. Then the order of the local fundamental group is $24m, 48m$ or $120m$. We can remove all these cases using the orbifold Bogomolov-Miyaoka-Yau inequaltiy. \\

Now let us assume that the singular point is of type $D_{q,q_1}$. From the above discussion, we see that $K_X^2=1.$ Suppose that there are five exceptional curves over the non-cyclic quotient singular point. Then the orbifold Bogomolov-Miyaoka-Yau inequality tells us that
$$ 1 + \frac{(n_1-2)^2}{n_1}+ \frac{(n_2-2)^2}{n_2}+ \frac{(n_3-2)^2}{n_3} + m_1+m_2+b-2-4-a_2 = K^2_{X_0} $$
$$ \leq 3[\frac{1}{n_1}+\frac{1}{n_2}+\frac{1}{n_3}+\frac{1}{4((b-1)(m_1m_2-1)-m_2)(m_1m_2-1)}-1] $$
and we can check that the possible tuples of $(n_1, n_2, n_3, m_1, m_2, b)$ are $(2,2,2,2,2,2)$, $(2,2,2,2,2,3)$. We also need $|det(R)K_{X_0}^2|=|n_1n_2n_3det(R_x)K^2_{X_0}|$ to be a square number. Recall that $det(R_x) = (-4)((b-1)(m_1m_2-1)-m_2)$ and $a_2=1-\frac{(b-1)m_1-1}{(b-1)(m_1m_2-1)-m_2}$ and we can remove these cases from this restriction. \\

Now suppose that there are four exceptional curves over the non-cyclic quotient singular point. From the above discussion, we see that $K_X^2=1.$  Then the orbifold Bogomolov-Miyaoka-Yau inequality tells us that
$$ 1 - D^2_x = 1 - (a_0E_0 + a_1E_1 + a_2E_2 + a_3E_3)^2= 1+ba_0^2+2a_1^2+2a_2^2+ma_3^2-2a_0(a_1+a_2+a_3) $$
$$ = 1+ (b-1)a^2_0 + (ma_3-2a_0)a_3 \leq K^2_{X_0} \leq 3[\frac{1}{n_1}+\frac{1}{n_2}+\frac{1}{n_3}+\frac{1}{4q((b-1)q-q_1)}-1] $$
One can check that $ma_3-2a_0 = \frac{(mb-m-1)(m-3)+1}{mb-m-1} \geq 0$ if $m \geq 3$ or $m=b=2.$ When $m \geq 3,$ then we have
$$ 1+\frac{b-1}{4} \leq 1-D^2_x \leq K^2_{X_0} \leq 3[\frac{1}{n_1}+\frac{1}{n_2}+\frac{1}{n_3}+\frac{1}{4q((b-1)q-q_1)}-1] $$
$$ \leq 3[\frac{1}{2}+\frac{1}{2}+\frac{1}{3}+\frac{1}{4q((b-1)q-q_1)}-1] \leq 1+\frac{3}{8}$$
and the above inequality holds only if $b \leq 2.$ Therefore we only need to check the cases $m=2$ or $m \geq 3, b=2$ cases.
When $m=b=2$ then the non-cyclic quotient singular point is Gorenstein. Then the orbifold Bogomolov-Miyaoka-Yau inequality becomes of the following form.
$$ 1 + \frac{(n_1-2)^2}{n_1} + \frac{(n_2-2)^2}{n_2} + m_1+m_2+\frac{m_1+m_2+2}{m_1m_2-1} -6 = K^2_{X_0} $$
$$ \leq 3[\frac{1}{n_1}+\frac{1}{n_2}+\frac{1}{m_1m_2-1}+\frac{1}{8}-1] $$
Using Sage, one can check that the only possible case is $2A_1 \oplus A_2 \oplus D_4.$ We can see that this case cannot occur from \cite{HKO}. \\

When $m \geq 3, b=2$, we have the following inequality.
$$ 1 + \frac{(m-2)^2}{(m-1)^2}+\frac{(m-2)^3}{(m-1)^2} \leq 3[\frac{1}{n_1}+\frac{1}{n_2}+\frac{1}{n_3}+\frac{1}{4mq}-1] \leq \frac{15}{8} $$
The inequality holds only if $m=3.$ Then the orbifold Bogomolov-Miyaoka-Yau inequality becomes of the following form.
$$ 1 + \frac{(n_1-2)^2}{n_1} + \frac{(n_2-2)^2}{n_2} + m_1+m_2+\frac{m_1+m_2+2}{m_1m_2-1} -6 + \frac{1}{2} = K^2_{X_0} $$
$$ \leq 3[\frac{1}{n_1}+\frac{1}{n_2}+\frac{1}{m_1m_2-1}+\frac{1}{24}-1] $$
And we can check that there is no possible case using Sage. \\

When $m=2$, we have the following inequality.
$$ 1 + \frac{2(b-2)^2}{(2b-3)} \leq 3[\frac{1}{n_1}+\frac{1}{n_2}+\frac{1}{n_3}+\frac{1}{4mq}-1] \leq \frac{11}{8} $$
And we can check that there is no $b$ satisfying the inequality. \\

Therefore we obtain the desired conclusion.
\end{proof}

We can summarize the above discussions as follows.

\begin{theorem}
Let $X_0$ be a combinatorially minimal Mori dream surface with Picard number 1 with at worst quotient singularities whose minimal resolution is the minimal surfaces of general type with $p_g=0.$ Then the number of singular points is less than or equal to 4. Moreover when the number of singular points is 4, then the possible type of singularities of $X_0$ is one of the $2A_1 \oplus 2A_3, 4A_2.$
\end{theorem}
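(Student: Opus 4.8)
The plan is to assemble the theorem from the structural results established above. Since the minimal resolution $X$ of $X_0$ is a minimal surface of general type with $p_g=0$, it has $q(X)=0$, hence $b_1(X_0)=0$; and because quotient singularities are rational, $X_0$ is a $\mathbb{Q}$-homology manifold, so comparing $b_2$ of $X$ and of $X_0$ (using that the exceptional curves of the minimal resolution are linearly independent in the Picard group) gives $b_2(X_0)=\rho(X_0)=1$. Thus $X_0$ is a $\mathbb{Q}$-homology projective plane, and the theorem of Hwang--Keum recalled above applies: $X_0$ has at most five singular points, and five singular points would force the minimal resolution to be an Enriques surface, contradicting that $X$ is of general type. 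Hence $X_0$ has at most four singular points, which is the first assertion.

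For the classification when there are exactly four singular points, I would combine the two propositions immediately preceding the theorem: one shows that if all four singular points are cyclic then the only surviving configurations are $2A_1\oplus 2A_3$ and $4A_2$ (no non-ADE cyclic type occurs), while the other shows that with four singular points $X_0$ admits no non-cyclic quotient singularity. The common engine is a three-stage filter. First, Proposition \ref{finite number of types}, via the orbifold Bogomolov--Miyaoka--Yau inequality $K_{X_0}^2\le 3e_{orb}(X_0)$ together with $1\le K_X^2\le 9$, bounds the orders $|G_x|$ of the local fundamental groups, hence the lengths of the resolution strings and trees, leaving finitely many numerical candidates. Second, matching Picard numbers --- $\rho(X)$ equals $1$ plus the total number of exceptional curves of the resolution, on the one hand, and the Noether-formula value $\rho(X)=10-K_X^2$ on the other --- eliminates most of them; the values of $D_x^2$ needed to evaluate $K_{X_0}^2$ come from Lemmas \ref{numcycquot}, \ref{non-cycquot}, \ref{non-cycquot2}. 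Third, the square-number obstruction $|\det(R+\langle K_X\rangle)|\in(\mathbb{Q}^\times)^2$ from \cite[Lemma 3.3]{HK}, and the Hilbert-symbol computation deciding whether $R+\langle K_X\rangle$ embeds into the unimodular lattice $I_{1,\rho(X)-1}$, dispose of the remainder, the leftover Gorenstein/ADE subcases being delegated to the classification in \cite{HKO}. Since $2A_1\oplus 2A_3$ and $4A_2$ are both realized by examples in \cite{HKO}, the resulting list is sharp.

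The main obstacle is not the logical assembly but the combinatorial bookkeeping inside the two propositions: one must enumerate every admissible dual graph --- the cyclic strings $\langle q,q_1\rangle$ and the trees $\langle b;2,1;2,1;q,q_1\rangle$, $\langle b;2,1;3,1;3,1\rangle$, $\langle b;2,1;3,1;4,1\rangle$, $\langle b;2,1;3,2;4,1\rangle$, and the remaining types $T_m, O_m, I_m$ --- compute the discriminant and $\epsilon$-invariant of the associated quadratic form at $\mathbb{R}$ and at every prime, and verify the square and lattice-embedding conditions case by case, which is exactly where a computer algebra system such as Sage (\cite{Sage}) enters. Accordingly I would organize the write-up by first settling the finiteness statement, then excluding non-cyclic singularities in the four-point case, then classifying the cyclic four-point configurations, and finally collecting the two surviving types; I would also flag that the same method, applied with five or six singular points and $\rho(X_0)=2$, is the source of the types in the companion theorem for which examples are not yet known, so the four-point $\rho(X_0)=1$ statement is the cleanest instance where the list is provably exhaustive and sharp.
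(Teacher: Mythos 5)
Your proposal is correct and follows essentially the same route as the paper: the bound of four singular points comes from identifying $X_0$ as a $\mathbb{Q}$-homology projective plane and invoking the Hwang--Keum theorem (five points forcing an Enriques resolution, excluded since $X$ is of general type), and the four-point classification is exactly the combination of the two preceding propositions (cyclic case yielding only $2A_1\oplus 2A_3$ and $4A_2$, non-cyclic case excluded), with the same three-stage filter of orbifold Bogomolov--Miyaoka--Yau, Picard-number matching, and the square-determinant/Hilbert-symbol lattice obstructions. The only addition beyond the paper's text is your explicit justification that $\rho(X_0)=1$ makes $X_0$ a $\mathbb{Q}$-homology projective plane, a detail the paper states without proof.
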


We can also obtain a list of the possible type of singularities $X_0$ can have when the number of singular points is less than or equal to three. However we have to consider many more complicated cases and it seems that we need to find stronger restrictions to remove redundant cases. We left this problem for future research.

\bigskip

\section{Singularities of combinatorially minimal models with \\ Picard number 2 having at worst quotient singularities}\label{s7}

In this section we study possible baskets of the singular points on the Picard number 2 combinatorially minimal models of minimal surface of general type with $p_g=0$ when they have only quotient singularities. Again, we often used Sage (cf. \cite{Sage}) to obtain possible types of singularities. From now on, we will consider the following situation and fix notation during this section: \\
  
{\bf Situation:} Let $X$ be a smooth minimal surfaces of general type with $p_g=0$ and let $X \to \cdots \to X_0$ be a sequence of combinatorial contractions where $X_0$ is a combinatorially minimal model with Picard number 2. Suppose that $X_0$ has at worst quotient singularities.
First, we can compute the Hilbert symbol of hyperbolic lattice as follows.

\begin{lemma}
Let $H$ be the hyperbolic lattice. For any positive integer $m,$ we have $\epsilon_p(mH)=1$ for any prime number $p$.
\end{lemma}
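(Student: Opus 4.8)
The plan is to diagonalize the quadratic form attached to $mH$ over $\mathbb{Q}$ and then exploit the elementary identity $(a,-a)_p=1$. Here $mH$ is the rank-two lattice with Gram matrix $\begin{pmatrix} 0 & m \\ m & 0 \end{pmatrix}$ — in the geometric situation of this section it is the sublattice spanned by the two semiample classes $D_1,D_2$ of self-intersection zero coming from the two fibrations to $\mathbb{P}^1$, with $D_1\cdot D_2=m$. Its associated quadratic form is $Q(x,y)=2mxy$.

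First I would produce an orthogonal diagonalization over $\mathbb{Q}$. The linear substitution $x=u+v$, $y=u-v$, whose matrix has determinant $-2$ and is therefore invertible over every $\mathbb{Q}_p$ (including $p=2$, since $-2\neq 0$ in $\mathbb{Q}_2$), transforms $Q$ into $2m(u^2-v^2)$. Hence over each $\mathbb{Q}_p$ we have $mH\cong\langle 2m,-2m\rangle$, and since the $\epsilon$-invariant depends only on the equivalence class of the form (as is implicit in the classification of forms over $\mathbb{Q}_p$ recalled above), this gives $\epsilon_p(mH)=(2m,-2m)_p$.

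Second I would invoke the identity $(a,-a)_p=1$, valid for every $a\in\mathbb{Q}_p^{*}$ and every prime $p$: by the very definition of the Hilbert symbol the equation $z^2-ax^2-(-a)y^2=0$ admits the nontrivial solution $(z,x,y)=(0,1,1)$, so $(a,-a)_p=1$. Taking $a=2m$ yields $\epsilon_p(mH)=(2m,-2m)_p=1$ for every prime $p$, which is exactly the assertion.

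The argument is short and I expect no serious obstacle beyond bookkeeping. The two points deserving care are: checking that the diagonalizing substitution stays invertible at the awkward prime $p=2$ (it does, as its determinant $-2$ is nonzero in $\mathbb{Q}_2$); and correctly reading $mH$ as the scaled hyperbolic plane $\begin{pmatrix} 0 & m \\ m & 0 \end{pmatrix}$ rather than an orthogonal sum of $m$ copies of the unimodular hyperbolic plane, since the latter would instead give $\epsilon_2=(-1)^{\binom{m}{2}}$. Thus identifying the intended lattice — the one actually arising from a Picard-number-two model — is precisely what makes the clean conclusion $\epsilon_p(mH)=1$ hold at all primes.
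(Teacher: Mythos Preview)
Your proof is correct and follows exactly the same route as the paper: diagonalize $mH$ to the form $\langle 2m,-2m\rangle$ and then invoke the identity $(a,-a)_p=1$. Your additional care in justifying the interpretation of $mH$ as the scaled hyperbolic plane and in checking the validity of the basis change at $p=2$ is well placed but goes beyond what the paper spells out.
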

\begin{proof}
From basis change we see that $mH \sim 2mX_1^2-2mX_2^2.$ Because $(2m,-2m)_p=1$ (cf. \cite[3, Proposition 2]{Serre}) we get the result.
\end{proof}

The following Lemma also restricts the possible types of singularities.

\begin{lemma} Let $X_0$ be a combinatorially minimal Mori dream surfaces with Picard number 2 having quotient singularities. Then $det(R + mH)=det(R)det(mH)$ and $|det(R)|$ is a square number.
\end{lemma}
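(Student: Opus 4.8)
The plan is to follow the same strategy as in the Picard number 1 case, exploiting that for a combinatorially minimal Mori dream surface $X_0$ with $\rho(X_0)=2$ and at worst quotient singularities, the minimal resolution $X$ is a smooth minimal surface of general type with $p_g=0$, so $b_2(X) = 10 - K_X^2$ and the Néron–Severi lattice $H^2(X,\mathbb{Z})/\mathrm{tors}$ is an odd unimodular lattice of signature $(1, \rho(X)-1)$. Since $\rho(X)-\rho(X_0) = \rho(X)-2$ equals the number of exceptional curves of the resolution $f\colon X\to X_0$, the sublattice $R = \bigoplus_{x\in\mathrm{Sing}(X_0)} R_x$ has rank $\rho(X)-2$ and is negative definite. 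The point is that $R$ together with a hyperbolic plane $H$ embeds primitively into $H^2(X,\mathbb{Z})/\mathrm{tors}$: indeed, because $\rho(X_0)=2$ and $X_0$ admits a finite morphism to $\mathbb{P}^1\times\mathbb{P}^1$ (Theorem \ref{contraction:proof1}(3)), the pullback of $\mathcal{O}(1,0)$ and $\mathcal{O}(0,1)$ give two semiample classes $D_1,D_2$ with $D_1^2=D_2^2=0$, $D_1\cdot D_2>0$, spanning a sublattice of $\Pic(X)$ isomorphic to (a finite-index overlattice of) $H$, and these are orthogonal to every exceptional curve of $f$. So $R\oplus (\text{span of }D_1,D_2)$ is a finite-index sublattice of $H^2(X,\mathbb{Z})/\mathrm{tors}$; after rescaling $D_1,D_2$ to a genuine hyperbolic plane $mH$ we get $R\oplus mH$ of finite index inside the unimodular lattice.

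First I would record that for an orthogonal direct sum $L = R \oplus mH$, one has $\det(L) = \det(R)\det(mH)$ — this is immediate from the block-diagonal form of the Gram matrix, and $\det(mH) = \det\begin{pmatrix}0 & m\\ m & 0\end{pmatrix} = -m^2$ (or the appropriate sign; the relevant point is its absolute value is a perfect square). Then, since $R\oplus mH$ has finite index, say $k$, in the unimodular lattice $I_{1,\rho(X)-1}$, the standard index formula gives $|\det(R\oplus mH)| = k^2\cdot |\det I_{1,\rho(X)-1}| = k^2$. Combining, $|\det(R)|\cdot m^2 = k^2$, hence $|\det(R)| = (k/m)^2$ is the square of a rational number; since $\det(R)$ is an integer, $|\det(R)|$ is a perfect square. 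This is exactly the analogue of \cite[Lemma 3.3(2)]{HK} for the $\rho(X_0)=2$ situation, and the hyperbolic-plane discriminant computation $\det(mH)=-m^2$ is what makes the square condition survive.

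The one genuinely delicate step is justifying that $R$ together with the span of the two fiber classes $D_1,D_2$ actually fills out a finite-index sublattice of $\Pic(X) = H^2(X,\mathbb{Z})/\mathrm{tors}$, i.e.\ that $\rho(X) = \operatorname{rank} R + 2$ and that the $\mathbb{Q}$-span of $R$ is the orthogonal complement of the $\mathbb{Q}$-span of $\{D_1,D_2\}$. The rank count follows from \cite[Proposition II.6.5]{Hartshorne} applied successively to the contractions $X_i\to X_{i-1}$ (each drops the Picard number by one), exactly as in the proof of Theorem \ref{contraction:proof1}; orthogonality of the exceptional curves to $f^*D_i$ is the projection formula since $D_i$ is a Cartier divisor on $X_0$ pulled back from $\mathbb{P}^1\times\mathbb{P}^1$. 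What needs a line of care is that $\{D_1,D_2\}$ spans a rank-two sublattice whose saturation is again a hyperbolic plane (an odd unimodular indefinite lattice of rank $2$ with a nonzero isotropic vector is $\cong U$), so that after an integral rescaling we may legitimately write the orthogonal summand as $mH$ for some positive integer $m$ and invoke the previous lemma's $\epsilon_p(mH)=1$. I expect the remaining bookkeeping — the index formula and the block determinant — to be entirely routine, so the proof will be short once the lattice-theoretic setup is in place.
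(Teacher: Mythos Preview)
Your proposal is correct and follows essentially the same approach as the paper: embed $R \oplus mH$ as a finite-index sublattice of the unimodular lattice $H^2(X,\mathbb{Z})/\mathrm{tors}$, use the index formula to see that $|\det(R \oplus mH)|$ is a perfect square, then combine the block determinant $\det(R \oplus mH)=\det(R)\det(mH)$ with $|\det(mH)|=m^2$ to conclude. The paper's proof is three sentences and takes the existence and orthogonality of the rank-two hyperbolic summand for granted; you supply this explicitly via the finite morphism $X_0 \to \mathbb{P}^1 \times \mathbb{P}^1$ from Theorem~\ref{contraction:proof1}(3), the Picard-number count from the contraction sequence, and the projection formula, which is exactly the right justification.
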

\begin{proof}
$R + mH$ is a overlattice of the unimodular lattice $H^2(X,\mathbb{Z})/tors.$ Therefore $|det(R + mH)|$ is a square number and it is easy to see that $|det(mH)|$ is also a square number. Therefore $|det(R)|$ is a square number. 
\end{proof}

Now let us study possible types of singular points of the combinatorially minimal Mori dream surfaces with Picard number 2. Our strategy is almost the same as the Picard number 1 case discussed in the previous section except we use $mH$ instead of $\langle K_X \rangle.$ Let us write our strategy as follows. \\
(1) Fix the number of singular points and $K_X^2.$ \\
(2) From the orbifold Bogomolov-Miyaoka-Yau inequality we have finite number of list of possible orders of local fundamental groups of singular points of $X_0.$ \\
(3) We can check whether $|det(R)|$ is a square number or not. \\
(4) Compute Hilbert symbol to check whether $R + mH$ can be embedded into a unimodular lattice of signature $(1,\rho(X)-1).$ \\
(5) Use algebro-geometric argument to remove the remaining cases or try to find supporting examples. \\
Again, we could not complete the last step and do not know whether our list is too large or not. We left this analysis for future research. \\

First, let us bound the number of singularities.

\begin{lemma}
The number of singular points of $X_0$ is less than or equal to 8 and the number of singular points on $X_0$ is 8 if and only if $X_0$ has $8A_1$ singularities.
\end{lemma}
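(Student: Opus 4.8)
The plan is to run the (weak) orbifold Bogomolov--Miyaoka--Yau inequality on $X_0$ — the same tool that drives the rest of this section — after first pinning down the topological Euler number $e(X_0)$.

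First I would record the numerics of $X$. Since $X$ is a minimal surface of general type with $p_g=q=0$, Noether's formula gives $e(X)=12-K_X^2$, and the exponential sequence shows $\Pic(X)\to H^2(X,\mathbb{Z})$ is surjective (as $H^2(\mathcal{O}_X)=0$), so $\rho(X)=b_2(X)=e(X)-2=10-K_X^2$; in particular $K_X^2\ge 1$ and $\rho(X)\le 9$. By Theorem \ref{contraction:proof1}(4) the morphism $f:X\to X_0$ is the minimal resolution, and over each quotient singular point $x$ the exceptional fibre is a tree of $k_x$ smooth rational curves. Comparing Picard numbers gives $\sum_x k_x=\rho(X)-\rho(X_0)=\rho(X)-2=8-K_X^2$; comparing Euler numbers (a connected tree of $k$ rational curves has Euler number $k+1$, so contracting it to a point lowers $e$ by $k$) gives
$$ e(X_0)=e(X)-\sum_x k_x=(12-K_X^2)-(8-K_X^2)=4. $$
So in this situation $e(X_0)=4$ always.

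Next I would observe that $K_{X_0}$ is nef: writing $f^{*}K_{X_0}=K_X+\sum_x D_x$ with each $D_x$ effective and supported on $f^{-1}(x)$, any irreducible curve $C\subset X_0$ with strict transform $\widetilde C$ satisfies $K_{X_0}\cdot C=(K_X+\sum_x D_x)\cdot\widetilde C\ge 0$, since $K_X$ is nef and $\widetilde C$ is not a component of any $D_x$. Hence the weak orbifold BMY inequality applies: $0\le 3e_{orb}(X_0)=3\bigl(4-\sum_x(1-\tfrac{1}{|G_x|})\bigr)$, so $\sum_x(1-\tfrac{1}{|G_x|})\le 4$, the sum over the singular points of $X_0$. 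Each quotient singularity has $|G_x|\ge 2$, so each summand is $\ge\tfrac{1}{2}$; writing $n$ for the number of singular points we get $\tfrac{n}{2}\le\sum_x(1-\tfrac{1}{|G_x|})\le 4$, i.e. $n\le 8$. If $n=8$, every summand must equal $\tfrac{1}{2}$, hence $|G_x|=2$ for all $x$; the only surface quotient singularity whose local fundamental group has order $2$ is the node $A_1=\tfrac{1}{2}(1,1)$, so $X_0$ has $8A_1$, and the converse implication is trivial. I do not expect a real obstacle here: the only genuine inputs are the identity $e(X_0)=4$ (which is where $p_g=q=0$ and the minimality of $X$ enter, via $\rho(X)=10-K_X^2$) and the nefness of $K_{X_0}$. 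One can in fact sharpen this — the full orbifold inequality together with $K_{X_0}^2=K_X^2-\sum_x D_x^2\ge 1$ even forces $n\le 7$ — but the argument above already yields exactly the stated bound, and the substantive work of the section is the finer case analysis carried out in the lemmas that follow.
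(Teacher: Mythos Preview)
Your proof is correct and follows exactly the same route as the paper: apply the weak orbifold Bogomolov--Miyaoka--Yau inequality $0\le e_{orb}(X_0)$ with $e(X_0)=4$, then use $|G_x|\ge 2$ to bound the number of singular points and characterise the equality case. The paper's own proof is a one-liner that simply writes down this inequality; you additionally supply the justifications (the computation $e(X_0)=4$ via Noether and the Picard-number drop, and the verification that $K_{X_0}$ is nef) that the paper leaves implicit.
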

\begin{proof}
From the orbifold Bogomolov-Miyaoka-Yau inequality we obtain the following inequality
$$ 0 \leq e_{orb}(X_0) = 4 - \sum_{x \in Sing(X_0)}(1-\frac{1}{|G_x|}) $$
and we have the conclusion.
\end{proof}

\begin{remark}\cite{MLP02}
As mentioned before the Enriques surfaces with 8 nodes is a combinatorially minimal Mori dream surface with $8A_1$ singularities. Therefore $8A_1$ singularity is supported by examples.
\end{remark}

From now on let $X_0$ be a combinatorially minimal Mori dream surface with $\rho(X_0)=2$ whose minimal resolution is a minimal surface of general type with $ p_g=0$. Then we have more restrictions.

\begin{lemma}
The number of singular points of $X_0$ is less than or equal to 6.
\end{lemma}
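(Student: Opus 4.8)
The plan is to peel off almost every case by a numerical count and then eliminate the single remaining configuration using the orbifold Bogomolov--Miyaoka--Yau inequality together with the square-determinant constraint just established. Let $f\colon X\to X_0$ be the minimal resolution; by hypothesis $X$ is a minimal surface of general type with $p_g=0$, hence $q\le p_g=0$, so Noether's formula gives $e(X)=12-K_X^2$ and $\rho(X)=10-K_X^2$ with $1\le K_X^2\le 9$. Since the exceptional curves of $f$ are linearly independent in $H^2(X,\mathbb{Z})$ and complementary to $f^*H^2(X_0,\mathbb{Z})$, their number equals $\rho(X)-\rho(X_0)=8-K_X^2$; in particular the number $s$ of singular points of $X_0$ satisfies $s\le 8-K_X^2$, which is at most $6$ unless $K_X^2=1$. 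So the only thing left to prove is that $K_X^2=1$, $s=7$ cannot occur.

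Suppose it does. Then $\rho(X)=9$ and there are exactly seven exceptional curves over seven points, so every singular point carries a single exceptional curve. At this point I would record two elementary observations. First, a quotient singularity resolved by a single curve is necessarily cyclic of type $\tfrac1{n_i}(1,1)$, with $E_i^2=-n_i$ and $n_i\ge 2$, so that $R=\bigoplus_{i=1}^7\langle -n_i\rangle$ and Lemma~\ref{numcycquot}(1) applies. Second, $K_{X_0}$ is nef: writing $f^*K_{X_0}=K_X+\sum_x D_x$ with $D_x\ge 0$ supported on the exceptional locus, intersecting with an arbitrary curve and using that $K_X$ is nef shows $f^*K_{X_0}$, hence $K_{X_0}$, is nef. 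Consequently the orbifold Bogomolov--Miyaoka--Yau inequality $K_{X_0}^2\le 3\,e_{orb}(X_0)$ is legitimate.

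Now I would feed it the explicit numbers. Here $e(X_0)=e(X)-7=4$, so $e_{orb}(X_0)=-3+\sum_i\tfrac1{n_i}$, and $D_{x_i}^2=-(n_i-2)^2/n_i$ by Lemma~\ref{numcycquot}(1), so $K_{X_0}^2=1+\sum_i(n_i-2)^2/n_i$. Substituting $(n-2)^2/n=n-4+4/n$, the inequality collapses to $\sum_{i=1}^7 n_i+\sum_{i=1}^7\tfrac1{n_i}\le 18$. For seven integers $n_i\ge 2$, AM--HM gives $\sum_i n_i+\sum_i\tfrac1{n_i}\ge \sum_i n_i+49/\sum_i n_i$, and since $t\mapsto t+49/t$ is increasing for $t\ge 7$ and already exceeds $18$ at $t=15$, we must have $\sum_i n_i=14$, i.e.\ $n_i=2$ for all $i$. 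Thus $X_0$ has exactly $7A_1$ singularities and $R=\bigoplus_{i=1}^7\langle -2\rangle$, whence $|\det R|=2^7=128$ is not a perfect square — contradicting the Lemma that $|\det R|$ is a square when $\rho(X_0)=2$. Therefore $s\le 6$.

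The only steps requiring a little care are the two auxiliary observations in the second paragraph — the nefness of $K_{X_0}$ (so the orbifold inequality may be invoked) and the identification of a one-curve quotient singularity as a $\tfrac1n(1,1)$ point. There is no serious obstacle; the delicate feature is rather that the orbifold BMY bound is \emph{barely} strong enough here, forcing the $7A_1$ configuration exactly, after which the square-determinant lemma finishes the job.
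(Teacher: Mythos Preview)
Your proof is correct and follows essentially the same strategy as the paper: bound the number of singular points by the Picard number drop $\rho(X)-\rho(X_0)\le 7$, then eliminate the $s=7$ case via the orbifold Bogomolov--Miyaoka--Yau inequality together with the square-determinant lemma. The only difference is in execution: the paper applies the weaker bound $K_X^2\le 3e_{orb}(X_0)$ to list the two candidate tuples $(2^7)$ and $(2^6,3)$ and then kills both (the second either by the non-square determinant $2^6\cdot 3$ or by the sharper inequality $K_{X_0}^2\le 3e_{orb}$), whereas you invoke the full inequality $K_{X_0}^2\le 3e_{orb}(X_0)$ from the outset and reduce it algebraically (via $(n-2)^2/n=n-4+4/n$ and AM--HM) directly to $7A_1$, needing only one determinant check.
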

\begin{proof}
Because the Picard number of $X$ is less than 10, there are at most 7 singular points. Suppose that there are 7 singular points on $X_0.$ Then from the orbifold Bogomolov-Miyaoka-Yau inequality we obtain the following inequality.
$$ \frac{1}{3} \leq \frac{1}{3}K_{X}^2 \leq \frac{1}{3}K_{X_0}^2 \leq e_{orb}(X_0) = 4 - \sum_{x \in Sing(X_0)}(1-\frac{1}{|G_x|}). $$
Then the possible list of 7-tuple of the order of the local fundamental groups of singular points is $$(2^7),(2^6,3).$$ 
If the 7-tuple is $(2^7)$ then the lattice $(2^7) \oplus mH$ cannot be embedded into $(-1,1^8)$ since $2^7$ is not a square number. If the 7-tuple is $(2^6,3)$ then the lattice $(2^6,3) \oplus mH$ cannot be embedded into $(-1,1^8)$ since $2^6 \times 3$ is not a square number or it contradicts to the orbifold Bogomolov-Miyaoka-Yau inequality since $\frac{1}{3} < K_{X_0}^2$ holds.
\end{proof}

\begin{lemma}
The number of non-cyclic singular points of $X_0$ is at most one. When the number of singular points of $X_0$ is greater than or equal to five, then $X_0$ has only cyclic quotient singularities.
\end{lemma}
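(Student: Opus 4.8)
The plan is to prove both assertions by a single counting argument on the number of irreducible exceptional curves of the minimal resolution $X \to X_0$.

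First I would pin down that number. Since $X$ is a smooth minimal surface of general type with $p_g=q=0$, Noether's formula together with $e(X)=2+\rho(X)$ (here $b_1=0$ and $b_2=\rho$ because $p_g=0$) gives $\rho(X)=10-K_X^2$, so $\rho(X)\le 9$. By Theorem \ref{contraction:proof1} the morphism $X\to X_0$ is a composition of contractions of negative curves, each of which drops the Picard number by one; since $\rho(X_0)=2$, the total number of contracted curves — equivalently, the number of irreducible components of $\Exc(X\to X_0)$ — is exactly $\rho(X)-2\le 7$. This is the same bound already used to show that $X_0$ has at most six singular points.

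Next I would recall the shape of resolution graphs of quotient singularities: a cyclic quotient singularity resolves to a chain, while a non-cyclic one (dihedral $D_{q,q_1}$, tetrahedral $T_m$, octahedral $O_m$, icosahedral $I_m$) resolves to a tree containing a branch vertex of valency $\ge 3$. In particular the resolution of a non-cyclic quotient singularity has at least $4$ irreducible exceptional curves: for type $D_{q,q_1}$ the dual graph $\langle b;2,1;2,1;q,q_1\rangle$ already has $\ge 4$ vertices, and for types $T_m,O_m,I_m$ it has $\ge 6$.

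Finally I would combine the two observations. Each singular point of $X_0$ contributes at least one exceptional curve, and a non-cyclic one contributes at least four. If $X_0$ had two non-cyclic quotient singular points, then $\Exc(X\to X_0)$ would have at least $4+4=8$ components, contradicting the bound $\le 7$; hence there is at most one. Likewise, if $X_0$ had $k\ge 5$ singular points among which at least one is non-cyclic, the number of exceptional components would be at least $4+(k-1)\ge 4+4=8$, again impossible; so whenever $X_0$ has five or more singular points they are all cyclic. There is no genuinely hard step here: the argument is purely numerical, and the only points that need care are the identity $\#\{\text{components of }\Exc(X\to X_0)\}=\rho(X)-\rho(X_0)$ (which uses that the exceptional curves span a negative definite, hence nondegenerate, sublattice of $H^2$) and the essential use of $\rho(X_0)=2$, together with $\rho(X)\le 9$, to make the count sharp.
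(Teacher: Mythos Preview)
Your proof is correct and follows essentially the same counting argument as the paper: a non-cyclic quotient singularity contributes at least four exceptional components, while $\rho(X)-\rho(X_0)\le 9-2=7$, so two non-cyclic points (giving $\ge 8$) or one non-cyclic point together with four others (again $\ge 8$) are impossible. Your write-up simply spells out the justifications (Noether's formula for $\rho(X)\le 9$, the shape of the resolution graphs) more explicitly than the paper does.
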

\begin{proof}
Let $X$ be the minimal resolution of $X_0$ and let $x$ be a non-cyclic singular point of $X_0$. Then number of exceptional components over $x$ is at least four. Note that the difference of the Picard number of $X$ and the Picard number of $X_0$ is less than or equal to seven. Therefore we have the desired result.
\end{proof}

If $X_0$ has 6 singular points then the orbifold Bogomolov-Miyaoka-Yau inequality gives restrictions of the possible list of 6-tuple of the order of the local fundamental groups of singular points as follows.

\begin{lemma}
If there are 6 singular points on $X_0$ then the possible list of 6-tuple of the order of the local fundamental groups of singular points is 
$$(2^6),(2^4,3,[2,2]).$$
\end{lemma}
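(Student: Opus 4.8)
\medskip
\noindent The plan is to first pin down $K_X^2$ from a Picard-number count, and then cut the resulting finite list of numerical candidates down using the orbifold Bogomolov--Miyaoka--Yau inequality and the square-determinant constraint. Since $X$ is a smooth minimal surface of general type with $p_g=0$ we have $q=0$ and $\chi(\mathcal{O}_X)=1$, so Noether's formula gives $e(X)=12-K_X^2$ and hence $\rho(X)=b_2(X)=10-K_X^2$, with $1\le K_X^2\le 9$. If $f\colon X\to X_0$ is the minimal resolution and $N$ is the total number of irreducible exceptional curves, then $\rho(X)=\rho(X_0)+N=2+N$. With six singular points all the singularities are cyclic by the preceding Lemma (a non-cyclic one would force $N\ge 4+5=9$, hence $\rho(X)\ge 11$, absurd), so $N\ge 6$, and $10-K_X^2\ge 8$ gives $K_X^2\in\{1,2\}$. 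For $K_X^2=2$ we get $N=6$, so every singular point carries a single $(-n_i)$-curve, i.e. is of type $\tfrac1{n_i}(1,1)$ with $n_i\ge 2$. For $K_X^2=1$ we get $N=7$, so exactly one singular point carries two exceptional curves, namely a cyclic quotient with a length-$2$ chain $[a,b]$ ($a,b\ge2$) of order $ab-1$, while the remaining five are of type $\tfrac1{n_i}(1,1)$.

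Next I would apply the orbifold BMY inequality. As $K_X$ is nef and $K_X=f^*K_{X_0}-\sum_x D_x$ with $D_x$ effective, $K_{X_0}$ is nef, and $K_{X_0}^2=K_X^2-\sum_x D_x^2\ge K_X^2>0$ since $D_x^2\le 0$ by Lemma~\ref{numcycquot}. In both cases $e(X_0)=e(X)-N=4$, so $e_{orb}(X_0)=4-\sum_x(1-\tfrac1{|G_x|})$; inserting the explicit values of $D_x^2$ from Lemma~\ref{numcycquot} turns $K_{X_0}^2\le 3e_{orb}(X_0)$ into a single linear inequality in the discrete data. For $K_X^2=2$ it becomes $\sum_{i=1}^6(n_i+\tfrac1{n_i})\le 16$, whose only solutions are $(2^6)$ and $(2^5,3)$. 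For $K_X^2=1$ it becomes $\sum_{i=1}^5 n_i+a+b+\sum_{i=1}^5\tfrac1{n_i}+\tfrac{a+b-1}{ab-1}\le 19$, which forces $n_i\le 7$ and $a+b\le 6$ and hence leaves only finitely many candidate chains $[a,b]$ and tuples $(n_1,\dots,n_5)$ (conveniently enumerated with Sage).

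The finite list is then pruned with the determinant constraint from the Lemma above: writing $R=\bigoplus_x R_x$ for the lattice of exceptional curves, $|\det R|$ must be a perfect square. For $K_X^2=2$ this alone kills $(2^5,3)$, since $|\det R|=2^5\cdot3=96$, leaving $(2^6)$. For $K_X^2=1$ one has $|\det R|=(\prod_{i=1}^5 n_i)(ab-1)$, and running through the BMY-admissible tuples every one except $(2^4,3,[2,2])$ --- for which $|\det R|=2^4\cdot3\cdot3=144$ --- is removed by non-squareness of $|\det R|$, or (for the handful of borderline tuples) by a slightly sharper application of BMY, and, if still needed, by the Hilbert-symbol obstruction to embedding $R+mH$ into the odd unimodular lattice $I_{1,\rho(X)-1}$, using $\epsilon_p(mH)=1$ as computed above. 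This yields exactly the asserted list $(2^6),\ (2^4,3,[2,2])$.

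I expect the main obstacle to be purely the case-bookkeeping in the $K_X^2=1$ branch: for each chain $[a,b]$ and each admissible $(n_1,\dots,n_5)$ one must tabulate $K_{X_0}^2$, $D_x^2$, $\det R$, and (where the determinant test is inconclusive) the $p$-adic invariants of $R+mH$, which is finite but tedious; there is no conceptual difficulty, though one must keep in mind that $K_{X_0}^2$ need not be an integer.
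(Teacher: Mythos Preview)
Your proposal is correct and follows essentially the same approach as the paper: both arguments pin down $K_X^2\in\{1,2\}$ via the Picard-number count, then use the orbifold Bogomolov--Miyaoka--Yau inequality together with the square-determinant constraint on $|\det R|$ to eliminate all candidates except $(2^6)$ and $(2^4,3,[2,2])$. Your formulation is in fact slightly tidier: you substitute the explicit values of $D_x^2$ into BMY at the outset to obtain the single inequalities $\sum_i(n_i+\tfrac1{n_i})\le 16$ (for $K_X^2=2$) and $\sum_i n_i+a+b+\sum_i\tfrac1{n_i}+\tfrac{a+b-1}{ab-1}\le 19$ (for $K_X^2=1$), whereas the paper runs a more verbose case split on how many $n_i$ equal $2$. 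One small remark: the Hilbert-symbol fallback you mention is never actually needed here---the determinant test alone already kills every surviving tuple other than the two in the statement (e.g.\ $(2^5,[2,2])$ and $(2^5,[2,3])$ have $|\det R|=96,\,160$), and the paper's proof likewise makes no use of $\epsilon_p$ for this lemma.
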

\begin{proof}
Let $X_0$ be a surface with $\rho(X_0)=2$ having six quotient singularities and let $X \to X_0$ be its minimal resolution. The number of exceptional curves of $X \to X_0$ is 6 or 7. Suppose that the number of exceptional curves of $X \to X_0$ is 6. Then $K_X^2=2$ and we have the following inequality
$$ 2 = K_X^2 \leq K_{X_0}^2 \leq 3[4-6+\sum_{i=1}^6 \frac{1}{n_i}]. $$
Therefore we have
$$ \frac{8}{3} \leq \sum_{i=1}^6 \frac{1}{n_i}. $$
We may assume that $n_1 \leq \cdots \leq n_6.$ Let us assume that one of $n_i$ is not equal to two. If $n_1=n_2=n_3=2, n_4 \geq 3$ then the above inequality fails. If $n_1=n_2=n_3=n_4=2, n_5 \geq 3$ then there are two disjoint curves with self-intersection numbers $-n_5$ and $-n_6$ and we have the following inequality $$ 2 + \frac{(n_5-2)^2}{n_5} + \frac{(n_6-2)^2}{n_6} \leq K_{X_0}^2 \leq 3[4-6+\sum_{i=1}^6 \frac{1}{n_i}]. $$
which fails when $n_5 \geq 3.$
If $n_1=n_2=n_3=n_4=n_5=2, n_6 \geq 3$ then there is a curve with self-intersection $-n_6$ and we have the following inequality $$ 2 + \frac{(n_6-2)^2}{n_6} \leq K_{X_0}^2 \leq 3[4-6+\sum_{i=1}^6 \frac{1}{n_i}]. $$
Then we have $n_1=\cdots=n_5=2, n_6 = 3$ but $2^5 \cdot 3$ is not a square number. Therefore the only possible case is $(2^6).$ \\

Suppose that the number of exceptional curves of $X \to X_0$ is 7. Then $K_X^2=1$ and we have the following inequality
$$ 1 = K_X^2 \leq K_{X_0}^2 \leq 3[4-6+\sum_{i=1}^6 \frac{1}{n_i}] $$
Therefore we have
$$ \frac{7}{3} \leq \sum_{i=1}^6 \frac{1}{n_i}. $$

We may assume that $n_1 \leq \cdots \leq n_6.$ Let us assume that one of $n_i$ is not equal to two. If $n_1=n_2=2, n_3 \geq 3$ then the only possible case is $(2,2,3,3,3,3).$ However since the Picard number of $X$ is 9, $K_{X_0}^2$ is strictly bigger than $\frac{7}{3}$ and there is no possible case.

If $n_1=n_2=n_3=2$ and $n_4 \geq 3$ then there are two disjoint curves with self- intersection numbers $-n_i, -n_j$ and two curves with self-intersection numbers $-m_1,-m_2$ meeting at a point which contracts to three points and we have the following inequality 
$$ 1 + \frac{(n_i-2)^2}{n_i} + \frac{(n_j-2)^2}{n_j}+m_1+m_2+\frac{m_1+m_2+2}{m_1m_2-1}-6 \leq K_{X_0}^2 \leq 3[-\frac{1}{2}+\frac{1}{n_i}+\frac{1}{n_j}+\frac{1}{m_1m_2-1}]. $$
and we can easily see that there is no possible case.

If $n_1=n_2=n_3=n_4=2$ and $n_5 \geq 3$ then there is a curve with self-intersection $-n_5$ and two curves with self-intersection numbers $-m_1,-m_2$ meeting at a point which contracts to a point and we have the following inequality 
$$ 1 + \frac{(n_5-2)^2}{n_5} + m_1+m_2+\frac{m_1+m_2+2}{m_1m_2-1}-6 \leq K_{X_0}^2 \leq 3[\frac{1}{n_5}+\frac{1}{m_1m_2-1}]. $$
And we can see that the possible cases are $n_5=3, m_1=m_2=2.$

If $n_1=n_2=n_3=n_4=n_5=2$ and $n_6 \geq 3$ then there are two curves with self-intersections $-m_1,-m_2$ which contract to a singular point and we have the following inequality 
$$ 1 + m_1+m_2+\frac{m_1+m_2+2}{m_1m_2-1}-6 \leq K_{X_0}^2 \leq 3[\frac{1}{2}+\frac{1}{m_1m_2-1}]. $$
And we can check that there is no possible case.
\end{proof}

Indeed, the $(2^6)$ and $(2^4,3,[2,2])$ cases are supported by examples.

\begin{remark}
There are combinatorially minimal Mori dream surfaces with Picard number 2 with 6 singular points whose order of the local fundamental groups of singular points is $(2^6)$ or $(2^4,3,[2,2]).$ They are image of product-quotient surfaces with $K^2=1, 2.$ See \cite{BCGP,BP} for more details.
\end{remark}

Therefore we get the following result.

\begin{proposition}
Let $X_0$ be a combinatorial minimal Mori dream surfaces with Picard number 2 whose minimal resolution is a minimal surfaces of general type with $p_g=0.$ If there are 6 singular points on the combinatorial minimal Mori dream surface then the possible list of 6-tuple of the order of the local fundamental groups of singular points is $(2^6),(2^4,3,[2,2]).$
\end{proposition}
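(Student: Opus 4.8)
The plan is to obtain this Proposition by combining the three preparatory lemmas proved just above. First I would record that the number of singular points of $X_0$ is at most $6$: since $X$ is a smooth minimal surface of general type with $p_g=0$ we have $1 \le K_X^2 \le 9$, hence $1 \le \rho(X) \le 9$ by Noether's formula and the Bogomolov--Miyaoka--Yau inequality, while each singular point of $X_0$ contributes at least one exceptional curve to the minimal resolution $f : X \to X_0$, so by \cite[Proposition II.6.5]{Hartshorne} the number of singular points is bounded by $\rho(X) - \rho(X_0) = \rho(X) - 2 \le 7$; the orbifold Bogomolov--Miyaoka--Yau inequality together with the square-determinant constraint then excludes the case of $7$ points, exactly as in the Lemma above.

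Next, assuming $X_0$ has exactly $6$ singular points, I would invoke the classification Lemma verbatim. Here $f$ has either $6$ or $7$ exceptional curves, corresponding to $K_X^2 = 2$ or $K_X^2 = 1$; in each case the chain $K_X^2 \le K_{X_0}^2 \le 3\,e_{\mathrm{orb}}(X_0) = 3(4 - \sum_x (1 - 1/|G_x|))$ forces $\sum_i 1/n_i$ to be large for the orders $n_1 \le \cdots \le n_6$ of the local fundamental groups. Running through the case split according to how many of the $n_i$ equal $2$ — and, whenever several curves meeting at a point contract to a single cyclic or non-cyclic point, applying the refined orbifold inequality together with the formulas for $D_x^2$ from Lemma~\ref{numcycquot} and Lemma~\ref{non-cycquot2} — narrows the list to a handful of candidates, and the borderline ones are then eliminated by the requirement that $|\det(R + mH)| = |\det R|\cdot|\det(mH)|$ be a square, leaving precisely the tuples $(2^6)$ and $(2^4,3,[2,2])$.

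Finally I would note, as in the Remark above, that both tuples are realized: they occur as combinatorially minimal models of product-quotient surfaces with $K^2 = 1$ and $K^2 = 2$ from \cite{BCGP, BP}, so the list is exactly right and nothing further need be added. Assembling the bound on the number of singular points with the classification Lemma yields the Proposition. The only substantive work lies in the case analysis inside the classification Lemma — in particular, disentangling the reducible exceptional configurations (two curves meeting at a point contracting to one singularity) and checking the square-determinant obstruction for the borderline tuples — but since that Lemma is already established above, the present argument is essentially a matter of packaging the bound on $\#\mathrm{Sing}(X_0)$ together with that classification.
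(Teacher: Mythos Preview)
Your proposal is correct and mirrors the paper's approach exactly: the Proposition is simply a restatement of the classification Lemma proved immediately before it (together with the Remark confirming both tuples are realized by product-quotient examples), and the paper offers no separate argument beyond this packaging. Your inclusion of the bound $\#\mathrm{Sing}(X_0)\le 6$ and the reference to the non-cyclic formula Lemma~\ref{non-cycquot2} is slightly more than required---the Proposition already hypothesizes exactly six singular points, and an earlier Lemma forces all of them to be cyclic---but this does not affect the correctness of the outline.
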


Again, if $X_0$ has 5 singular points then the orbifold Bogomolov-Miyaoka-Yau inequality gives restrictions of the possible list of 5-tuple of the order of the local fundamental groups of singular points.

\begin{lemma}
If there are 5 singular points on $X_0$ then the possible list of 5-tuple of the order of the local fundamental groups of singular points is 
$$(2^2,4,[2,2],[2,2]),(2^4,[2,2,2]),(2^3,4,[2,3,2]),(2^2,3,3,[2,2,2]).$$
\end{lemma}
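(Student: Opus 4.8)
The plan is to follow exactly the strategy outlined just before the statement: fix the number of singular points to be five and work out which $5$-tuples of orders of local fundamental groups survive the three filters (orbifold BMY, the square-number condition on $|\det(R)|$, and the Hilbert-symbol embeddability into $I_{1,\rho(X)-1}$). First I would record the basic numerology: since $X$ is smooth minimal of general type with $p_g=0$, $1\le K_X^2\le 9$, and since $X_0$ has $\rho(X_0)=2$ and exactly five singular points, the total number of exceptional components of $X\to X_0$ is $\rho(X)-2=K_X^2+8-2$, hence the number of exceptional curves is either $5$ (so $K_X^2=1$, giving one component per singular point, i.e.\ all five are $A_1$ — but then the tuple would be $(2^5)$, excluded below by the square condition) or more. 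More precisely, with five singular points and $7$ or $8$ exceptional components we get $K_X^2=1$ or $2$; I would split into these two cases.

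Next I would apply the orbifold Bogomolov--Miyaoka--Yau inequality in the form
$$ K_X^2\ \le\ K_{X_0}^2\ \le\ 3\,e_{orb}(X_0)\ =\ 3\Big(4-\sum_{i=1}^5\big(1-\tfrac1{|G_i|}\big)\Big), $$
i.e.\ $\sum_i \tfrac1{|G_i|}\ge 1+\tfrac{K_X^2}{3}$, together with the sharper per-point bounds from Lemmas \ref{numcycquot}, \ref{non-cycquot}, \ref{non-cycquot2} that control $D_x^2$ (and hence force $K_{X_0}^2$ up whenever a singular point has a long or heavy resolution string). Ordering $n_1\le\cdots\le n_5$ and using $K_X^2=1$ or $2$ this leaves only a short finite list of candidate order-tuples; one then has to interleave the constraint ``number of exceptional curves $=\rho(X)-2$'' to decide, for each tuple, the precise resolution graphs (e.g.\ whether an order-$3$ point is $A_2$ or $\tfrac13(1,1)$, whether an order-$4$ point is $A_3$, $\tfrac14(1,1)$ or $\tfrac14(1,3)$, and how the $[2,2]$, $[2,2,2]$, $[2,3,2]$ strings are distributed). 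For each surviving graph I would compute $\det(R)=\prod_x\det(R_x)$ via Lemmas \ref{numcycquot}--\ref{non-cycquot2} and discard any case where $|\det(R)|$ is not a square (using the Lemma that $|\det(R)|$ is a square for $\rho(X_0)=2$); then compute the $2$-adic and odd-$p$ Hilbert symbols $\epsilon_p(R\oplus mH)$ from the formulas in the quoted Serre/\cite{HK} material, using $\epsilon_p(mH)=1$ from the Lemma just proved, and discard any case that cannot embed into the unimodular lattice $I_{1,\rho(X)-1}$ of signature $(1,\rho(X)-1)$ (whose Hilbert symbols are also recorded in the excerpt). Finally I would note, as in the previous section, that the algebro-geometric last step (ruling out or realizing each leftover tuple by an actual surface) is not carried out, so the statement is only that \emph{the possible list is} the four displayed tuples
$$(2^2,4,[2,2],[2,2]),\quad (2^4,[2,2,2]),\quad (2^3,4,[2,3,2]),\quad (2^2,3,3,[2,2,2]),$$
with the understanding (to be cross-referenced with the $p_g=0$, $K^2$ constraints and \cite{HKO}) that the Gorenstein-only subcases are already excluded by \cite{HKO}.

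The main obstacle I expect is bookkeeping rather than conceptual: there are many resolution graphs compatible with a given order-tuple once non-cyclic (dihedral) points of type $D_{q,q_1}$ with short tails ($l=1$ or $l=2$) are allowed, and for each one must produce $D_x^2$, $\det(R_x)$, and the Hilbert symbols, then check embeddability into $I_{1,\rho(X)-1}$ prime by prime. This is precisely where the paper says it uses Sage; in the write-up I would organize the argument so that BMY first kills the bulk of tuples, the square-determinant test kills most of the remainder (e.g.\ it immediately rules out $(2^5)$ since $2^5$ is not a square, and similarly $(2^3,3,\ldots)$-type tuples with odd $3$-valuation in $\det R$), and only a handful of genuinely delicate cases need the full Hilbert-symbol computation. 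I would also double-check the consistency of each surviving case with $K_X^2\in\{1,2\}$ and with $\rho(X)=K_X^2+8$ via Noether's formula, since that numerical match is what pins down the resolution graphs and is the cheapest way to eliminate spurious tuples.
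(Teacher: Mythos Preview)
Your overall strategy matches the paper's, but there is a concrete numerical error that would derail your case analysis. You write the relation as $\rho(X)-2=K_X^2+8-2$ and later $\rho(X)=K_X^2+8$; for a smooth minimal surface of general type with $p_g=q=0$, Noether's formula gives $K_X^2+c_2=12$ and $b_2=c_2-2$, hence $\rho(X)=b_2=10-K_X^2$, not $K_X^2+8$. With the correct formula the number of exceptional components is $\rho(X)-2=8-K_X^2$, so the cases are $5$, $6$, or $7$ exceptional curves with $K_X^2=3,2,1$ respectively. Your stated split into ``$7$ or $8$ exceptional components with $K_X^2=1$ or $2$'' is therefore off: eight components would force $K_X^2=0$, which is impossible, and you omit the $K_X^2=3$ case (five components, all $A_1$-type points --- this is the case the paper does treat first and rules out via the square-determinant test and BMY). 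Once you correct this, your filtering plan is exactly the paper's.

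One structural remark: the paper's proof of \emph{this} lemma uses only the orbifold BMY inequality together with the square-determinant condition $|\det(R)|=\square$; the Hilbert-symbol computations you describe are carried out \emph{afterwards}, in two separate lemmas, and are what cut the four tuples down to the two in the subsequent Proposition. If you run the Hilbert-symbol filter inside this lemma you will end up with a list strictly shorter than the one asserted, so you would be proving the later Proposition rather than this intermediate Lemma as stated.
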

\begin{proof}
Let $X_0$ be a surface with $\rho(X_0)=2$ having five quotient singular points and let $X \to X_0$ be its minimal resolution. The number of exceptional curves of $X \to X_0$ is five, six or seven. Suppose that the number of exceptional curves of $X \to X_0$ is five. Then $K_X^2=3,$ $\rho(X)=7$ and we have the following inequality
$$ 3 = K_X^2 \leq K_{X_0}^2 \leq 3[4-5+\sum_{i=1}^5 \frac{1}{n_i}]. $$
Therefore we have 
$$ 2 \leq \sum_{i=1}^5 \frac{1}{n_i}. $$

We may assume that $n_1 \leq \cdots \leq n_5.$ If $n_3 \geq 3$ then the only possible case is $(2,2,3,3,3).$ However, in this case $K_{X_0}^2$ is strictly bigger than $3$ and hence we can remove this case using the orbifold Bogomolov-Miyaoka-Yau inequality. Therefore we see that $n_3=2.$ Then the possible pairs $(n_4, n_5)$ are $(2,n_5),$ $(3,n_5)$ where $n_5 \leq 6$ and $(4,4).$ If $n_4=2,$ then we have $3+\frac{(n_5-2)^2}{n_5} \leq 3[1+\frac{1}{n_5}]$ and hence $n_5=2,3.$ However, the determinant of $R$ is not a square number so we can remove these cases. If $n_4=3,$ then we have $\frac{10}{3}+\frac{(n_5-2)^2}{n_5} \leq 3[\frac{5}{6}+\frac{1}{n_5}]$ and we can see that there is no possible case. We can remove $(4,4)$ case using the orbifold Bogomolov-Miyaoka-Yau inequality. Therefore we see that the number of exceptional curves is 6 or 7. \\

Suppose that the number of exceptional curves of $X \to X_0$ is 6. Then $K_X^2=2$ and we have the following inequality
$$ 2 = K_X^2 \leq K_{X_0}^2 \leq 3[4-5+\sum_{i=1}^5 \frac{1}{n_i}]. $$
Therefore we have
$$ 2+\frac{(n_1-2)^2}{n_1}+\frac{(n_2-2)^2}{n_2}+\frac{(n_3-2)^2}{n_3}+\frac{(n_4-2)^2}{n_4}+ m_1+m_2+\frac{m_1+m_2+2}{m_1m_2-1}-6 $$
$$ \leq \sum_{i=1}^4 \frac{3}{n_i} + \frac{3}{m_1m_2-1} -3. $$
And the determinant $det(R)=n_1n_2n_3n_4(m_1m_2-1)$ need to be a square number. And we can check that there is no possible case. \\

Suppose that the number of exceptional curves of $X \to X_0$ is 7. Then $K_X^2=1$ and we have the following inequality
$$ 1 = K_X^2 \leq K_{X_0}^2 \leq 3[4-5+\sum_{i=1}^5 \frac{1}{n_i}] $$

Suppose that there are two pairs of curves meeting at a point contracted to a singular point. In this case, we have
$$ 1+\frac{(n_1-2)^2}{n_1}+\frac{(n_2-2)^2}{n_2}+\frac{(n_3-2)^2}{n_3}+ m_1+m_2+\frac{m_1+m_2+2}{m_1m_2-1}-6+ l_1+l_2+\frac{l_1+l_2+2}{l_1l_2-1}-6 $$
$$ \leq \sum_{i=1}^3 \frac{3}{n_i} + \frac{3}{m_1m_2-1}+ \frac{3}{l_1l_2-1} -3. $$

And the determinant $det(R)=n_1n_2n_3(m_1m_2-1)(l_1l_2-1)$ need to be a square number. We can check that $(2^2,4,[2,2],[2,2])$ is the only possible case. \\

Suppose that there are three curves contracted to a singular point. In this case, we have
$$ 1+\frac{(n_1-2)^2}{n_1}+\frac{(n_2-2)^2}{n_2}+\frac{(n_3-2)^2}{n_3}+\frac{(n_4-2)^2}{n_4}+ m_1+m_2+m_3+\frac{m_1m_2+m_2m_3}{m_1m_2m_3-m_1-m_3}-8 $$
$$ \leq \sum_{i=1}^4 \frac{3}{n_i} + \frac{3}{m_1m_2m_3-m_1-m_3} -3. $$
And the determinant $det(R)=n_1n_2n_3n_4(m_1m_2m_3-m_1-m_3)$ need to be a square number. We can check that $(2^4,[2,2,2]),(2^3,4,[2,3,2]),(2^2,3,3,[2,2,2])$ are the only possible cases. \\

From the above discussion, we see that the only possible cases are
$$ (2^2,4,[2,2],[2,2]),(2^4,[2,2,2]),(2^3,4,[2,3,2]),(2^2,3,3,[2,2,2]). $$
\end{proof}

Then we can remove some of these cases using lattice theory.

\begin{lemma} Let $Q$ be the quadratic form corresponding to a Hirzebruch-Jung continued fraction $[2,2]$. Then we take an orthogonal basis so the quadratic form is given as $Q=-2X^2_1-\frac{3}{2}X^2_2$ and we have $d_p(Q)=3$ and $\epsilon_2(Q)=-1$ and $\epsilon_p(Q)=1$ for all $p \neq 2.$ Then the lattices $2A_1 \oplus 2A_2 \oplus \frac{1}{4}(1,1) \oplus mH$ cannot be embedded into $I_{1,9}.$ 
\end{lemma}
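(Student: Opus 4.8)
The plan is to follow the same computational strategy outlined just before this Lemma: take a candidate five-point basket $2A_1 \oplus 2A_2 \oplus \frac{1}{4}(1,1)$ on a Picard number 2 combinatorially minimal model $X_0$ whose minimal resolution is a minimal surface of general type with $p_g = 0$, and show the resulting lattice cannot occur by comparing invariants of quadratic forms over $\mathbb{Q}_2$. Recall that in this situation $R = R_1 \oplus R_2 \oplus R_3 \oplus R_4 \oplus R_5$ with two $A_1$ summands (each $\cong (-2)$), two $A_2$ summands (each the lattice of the chain $[2,2]$), and one $\frac{1}{4}(1,1)$ summand ($\cong (-4)$), and that $R + mH$ must embed as an overlattice into the unimodular lattice $H^2(X,\mathbb{Z})/\mathrm{tors} \cong I_{1,\rho(X)-1} = I_{1,9}$, since here $\rho(X) = \rho(X_0) + 7 = 9$, i.e. $K_X^2 = 1$ and the five singular points carry seven exceptional curves.

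First I would record the local invariants: by \cite[Lemma 6.4]{HK} the $[2,2]$-lattice $A_2$ is diagonalized over $\mathbb{Q}$ as $Q = -2X_1^2 - \tfrac{3}{2}X_2^2$, so $d_2(A_2) = 3$ and $\epsilon_2(A_2) = (-2,-\tfrac{3}{2})_2 = (-2,-6)_2$, which one computes to be $-1$; over $\mathbb{Q}_p$ for odd $p$ one gets $\epsilon_p(A_2) = 1$, since $-2$ and $-6$ are not both non-units and the relevant Legendre symbols vanish. For $A_1 = (-2)$ we have $d_2 = -2$, $\epsilon_2 = 1$, and likewise $\epsilon_p = 1$, $d_p = -2$ for odd $p$; for $\frac14(1,1) = (-4)$ we have $d_2 = -4 \equiv -1$, $\epsilon_2 = 1$. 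By the earlier Lemma, $\epsilon_p(mH) = 1$ for all $p$ and $d_p(mH) = (-1)^m$ (a square times $(-1)^m$; for the embedding obstruction only the $\mathbb{Q}_2$ class matters and one may as well take $m$ even so $d_2(mH) = 1$, or carry $m$ along and see it cancels).

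Next I would assemble $d_2$ and $\epsilon_2$ of $R \oplus mH$ using the standard multiplicativity formulas
$$ d_2(Q \oplus Q') = d_2(Q)\,d_2(Q'), \qquad \epsilon_2(Q \oplus Q') = \epsilon_2(Q)\,\epsilon_2(Q')\,(d_2(Q),d_2(Q'))_2, $$
stacking the seven pieces $(-2),(-2),A_2,A_2,(-4),mH$ one at a time, and compare with the invariants of $I_{1,9}$ — namely $d_2(I_{1,9}) = (-1)^1 = -1$ (signature $(1,9)$, so discriminant $(-1)^9 = -1$) and $\epsilon_2(I_{1,9}) = (-1)^{10} = 1$ by the Lemma on $I_{1,\rho-1}$ — remembering that an embedding as an overlattice forces the rational quadratic forms $R \oplus mH \otimes \mathbb{Q}$ and $I_{1,9} \otimes \mathbb{Q}$ to be equivalent over each $\mathbb{Q}_p$, hence in particular to share rank, discriminant and $\epsilon$-invariant at $p = 2$. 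The contradiction will come out as a sign mismatch in $\epsilon_2$: the two $A_2$ blocks each contribute a factor $-1$ to $\epsilon_2$, and combined with the Hilbert-symbol cross terms $(d_2(\cdot),d_2(\cdot))_2$ involving the discriminants $3, 3, -2, -2, -4$ the total $\epsilon_2(R \oplus mH)$ will disagree with $\epsilon_2(I_{1,9}) = 1$ (or, should $\epsilon_2$ happen to agree, the discriminant $d_2 = (-2)(-2)(3)(3)(-4)(\text{sq}) = -144 \cdot(\text{sq}) \equiv -1$ will need to be cross-checked, but the genuine obstruction here is $\epsilon_2$).

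The main obstacle — really the only non-bookkeeping point — is getting every $\mathbb{Q}_2$ Hilbert symbol right, since $p=2$ uses the more delicate formula $(a,b)_2 = (-1)^{\epsilon(a')\epsilon(b') + \alpha\omega(b') + \beta\omega(a')}$ and the cross terms between the five negative discriminants $3,3,-2,-2,-4$ (and $mH$) must all be tallied correctly; a single sign error flips the conclusion. I would organize the computation in a small table of $(a',\alpha)$ data for each generator and then sum the exponents mod $2$, exactly in the style of \cite[Lemma 6.4, Lemma 6.6]{HK}; the final line is then ``$\epsilon_2(2A_1 \oplus 2A_2 \oplus \tfrac14(1,1) \oplus mH) = -1 \neq 1 = \epsilon_2(I_{1,9})$, so no such embedding exists,'' which is the desired conclusion.
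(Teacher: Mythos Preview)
Your overall strategy is right, but the specific prime you chose does not produce an obstruction, and there is a rank bookkeeping slip that hides this.

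First the rank issue. The sublattice $R = 2A_1 \oplus 2A_2 \oplus \frac14(1,1)$ has rank $2+4+1=7$, so $R \oplus mH$ has rank $9$. Since $\rho(X_0)=2$ and there are seven exceptional curves, $\rho(X)=9$ and hence $H^2(X,\bbZ)/\text{tors}\cong I_{1,8}$, a rank-$9$ lattice. The comparison of $\mathbb{Q}_p$-invariants you invoke only works when $R\oplus mH$ sits as a \emph{finite-index} sublattice, i.e.\ when the ranks agree; you state this yourself (``share rank'') but then compare a rank-$9$ form against $I_{1,9}$ of rank $10$. The target in the statement should be read as $I_{1,8}$, and with that correction your own criterion forces you to match $\epsilon_2(I_{1,8})=(-1)^9=-1$, not $+1$.

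Second, and more seriously, working at $p=2$ gives no contradiction against $I_{1,8}$. If you carry out the computation you sketched, you find $d_2(R\oplus mH)=576m^2\equiv 1$ and, building up with the multiplicativity formula, $\epsilon_2(2A_1)=-1$, $\epsilon_2(2A_2)=-1$, $\epsilon_2(2A_1\oplus 2A_2)=(-1)(-1)(4,9)_2=1$, $\epsilon_2(\,\cdot\,\oplus(-4))=1\cdot 1\cdot(36,-4)_2=1$, and finally $\epsilon_2(R\oplus mH)=1\cdot 1\cdot(-144,-1)_2=(-1,-1)_2=-1$. Since $d_2(I_{1,8})=1$ and $\epsilon_2(I_{1,8})=-1$, both invariants agree and nothing is excluded at $p=2$.

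The paper instead works at $p=3$: here $\epsilon_3(I_{1,\rho-1})=1$ for all $\rho$, while $\epsilon_3(2A_2)=(3,3)_3=-1$ and every other block and cross term contributes $+1$, giving $\epsilon_3(R\oplus mH)=-1$. That is where the obstruction actually lives. The moral is that with two copies of $A_2$ present, each of discriminant $3$, the prime $p=3$ is the natural place to look.
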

\begin{proof}
We have $\epsilon_3(-2,-2)=1$ and hence $d_3(2A_1)=4$ and $\epsilon_3(2A_1)=1.$ And because $\epsilon_3(3,3)=-1$ we have $d(2A_2)=9$ and $\epsilon_3(2A_2)=-1.$ From this and $\epsilon_3(4,9)=1$ we have $d(2A_1 \oplus 2A_2)=36$ and $\epsilon_3(2A_1 \oplus 2A_2)=-1.$ From $\epsilon_3(36,-4)=1$ we have $\epsilon_3(2A_1 \oplus 2A_2 \oplus \frac{1}{4}(1,1)) = -1.$
Finally, we have $ \epsilon_3(2A_1 \oplus 2A_2 \oplus \frac{1}{4}(1,1) \oplus mH) = -1$ but since $\epsilon_3(I_{1,9}) = 1,$ we see that the lattice cannot be embedded into $I_{1,9}.$
\end{proof}

\begin{lemma} Let $Q$ be the quadratic form corresponding to a Hirzebruch-Jung continued fraction $[2,2,2]$. Then we take an orthogonal basis so the quadratic form is given as $Q=-2X^2_1-\frac{3}{2}X^2_2-\frac{4}{3}X^2_3$ and we have $d_p(Q)=-4$ and $\epsilon_2(Q)=-1$ and $\epsilon_p(Q)=1$ for all $p \neq 2.$ We cannot embed $2A_1 \oplus 2 \times \frac{1}{3}(1,1) \oplus A_2 \oplus mH$ into $I_{1,9}.$
\end{lemma}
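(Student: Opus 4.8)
The plan is to follow the pattern of the preceding lemma. First I would pin down $Q$ and its $p$-adic invariants. By \cite[Lemma~6.4]{HK} applied to the continued fraction $[2,2,2]$ there is an orthogonal basis $v_1,v_2,v_3$ with $v_i^2=-[n_i,\dots,n_1]$, and since $[2]=2$, $[2,2]=\frac{3}{2}$ and $[2,2,2]=\frac{4}{3}$ this gives the asserted diagonalization $Q=-2X_1^2-\frac{3}{2}X_2^2-\frac{4}{3}X_3^2$. Then $d_p(Q)=(-2)(-\frac{3}{2})(-\frac{4}{3})=-4$, and the values $\epsilon_2(Q)=-1$ and $\epsilon_p(Q)=1$ for $p\neq2$ come from directly evaluating the three Hilbert symbols $(-2,-\frac{3}{2})_p$, $(-2,-\frac{4}{3})_p$, $(-\frac{3}{2},-\frac{4}{3})_p$ with the explicit formulas recalled above: for $p>3$ every entry is a $p$-adic unit, so each symbol is $1$, while $p=2$ and $p=3$ are short computations using that $-4\equiv-1$ modulo squares. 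Since $Q$ is exactly the rational quadratic form of the $A_3$-lattice (the lattice of the $[2,2,2]$ string), this also records $\epsilon_3(A_3)=1$ and $d_3(A_3)=-4\equiv-1$, which are used below.

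Next I would prove the non-embedding by contradiction. Set $L:=2A_1\oplus 2\times\frac{1}{3}(1,1)\oplus A_3\oplus mH$, the lattice attached to the basket under consideration; it sits as a full-rank sublattice of the odd unimodular lattice $I_{1,9}=H^2(X,\bbZ)/\mathrm{tors}$, so any embedding $L\hookrightarrow I_{1,9}$ has finite index and hence induces an isomorphism $L\otimes\bbQ_3\cong I_{1,9}\otimes\bbQ_3$ of quadratic forms. This forces $\epsilon_3(L)=\epsilon_3(I_{1,9})$, which is $1$ by the earlier lemma on odd unimodular lattices. On the other hand $\epsilon_3(L)$ is assembled from the multiplicativity relations $d_3(Q\oplus Q')=d_3(Q)d_3(Q')$ and $\epsilon_3(Q\oplus Q')=\epsilon_3(Q)\epsilon_3(Q')(d_3(Q),d_3(Q'))_3$ from the four summands: $\epsilon_3(2A_1)=(-2,-2)_3=1$ with $d_3=4\equiv1$; $\epsilon_3(2\times\frac{1}{3}(1,1))=(-3,-3)_3=-1$ with $d_3=9\equiv1$; $\epsilon_3(A_3)=1$ with $d_3\equiv-1$; and $\epsilon_3(mH)=1$ (by the preceding lemma) with $d_3(mH)\equiv-1$. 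Because the cross terms $(1,1)_3$, $(1,-1)_3$ and $(-1,-1)_3$ are all $1$, this yields $\epsilon_3(L)=-1$, contradicting $\epsilon_3(I_{1,9})=1$; hence no such embedding exists.

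The computations are elementary, so the only genuine obstacle is keeping the $3$-adic bookkeeping straight: correctly tracking the discriminant classes in $\bbQ_3^*/(\bbQ_3^*)^2$ and the signs of the Hilbert symbols at $p=3$, the decisive one being $\epsilon_3(2\times\frac{1}{3}(1,1))=(-3,-3)_3=-1$, the summand that produces the obstruction. In the first stage the only points needing care are the evaluations of $\epsilon_2(Q)$ and $\epsilon_3(Q)$; everything else is a routine unwinding of the quoted formulas.
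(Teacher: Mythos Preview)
Your proposal is correct and follows essentially the same route as the paper: compute $\epsilon_3$ of each summand, assemble via the multiplicativity formula, and obtain $\epsilon_3(L)=-1\neq 1=\epsilon_3(I_{1,9})$, with the decisive sign coming from $(-3,-3)_3=-1$. You also correctly recognized that the lattice in question is $2A_1\oplus 2\times\frac{1}{3}(1,1)\oplus A_3\oplus mH$ (with $A_3$ the $[2,2,2]$ string), which is what the paper's proof actually computes despite the ``$A_2$'' in the displayed statement.
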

\begin{proof}
We have $\epsilon_3(-2,-2)=1$ and hence $d_3(2A_1)=4$ and $\epsilon_3(2A_1)=1.$ And because $\epsilon_3(4,-4)=1$ we have $d(2A_1 \oplus A_3)=-16$ and $\epsilon_3(2A_1 \oplus A_3)=1.$ Similarly, we have $d_3(2 \times \frac{1}{3}(1,1))=9,$ $\epsilon_3(2 \times \frac{1}{3}(1,1))=-1$ from $\epsilon_3(-3,-3)=-1.$ Then we have $d_3(A_1 \oplus 2 \times \frac{1}{3}(1,1) \oplus A_2) = -144,$ $\epsilon_3(A_1 \oplus 2 \times \frac{1}{3}(1,1) \oplus A_2) = -1 $ and finally we get $\epsilon_3(A_1 \oplus 2 \times \frac{1}{3}(1,1) \oplus A_2 \oplus mH) = -1.$
Therefore we see that the lattice cannot be embedded into $I_{1,9}$ since $\epsilon_3(I_{1,9}) = 1.$
\end{proof}

From the above discussion, we can remove $(2^2,4,[2,2],[2,2]),(2^2,3,3,[2,2,2]).$ cases using lattice theory.

\begin{proposition}
Let $X_0$ be a combinatorial minimal Mori dream surfaces with Picard number 2 coming from $X$ which is a minimal surfaces of general type with $p_g=0.$ If there are 5 singular points on the combinatorial minimal Mori dream surface then the possible list of 5-tuple of the order of the local fundamental groups of singular points is 
$$ (2^3,4,[2,3,2]), (2^4,[2,2,2]). $$
\end{proposition}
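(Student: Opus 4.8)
The plan is to combine the previous Lemma (which produces the four candidate $5$-tuples $(2^2,4,[2,2],[2,2])$, $(2^4,[2,2,2])$, $(2^3,4,[2,3,2])$, $(2^2,3,3,[2,2,2])$) with the two lattice-embedding Lemmas just established, which eliminate two of the four. Concretely, since $X_0$ has $\rho(X_0)=2$, the sublattice $R\oplus mH$ (where $R=\bigoplus_{x}R_x$ is the span of the exceptional curves and $mH$ the orthogonal complement spanned by the two fibre classes) sits inside the unimodular lattice $H^2(X,\mathbb{Z})/\mathrm{tors}\cong I_{1,\rho(X)-1}$. When the number of exceptional curves is $7$, i.e. $\rho(X)=9$ and $K_X^2=1$, this ambient lattice is $I_{1,9}$. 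The candidate $(2^2,4,[2,2],[2,2])$ corresponds to $2A_1\oplus 2A_2\oplus\frac14(1,1)$ (since the Hirzebruch--Jung string $[2,2]$ resolves an $A_2$, but here the order-$4$ data forces the $\frac14(1,1)$ reading as in the previous Lemma's hypothesis), and that Lemma shows $2A_1\oplus 2A_2\oplus\frac14(1,1)\oplus mH$ cannot embed into $I_{1,9}$; similarly $(2^2,3,3,[2,2,2])$ gives $2A_1\oplus 2\times\frac13(1,1)\oplus A_2$, which the other Lemma rules out. So only the two remaining tuples survive.

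The steps, in order, are: (1) invoke the previous Lemma to reduce to the four listed $5$-tuples; (2) for each of the two tuples to be removed, identify the precise basket of quotient singularities it encodes and write down the corresponding lattice $R$ with the $[\,\cdot\,]$-strings interpreted via the Hirzebruch--Jung convention (taking care that an order-$n$ local fundamental group with only one or two exceptional curves forces a cyclic $\frac1n$-type rather than an $A_{n-1}$); (3) append $mH$ and apply the $\epsilon_p$-computation rules (the multiplicativity of $d_p$ and the twisted multiplicativity of $\epsilon_p$, together with $\epsilon_p(mH)=1$ for all $p$ from the hyperbolic-lattice Lemma, and $\epsilon_p(I_{1,9})=1$ for $p>2$ from the Lemma on odd unimodular lattices) to detect the obstruction at $p=3$; (4) conclude that $(2^2,4,[2,2],[2,2])$ and $(2^2,3,3,[2,2,2])$ are impossible, leaving exactly $(2^3,4,[2,3,2])$ and $(2^4,[2,2,2])$.

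The main obstacle I expect is step (2): making sure the coarse numerical data (order of local fundamental group, number of exceptional curves, and the $K_{X_0}^2$-bound coming from the orbifold Bogomolov--Miyaoka--Yau inequality) pins down the singularity type unambiguously, so that the lattice $R$ fed into the Hilbert-symbol computation is the correct one. In particular, one must check that for the two surviving tuples the determinant $\det(R)$ is indeed a square and that no analogous $p=3$ (or $p=2$) obstruction kills them — so the proof is really an exhaustion over baskets, and the delicate point is that the lattice-theoretic tests are applied to the right lattices. Everything else is the bookkeeping already packaged in the cited Lemmas, so the proof will be short: after the reduction, it is just a pointer to the two embedding Lemmas and a remark that the two remaining cases pass all the tests (and, per the earlier Remark, are in fact realized by product-quotient surfaces with $K^2=1,2$ from \cite{BCGP,BP}, though for the present Proposition only the non-existence of the other two is needed).
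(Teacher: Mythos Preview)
Your proposal is correct and follows exactly the paper's own argument: the paper simply states that ``from the above discussion, we can remove $(2^2,4,[2,2],[2,2])$, $(2^2,3,3,[2,2,2])$ cases using lattice theory,'' i.e., it reduces to the four candidates via the preceding Lemma and then applies the two lattice-embedding Lemmas (the $\epsilon_3$-obstruction) to discard two of them. One small correction to your closing parenthetical: the paper explicitly says it does \emph{not} know whether $(2^3,4,[2,3,2])$ and $(2^4,[2,2,2])$ are supported by examples---the product-quotient examples from \cite{BCGP,BP} you cite pertain to the six-singular-point case, not this one.
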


We can summarize the above discussions as follows.

\begin{theorem}
Let $X_0$ be a combinatorially minimal Mori dream surface whose minimal resolution is a smooth minimal surfaces of general type with $p_g=0.$ If the Picard number of $X_0$ is 2 then the number of singular points is less than 7. Moreover when the number of singular points is big we have the followings. \\
(1) If the number of singular points of $X_0$ is six, then the possible type of singularities of $X_0$ is
$$ (2^6), (2^4,3,[2,2]). $$
(2) If the number of singular points of $X_0$ is five, then then the possible type of singularities of $X_0$ is
$$ (2^3,4,[2,3,2]), (2^4,[2,2,2]). $$
\end{theorem}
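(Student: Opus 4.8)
The plan is to read the theorem off the case analysis of this section, which rests on two inputs: the orbifold Bogomolov--Miyaoka--Yau inequality $K_{X_0}^2\le 3\,e_{orb}(X_0)$, which (since $K_{X_0}^2\ge K_X^2\ge 1$) forces $\sum_x 1/|G_x|$ to be large, and the arithmetic constraint that the sublattice $R+mH$ of $H^2(X,\mathbb{Z})/\mathrm{tors}$ spanned by the exceptional curves of $X\to X_0$ together with the rank-$2$ orthogonal complement $mH$ embeds into the unimodular lattice $I_{1,\rho(X)-1}$ --- so $|\det(R+mH)|$ is a perfect square and the $p$-adic invariants of $R+mH$ agree with those of $I_{1,\rho(X)-1}$. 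First I would note that the bound ``$<7$'' is already established: since $\rho(X)=10-K_X^2\le 9$, the resolution $X\to X_0$ contracts at most $\rho(X)-2\le 7$ curves, and the two conceivable seven-point baskets $(2^7)$ and $(2^6,3)$ are excluded because $2^7$ and $2^6\cdot 3$ are not squares, resp.\ because $\tfrac13<K_{X_0}^2$ violates the orbifold BMY inequality in the remaining borderline case.

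For assertion (1), with six singular points there are either $6$ or $7$ exceptional curves, hence $K_X^2=2$ or $K_X^2=1$ respectively. In each sub-case I would feed the discrepancy formulas of Lemmas \ref{numcycquot}, \ref{non-cycquot} and \ref{non-cycquot2} into $K_X^2\le K_{X_0}^2\le 3\bigl(4-6+\sum 1/n_i\bigr)$; the inequality then pins $\sum 1/n_i$ down so tightly that only $(2^6)$ (when $K_X^2=2$) and $(2^4,3,[2,2])$ (when $K_X^2=1$) remain, and both genuinely occur, as images of the product-quotient surfaces with $K^2=2$ and $K^2=1$ classified in \cite{BCGP,BP}. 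For assertion (2), with five singular points the same bookkeeping over $K_X^2\in\{1,2,3\}$ shows that $K_X^2=3$ and $K_X^2=2$ are impossible, while $K_X^2=1$ (so $7$ exceptional curves), together with the requirement that $|\det(R)|$ be a square, leaves exactly the four candidates $(2^2,4,[2,2],[2,2])$, $(2^4,[2,2,2])$, $(2^3,4,[2,3,2])$ and $(2^2,3,3,[2,2,2])$.

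The decisive and genuinely delicate step is the elimination of $(2^2,4,[2,2],[2,2])$ and $(2^2,3,3,[2,2,2])$, which the square-determinant test cannot achieve. Here I would diagonalize the Hirzebruch--Jung lattices attached to the strings $[2,2]$ and $[2,2,2]$ via the Lemma of \cite{HK}, compute the $p$-adic $\epsilon$-invariants of the resulting rational quadratic forms --- using $\epsilon_p(mH)=1$ and $\epsilon_3(I_{1,9})=1$ --- and check that $\epsilon_3(R+mH)=-1$ for these two baskets; by the Hasse--Minkowski classification of quadratic forms over $\mathbb{Q}_p$ this obstructs the embedding of $R+mH$ into $I_{1,9}$, so neither basket can occur. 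Collecting the surviving cases $(2^6)$, $(2^4,3,[2,2])$ for six points and $(2^3,4,[2,3,2])$, $(2^4,[2,2,2])$ for five points yields the theorem.

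The main obstacle, exactly as in \S\ref{s6}, is that the orbifold BMY inequality and the integral-lattice determinant condition are too coarse near the extremal configurations, so the proof genuinely needs the arithmetic of quadratic forms --- Hilbert symbols, $\epsilon$-invariants, Hasse--Minkowski; moreover, even after this the lists may remain larger than the truth, since for $(2^3,4,[2,3,2])$ and parts of the six-point discussion we have produced no supporting examples.
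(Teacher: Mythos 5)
Your proposal follows essentially the same route as the paper: the bound $<7$ via $\rho(X)\le 9$ plus exclusion of $(2^7)$ and $(2^6,3)$ by the square-determinant and orbifold BMY tests, the six- and five-point case analyses split by $K_X^2$ using the discrepancy formulas, and the final elimination of $(2^2,4,[2,2],[2,2])$ and $(2^2,3,3,[2,2,2])$ by computing $\epsilon_3$ of the diagonalized Hirzebruch--Jung forms against $\epsilon_3(I_{1,9})=1$. Your closing caveat --- that the surviving lists may still exceed what is geometrically realized --- also matches the paper's own admission.
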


We do not know whether the cases $(2^3,4,[2,3,2]), (2^4,[2,2,2])$ in the above list are supported by examples or not. 

\begin{remark}
There are many combinatorially minimal MDS of general type with $\rho=2$ having only quotient singularities. See \cite{BCGP, BP} for more details.
\end{remark}

We can also obtain a list of the possible type of singularities $X_0$ can have when the number of singular points is less than or equal to four. However there are many more complicated cases and it seems that one need to find stronger restrictions to remove redundant cases. We left this problem for future research.

\bigskip

\section{Singularities of combinatorially minimal models in general}\label{s8}

Let $X_0$ be a combinatorially minimal model of a smooth minimal surface of general type with $p_g=0$. In this section we discuss what kind of singularities $X_0$ can have by analyzing our previous examples. 

\begin{definition}
Let $(X_0,x)$ be a normal surface singularity and $f : X \to X_0$ be a weak resolution. Then the geometric genus of $(X_0,x)$ is defined as follows.
$$ p_g(X_0, x) = \mathrm{dim}_{\mathbb{C}}R^1 f_* \mathcal{O}_X $$
\end{definition}

\begin{definition}
Let $X_0$ be a normal projective surface. A singular point $x \in X_0$ is an elliptic singularity if $p_g(X_0,x)=1.$
\end{definition}

\begin{definition}
Let $X_0$ be a normal projective surface. A singular point $x \in X_0$ is simple elliptic if the exceptional curve of the minimal resolution $X \to X_0$ is a smooth projective curve of genus 1.
\end{definition}

\begin{definition}
Let $X_0$ be a normal projective surface. A singular point $x \in X_0$ is du Bois if for a good resolution $f : X \to X_0$ with a reduced exceptional divisor $E,$ we have a canonical isomorphism $R^1f_* \mathcal{O}_X \cong H^1(E,\mathcal{O}_E).$ 
\end{definition}

\begin{remark}
(1) A rational singularity is a Du Bois singularity. \\
(2) A log canonical singularity is a Du Bois singularity. Therefore a simple elliptic singularity is a Du Bois singularity.
\end{remark}

By analyzing examples we discussed so far, we have the following result.

\begin{proposition}
Let $X$ be a surface discussed in Theorem \ref{CMM:example} and Theorem \ref{CMM:example2}. Then there exist at least one combinatorially minimal $X_0$ having at worst log canonical singularities. 
\end{proposition}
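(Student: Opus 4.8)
The plan is to go through the examples in Theorem~\ref{CMM:example} and Theorem~\ref{CMM:example2} one by one and, in each case, exhibit a specific combinatorially minimal model $X_0$ whose singularities are log canonical. The unifying principle is this: whenever $X_0$ is obtained by contracting a configuration of curves whose dual graph is either an ADE graph, a cycle of rational curves, or a single smooth elliptic curve, the resulting singularity is respectively canonical, a cusp, or simple elliptic, and all three are log canonical (indeed Du Bois, by the Remark preceding the statement). So the work is combinatorial: for each surface on the list, I will pick the contraction that produces only such singularities, using the explicit intersection matrices and fibration data already recorded in \S\ref{s4}.

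\textbf{Key steps.} First, for the cases where $X$ itself is already combinatorially minimal (fake projective planes, surfaces isogenous to a higher product, reducible fake quadrics), $X_0=X$ is smooth, so there is nothing to prove. Second, for the Inoue surfaces with $K^2=7$, the three negative curves $\widetilde\Delta_i$ are $(-1)$-curves of arithmetic genus $1$; contracting one of them produces a single simple elliptic (hence log canonical) singularity. Third, for Chen's surfaces with $K^2=7$, contracting any one of the four negative curves in the list (each of self-intersection $-1$ or $-4$ and small genus) yields $X_0$ with one singular point; I will check from the configuration that this point is a cusp or simple elliptic singularity, hence log canonical. Fourth, for the Kulikov and Burniat surfaces with $K^2=6$ (same configuration of six negative curves), contracting $(\widetilde E_1,\widetilde E_2,\widetilde E_3)$ or $(\widetilde L_1,\widetilde L_2,\widetilde L_3)$ gives $X_0$ with $\rho=1$ and three simple elliptic singularities; contracting $(\widetilde E_i,\widetilde L_i)$ gives $\rho=2$ with a cycle of two rational curves (a cusp). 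Fifth, for Burniat surfaces with $2\le K^2\le 5$, the curves being contracted are among $(-1,1)$, $(-2,0)$, $(-4,0)$ types, producing nodes, cusps, or simple elliptic points—all log canonical—so any of the contractions described in \S\ref{s4} works. Sixth, for the two product-quotient families, the computations already in \S\ref{s4} identify the singularities of the $X_0$'s explicitly ($2A_1$, two simple elliptic points, four simple elliptic points, four $A_1$'s, etc.), and all of these are canonical or simple elliptic. Finally, for Theorem~\ref{CMM:example2}: the product-quotient cases and the $\mathbb{Q}$-homology plane cases have log canonical $X_0$ because $X_0=(C\times D)/G$ has only cyclic quotient singularities, which are log terminal; for Chen--Shin's $K^2=7$ surfaces and numerical Campedelli surfaces, the $X_0$ constructed via Stein factorization of $X\to\mathbb{P}^1\times\mathbb{P}^1$ contracts $\widetilde B_3$, a smooth elliptic curve of self-intersection $-1$, giving a simple elliptic point; for the Inoue $K^2=6$ / Mendes Lopes--Pardini surfaces, the $X_0$ constructed in \S\ref{s5} contracts $\widetilde\Delta_{i+1}$, again an elliptic curve, giving a simple elliptic (log canonical) singularity.

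\textbf{Main obstacle.} The genuinely delicate part is Chen's surfaces with $K^2=7$ and the Burniat surfaces with $2\le K^2\le 5$: there I need to verify that the particular curve being contracted, say $\widetilde E$ with $\widetilde E^2=-4$ on Chen's surface, or the $(-4,0)$ and $(-2,0)$ curves on the Burniat surfaces, really gives a log canonical rather than merely normal singularity. For an irreducible curve $C$ with $C^2=-4$ and $p_a(C)=2$ (as for $\widetilde E$), one must check whether the contraction is log canonical; a single smooth rational $(-4)$-curve contracts to the log terminal point $\tfrac14(1,1)$, but higher genus or higher multiplicity could fail. I expect this to come down to the fact that in each of these cases the curve in question, together with its scheme-theoretic structure inherited from the bidouble cover, is either rational (giving a cyclic quotient singularity) or elliptic (giving a simple elliptic singularity), both log canonical; pinning this down requires consulting the explicit descriptions in \cite{Keum-Lee1} and \cite{CS}, but involves no new ideas beyond the classification of log canonical surface singularities.
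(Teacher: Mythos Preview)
Your approach is the paper's approach: case-by-case, exhibit a contraction producing only quotient or simple elliptic singularities. Two points in your execution need correction, though neither is fatal since the statement only asks for \emph{one} good $X_0$.

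First, for Chen's surfaces you propose to check that \emph{any} of the four contractions is log canonical and expect the contracted curve to be ``either rational \dots\ or elliptic.'' This fails for $\widetilde{E}$, $\widetilde{B_2}$, $\widetilde{B_3}$, which have $p_a=2,2,3$: contracting a smooth irreducible curve of genus $\geq 2$ never gives a log canonical surface singularity (the minimal-resolution exceptional divisor of a log canonical point is an ADE tree, a smooth elliptic curve, a cycle of rational curves, or one of the few $\widetilde{D}/\widetilde{E}$ trees). The paper sidesteps this entirely by contracting only the genus-$1$ curve $\widetilde{\Gamma}$, obtaining a single simple elliptic point. You should do the same rather than attempt the $(-4,2)$ case.

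Second, for Kulikov/primary Burniat surfaces your description of the $\rho=2$ model contracting $(\widetilde{E_i},\widetilde{L_i})$ as ``a cycle of two rational curves (a cusp)'' is wrong on both counts: the table gives $\widetilde{E_i}\cdot\widetilde{L_i}=0$, so the two curves are disjoint, and each has $p_a=1$, so you get two simple elliptic points, not a cusp. This is still log canonical, so your conclusion survives; the paper instead uses the $\rho=1$ model contracting three disjoint genus-$1$ curves. Similarly, for the Mendes Lopes--Pardini $K^2=6$ surfaces remember that the passage $X\to X_1$ already contracts a rational $(-4)$-curve to a $\tfrac14(1,1)$ point, so $X_0$ carries that quotient singularity in addition to the simple elliptic one from $\widetilde{\Delta}_{i+1}$.
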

\begin{proof}
Let $X$ be a fake projective plane or a reducible fake quadric. Therefore we have $X_0=X$ and combinatorially minimal models are smooth for these cases. When $X$ is an Inoue surface or a Chen's surface, we can contract a smooth curve of genus 1 to obtain $X_0.$ Therefore we see that there are at least one $X_0$ having log canonical singularity for these cases. When $X$ is a primary Burniat surface or a Kulikov surface, we can contract three disjoint smooth curves of genus 1 to obtain $X_0.$ Therefore we see that there are at least one $X_0$ having at worst log canonical singularities for these cases. When $X$ is a product-quotient surface with $K^2=6, G=D_4 \times \mathbb{Z}/2$ or $K^2=4, G=\mathbb{Z}/4 \times \mathbb{Z}/2,$ we can contract disjoint $(-2)$-curves to obtain $X_0.$ Therefore we see that there are at least one $X_0$ having at worst  log canonical singularities for these cases. When $X$ is a Burniat surface with $2 \leq K^2 \leq 5,$ then we can contract disjoint union of smooth rational curves and simple elliptic curves to obtain $X_0.$ From the above discussion, we obtain the desired conclusion for the examples in Theorem \ref{CMM:example}. For the examples of Theorem \ref{CMM:example2}, we can contract union of smooth rational curves and simple elliptic curves to obtain combinatorially minimal model. Therefore we see that for any $X$ we discussed in Theorem \ref{CMM:example} and Theorem \ref{CMM:example2}, there is at least one $X_0$ having at worst log canonical singularities.
\end{proof}

Then we have the following natural questions.

\begin{question}
Let $X$ be a smooth minimal surface of general type with $p_g=0.$ Can we find a combinatorially minimal model $X_0$ having at worst log canonical or Du Bois singularities?
\end{question}

If we can characterize which kind of singularities $X_0$ can have, then the next question would be how to use it to classify possible singular points $X_0$ can have. In order to do this we need to develop a general machinary which works  for a larger class of singularities. 

\begin{question}
Can we generalize the discussions of previous sections to a larger class of singularities? For example, can we generalize the orbifold Bogomolov-Miaoka-Yau inequality to log canonical singularities or Du Bois singularities? If it is possible, can we classify which type of singularities $X_0$ can have?
\end{question}

\bigskip

\section{Questions and further directions}\label{s9}

We have discussed combinatorially minimal models of smooth minimal surfaces of general type with $p_g=0$. There are many natural and interesting questions arise. Let us discuss some of them. \\

First of all, it is important to determine whether all smooth minimal surfaces of general type with $p_g=0$ are Mori dream surfaces or not. 

\begin{question}
Let $X$ be a smooth minimal surface of general type with $p_g=0.$ Can we describe $\Eff(X)$, $\Nef(X)$, $\SAmp(X)$? Is $X$ a Mori dream surface?
\end{question}

On the other hand, even if there is a minimal surface of general type with $p_g=0$ which is not a Mori dream space, it seems to be interesting and meaningful to ask whether we can contract negative curves on it to reach a combinatorially minimal model. In this case we can see that it is the minimal resolution of a combinatorially minimal Mori dream surface.

\begin{question}
Let $X$ be a minimal surfaces of general type with $p_g=0$. Can we contract negative curves on $X$ to obtain a combinatorially minimal Mori dream surface? In other words, can we get $X$ as the minimal resolution of a combinatorially minimal Mori dream surface $X_0$?
\end{question}

We have provided several examples of minimal surfaces of general type with $p_g=0$ which are Mori dream surfaces in \cite{FL, Keum-Lee1}. Then it is a natural question whether combinatorially minimal models of them provide new descriptions of them.

\begin{question}
Can we provide explicit descriptions of combinatorially minimal models of minimal surfaces of general type with $p_g=0$ we discussed so far? Can we provide new descriptions of these surfaces from this perspective?
\end{question}

Via studying these questions and singularities of combinatorially minimal models of minimal surfaces of general type, we hope to answer the following questions. 

\begin{question}
Can we classify Mori dream surfaces which are combinatorially minimal? Moreover can we classify smooth minimal Mori dream surfaces with $p_g=0$ via studying their combinatorially minimal models? Does every smooth minimal surface of general type with $p_g=0$ can be obtained as the minimal resolution of a combinatorially minimal Mori dream surface?
\end{question}

We finish the discussion with the following question and hope studying it will be an important step toward the answer of Mumford's original question.

\begin{question}
Can a computer classify Mori dream surfaces of general type with $p_g=0$?
\end{question}

\bigskip

%%%%%%%%%%%%%% reference %%%%%%%%%%%%%%%%%

\end{document}